\setlist[enumerate,1]{leftmargin=1cm}
\newcommand{\cev}[1]{\accentset{\leftharpoonup}{#1}}
\theoremstyle{plain}
\newtheorem{theorem}{Theorem}
\newtheorem{proposition}[theorem]{Proposition}
\newtheorem{corollary}[theorem]{Corollary}
\newtheorem{lemma}[theorem]{Lemma}
\theoremstyle{definition}
\newtheorem{definition}[theorem]{Definition}
\theoremstyle{remark}
\newcommand{\abs}[1]{\left\lvert #1 \right\rvert}
 \DeclareRobustCommand{\checkarg}{\@ifnextchar[{\@witharg}{}}
 \DeclareRobustCommand{\@witharg}[1][]{\ensuremath{\left(#1\right)}}
 \DeclareRobustCommand{\scaleGen}[1]{\@ifnextchar[{\@scalewithargs{#1}}{\odot^{}_{#1}}}
 \def\@scalewithargs#1[#2][#3]{#2 \odot^{}_{#1} #3}
\def\IPspace{\mathcal{I}}
\def\dI{d_{\IPspace}}
\def\cJ{\mathcal{J}}
\def\Exc{\mathcal{E}}
\def\mBxc{\nu_{\textnormal{BES}}}	
\def\cC{\mathcal{C}}	
\def\cN{\mathcal{N}}
\def\cT{\mathcal{T}}
\def\life{\zeta}					
\def\dis{\textnormal{dis}}			
\def\IPmag#1{\left\|\vphantom{I}#1\right\|}		
\def\skewer{\textsc{skewer}}		
\def\scaleB{\scaleGen{\textnormal{BES}}}		
\def\scaleH{\scaleGen{\textnormal{cld}}}		
\def\ShiftRestrict#1#2{#1\big|^{\from}_{#2}} 
\def\shiftrestrict#1#2{#1|^{\from}_{#2}}
\def\RestrictShift#1#2{#1\big|^{\to}_{#2}} 
\def\restrictshift#1#2{#1|^{\to}_{#2}}
\def\Restrict#1#2{#1\big|_{#2}}
\def\restrict#1#2{#1|_{#2}}
\def\Concat{ \mathop{ \raisebox{-2pt}{\Huge$\star$} } }
\def\ConcatIL{ \mbox{\huge $\star$} }
\def\concat{\star}
\newcommand{\IPLT}{\mathscr{D}}
\newcommand{\td}[1]{\widetilde{#1}}
\def\BR{\mathbb{R}}				
\def\Leb{\textnormal{Leb}}		
\def\to{\rightarrow}
\def\downto{\downarrow}
\def\from{\leftarrow}
\def\cf{\mathbf{1}}				
\def\Pr{\mathbf{P}}				
\def\cF{\mathcal{F}}			
\def\distribfont#1{\texttt{\upshape #1}}
\def\ExpDist{\distribfont{Exponential}\checkarg}
\def\GammaDist{\distribfont{Gamma}\checkarg}
\def\DirDist{\distribfont{Dir}\checkarg}
\def\PRM{\distribfont{PRM}\checkarg}
\def\Stable{\distribfont{Stable}\checkarg}
\def\BESQ{\distribfont{BESQ}\checkarg}
\def\PDIP{\distribfont{PDIP}\checkarg}
\DeclareRobustCommand{\CRP}{\texttt{CRP}\checkarg}
\DeclareRobustCommand{\oCRP}{\texttt{oCRP}\checkarg}
\def\CRPAT{\CRP[\alpha,\theta]}
\def\oCRPAT{\oCRP[\alpha,\theta]}
\let\save@mathaccent\mathaccent
\newcommand*\if@single[3]{%
  \setbox0\hbox{${\mathaccent"0362{#1}}^H$}%
  \setbox2\hbox{${\mathaccent"0362{\kern0pt#1}}^H$}%
  \ifdim\ht0=\ht2 #3\else #2\fi
  }
\newcommand*\rel@kern[1]{\kern#1\dimexpr\macc@kerna}
\newcommand{\widebar}{}
\DeclareRobustCommand*\widebar[1]{\@ifnextchar^{\wide@bar{#1}{0}}{\wide@bar{#1}{1}}}
\newcommand*\wide@bar[2]{\if@single{#1}{\wide@bar@{#1}{#2}{1}}{\wide@bar@{#1}{#2}{2}}}
\newcommand*\wide@bar@[3]{%
  \begingroup
  \def\mathaccent##1##2{%
    \let\mathaccent\save@mathaccent
    \if#32 \let\macc@nucleus\first@char \fi
    \setbox\z@\hbox{$\macc@style{\macc@nucleus}_{}$}%
    \setbox\tw@\hbox{$\macc@style{\macc@nucleus}{}_{}$}%
    \dimen@\wd\tw@
    \advance\dimen@-\wd\z@
    \divide\dimen@ 3
    \@tempdima\wd\tw@
    \advance\@tempdima-\scriptspace
    \divide\@tempdima 10
    \advance\dimen@-\@tempdima
    \ifdim\dimen@>\z@ \dimen@0pt\fi
    \rel@kern{0.6}\kern-\dimen@
    \if#31
      \overline{\rel@kern{-0.6}\kern\dimen@\macc@nucleus\rel@kern{0.4}\kern\dimen@}%
      \advance\dimen@0.4\dimexpr\macc@kerna
      \let\final@kern#2%
      \ifdim\dimen@<\z@ \let\final@kern1\fi
      \if\final@kern1 \kern-\dimen@\fi
    \else
      \overline{\rel@kern{-0.6}\kern\dimen@#1}%
    \fi
  }%
  \macc@depth\@ne
  \let\math@bgroup\@empty \let\math@egroup\macc@set@skewchar
  \mathsurround\z@ \frozen@everymath{\mathgroup\macc@group\relax}%
  \macc@set@skewchar\relax
  \let\mathaccentV\macc@nested@a
  \if#31
    \macc@nested@a\relax111{#1}%
  \else
    \def\gobble@till@marker##1\endmarker{}%
    \futurelet\first@char\gobble@till@marker#1\endmarker
    \ifcat\noexpand\first@char A\else
      \def\first@char{}%
    \fi
    \macc@nested@a\relax111{\first@char}%
  \fi
  \endgroup
}
\newcommand{\bD}{\mathbb{D}}
\newcommand{\bE}{\mathbb{E}}
\newcommand{\bP}{\mathbb{P}}
\newcommand{\bR}{\mathbb{R}}
\newcommand{\cE}{\mathcal{E}}
\newcommand{\cI}{\mathcal{I}}
\newcommand{\sD}{\mathscr{D}}
\newcommand{\ff}{\mathbf{f}}
\newcommand{\fE}{\mathbf{E}}
\newcommand{\fN}{\mathbf{N}}
\newcommand{\fn}{\mathbf{n}}
\newcommand{\fm}{\mathbf{m}}
\newcommand{\fX}{\mathbf{X}}
\newcommand{\fP}{\mathbf{P}}
\newcommand{\fT}{\mathbf{T}}
\newcommand{\besq}{{\tt BESQ}}
\newcommand{\skewerbar}{\ensuremath{\overline{\normalfont\textsc{skewer}}}}
\newcommand{\clade}{\ensuremath{\normalfont\textsc{clade}}}
\title[Interval partition evolutions related to the Aldous diffusion]{Interval partition evolutions with emigration \\ related to the Aldous diffusion}
\author{N\MakeLowercase{\sc oah} F\MakeLowercase{\sc orman,} S\MakeLowercase{\sc oumik} P\MakeLowercase{\sc al,} D\MakeLowercase{\sc ouglas} R\MakeLowercase{\sc izzolo, and}  M\MakeLowercase{\sc atthias} W\MakeLowercase{\sc inkel}}
\address{\hspace{-0.42cm}N.~Forman\\ Department of Mathematics\\ University of Washington\\ Seattle WA 98195\\ USA\\ Email: noah.forman@gmail.com}             
\address{\hspace{-0.42cm}S.~Pal\\ Department of Mathematics\\ University of Washington\\ Seattle WA 98195\\ USA\\ Email: soumikpal@gmail.com}
\address{\hspace{-0.42cm}D.~Rizzolo\\ 531 Ewing Hall\\ Department of Mathematical Sciences\\ University of Delaware\\ Newark DE 19716\\ USA\\ Email: drizzolo@udel.edu}
\address{\hspace{-0.42cm}M.~Winkel\\ Department of Statistics\\ University of Oxford\\ 24--29 St Giles'\\ Oxford OX1 3LB\\ UK\\ Email: winkel@stats.ox.ac.uk}
\keywords{Brownian CRT, reduced tree, interval partition, Chinese restaurant process, Aldous diffusion, Wright--Fisher diffusion, Crump--Mode--Jagers with emigration}
\subjclass[2010]{Primary 60J25, 60J80; Secondary 60J60, 60G18}
\date{\today}
\thanks{This research is partially supported by NSF grants DMS-1204840, DMS-1444084, DMS-1612483, EPSRC grant EP/K029797/1, and the University of Delaware Research Foundation}
\begin{document}

\begin{abstract}
 We construct a stationary Markov process corresponding to the evolution of masses and distances of subtrees along the spine from the root to a branch point in a conjectured stationary, continuum random tree-valued diffusion that was proposed by David Aldous. As a corollary this Markov process induces a recurrent extension, with Dirichlet stationary distribution, of a Wright--Fisher diffusion for which zero is an exit boundary of the coordinate processes. This extends previous work of Pal who argued a Wright--Fisher limit for the three-mass process under the conjectured Aldous diffusion until the disappearance of the branch point. In particular, the construction here yields the first stationary, Markovian projection of the conjectured diffusion. Our construction follows from that of a pair of interval partition-valued diffusions that were previously introduced by the current authors as continuum analogues of down-up chains on ordered Chinese restaurants with parameters $\big(\frac{1}{2}, \frac12\big)$ and $\big(\frac{1}{2},0\big)$. These two diffusions are given by an underlying Crump--Mode--Jagers branching process, respectively with or without immigration. In particular, we adapt the previous construction to build a continuum analogue of a down-up ordered Chinese restaurant process with the unusual parameters $\left(\frac{1}{2},  -\frac{1}{2} \right)$, for which the underlying branching process has emigration.
\end{abstract}

\maketitle

\vspace{-0.5cm}

\section{Introduction}

\noindent The Aldous chain is a Markov chain on the space of rooted binary trees with $n$ labeled leaves. Each transition of the {Aldous chain}, called a down-up move, has two steps. In the down-move a uniform random leaf is deleted and its parent branch point is contracted away. In the up-move a uniform random edge is selected, a branch point is inserted into the middle of the edge, and the leaf is reattached at that point. See Figure \ref{fig:AC_move}. 
David Aldous \cite{Aldous00} studied the analogue of this chain on unrooted trees.

The unique stationary distribution of the Aldous chain on rooted $n$-leaf labeled binary trees is the uniform distribution. Consider an $n$-leaf binary tree as a metric space where each edge has a length of $1/\sqrt{n}$. Then the scaling limit of the sequence of uniform $n$-leaf binary trees, as $n$ tends to infinity, is the Brownian Continuum Random Tree (CRT) \cite{AldousCRT1}. This fundamental limiting random metric space can alternatively be described as being encoded by a Brownian excursion. Aldous \cite{Ald-web,Aldous00} conjectured a ``diffusion on continuum trees'' that can be thought of as a continuum analogue of the Aldous Markov chain. 

In order to understand this difficult and abstract conjectured diffusion, it is natural to search for simpler ``finite-dimensional projections'' that are also Markovian and easier to analyze. Such a projection was suggested by Aldous for the Markov chain and later analyzed by Pal \cite{Pal13}. Specifically, suppose $(T_n(j),\,j\geq 0)$ is the Aldous chain on trees with $n$ leaves. 
Any branch point naturally partitions the tree $T_n(0)$ into three components. As the Aldous chain runs, leaves move among components until the branch point disappears, i.e.\ a component becomes empty.  Until that time, let $m_i(j)$, $i\in \{1,2,3\}$, be the 
proportions of leaves in these components, with $m_3$ referring to the root component. Then
\begin{equation}\label{eq:negativewf}
 \left(\left( m_1^{(n)}(\lfloor n^2u\rfloor), m_2^{(n)}(\lfloor n^2u\rfloor), m_3^{(n)}(\lfloor n^2u\rfloor)\right),\,u\geq 0\right) \overset{d}{\underset{n\to\infty}{\longrightarrow}} ((X_1(u),X_2(u), X_3(u)),\,u\geq 0),
\end{equation}
where the right hand side is a generalized Wright--Fisher diffusion with mutation rate parameters $(-\frac{1}{2},-\frac{1}{2}, \frac{1}{2})$, stopped when one of the first two coordinates vanishes. Since zero is an exit boundary for the coordinates of a Wright--Fisher diffusion that have negative mutation rates, the limiting process does not shed light on how to continue beyond the disappearance of a branch point. 

\begin{figure}[t!]\centering
 \input{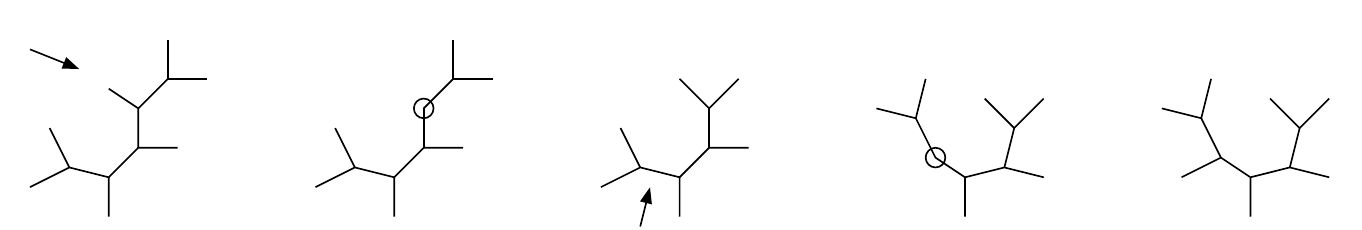_t}
 \caption{From left to right, one Aldous down-up move.\label{fig:AC_move}}
\end{figure}

In our previous work on the discrete Aldous chain \cite{Paper2} we have provided a natural mechanism for selecting a new branch point when the old one disappears, in such a way that the projected mass evolutions remain Markovian. The primary purpose of this paper is to construct a diffusion analogue of this strategy for the case of one branch point. 
To this end, we construct a process on a space of interval partitions as in \cite{Paper1}, which can be projected down onto a three-mass process, and which has the added benefit of describing certain lengths in the conjectured CRT-valued diffusion. 
For the discrete chain, this idea was described in \cite[Appendix A]{Paper2}.

An \emph{interval partition} (IP) in the sense of \cite{Aldousexch,CSP} is a set $\beta$ of disjoint, open subintervals of some interval $[0,M]$, that cover $[0,M]$ up to a Lebesgue null set. We refer to $M\ge0$ as the \emph{mass} of $\beta$ and generally use notation $\|\beta\|$ for $M$. We refer to the subintervals comprising the interval partition as its \emph{blocks}. We denote by $\cI_H$ the set of all interval partitions and by $\cI$ the subset of ``Brownian-like'' interval partitions $\beta$ \emph{with diversity} \cite{Pitman03}, i.e.\ for which the limit 
\begin{equation}\label{eq:diversity}
 \sD_\beta(t)=\sqrt{\pi}\lim_{h\downarrow 0}\sqrt{h}\#\{(a,b)\in\beta\colon |b-a|>h,b\le t\}
\end{equation}
exists for all $t\in[0,\|\beta\|]$. In the context of a rooted Brownian CRT $(\cT,d,\rho,\mu)$ with root $\rho$, a ``2-tree'' with an associated interval partition can be extracted as follows. We independently sample two leaves $\Sigma_1,\Sigma_2\sim\mu$. Consider the geodesic paths $[[\rho,\Sigma_1]],[[\rho,\Sigma_2]]\subset\cT$, their intersection 
$[[\rho,b_{1,2}]]=[[\rho,\Sigma_1]]\cap[[\rho,\Sigma_2]]$, which defines a branch point $b_{1,2}\in\cT$, and the masses $\mu(\cC)$ of all connected components $\cC$ of $\cT\setminus[[\rho,b_{1,2}]]$.
Record $(X_1,X_2,\beta)$, where 
 $X_1$ and $X_2$ are the $\mu$-masses of the connected components containing $\Sigma_1$ and $\Sigma_2$, respectively, 
 and $\beta$ is the \emph{spinal interval partition} of total mass $1-X_1-X_2$ that captures in its interval lengths the $\mu$-masses of the remaining components, in the order of decreasing distance from $\rho$.
It is well-known \cite{Aldous94,PitmWink09} that $(X_1,X_2,1-X_1-X_2)$ has law $\DirDist[\frac{1}{2},\frac{1}{2},\frac{1}{2}]$, and $\beta/(1-X_1-X_2)$ is independent with law $\PDIP\left(\frac{1}{2},\frac{1}{2}\right)$. Here $\PDIP\left(\frac12,\frac12\right)$, which stands for Poisson--Dirichlet Interval Partition, is law of the random interval partition of the unit interval obtained from the excursion intervals of a standard Brownian bridge  \cite{PermPitmYor92, PitmYor92, GnedPitm05}. Furthermore, the total diversity $\sD_\beta(\|\beta\|)$, from \eqref{eq:diversity}, is also equal to $d(\rho,b_{1,2})$, the length of the spine from $\rho$ to
$b_{1,2}$ in $\cT$. Our aim is to construct a Markov process on such 2-tree structures, triplets of an interval partition and two top masses, with total mass one, that is stationary with respect to the law of $(X_1, X_2, \beta)$ described above. 

In \cite{Paper1}, the present authors introduced a related IP-valued process called type-1 evolution, or $\left(\frac{1}{2},0\right)$-IP evolution. We recall its definition in Section \ref{prel}, but for now we recall three properties.
\begin{enumerate}[label=(\roman*), ref=(\roman*)]
 \item It is a path-continuous Hunt process on a space $(\cI,d_\cI)$ with continuously evolving diversities \cite[Theorem 1.4]{Paper1}. The metric $d_{\cI}$ is defined in Definition \ref{def:IP:metric}.
 \item The total mass of the interval partition evolves as a \BESQ[0], the squared-Bessel diffusion of dimension 0 \cite[Theorem 1.5]{Paper1}. In particular, the type-1 evolution is eventually absorbed (we say it dies) at the empty interval partition state, $\emptyset$.\label{item:type1:total_mass}
 \item At Lebesgue almost every time prior to its death, the evolving interval partition has a leftmost block \cite[Proposition 4.30, Lemma 5.1]{Paper1}.\label{item:type1:LMB}
\end{enumerate}


In this paper we find it convenient to represent a type-1 evolution by a pair, $((m^y,\beta^y),\,y\ge0)$, rather than just an evolving interval partition, with $m^y$ denoting the mass of the leftmost block and $\beta^y$ denoting the remaining interval partition, shifted down so that its left end lines up with zero. We take the convention that $m^y = 0$ at the exceptional times $y$ at which there is no leftmost block and after the death of the process. 


A type-2 evolution, or $(\frac{1}{2},-\frac{1}{2})$-IP evolution, is a process that has two leftmost blocks at almost every time. We can represent the two leftmost blocks by just their masses and consider such a process on either of the following state spaces:
\begin{equation}\label{type2spaces}
\begin{split}
 \cJ^\circ &:=\left\{ (a,b,\beta)\in [0,\infty)^2 \times \cI,\;  a+b >0  \right\} \cup \{(0,0,\emptyset)\},\\
 \cI^\circ &:= \{\beta\in\cI\colon \exists a>0\text{ s.t.\ }(0,a)\in\beta\}\cup\{\emptyset\} = \big\{ (0,a)\concat (0,b)\concat\beta\colon (a,b,\beta)\in\cJ^\circ\big\}.
\end{split}
\end{equation} 
Here $\concat$ means a natural concatenation of blocks. Let $d_{\cJ}$ denote the metric on $\cJ^\circ$ given by 
\[
d_{\cJ}\left( (a_1, b_1, \beta_1), (a_2, b_2, \beta_2)  \right)= \abs{a_1-a_2} + \abs{b_1 - b_2} + d_{\cI}(\beta_1, \beta_2). 
\]

Let $\besq_a(-1)$ denote the squared Bessel diffusion of dimension $-1$ starting from $a\ge 0$. This process is killed upon hitting zero. If $\ff\sim \besq_a(-1)$, for some $a\ge 0$, let $\zeta(\ff)$ denote the \emph{lifetime} of the process $\ff$.

\begin{definition}\label{def:type2:v1}
 Let $(a,b,\beta)\in\cJ^\circ$. A type-2 evolution starting from $(a,b,\beta)$ is a $\cJ^\circ$-valued process of the form 
$((m_1^y,m_2^y,\alpha^y),y\ge 0)$, with $(m_1^0,m_2^0,\alpha^0)=(a,b,\beta)$. Its IP-valued variant is a process on state space $\cI^\circ$ that starts from $(0,a)\concat (0,b) \concat \beta$. The distributions of these processes are specified by the following construction.
 
 Let $\left(\fm^{(0)}, \gamma^{(0)} \right)$ be a type-1 evolution starting with the initial condition $( b, \beta)$ and independent of $\ff^{(0)}\sim \besq_a(-1)$. Let $Y_1=\zeta(\ff^{(0)})$. For $0\le y \le Y_1$, define the type-2 evolution as
 \[
 \left( m_1^y, m_2^y, \alpha^y  \right):=\left(\ff^{(0)}(y), \fm^{(0)}(y), \gamma^{(0)}(y)  \right), \quad 0\le y \le Y_1,
 \]
 while its IP-valued variant is the process 
 \[
 (0, \ff^{(0)}(y)) \concat (0, \fm^{(0)}(y))\concat \gamma^{(0)}(y), \quad 0\le y \le Y_1.
 \]
 Now proceed inductively. Suppose, for some $n\ge 1$, these processes have been constructed until time $Y_n$ with $m_1^{Y_n}+m_2^{Y_n}>0$. Conditionally given this history, consider a type-1 evolution $(\fm^{(n)}, \gamma^{(n)})$ with initial condition $(0,\alpha^{Y_n}) = (0,\gamma^{Y_{n-1}}(Y_n-Y_{n-1}))$ that is independent of $\ff^{(n)}$, a $\besq(-1)$ diffusion with initial value $\fm^{(n-1)}(Y_n-Y_{n-1})$. The latter equals $m_2^{Y_n}$ if $n$ is odd or $m_1^{Y_n}$ if $n$ is even.
Set $Y_{n+1}=Y_n+\zeta(\ff^{(n)})$. For $y\in(0,Y_{n+1}-Y_n]$, define
 \begin{align*}
  &\left(m_1^{Y_n+y},m_2^{Y_n+y},\alpha^{Y_n+y}\right):= \begin{dcases}
  (\fm^{(n)}(y),\ff^{(n)}(y),\gamma^{(n)}(y)), &\mbox{if $n$ is odd},\\
  (\ff^{(n)}(y),\fm^{(n)}(y),\gamma^{(n)}(y)), &\mbox{if $n$ is even.}
  \end{dcases}
 \end{align*}
 The IP-valued variant of the process does not switch between the top two masses and is always defined as
 \[
 (0, \ff^{(n)}(y-Y_n)) \concat (0, \fm^{(n)}(y-Y_n)) \concat \gamma^{(n)}(y-Y_n), \quad Y_n < y \le Y_{n+1}. 
 \]
 If, for some $n\ge 1$, $m_1^{Y_n}+m_2^{Y_n}=0$, set $(m_1^y,m_2^y,\alpha^y):=(0,0,\emptyset)$ for all $y>Y_n$ and $Y_{n+1} := \infty$.
\end{definition}

The difference between the two variants of type-2 evolutions is that in one the top two masses are labeled by $1$ and $2$ which jump as a mass hits zero, while in the other the top two masses are unlabeled and simply drop out of the interval partition as empty blocks when they hit zero. The former allows a stationary construction, while the latter is necessary for continuity.

\begin{theorem}\label{thm:diffusion}
 Type-2 evolutions are Borel right Markov processes on $(\cJ^\circ,d_{\cJ})$. IP-valued type-2 evolutions are path-continuous Hunt processes on $(\cI^\circ,d_{\cI})$.
\end{theorem}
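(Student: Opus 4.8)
The plan is to establish the two assertions together by induction on the excursion-count $n$, leveraging the type-1 results from \cite[Theorems 1.4--1.5]{Paper1} and the concatenation-type gluing arguments for Markov processes. The key structural fact is that a type-2 evolution is built, one $\besq(-1)$-lifetime at a time, by running a type-1 evolution (for the non-$\besq(-1)$ coordinate plus the tail interval partition) simultaneously with an independent $\besq(-1)$ diffusion; at each time $Y_n$ the construction restarts from the current state. So the first task is to check that the construction in Definition~\ref{def:type2:v1} genuinely produces a process, i.e.\ that $Y_n\uparrow\infty$ almost surely (or that the process is absorbed), which follows because $\besq(-1)$ hits $0$ in finite time and, each time it does, the total mass $m_1+m_2+\|\alpha\|$ evolves as a \besq of dimension $0$ or $-1$ depending on whether the type-1 part is alive — in either case it is eventually absorbed at $0$, so only finitely many $Y_n$ accumulate before death, with $Y_n\to\zeta$. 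One should also record the (strong) Markov property of the \emph{joined} building blocks: $(\ff^{(n)},\fm^{(n)},\gamma^{(n)})$ is a Borel right process on $[0,\infty)\times\cI$ (product of a diffusion and a Hunt process), and $Y_{n+1}-Y_n=\zeta(\ff^{(n)})$ is an a.s.\ finite stopping time for its natural filtration.

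\medskip\noindent
For the first statement I would verify the Borel right Markov property of $((m_1^y,m_2^y,\alpha^y),y\ge0)$ on $(\cJ^\circ,d_\cJ)$ directly from the defining recursion. The simple Markov property at deterministic times is immediate from the inductive construction together with the branching/independence properties of type-1 evolutions: conditionally on the state at time $t$, the future is obtained by the same recipe started afresh, because within an excursion the triple $(\ff^{(n)},\fm^{(n)},\gamma^{(n)})$ has the Markov property, and at each $Y_n$ the construction depends on the past only through the current state (the roles of $m_1,m_2$ swap, but this is a measurable function of the state plus the parity, and parity is itself recoverable up to death from whether the first or second coordinate is the one currently run as $\besq(-1)$). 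To upgrade to the strong Markov property and right-continuity of the semigroup one checks: (a) $y\mapsto(m_1^y,m_2^y,\alpha^y)$ is right-continuous in $d_\cJ$ — within each excursion this is continuity of $\ff^{(n)}$, $\fm^{(n)}$ and $\gamma^{(n)}$ (the latter by \cite[Theorem 1.4]{Paper1}), and at the junction times $Y_n$ one has $m_i^{Y_n}=0$ for the coordinate that just died and the other two coordinates match up by construction, so there is at most a relabeling that does not affect $d_\cJ$; (b) the measurability of $(a,b,\beta)\mapsto\Pr_{(a,b,\beta)}$, which follows from measurable dependence of the type-1 laws on their starting point \cite[Theorem 1.4]{Paper1} and of $\besq(-1)$ on its starting point; (c) quasi-left-continuity, which reduces to the analogous property of type-1 evolutions on each excursion block plus the fact that the $Y_n$ are totally inaccessible only in the trivial sense (they are hitting times of $0$ by a diffusion with an exit boundary, hence predictable, and the state is continuous there). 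A clean way to package (a)--(c) is to appeal to a general ``pasting'' lemma: a process obtained by concatenating Borel right processes at a sequence of stopping times that are lifetimes, with matching entrance laws at the junctions, is again a Borel right process; this is precisely the structure here, and such lemmas are standard (cf.\ the regeneration arguments in \cite{Paper1}).

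\medskip\noindent
For the second statement, the IP-valued variant $((0,\ff^{(n)})\concat(0,\fm^{(n)})\concat\gamma^{(n)},\,Y_n<y\le Y_{n+1})$ is a measurable (indeed Lipschitz, in $d_\cI$) image of the $\cJ^\circ$-valued process under the map $(a,b,\beta)\mapsto(0,a)\concat(0,b)\concat\beta$ restricted to where it is defined — but the crucial point, and the reason the IP-variant does \emph{not} relabel, is that this map is \emph{continuous across the junction times}: when the coordinate playing the role of $\besq(-1)$ hits $0$, the corresponding leftmost block shrinks continuously to the empty block and simply drops out, and the surviving type-1 part $\gamma^{(n)}$ at time $Y_{n+1}$ becomes the new $(0,\alpha^{Y_{n+1}})$, which is exactly how the next type-1 evolution is started. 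Thus path-continuity in $(\cI^\circ,d_\cI)$ follows from path-continuity of type-1 evolutions within each excursion (\cite[Theorem 1.4]{Paper1}) together with left- and right-continuity at each $Y_n$, where one must check that $d_\cI\big((0,\ff^{(n-1)}(\zeta))\concat\cdots,\ (0,\ff^{(n)}(0))\concat\cdots\big)=0$; here $\ff^{(n-1)}(\zeta-)=0$ and $\gamma^{(n-1)}(\zeta)=\alpha^{Y_n}$ is the starting configuration for step $n$, so the two configurations literally agree. The Hunt property (right process with quasi-left-continuity, on the metric space $(\cI^\circ,d_\cI)$, which is Lusin by the type-1 theory) then transfers from the $\cJ^\circ$-valued process via the continuous, finite-to-one projection, plus path-continuity which we have just established; strong Markovness transfers because the projection intertwines the two semigroups.

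\medskip\noindent
\textbf{Main obstacle.} The delicate point is the behaviour at the junction times $Y_n$: one must show that splicing in a new type-1 evolution started from $(0,\alpha^{Y_n})$, with a fresh independent $\besq(-1)$ started from the mass that did \emph{not} just die, produces a process that is (i) continuous there (for the IP-variant) or at least c\`adl\`ag and quasi-left-continuous there (for the $\cJ^\circ$-variant), and (ii) still Markov. Continuity is the subtle half: it requires knowing that a type-1 evolution started from $(0,\alpha)$ does not instantaneously create a leftmost block of macroscopic size — i.e.\ that the map $y\mapsto\fm^{(n)}(y)$ is continuous with $\fm^{(n)}(0)=0$, which is exactly property~\ref{item:type1:LMB} together with \cite[Theorem 1.4]{Paper1} (continuity of diversities and of the IP in $d_\cI$), and it requires that the tail $\gamma^{(n)}$ matches $\alpha^{Y_n}$ at $y=0$, which is true by fiat of the construction. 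Verifying that these local-time-zero/entrance-law issues behave well — and that the $\besq(-1)$ restart does not introduce a discontinuity in the total-mass process (it does not: \besq of a negative dimension, killed at $0$, has continuous paths up to its lifetime) — is where the real work lies; everything else is bookkeeping and appeals to the type-1 results of \cite{Paper1}.
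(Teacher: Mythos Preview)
Your overall architecture is right and matches the paper's: path-regularity within and across the junction times $Y_n$, a ``pasting'' argument for the Markov property, and path-continuity giving the Hunt property for the IP-valued variant. But there are two genuine gaps and one misstatement.

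\textbf{Non-accumulation of $Y_n$.} Your claim that ``only finitely many $Y_n$ accumulate before death'' does not follow from the total mass being \besq: even granting that the total mass is $\besq(-1)$ (which is Theorem~\ref{thm:total_mass}, proved \emph{after} this one), nothing prevents infinitely many clock changes piling up at some level strictly below the lifetime $\zeta$. The paper's Lemma~\ref{type2welldef} handles this with a substantive argument: the increments $\Delta_n=Y_{n+1}-Y_n$ are overshoots of a \Stable[\frac32] process across successive levels, whence $\log(\Delta_{n+1}/\Delta_n)$ is an i.i.d.\ sequence whose mean is computed (via the \Stable[\frac12] ladder subordinator and a Brownian local time identity) to be exactly zero, so the random walk does not drift to $-\infty$ and $Y_n\to\infty$. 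This is the one genuinely nontrivial ingredient your sketch omits.

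\textbf{Quasi-left-continuity on $\cJ^\circ$.} You list ``(c) quasi-left-continuity'' as part of upgrading to strong Markov on $\cJ^\circ$. It is neither needed (Borel right Markov does not require it) nor true: already the type-1 pair representation $(m^y,\gamma^y)$ fails to be Hunt (Corollary~\ref{corpairtype1}), because $m^y$ jumps up from $0$ along an increasing sequence of stopping times. The theorem only claims Borel right for $\cJ^\circ$; the Hunt conclusion is for the IP-valued variant, where it follows trivially from path-continuity.

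\textbf{The pasting/Markov step.} Your ``general pasting lemma'' is the right idea; the paper invokes Meyer's resurrection theorem \cite{Mey75} for this. But note the labeling subtlety: the rebirth kernel depends on which coordinate is the clock, so the paper first augments the state by $I(y)\in\{1,2\}$, obtains a right process there, and then projects back via Dynkin's criterion. The key input for Dynkin is Lemma~\ref{lem:type2_symm}, which shows that swapping the roles of $\ff_1$ and $\ff_2$ in the construction yields pathwise the same $\cJ^\circ$-valued process. Your remark that ``parity is recoverable from which coordinate is currently run as $\besq(-1)$'' is not quite enough: during an excursion both top masses are positive and neither the state nor the $\cJ^\circ$-semigroup records the clock index, so you need the symmetry lemma to see that the projection is Markov.
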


\begin{theorem}\label{thm:total_mass}
 For a type-2 evolution $((m_1^y,m_2^y,\alpha^y),\,y\ge0)$, the total mass process $(m_1^y+m_2^y+\|\alpha^y\|,\,y\ge0)$ is a \BESQ[-1] process.
\end{theorem}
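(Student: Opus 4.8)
The plan is to show that the total mass $S^y := m_1^y + m_2^y + \|\alpha^y\|$, $y\ge 0$, solves the squared Bessel stochastic differential equation $dS^y = -\,dy + 2\sqrt{S^y}\,dW^y$ up to its first hitting time of $0$ and equals $0$ thereafter; pathwise uniqueness (Yamada--Watanabe) for this equation then identifies the law of $S$ with that of a \BESQ[-1] process started from $s_0:=a+b+\|\beta\|$, where $(a,b,\beta)\in\cJ^\circ$ is the deterministic starting state. Throughout, let $(\cG^y)$ be the filtration generated by the full construction of Definition \ref{def:type2:v1}, so that $S$ is adapted and each $Y_n$ is a $(\cG^y)$-stopping time.

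\emph{One segment.} On $[Y_n,Y_{n+1}]$ the construction writes $S^{Y_n+t}=\ff^{(n)}(t)+Z^{(n)}(t)$, where $\ff^{(n)}$ is a \BESQ[-1] diffusion and $Z^{(n)}:=\fm^{(n)}+\|\gamma^{(n)}\|$ is the total mass of an \emph{independent} type-1 evolution, hence a \BESQ[0] process by property \ref{item:type1:total_mass}, with $\ff^{(n)}(0)+Z^{(n)}(0)=S^{Y_n}$. Adding the defining equations of the two summands and using that their driving Brownian motions are independent (this is just the additivity of squared Bessel processes, applied on the segment) gives $S^{Y_n+t}=S^{Y_n}-t+N^{(n)}_t$ for $t\in[0,Y_{n+1}-Y_n]$ with $N^{(n)}$ a continuous local martingale of bracket $\langle N^{(n)}\rangle_t=4\int_0^t S^{Y_n+s}\,ds$. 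The decomposition of $S$ into a dimension-$(-1)$ part and a dimension-$0$ part is reset at $Y_{n+1}$, but $S$ itself is continuous there: the value at the end of segment $n$ is $\fm^{(n)}(Y_{n+1}-Y_n)+\|\gamma^{(n)}(Y_{n+1}-Y_n)\|$ (here $\ff^{(n)}$ has just reached $0$), and at the start of segment $n+1$ the new dimension-$(-1)$ part restarts from the surviving mass $\fm^{(n)}(Y_{n+1}-Y_n)$ and the new dimension-$0$ part from $\|\gamma^{(n)}(Y_{n+1}-Y_n)\|$, the same total.

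\emph{Concatenation and absorption.} Since the type-2 evolution is defined on all of $[0,\infty)$ (Theorem \ref{thm:diffusion}), a.s.\ $Y_n\uparrow\infty$ (once a death time $Y_N$ is reached, $Y_{N+1}=\infty$), so by the previous paragraph $S$ is continuous on $[0,\infty)$. On each nondegenerate interval $[Y_n,Y_{n+1})$ one has $S^y\ge\ff^{(n)}(y-Y_n)>0$, so $S$ can vanish only at some $Y_n$; and since $\zeta(\ff^{(n-1)})$ is independent of the type-1 evolution $(\fm^{(n-1)},\gamma^{(n-1)})$, whose leftmost-block mass is positive at Lebesgue-a.e.\ time before the type-1 dies (property \ref{item:type1:LMB}) and whose total mass vanishes only from that point on, a.s.\ the surviving mass $\fm^{(n-1)}(Y_n-Y_{n-1})$ is $0$ only when $\|\gamma^{(n-1)}(Y_n-Y_{n-1})\|$ is, that is, only at the death time. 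Thus $T_0:=\inf\{y\colon S^y=0\}$ equals the death time of the type-2 evolution, and $S^y=0$ for $y\ge T_0$. Stopping at $T_0$ and summing the segment increments, the standard concatenation of local martingales makes $N^y:=S^{y\wedge T_0}-s_0+(y\wedge T_0)$ a continuous $(\cG^y)$-local martingale with $\langle N\rangle^y=4\int_0^{y\wedge T_0}S^u\,du$. A Dambis--Dubins--Schwarz time change (with an auxiliary independent Brownian motion to cover $\{S=0\}$) produces a Brownian motion $W$ with $S^y=s_0-(y\wedge T_0)+2\int_0^y\sqrt{S^u}\,dW^u$ and $S$ absorbed at $0$; so $S$ solves the displayed SDE, and uniqueness completes the proof.

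\emph{Main obstacle.} The delicate step is the concatenation. The reset of the hidden dimension-$(-1)$/dimension-$0$ splitting at each $Y_n$ depends on how $S^{Y_n}$ is partitioned between the surviving top block and the spinal partition, not on $S^{Y_n}$ alone; so $S$ carries no obvious transition kernel of its own and one cannot simply iterate a strong-Markov argument for $S$ itself. Routing the argument through the martingale problem / SDE is exactly what renders the reset harmless, since only the local drift $-\,dy$ and local variance $4S\,dy$ enter, and these are insensitive to the decomposition. The two points that need genuine care are the continuity of $S$ at the death time (via independence of $\zeta(\ff^{(n-1)})$ from the type-1 evolution together with property \ref{item:type1:LMB}) and the non-accumulation $Y_n\uparrow\infty$, which comes from the well-definedness of the process established for Theorem \ref{thm:diffusion}.
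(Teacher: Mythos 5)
Your proof is correct and rests on the same underlying mechanism as the paper's---the \BESQ\ stochastic differential equation together with pathwise (Yamada--Watanabe) uniqueness---but it is organized differently and is somewhat more streamlined. The paper isolates the single--clock-change case into a standalone additivity lemma (Lemma \ref{lmadd}: a \BESQ[-1] plus an independent \BESQ[0], switched at the first zero of either summand to a scaled independent $\besq_1(-1)$, is again a \BESQ[-1]), and then runs an induction over clock changes using auxiliary processes $V^{(n)}$ that equal the true total mass up to $D_n=\min\{Y_n,D\}$ and thereafter continue as a scaled $\besq_1(-1)$; the $n=1$ case is exactly Lemma \ref{lmadd}, and the inductive step concatenates the $n=1$ case with the inductive hypothesis applied to the post-$D_1$ shifted process. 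You instead skip the induction entirely: you build a single continuous local martingale $N^y=S^{y\wedge T_0}-s_0+(y\wedge T_0)$ over the full lifetime by summing the segment increments, read off its bracket $\langle N\rangle^y=4\int_0^{y\wedge T_0}S^u\,du$ segment by segment, and apply Dambis--Dubins--Schwarz and pathwise uniqueness once at the end. The remark you label the ``main obstacle'' is exactly the point that makes the direct SDE route preferable to trying to iterate a strong Markov argument on $S$ alone: the $(-1)/(0)$ split of the mass is reset in a way that depends on more than $S^{Y_n}$, but only the local drift and local quadratic variation enter the martingale problem, and these are blind to the split. Both proofs use the same supporting facts---Proposition \ref{type1totalmass} for the \BESQ[0] total mass of a type-1 evolution, the continuity of $S$ across clock levels, property \ref{item:type1:LMB} combined with the independence of $\zeta(\ff^{(n-1)})$ to rule out $S$ hitting zero before degeneration, and non-accumulation of the $Y_n$. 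On that last point, you cite Theorem \ref{thm:diffusion}, which is adequate and not circular (Theorem \ref{thm:diffusion} is proved before Theorem \ref{thm:total_mass} and does not use it), but citing Lemma \ref{type2welldef} directly would be cleaner, since that is where non-accumulation is actually established.
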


Since a $\besq(-1)$ process eventually gets killed at zero, a type-2 evolution is not stationary. However, we obtain a stationary variant 
by modifying the process in two ways: de-Poissonization and resampling. De-Poissonization means that we normalize so that the total mass remains constant at one, and then we apply a time-change. De-Poissonization was used in \cite{Paper1} to obtain a stationary variant of type-1 evolution and has previously been applied in related settings in \cite{Pal11,Pal13,WarrYor98}. 
Resampling is a new idea in this context. 
We will see that the type-2 evolution eventually \emph{degenerates}, entering a state of only having a single block: either $m_1^y = \|\alpha^y\| = 0 < m_2^y$ or $m_2^y = \|\alpha^y\| = 0 < m_1^y$. 
At that time we will have the process jump into an independent state sampled from the law described above as a 2-tree projection of the Brownian CRT; see Definition \ref{def:resampling}. The state spaces of the resampling de-Poissonized processes are 
\begin{equation}\label{eq:IP_spaces}
\begin{split}
 \cJ^*_1 &:= \{(a,b,\beta)\in\cJ^\circ\colon a+b+\|\beta\| = 1;\,a,b,\|\beta\|<1\},\\
 \cI^*_1 &:= \big\{\beta\in\cI^\circ\colon \|\beta\| = 1,\,\beta\neq\{(0,1)\}\big\} = \big\{ (0,a)\concat (0,b)\concat\beta\colon (a,b,\beta)\in\cJ^*_1\big\}.
\end{split}
\end{equation}

\begin{theorem}\label{thm:stationary}
The resampling, de-Poissonized type-2 evolution (which we also call a \emph{2-tree evolution}) is a Borel right Markov process on $(\cJ_1^*,d_{\cJ})$. The IP-valued variant is a Borel right Markov process on $(\cI^*_1,d_{\cI})$ and is path-continuous except on a discrete set of resampling times.
 
 Consider $(A_1,A_2,A_3)\sim \DirDist[\frac12,\frac12,\frac12]$ and an independent interval partition $\widebar\beta\sim\PDIP\left(\frac12,\frac12\right)$. The law of $(A_1,A_2,A_3\widebar\beta)$ is the unique stationary distribution for the 2-tree evolution on $\cJ_1^*$.
\end{theorem}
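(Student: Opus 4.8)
The statement has two parts---the Markov property together with the path regularity, and the identification of the stationary law---which I would establish in that order.

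\emph{Markov property and path regularity.} The plan is to build the $2$-tree evolution from the type-2 evolution of Definition~\ref{def:type2:v1} in three moves. By Theorem~\ref{thm:diffusion} the type-2 evolution is a Borel right process on $(\cJ^\circ,d_{\cJ})$ with a path-continuous Hunt IP-variant on $(\cI^\circ,d_{\cI})$. First, I de-Poissonize: since the total mass is a \BESQ[-1] by Theorem~\ref{thm:total_mass}, normalising by it while time-changing through the additive functional $t\mapsto\int_0^t\big(m_1^r+m_2^r+\|\alpha^r\|\big)^{-1}\,dr$ again yields a Borel right process, by adapting the de-Poissonization argument for type-1 in \cite{Paper1}; started at total mass one it runs until it first hits a degenerate single-block configuration $(0,1,\emptyset)$ or $(1,0,\emptyset)$ (resp.\ $\{(0,1)\}$). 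Second, one checks this degeneration time $D$ lies a.s.\ in $(0,\infty)$: no state of $\cJ_1^*$ degenerates instantaneously, and the type-2 evolution is seen to degenerate strictly before its total mass hits zero. Third, at time $D$ I revive the killed process with the probability kernel $\pi$, which is carried by $\cJ_1^*$ (resp.\ $\cI_1^*$); reviving a Borel right process at its lifetime by a probability kernel produces a Borel right process by the classical Ikeda--Nagasawa--Watanabe piecing-out construction. As the excursions between consecutive resampling times are then i.i.d.\ with a.s.\ strictly positive and a.s.\ finite lengths, the resampling times form a discrete set, off which the IP-variant inherits path-continuity from Theorem~\ref{thm:diffusion}.

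\emph{Pseudo-stationarity---the crux.} The central input is a pseudo-stationarity property of the un-normalised type-2 evolution: if its initial state is, conditionally on its total mass $s$, distributed as $s\cdot(A_1,A_2,A_3\widebar\beta)$ with $(A_1,A_2,A_3)\sim\DirDist[\frac12,\frac12,\frac12]$ and $\widebar\beta\sim\PDIP\left(\frac12,\frac12\right)$ independent, then at every time $y\ge0$, conditionally on the total-mass process and on not having degenerated, the state renormalised to total mass one is again distributed as $(A_1,A_2,A_3\widebar\beta)$, i.e.\ as $\pi$. I would prove this by induction over the sub-evolution intervals $(Y_n,Y_{n+1}]$, feeding three ingredients into each step: (a) the type-1 pseudo-stationarity of \cite{Paper1}, to propagate the conditional law of each pair $(\fm^{(n)},\gamma^{(n)})$ given its \BESQ[0] total mass, together with the entrance behaviour of a type-1 evolution from a leftmost-block-zero state $(0,\gamma^{(n-1)}(\cdot))$; (b) additivity of squared Bessel processes and the Beta/Dirichlet law of the ratio of independent squared Bessel processes, used to merge the extra top mass $\ff^{(n)}$, a \BESQ[-1], with the sub-evolution's mass; and (c) the recursive Poisson--Dirichlet/Dirichlet algebra making $\DirDist[\frac12,\frac12,\frac12]$, $\PDIP\left(\frac12,\frac12\right)$ and the $\PDIP\left(\frac12,0\right)$-type structure of the type-1 sub-evolutions compatible under block concatenation and under the role-swap of the two top masses at each $Y_n$. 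Closing this Poisson--Dirichlet bookkeeping across a full sub-evolution interval, and in particular reconciling the role-swap with the entrance analysis at the times $Y_n$, is where I expect the main difficulty to lie.

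\emph{Stationarity and uniqueness.} Granting pseudo-stationarity, de-Poissonization divides out the total mass, so whenever the de-Poissonized type-2 evolution started from $\pi$ has not yet degenerated its state is \emph{exactly} $\pi$-distributed. Hence, for the $2$-tree evolution started from $\pi$ and any time $u$, I would condition on the last resampling time $T\le u$ (or on there being none) and run the fresh, independent, not-yet-degenerated $\pi$-excursion on $[T,u]$; mixing over $T$ and the number of resamplings in $[0,u]$ then gives that the time-$u$ law is again $\pi$, so $\pi$ is stationary on $\cJ_1^*$. For uniqueness, the $2$-tree evolution regenerates at every resampling time, restarting from the fixed law $\pi$ independently of the past, with i.i.d.\ cycle lengths that are a.s.\ finite---by continuity and strict positivity of the total mass on the compact interval $[0,D]$---and of finite mean, via a tail bound on $\int_0^D\big(m_1^r+m_2^r+\|\alpha^r\|\big)^{-1}\,dr$; a standard regenerative-process argument then forces $\pi$ to be the only stationary law. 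The IP-variant is handled by the same de-Poissonization and piecing-out applied to the type-2 IP-evolution of Theorem~\ref{thm:diffusion}, with path-continuity off the resampling set as already noted.
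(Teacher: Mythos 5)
Your overall architecture matches the paper: de-Poissonize the type-2 evolution into a process absorbed at degeneration, revive it at the degeneration time with the kernel $\widebar\mu$, verify that the resampling times are discrete, and then deduce stationarity and uniqueness from a pseudo-stationarity result combined with a regenerative argument. The Markov-property step via piecing-out (Ikeda--Nagasawa--Watanabe in your write-up, Meyer \cite{Mey75} in the paper) is equivalent, and the final stationarity/uniqueness step---while the paper uses an independent exponential time $U$ split over $\{V_m\le U<V_{m+1}\}$ followed by Laplace inversion and you use conditioning on the last resampling time---is the same regenerative-process argument in different clothing.

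The gap is in the pseudo-stationarity step, which you yourself flag as the crux. Your plan to induct over the sub-evolution intervals $(Y_n,Y_{n+1}]$ has a real obstruction: at a clock-change level $Y_n$ the type-1 component has its leftmost block conditioned to hit zero, so its conditional state given the total masses is \emph{not} of the Dirichlet/\PDIP{} form, and it is not clear how to restart the Poisson--Dirichlet algebra from such a state nor how to reconcile the role-swap of the two top masses with it. The paper sidesteps this entirely by two tools you have not invoked. First, the interweaving construction (Section~\ref{sec:interweaving}, Proposition~\ref{interweaving}): when started from the pseudo-stationary law with \GammaDist{} mass, the type-2 evolution is realised by alternating intervals of two \emph{independent} pseudo-stationary type-1 evolutions, and degeneration is exactly the minimum of their lifetimes; conditioning on $\{D>y\}$ then factors into two independent type-1 conditionings, to which Propositions~\ref{prop:type1:pseudo}--\ref{prop:type1:pseudo_g} apply directly. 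Second, the memorylessness Lemma~\ref{lem:pseudostat:memoryless}, exploiting the \PRM{} structure of the scaffolding, is used to show that the interval-partition component past the second top-mass spindle has the unconditioned pseudo-stationary law, with no inductive bookkeeping across clock changes at all. These two inputs, plus Laplace inversion to pass from \GammaDist[\frac32,\gamma] initial mass to fixed mass (Propositions~\ref{prop:pseudo_f} and~\ref{prop:type2:pseudo}), replace the whole of your item (c). Without something like the interweaving representation, the induction you propose is likely to stall exactly where you predicted, so the proposal as written has a genuine hole in its central lemma.
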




Consider the map $\pi^\bullet_2$ on $\cJ_1^*$ given by $(a,b,\beta)\mapsto (a,b,\|\beta\|)$. The range of this map is the set $\Delta:=\{ (p_1, p_2, p_3) \in [0,1)^3, \sum_{i=1}^3 p_i=1   \}$. Also consider the stochastic kernel $\Lambda$ from $\Delta$ to $\cJ_1^*$ that maps $(p_1,p_2,p_3)$ to the law of $(p_1,p_2,p_3\widebar\beta)$, where $\widebar\beta\sim\PDIP[\frac12,\frac12]$. Given $(p_1,p_2,p_3)\in\Delta$, run a resampling, de-Poissonized type-2 evolution $(T^u,\,u\ge0)$ with initial condition $\Lambda(p_1,p_2,p_3)$. The \emph{induced $3$-mass process} is then $(X_1(u),X_2(u),X_3(u)) := \pi^\bullet_2(T^u)$, $u\ge0$.

\begin{theorem}\label{thm:wright-fisher}
 The induced $3$-mass process is a recurrent Markovian extension of the Wright--Fisher$(-\frac{1}{2}, -\frac{1}{2}, \frac{1}{2})$ diffusion, described in \eqref{eq:negativewf}, in the following sense. Let $U$ be the first time $u$ when either $X_1(u)=0$ or $X_2(u)=0$. Then the process killed at $U$ is the killed Wright--Fisher diffusion. The $3$-mass process is intertwined with the resampling, de-Poissonized type-2 evolution, and it converges to its unique stationary law $\DirDist\left( \frac{1}{2}, \frac{1}{2}, \frac{1}{2} \right)$.
\end{theorem}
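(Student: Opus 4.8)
The plan is to realize the induced $3$-mass process as a Markov projection of the $2$-tree evolution $(T^u)$ along $\pi^\bullet_2$, via the kernel $\Lambda$, and then to read off recurrence and convergence from the resampling structure. Since $\|\widebar\beta\|=1$ for $\widebar\beta\sim\PDIP[\frac12,\frac12]$, the kernel $\Lambda$ is carried by the fibres of $\pi^\bullet_2$, i.e.\ $\Lambda\big(p,(\pi^\bullet_2)^{-1}\{p\}\big)=1$ for every $p\in\Delta$. Granting the intertwining relation $\Lambda\mathcal{Q}_u=\mathcal{P}_u\Lambda$ for all $u\ge0$, where $\mathcal{Q}$ is the semigroup of the $2$-tree evolution (Theorem \ref{thm:stationary}) and $\mathcal{P}$ is a suitable semigroup on $\Delta$, a standard intertwining lemma for Markov functions (Rogers and Pitman) gives at once: the process $(X_1(u),X_2(u),X_3(u))=\pi^\bullet_2(T^u)$ is Markov with semigroup $\mathcal{P}$ whenever $T^0\sim\Lambda(p_1,p_2,p_3)$, which is exactly the prescribed initial condition; the conditional law $\mathcal{L}\big(T^u\,\big|\,\sigma(X_1(s),X_2(s),X_3(s)\colon s\le u)\big)=\Lambda\big(X_1(u),X_2(u),X_3(u)\big)$ holds, which is the asserted intertwining; and, taking $T^0\sim\Lambda(\DirDist[\frac12,\frac12,\frac12])$, which is the unique stationary law of $\mathcal{Q}$ by Theorem \ref{thm:stationary} and is of the form $\Lambda\mu$, stationarity of $\mathcal Q$ forces $X(u)\sim\pi^\bullet_2\big(\Lambda(\DirDist[\frac12,\frac12,\frac12])\big)=\DirDist[\frac12,\frac12,\frac12]$ for all $u$, so $\DirDist[\frac12,\frac12,\frac12]$ is a stationary law for the $3$-mass process.

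The heart of the argument is the intertwining relation itself, which amounts to showing that if a $2$-tree evolution is started with its spinal interval partition a rescaled independent $\PDIP[\frac12,\frac12]$ given the three masses, then this conditional description propagates for all $u\ge0$ and, under it, the mass vector evolves Markovianly. I would deduce this from the pseudo-stationarity of type-1 and type-2 evolutions used in proving Theorem \ref{thm:stationary} (cf.\ the corresponding property of type-1 evolutions in \cite{Paper1}): before the first degeneration, Definition \ref{def:type2:v1} presents the process as an independent $\besq(-1)$ alongside a type-1 evolution; de-Poissonization is compatible with this conditional description, since the normalisation is scale-covariant and the accompanying time change is a functional of the masses alone; and the relabelling of the two top masses at the times $Y_n$ is a deterministic permutation that does not touch the spinal partition. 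At a degeneration time $\sigma$ the state is resampled to an independent copy of $(A_1,A_2,A_3\widebar\beta)$, which is precisely $\Lambda$ applied to a $\DirDist[\frac12,\frac12,\frac12]$-distributed mass vector, so the relation $\mathcal{L}\big(T^\sigma\,|\,X(\sigma),\mathcal{F}_{\sigma-}\big)=\Lambda(X(\sigma),\cdot)$ with $X(\sigma)\sim\DirDist[\frac12,\frac12,\frac12]$ persists through the jump. I expect this propagation step — interlocking de-Poissonization, the alternating-$\besq$ construction of Definition \ref{def:type2:v1}, and the resampling rule — to be the main obstacle.

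Identifying the killed process proceeds as follows. Up to the first time $U$ that $X_1$ or $X_2$ vanishes — equivalently, before $\ff^{(0)}$ dies or the leftmost block $\fm^{(0)}$ first hits $0$ — the type-2 evolution equals the triple $(\ff^{(0)},\fm^{(0)},\|\gamma^{(0)}\|)$ built from an independent $\besq(-1)$ and the two-mass projection of a type-1 evolution, whose total mass is $\besq(0)$, so that the overall total mass is $\besq(-1)$ by Theorem \ref{thm:total_mass}. By the de-Poissonization results for type-1 evolutions in \cite{Paper1}, the projection to (leftmost block mass, remaining mass) of the de-Poissonized type-1 evolution is a Wright--Fisher$(-\frac12,\frac12)$ diffusion; appending the independent $\besq(-1)$ coordinate and de-Poissonizing the resulting triple (a classical fact about de-Poissonization of independent squared Bessel processes, cf.\ \cite{WarrYor98}) gives the Wright--Fisher$(-\frac12,-\frac12,\frac12)$ diffusion, which is automatically absorbed at $U$ because $0$ is an exit boundary for its first two coordinates, in agreement with \eqref{eq:negativewf}. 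Hence the $3$-mass process stopped at $U$ has the law of the killed Wright--Fisher$(-\frac12,-\frac12,\frac12)$ diffusion, and since the $3$-mass process continues past $U$ (the top masses switch and, eventually, the state is resampled) it is a genuine, non-trivial extension.

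Finally, recurrence and convergence. The Poissonized type-2 evolution degenerates at a finite time, and since its total mass is a $\besq(-1)$ which stays bounded away from $0$ up to that time, the de-Poissonizing time change sends degeneration to a finite time; thus the first resampling time $\tau_1$ is a.s.\ finite and, iterating, the resampling times $\tau_1<\tau_2<\cdots$ form a locally finite set increasing to $\infty$. At each $\tau_k$ the full state is refreshed to an independent copy of $(A_1,A_2,A_3\widebar\beta)$, so $X(\tau_k)\sim\DirDist[\frac12,\frac12,\frac12]$ independently of $\tau_k$ and of the past. Applying the strong Markov property at $\tau_1$ together with the $\mathcal{P}$-stationarity of $\DirDist[\frac12,\frac12,\frac12]$ gives, for every $u$, that $\mathcal{L}\big(X(u)\,;\,u\ge\tau_1\big)=\Pr(u\ge\tau_1)\,\DirDist[\frac12,\frac12,\frac12]$, so $\big\|\mathcal{L}(X(u))-\DirDist[\frac12,\frac12,\frac12]\big\|_{\mathrm{TV}}\le 2\Pr(\tau_1>u)\to0$ as $u\to\infty$; the same bound forces uniqueness of the stationary law. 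Recurrence — the $3$-mass process re-enters every neighbourhood of every point of the open simplex infinitely often — follows from the same regeneration structure, and together with the identification of the killed process this shows that the $3$-mass process is a recurrent Markovian extension of the Wright--Fisher$(-\frac12,-\frac12,\frac12)$ diffusion, intertwined with the resampling, de-Poissonized type-2 evolution.
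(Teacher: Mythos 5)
Your proposal follows the same high-level architecture as the paper: realize the $3$-mass process as a Rogers--Pitman projection of the $2$-tree evolution via $\Lambda$, identify the killed process with the Wright--Fisher diffusion by de-Poissonizing a triple of independent $\besq$ processes, and extract recurrence, uniqueness and convergence from the a.s.\ finiteness of the first resampling time. The recurrence/convergence argument is identical to what is done in the proof of Theorem~\ref{thm:stationary}, and your identification of the killed process is, modulo the detour through a two-coordinate de-Poissonization of the type-1 part, the same as Proposition~\ref{prop:3mass:dePoi}: one writes $(Z_1,Z_2,Z_3)=\bigl(\ff_1,\ff_2,\|\skewerbar(\cev\fN+\fN_\beta)\|\bigr)\sim\besq_a(-1)\otimes\besq_b(-1)\otimes\besq_{\|\beta\|}(1)$, notes $Z_++\cdots=\|T^y\|$ up to $\tau$, and applies the \cite{Pal11,Pal13} construction.

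The one substantive gap is exactly the one you flag: the intertwining relation $\Lambda\mathcal{Q}_u=\mathcal{P}_u\Lambda$, or equivalently Rogers--Pitman condition (ii), $\bP_\mu(T^u\in\cdot\mid X(s),s\le u)=\Lambda(X(u),\cdot)$. Your sketch points to the right ingredients (pseudo-stationarity of the spinal $\PDIP$, scale-covariance of the normalization, the clock relabellings leaving the spine untouched), but it argues pointwise-in-$u$ and does not deal with the conditioning on the full $3$-mass path, including the time change $\rho_\fT$, which is a functional of that path. That is precisely what Lemma~\ref{pretotal}\ref{item:pretotal:3} (``strong pseudo-stationarity'') supplies: it asserts the conditional description of the cutoff data $\Psi^Y$ above an arbitrary stopping time $Y$ in the $3$-mass filtration $(\cF^y_{\rm 3\text{-}mass})$, which is the level of generality needed both to verify the Rogers--Pitman criterion for the Poissonized process (Proposition~\ref{prop:3mass:Poi}) and to commute de-Poissonization with the kernel $\Lambda$. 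The paper then gets the de-Poissonized statement essentially for free, since $\rho_\fT$ and the normalization depend only on the common total mass. So the approach is the same; the missing work is the upgrade from pseudo-stationarity at fixed levels (Propositions~\ref{prop:type2:pseudo}, \ref{prop:pseudo_f}) to the stopping-time/filtration version, without which the intertwining does not follow.
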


Notice that the 3-mass process jumps back into the interior of the simplex immediately after either of the first two coordinates vanish. This extension of the generalized Wright--Fisher diffusion is natural from the perspective of the Aldous chain and is the continuum analogue of the construction in \cite{Paper2}.

\subsection{From the Aldous chain to $\left(\frac12,-\frac12\right)$-Chinese restaurants}

In this and the next subsection, we informally discuss a discrete counterpart to the 2-tree evolutions, giving a preliminary overview of the construction of type-1 evolution and its connection to 2-trees.

Consider the following decomposition of a rooted binary tree, in analogy
with the decomposition of the BCRT described below equation
\eqref{eq:diversity}. Select a branch point. We decompose the tree into
two \emph{top subtrees} above the branch point and a
sequence of \emph{spinal subtrees} branching off of the path, called the
\emph{spine}, from the branch point to the root. We represent partial information about the tree via the
masses (leaf counts) in these subtrees: a pair of top masses 
$(m_1,m_2)$, followed by a finite sequence of spinal masses 
$(b_1,\ldots,b_{k-2})$, ordered by decreasing distance from the root. We 
call this representation a \emph{discrete 2-tree}.

\begin{figure}\centering
 \input{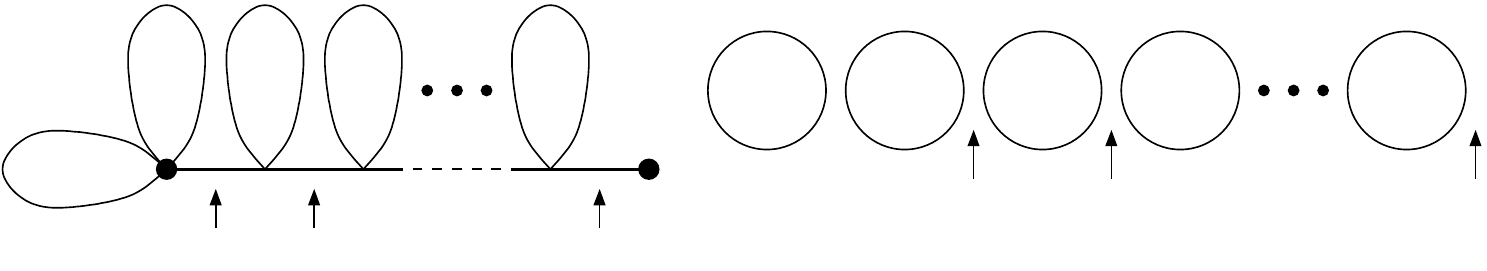_t}
 \caption{Up-move weights for 2-tree projection of Aldous chain correspond to seating rule for \oCRP[\frac12,-\frac12].\label{fig:2tree_CRP}}
\end{figure}

The Aldous down-up moves act on the discrete 2-tree as follows. In the
down-move, we make a size-biased pick among the masses and reduce that mass
by one. If the mass is reduced to zero, it is removed from the list. For
up-moves, we choose a mass $m$ with probability proportional to $2m - 1$,
or choose any edge along the spine with probability proportional to 1; see
Figure \ref{fig:2tree_CRP}. If a mass is chosen, it is incremented by 1; if
a spinal edge is chosen, a `1' is inserted into the sequence of spinal
masses at that point, representing the appearance of a new spinal subtree.
We adopt the rule that if, after a down-move, one of the two top subtree
masses is reduced to zero, then the first mass along the spine replaces it
as a new top mass. 
A generalization of this projected Aldous chain is studied in \cite[Appendix A]{Paper0}.

The up-move weights of Figure \ref{fig:2tree_CRP} are very close to the seating rule for an ordered Chinese restaurant process (\oCRP) \cite{PitmWink09,Paper1}. The \oCRPAT\ begins with a single customer sitting alone at a table. New customers enter one by one. Upon entering, the $n+1^{\text{st}}$ customer chooses to join a table that already has $m$ customers with probability $(m-\alpha)/(n+\theta)$; sits alone at a new table inserted at the far left end of the restaurant with probability $\theta/(n+\theta)$; or sits alone at a new table, inserted to the right of any particular table already present, with weight $\alpha/(n+\theta)$, so that the total probability to sit alone is $(k\alpha+\theta)/(n+\theta)$, where $k$ is the number of tables already present. If we ignore the left-to-right order of these tables, then this is the well-known (unordered) \CRPAT\ due to Dubins and Pitman \cite[\S 3.2]{CSP}. The distribution of an \oCRP\ after $n$ customers have arrived is the discrete analogue to the \PDIP.

 If we take $(\alpha,\theta) = \big(\frac12,0\big)$, then this seating rule differs from the up-move probabilities in Figure \ref{fig:2tree_CRP} only in that, in the \oCRP, a new table can be introduced between the two leftmost tables, whereas in the 2-tree no new mass can be inserted in between the two leftmost masses, representing the two top subtrees, which are not separated by an edge but only by a branch point. We refer to the probabilities in Figure \ref{fig:2tree_CRP} as the seating rule for the \oCRP[\frac12,-\frac12], as, under this rule, if there are a total of $k$ masses (2 top masses and $k-2$ spinal masses) then the probability for insertion of a new mass `1' is $(k-1)/(2n-1) = \big(k\frac12 - \frac12\big)/\big(n-\frac12\big)$.

This is outside of the usual parameter range considered for the \CRP. Indeed, if we start a \CRP[\frac12,-\frac12] with a single customer, as described above, then all subsequent customers will be forced to join the first at a single table, as the probability to sit alone will be zero. However, if we start with two customers sitting separately, then the \oCRP[\frac12,-\frac12] seating rule produces a non-trivial configuration distributed as the 2-tree projection of a uniform random rooted binary tree with labeled leaves. We remark that Poisson--Dirichlet distributions with ``forbidden'' parameters have been considered before, e.g.\ in the context of $\sigma$-finite dislocation measures of fragmentation processes and related discrete splitting probabilities \cite{Mie-03,Bas-06,MPW,HPW}.

In the setting of the Chinese restaurant analogy Aldous's down-up moves become \emph{re-seating}: a uniform random customer leaves their seat; their table is removed if empty; and they choose a new seat according to the seating rule, as if entering for the first time.

\subsection{Discrete scaffolding, spindles, and skewer}\label{sec:intro:scaff_spind}

We simplify matters by \emph{Poissonizing} the Aldous chain. In the Poissonized Aldous chain, each leaf is removed in a down-move after an independent exponential time with rate 1. That leaf is not immediately re-inserted into the tree. Rather, up-moves occur at each edge after an exponential time with rate $\frac12$ (since there are roughly twice as many edges as leaves). This allows the total number of leaves in the tree to fluctuate, but it results in a process in which disjoint subtrees evolve independently. 
Scaling limits of some statistics of this Poissonized chain have been rigorously connected to the Aldous chain via de-Poissonization in \cite{Pal13}.

When we project to the discrete 2-tree as before, the sequence of subtree masses evolves as a \emph{Poissonized down-up} \oCRP[\frac12,-\frac12]. In this process, each table population $m$ decreases by 1 with rate $m$, or increases by 1 with rate $m-\frac12$. To the right of any table except for the leftmost (i.e.\ not between the two leftmost), a new table of population 1 appears with rate $\frac12$.

Due to Poissonization, the table populations evolve independently of each other. Each one is a birth-and-death chain, having deaths with rate $m$ and births with rate $m-\frac12$ when the population is $m$, until absorption at population 0. Let $\mu$ denote the distribution of the lifetime of this birth-and-death chain, started from population 1.

This Poissonized down-up \oCRP\ admits a surprising representation, which was introduced in \cite{Paper1} to describe continuum analogues of the Poissonized down-up \oCRP[\frac12,0] and \oCRP[\frac12,\frac12]. We discuss here the $\big(\frac12,0\big)$ case and then the new extension to $\big(\frac12,-\frac12\big)$.

We think of the tables that appear and vanish in this evolving \oCRP\ as members of a family: when a new table is born, the table immediately to its left at that time is its parent. The number of tables is then evolving over time as a Crump--Mode--Jagers (CMJ) branching process \cite{Jagers69}. The genealogy among these tables, and their lifetimes, can be represented in a splitting tree \cite{GeigKers97}. For our purposes, this can be formalized as a rooted plane tree with edge lengths.

Figure \ref{fig:splitting} depicts the construction of a splitting tree representation of the Poissonized down-up \oCRP[\frac12,0], started with a single customer.
\begin{enumerate}[label=(\arabic*), ref=(\arabic*)]
 \item Draw a line of random length, sampled from $\mu$ (the lifetime distribution of a table started with population 1); this represents the first table.
 \item Now, mark that line with Poisson points along its length, with rate $\frac12$.
 \item At each marked point, attach a new ``child'' line, branching off to the right from its parent, with length independently sampled from $\mu$. Each such line represents a table born, at some point, immediately to the right of the first table.
 \item Repeat steps (2), (3), and (4) on each of the newly drawn lines, if any.
\end{enumerate}
It is not immediately obvious, but this procedure almost surely terminates for this choice of $\mu$.

\begin{figure}
 \centering
 \scalebox{.8}{\input{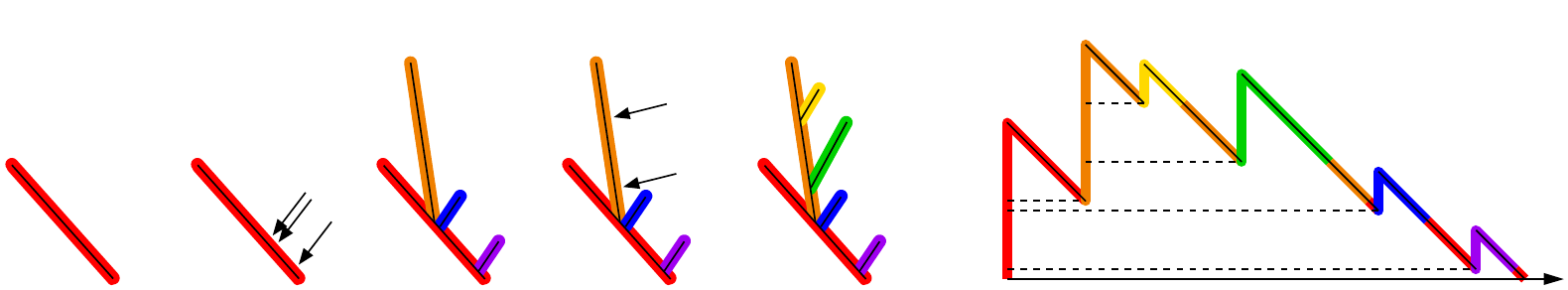_t}}
 \caption{Iterative construction of splitting tree representation of tables in Poissonized down-up \oCRP[\frac12,0], started with one customer, and JCCP.\label{fig:splitting}}
\end{figure}

This tree can be represented by a \emph{jumping chronological contour process} (JCCP) \cite{Geiger95,GeigKers97}, shown in Figure \ref{fig:splitting}. Imagine a flea traveling around the splitting tree. It begins to the left of the root, and immediate jumps up to the top of the leftmost branch, representing the first table. It then slides down the right hand side of that branch at unit speed until its path is blocked by a branch sticking out to the right. When that happens, it jumps to the top of the new branch, and carries on in the same manner, until it finally reaches the root. The JCCP records the distance from the flea to the root, as a function of time.

The tables that arise in the evolving \oCRP\ are in bijective correspondence with the jumps of the JCCP, with the levels of the bottom and top of each jump equaling the birth and death times of the corresponding table. The genealogy among tables can be recovered by looking to the bottom of each jump (a child), and drawing a horizontal line to the left from that point, seeing where it crosses another jump (its parent). 

JCCP representations of splitting trees like ours are L\'evy processes of positive jumps and negative drift \cite{Lambert10}. Our particular JCCP has drift $-1$ and L\'evy measure $\frac12\mu$. \emph{Levels} in the JCCP correspond to \emph{times} in the evolving \oCRP. On the other hand, times in the JCCP have no simple meaning in the \oCRP, and serve mainly to record the left-to-right order of tables.

What is missing from this JCCP picture is the evolving table populations. Recall that each table population evolves as a birth-and-death chain with lifetime distribution $\mu$. This is also the law of jump heights in our JCCP. We incorporate both the genealogy among tables and the evolving table populations into a single formal object by marking each jump with such a birth-and-death chain, with lifetime equal to the height of the jump.

We depict this object by representing each birth-and-death chain as a laterally symmetric ``spindle'' shape, beginning at the bottom of the jump and evolving towards its top, with width at each level describing the value of the chain at the corresponding time. In the context of this construction, we refer to the JCCP as \emph{scaffolding} and the markings as \emph{spindles}. See Figure \ref{fig:scaff_skewer}.

\begin{figure}[t]\centering
  \includegraphics[scale=.7]{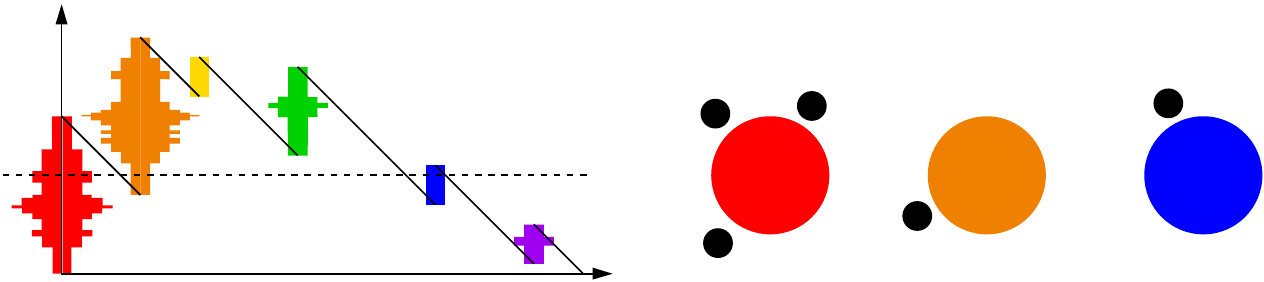}
  \caption{Scaffolding with spindles and \oCRP\ arrangement described by the skewer.}
  \label{fig:scaff_skewer}
\end{figure}

Then, to recover the Poissonized down-up \oCRP[\frac12,0] from the scaffolding and spindles representation, we apply a \emph{skewer map}: for any $y\ge0$, we draw a horizontal line through the picture at that level, and look at the cross-sections of spindles pierced by the line. The widths of these cross-sections represent populations of tables, and their left-to-right order corresponds to that in the \oCRP. If we slide this horizontal line up continuously, then the cross-sections gradually change in width, with some dying out as the horizontal line passes the top of a jump, and new ones appearing as it reaches the bottom of a jump.


In scaling limits, the scaffolding converges to a \Stable[\frac32] L\'evy process, and the law of the birth-and-death chain spindles converges to a $\sigma$-finite excursion measure associated with squared Bessel processes with parameter $-1$, abbreviated as \BESQ[-1], studied in \cite{PitmYor82}. This motivated the construction, in \cite{Paper1}, of \emph{type-1 evolutions} by applying the skewer map to \Stable[\frac32] scaffolding marked by \BESQ[-1] excursion spindles. A simulation of Poissonized down-up \oCRP\ approximating the type-1 evolution is shown in Figure \ref{fig:scaff_spind_skewer}.

\begin{figure}
 \centering
 \begin{minipage}{.49\textwidth}
  \centering
  (a) \raisebox{-.5\height}{\includegraphics[width = 1.2in,height=.72in]{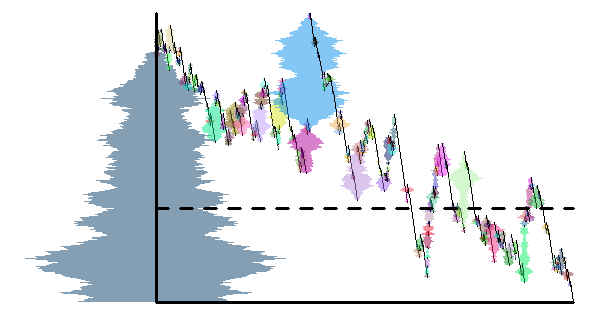}} \ \ 
  (b) \raisebox{-.5\height}{\includegraphics[width = 1.2in,height=.72in]{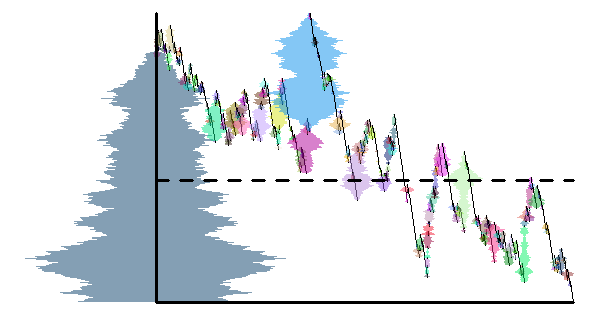}}\\[-.1in]
  \hphantom{(a)} \hspace{.2in}\includegraphics[width = 0.55in,height=.05in]{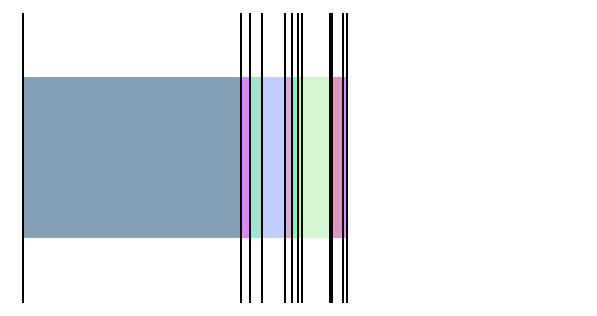}\hspace{.5in} \quad 
  \hphantom{(b)} \hspace{.1in}\includegraphics[width = 0.55in,height=.05in]{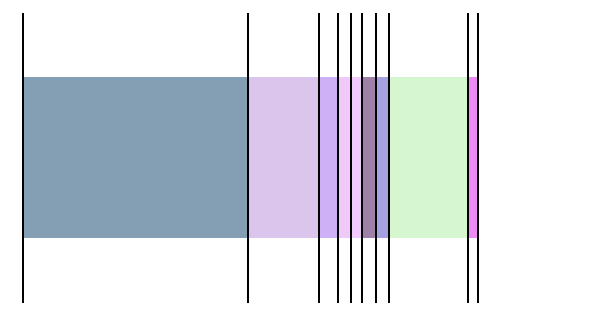}\hspace{.5in}\\[.1in]
  (c) \raisebox{-.5\height}{\includegraphics[width = 1.2in,height=.72in]{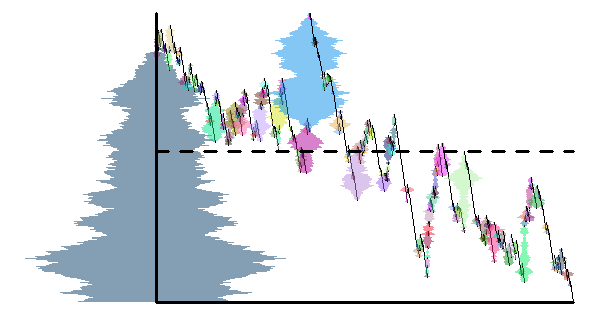}} \ \  
  (d) \raisebox{-.5\height}{\includegraphics[width = 1.2in,height=.72in]{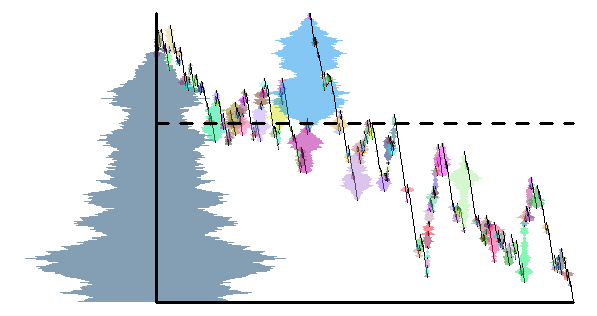}}\\[-.1in]
  \hphantom{(c)} \hspace{.2in}\includegraphics[width = 0.55in,height=.05in]{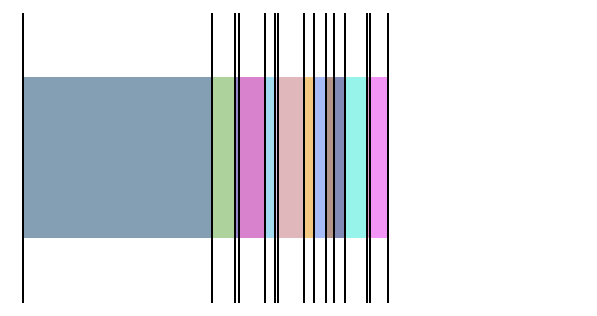}\hspace{.5in} \quad 
  \hphantom{(d)} \hspace{.1in}\includegraphics[width = 0.55in,height=.05in]{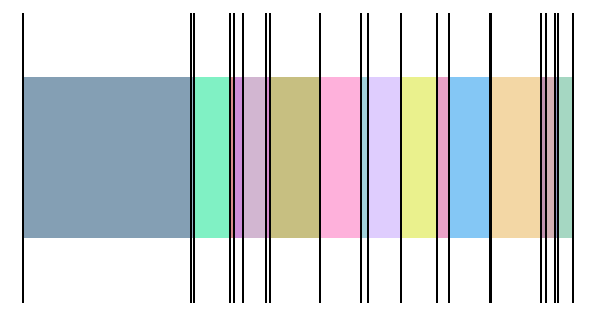}\hspace{.5in}\\
 \end{minipage}
 \begin{minipage}{.49\textwidth}
  \centering
  (e)\raisebox{-.5\height}{\includegraphics[width = 2.7in,height=1.62in]{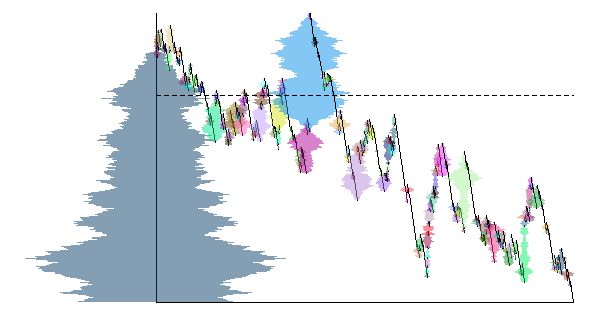}}\\[-.1in]
  \hphantom{(e)} \hspace{.72in}\includegraphics[width = 1.2in,height=.1in]{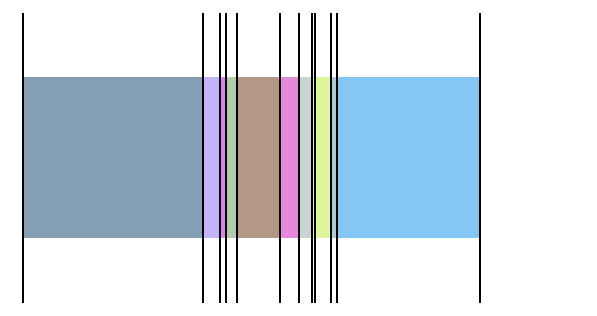}\hspace{.7in}
 \end{minipage}
 \caption{Type-1 interval partition evolution, constructed from scaffolding (slanted black lines), spindles (laterally symmetric colored shapes), and the skewer map. Simulation from \cite{WXMLCRP}.\label{fig:scaff_spind_skewer}}
\end{figure}

\subsection{Poissonized discrete 2-tree evolution}\label{sec:discrete_clocking}

The difference between the Poissonized down-up \oCRP[\frac12,0] and the Poissonized down-up \oCRP[\frac12,-\frac12], corresponding to the 2-tree evolution, is that in the latter process no new tables can be born in between the two leftmost tables. 
This corresponds to no new subtrees appearing between the two top subtrees in the 2-tree evolution. 
At different times, different tables may become the leftmost. Such a table may have had children prior to becoming leftmost, but subsequently, it ceases to do so.

To construct such a process via scaffolding and spindles, we begin with a scaffolding-and-spindles construction of the Poissonized down-up \oCRP[\frac12,0] with two initial tables. We then find all instances in which a child was born to a parent spindle at a level at which the parent was the overall leftmost spindle, and we delete all such children and their offspring; see Figure \ref{fig:d_clk_1}. We refer to these leftmost spindles as \emph{clock spindles} and the transformation of deleting their descendants as \emph{deletion clocking} or \emph{emigration}.  We think of these spindles, which correspond to the $\ff^{(n)}$s in Definition \ref{def:type2:v1}, as timers. When they reach zero, at $Y_{n+1}$, we pass to the $n+1^{\text{st}}$ stage in the construction. 
We formalize deletion clocking in the continuum analogue in Definition \ref{deftype2}.

\begin{figure}
 \centering
 \includegraphics[scale=.75]{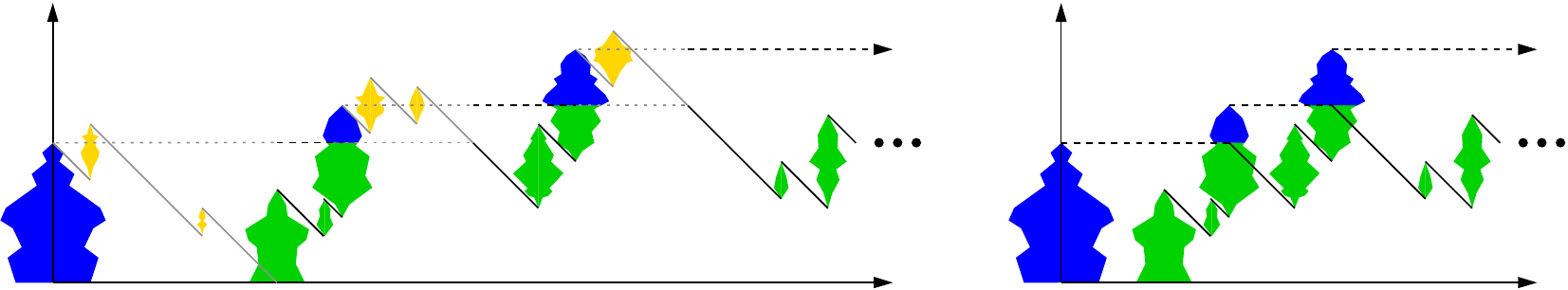}
 \caption{Deletion-clocking, to transform scaffolding-and-spindles representation of Poissonized down-up \oCRP[\frac12,0] to representation of \oCRP[\frac12,-\frac12].\label{fig:d_clk_1}}
\end{figure}

\section{Type-1 evolutions: preliminaries and representation as pairs}\label{prel}


\noindent In this section, we recall from \cite{Paper1} the ``scaffolding, spindles, and skewer'' constructions of type-1 and type-0 interval partition evolutions. We also recall the main results of \cite{Paper1} and record some further consequences. Here, for brevity, we will construct type-0 and type-1 evolutions on a probability space; in \cite{Paper1}, all of this work is carried out in terms of probability distributions and filtrations on a canonical space of counting measures.


\subsection{Preliminaries on type-1 and type-0 evolutions}

Recall the definition of the set $\cI$ of interval partitions with diversity from the introduction. This space can be metrized by the Hausdorff metric between complements of interval partitions, but we prefer a stronger metric $d_{\cI}$ that accounts for diversity. We formally define this metric later, in Definition \ref{def:IP:metric}. We define two probability distributions on this space, as in \cite{Paper1}: \PDIP[\frac12,0], which is the reversal of the interval partition formed by excursion intervals of Brownian motion on time $[0,1]$, including the incomplete final excursion; and \PDIP[\frac12,\frac12], which is the partition formed by excursion intervals of Brownian bridge. The names are in recognition of the facts that the ranked excursion lengths are Poisson--Dirichlet distributed with respective parameters, ${\tt PD}(\frac{1}{2},\frac{1}{2})$ and ${\tt PD}(\frac{1}{2},0)$, see e.g.\ \cite{CSP}. 

We denote by $\cE$ the space of c\`adl\`ag excursions away from zero. In the context of the following construction, we refer to continuous excursions $f\in\cE$ as \emph{spindles} and to excursions with a c\`adl\`ag jump at 0 and/or at their time of absorption $\zeta(f)$ as \emph{cut-off spindles}.

Recall $\besq(-1)$ has an exit boundary at zero \cite{GoinYor03}. Despite this, methods of \cite{PitmYor82} allow the construction of a $\sigma$-finite excursion measure $\mBxc$ associated with $\besq(-1)$. We choose the normalization constant so that
\begin{equation}
 \mBxc(\zeta\in dy)=\frac{3}{2\pi\sqrt{2}}y^{-5/2}dy.
\end{equation}

Let $\fN$ be a Poisson random measure on $[0,\infty)\times\cE$ with intensity measure ${\rm Leb}\times\nu_{\rm BES}$. For excursions $f$ arising in this point process, we take their lifetimes $\zeta(f)$ to be jump heights for a L\'evy process constructed from jumps and compensation:
\begin{equation}
 \fX(t) := \xi_{\fN}(t) := \lim_{z\downto 0}\left(\int_{[0,t]\times\{g\in\Exc\colon\zeta(g) > z\}}\life(f)d\fN(s,f) - \frac{3tz^{-1/2}}{\pi\sqrt{2}}\right),\quad t\ge0.\label{eq:JCCP_def}
\end{equation}
We abbreviate $\xi(\fN) := (\xi_{\fN}(t),\,t\ge 0)$. This is a spectrally positive \Stable[\frac32]\ L\'evy process, called \emph{scaffolding}. The \emph{aggregate mass process} and \emph{skewer} of $\fN$ at level $y\in\bR$ are
\begin{equation}
 \begin{array}{c}
 \displaystyle M^y_{\fN}(t) := \int_{(-\infty,t]\times\Exc} \max\Big\{\ f\big((y-\xi_{\fN}(u-))-\big),\ f\big(y-\xi_{\fN}(u-)\big)\ \Big\}d\fN(u,f),\quad t\ge 0,\\[14pt]
 \displaystyle \skewer(y,\fN) := \left\{\left(M^y_{\fN}(t-),M^y_{\fN}(t)\right)\colon t\in\bR,\ M^y_{\fN}(t-) < M^y_{\fN}(t)\right\}.
 \end{array}
 \label{eq:skewer_def}
\end{equation}
We abbreviate $\skewerbar(\fN) := \skewerbar(\fN,\xi(\fN)) := \big(\skewer(y,\fN),\ y\geq 0\big)$. A simulation of this construction is depicted in Figure \ref{fig:scaff_spind_skewer}. Note that \eqref{eq:skewer_def} is set up to allow counting measures $N$ on $\BR\times\Exc$, and not just on $[0,\infty)\times\Exc$, in anticipation of our construction of type-0 point measures. 

We denote by $\cN$ a space of point measures on $[0,\infty)\times\cE$, supported on bounded time intervals $[0,T]\times\cE$, in which $\skewerbar(N)$ is well-defined and $d_\cI$-continuous for each $N\in\cN$; this was denoted by $\cN_{\rm fin}^{\rm sp,*}$ in \cite[Definitions 3.16, 4.16]{Paper1}. We also define $\cev\cN$ analogously, but for measures supported on $(-\infty,0)\times\cE$, not necessarily on a bounded time interval. Around \eqref{eq:JCCP_def_type0}, we will introduce one such random element of $\cev\cN$, a random measure supported on unbounded time, for which the skewer at each level remains bounded and evolves continuously.

Now, consider $\ff\sim\besq_{x}(-1)$ a \BESQ[-1] started from $x >0$. A \emph{clade of initial mass $x$} is then a random counting measure $\fn\in\cN$, distributed as
\begin{equation}
 \clade(\ff,\fN) := \delta(0,\ff)+\Restrict{\fN}{(0,T_{-\zeta(\ff)}(\fN)]\times\cE},\quad \text{where} \quad T_{-y}(\fN) := \inf\{t\ge 0\colon\xi_{\fN}(t)=-y\}.
\end{equation}

In the following clade construction and elsewhere, the notion of ``concatenation,'' denoted by $\star$, is in the sense of excursion theory: concatenating a sequence of excursions means running one after the other. This easily generalizes to totally ordered collections with summable excursion lengths. Concatenation of excursions induces a notion of concatenation of point measures of jumps and hence a notion for point measures of (jumps marked by) spindles. See \cite{Paper1} for details.

For any ``initial'' interval partition $\beta$, we denote by $\fP_\beta^1$ the law of a random \emph{type-1 point measure} $\fN_{\beta}\in\cN$ obtained by concatenating independent clades with initial masses equal to the lengths of the intervals in $\beta$. 

Now, consider the point measure $\cev\fN$ on $(-\infty,0)\!\times\!\cE$ formed by concatenating a sequence of independent copies of $\restrict{\fN}{(0,T_{-1}(\fN)]\times\cE}$, with each copy being concatenated to the left of the previous copies. 
We slightly modify \eqref{eq:JCCP_def} in this setting:
\begin{equation}
 \cev\fX(t) := \xi_{\cev\fN}(t) := \lim_{z\downto 0}\left(-\int_{(t,0)\times\{g\in\Exc\colon\zeta(g) > z\}}\life(f)d\cev\fN(s,f) + \frac{3|t|z^{-1/2}}{\pi\sqrt{2}}\right),\quad t\le0.\label{eq:JCCP_def_type0}
\end{equation}
As before, $\xi\big(\cev\fN\big) := (\xi_{\cev\fN}(t),\,t\le 0)$. Informally, this is a spectrally positive \Stable[\frac32]\ first-passage descent from $\infty$ down to 0, arranged to arrive at 0 at time zero. This construction was discussed in \cite[Remark 5.15]{Paper1}.

If $\fN_{\beta}$ is as above and independent of $\cev\fN$, then $\cev\fN + \fN_{\beta}$ is a \emph{type-0 point measure} with initial state $\beta$. In the sequel, we find it convenient to represent this as a \emph{type-0 data pair} $(\cev\fN,\fN_{\beta})\in {\cev\cN}\times\cN$. 
We denote the law of this pair by $\fP_\beta^0$. We take the convention that $\xi(\cev\fN + \fN_{\beta})$ equals $\xi(\cev\fN)$ on $(-\infty,0)$ and equals $\xi(\fN_{\beta})$ on $[0,\infty)$.

We showed in \cite[Theorem 1.4]{Paper1} that the associated \emph{type-1} and \emph{type-0 evolutions}, respectively  
$\skewerbar(\fN_\beta)$ and $\skewerbar(\cev\fN+\fN_\beta)$, are path-continuous strong Markov processes on $(\cI,d_\cI)$. 

We define the \emph{shifted restrictions} of a point measure, denoted by $\shiftrestrict{N}{[a,b]\times\Exc}$ and $\restrictshift{N}{[a,b]\times\Exc}$ to be point measures obtained by first restricting support to the indicated region, and then shifting the resulting point measure to be supported on $[0,b-a]\times\Exc$ or $[a-b,0]\times\Exc$, respectively. We denote by $\cev{T}_y\big(\cev{\fN}\big)=\inf\{t\le 0\colon\xi_{\cev{\fN}}(t)=y\}$, $y\ge 0$, the pre-0 downward first passage times of $\xi(\cev{\fN})$. 
Just as we used $\fN$ on $(0,T_{-\zeta(\ff)}(\fN)]$ to define $\clade(\ff,\fN)$, we can use $\cev{\fN}$ on the time interval $[\cev{T}_{\zeta(\ff)}(\cev{\fN}),0)$ to extend a spindle $\ff$ to a clade $\clade(\ff,\cev{\fN}):=\delta(0,\ff)+\ShiftRestrict{\cev{\fN}}{[T_{\zeta(\ff)}(\cev{\fN}),0)\times\cE}$.

\begin{lemma}\label{type1fromtype0}
 Let $\Psi := (\ff,\cev{\fN},\fN_\gamma)\sim\fP_{x,\gamma}^1:=\besq_x(-1)\otimes\fP_\gamma^0$ for some $x\in[0,\infty)$ and $\gamma\in\cI$. Then the measure $\fN_* = \clade(\ff,\cev{\fN})\star\fN_\beta\sim\fP_{(0,x)\star\gamma}^1$ is a type-1 point measure. 
\end{lemma}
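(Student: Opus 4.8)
The plan is to unwind the definitions of the type-0 and type-1 point measures and identify the claimed equality in distribution by matching the construction of $\fN_*$ against the recipe for $\fP^1_{(0,x)\star\gamma}$. Recall that a type-1 point measure with initial partition $\beta$ is a concatenation of independent clades, one clade of initial mass $\ell$ for each interval of length $\ell$ in $\beta$, and that $\clade(\ff,\fN)$ is built by placing a spindle $\ff$ at time $0$ and appending $\fN$ restricted to $(0,T_{-\zeta(\ff)}(\fN)]$. The partition $(0,x)\star\gamma$ has a leftmost block of mass $x$ followed by the blocks of $\gamma$, so $\fP^1_{(0,x)\star\gamma}$ is, by definition, the law of $\mathbf{n}_x \star \fN_\gamma$, where $\mathbf{n}_x$ is a clade of initial mass $x$ built from an independent $\besq_x(-1)$ spindle and an independent copy of the driving PRM, and $\fN_\gamma\sim\fP^0_\gamma$ is independent of $\mathbf{n}_x$.

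\medskip

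First I would observe that $\clade(\ff,\cev\fN)$, the clade built by extending the spindle $\ff$ leftward using the pre-$0$ scaffolding $\cev\fN$, has exactly the same law as a clade $\mathbf{n}_x$ built from $\ff$ and a fresh forward PRM $\fN$: both are obtained by running the $\besq_x(-1)$ excursion $\ff$ and then pasting on the portion of a stationary \Stable[\frac32]-driven spindle-PRM that corresponds to the first passage of the scaffolding across a height equal to $\zeta(\ff)$. The key input here is the construction of $\cev\fN$ as a concatenation of independent copies of $\restrict{\fN}{(0,T_{-1}(\fN)]\times\cE}$; by the strong Markov property of the \Stable[\frac32] scaffolding and the splitting of the spindle-PRM at first-passage times, the segment of $\cev\fN$ on $[\cev T_{\zeta(\ff)}(\cev\fN),0)$, read left-to-right, is a copy of $\restrict{\fN}{(0,T_{-\zeta(\ff)}(\fN)]\times\cE}$ and is independent of everything to its left in $\cev\fN$. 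Hence $\clade(\ff,\cev\fN)\ed\mathbf{n}_x$, and moreover this clade is independent of the ``leftover'' part of $\cev\fN$, namely $\ShiftRestrict{\cev\fN}{(-\infty,\cev T_{\zeta(\ff)}(\cev\fN))\times\cE}$, which by the same first-passage decomposition is itself distributed as a fresh $\cev\fN$.

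\medskip

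Next I would assemble the pieces. We have $\fN_* = \clade(\ff,\cev\fN)\star\fN_\gamma$, but I claim this is better understood by inserting the leftover scaffolding: writing $\cev\fN = \ShiftRestrict{\cev\fN}{(-\infty,\cev T_{\zeta(\ff)})\times\cE}\star \ShiftRestrict{\cev\fN}{[\cev T_{\zeta(\ff)},0)\times\cE}$ and using that $\fN_\gamma\sim\fP^0_\gamma$ is itself a pair $(\cev{\fN}_\gamma,\fN_{\gamma'})$ with an independent pre-$0$ part — wait, more precisely, in $\fP^1_{x,\gamma} = \besq_x(-1)\otimes\fP^0_\gamma$ we have $\fN_\gamma$ standing for a type-0 point measure $\cev{\fN}\!+\!\fN_\gamma$ with its own independent pre-$0$ scaffolding, so the notation in the lemma identifies $\cev\fN$ with that type-0 pre-$0$ component. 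The cleanest route is therefore: by the previous paragraph, $(\clade(\ff,\cev\fN),\ \ShiftRestrict{\cev\fN}{(-\infty,\cev T_{\zeta(\ff)}(\cev\fN))\times\cE},\ \fN_\gamma)$ has the same joint law as $(\mathbf{n}_x,\ \cev{\fN}',\ \fN_\gamma)$ with all three independent, $\mathbf{n}_x$ a clade of initial mass $x$, and $\cev{\fN}'$ a fresh type-0 pre-$0$ scaffolding. But $\cev{\fN}' \star' (\text{nothing})$ — the leftover part contributes no spindles past the clade, so it does not affect $\fN_* = \clade(\ff,\cev\fN)\star\fN_\gamma \ed \mathbf{n}_x\star\fN_\gamma \sim \fP^1_{(0,x)\star\gamma}$, using that $\mathbf{n}_x$ is independent of $\fN_\gamma$ and that $\fP^1_{(0,x)\star\gamma}$ is by definition the concatenation law. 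I would present this last identification as an application of the concatenation/restriction consistency properties of type-1 point measures recorded in \cite{Paper1}.

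\medskip

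\textbf{Main obstacle.} The substantive point — and the step I expect to require the most care — is the first-passage decomposition of $\cev\fN$ in the second paragraph: showing that the left-to-right read of $\cev\fN$ between its pre-$0$ passage time $\cev T_{\zeta(\ff)}(\cev\fN)$ and $0$ is genuinely a copy of the forward object $\restrict{\fN}{(0,T_{-\zeta(\ff)}(\fN)]\times\cE}$, independent of the rest, and that this holds jointly with the randomness of $\zeta(\ff)$ (which is a function of $\ff$, independent of $\cev\fN$). This is essentially the time-reversal/first-passage structure of spectrally positive \Stable[\frac32] processes together with the excursion-theoretic marking, and it is exactly the content of \cite[Remark 5.15]{Paper1} on how $\cev\fN$ is glued from forward blocks; I would cite that discussion and the strong Markov property of the scaffolding at first-passage times, rather than reprove it. Everything else is bookkeeping with the definitions of $\clade$, $\star$, and $\fP^1$.
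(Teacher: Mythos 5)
Your argument is correct and takes the only natural route; the paper itself states this lemma without a written proof, relying on the paragraph immediately preceding it (the definition of $\clade(\ff,\cev\fN)$) together with the construction of $\cev\fN$ in \cite[Remark 5.15]{Paper1}, and your second paragraph correctly isolates the one substantive point, namely that $\clade(\ff,\cev\fN)\ed\clade(\ff,\fN)$ for a fresh PRM $\fN$, by the first-passage decomposition of $\cev\fN$. Two small blemishes worth fixing. First, in your opening paragraph you write ``$\fN_\gamma\sim\fP^0_\gamma$,'' but in the lemma's notation $\fN_\gamma$ is the second component of the type-0 data pair, i.e.\ a type-1 point measure $\fN_\gamma\sim\fP^1_\gamma$, while $\fP^0_\gamma$ is the law of the whole pair $(\cev\fN,\fN_\gamma)$; you notice and correct this mid-sentence in the third paragraph. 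Second, the third paragraph's detour through the ``leftover'' $\ShiftRestrict{\cev\fN}{(-\infty,\cev T_{\zeta(\ff)}(\cev\fN))\times\cE}$ is a red herring: that piece never enters $\fN_*$, so there is no need to track it. The streamlined argument is simply: (i) $\ff$, $\cev\fN$, $\fN_\gamma$ are jointly independent (the first by hypothesis, the latter two from $\fP^0_\gamma$), so $\clade(\ff,\cev\fN)$, a function of $(\ff,\cev\fN)$ alone, is independent of $\fN_\gamma$; (ii) for any fixed $y\ge0$, $\ShiftRestrict{\cev\fN}{[\cev T_y(\cev\fN),0)\times\cE}\ed\Restrict{\fN}{(0,T_{-y}(\fN)]\times\cE}$ by the i.i.d.\ block structure of $\cev\fN$ and the strong Markov property at first passage, and integrating over the independent $y=\zeta(\ff)$ gives $\clade(\ff,\cev\fN)\ed\clade(\ff,\fN)$, a clade of initial mass $x$; (iii) the concatenation of an independent clade of mass $x$ with $\fN_\gamma\sim\fP^1_\gamma$ is by definition $\fP^1_{(0,x)\star\gamma}$.
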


In light of this lemma, we refer to $\Psi$ as a \emph{type-1 data triple}. This construction may seem superfluous: why include $\fN_{\gamma}$ as a member of a triple of objects, just to set up another point measure $\fN_*$ of the same type? However, this sets up a parallel with type-0 data pairs leading to the definition of type-2 data quadruples. This parallel will be useful in forthcoming work on the Aldous diffusion \cite{Paper4,Paper5} involving all three processes.

To exhibit the Markovian nature of the skewer processes and underlying clade constructions, 
for $y\ge 0$ we decompose $\fN_*$ into a point process $\fN_*^y$ of spindles or cut-off spindles above level $y$ and a point process $\fN_*^{\le y}$ of spindles or cut-off spindles below level $y$, as in Figure \ref{fig:biclade_decomp}.

\begin{figure}
 \centering
 \includegraphics{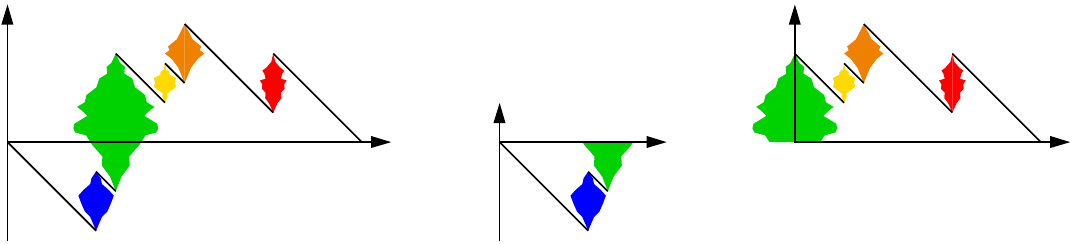}
 \caption{Decomposition of (discrete approximation of) a typical excursion of \Stable[\frac32]\ scaffolding with spindles into an initial component below the level and a subsequent component above. The latter is a \emph{clade}.\label{fig:biclade_decomp}}
\end{figure}

\begin{figure}
 \centering
 \input{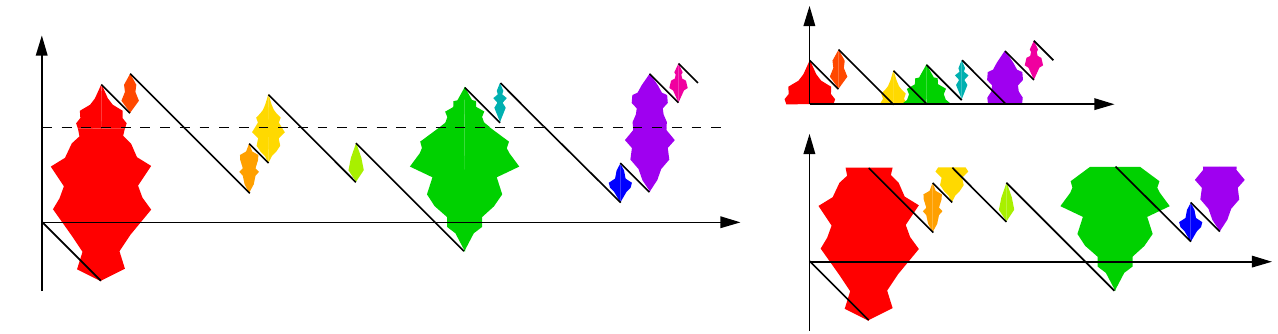_t}
 \caption{Illustration \cite[Figure 3.3]{Paper1} of spindles cut at level $y$. Left: $N$. Right: $N^y$ and $N^{\le y}$.\label{fig:cutoff}}
\end{figure}

More formally, for a type-0 data pair $\Psi = (\cev\fN,\fN_{\beta})$, define $\cev{\fN}^y_{\Psi}:=\restrictshift{\cev{\fN}}{(-\infty,T_y(\cev{\fN}))\times\cE}$. This captures only spindles above level $y$. Beyond time $T_y(\cev\fN)$, each spindle $f$ that crosses level $y$ corresponds to the unique jump across $y$ in an excursion of $\xi(\cev\fN+\fN_{\beta})$ about level $y$. The cut-off spindle $\hat{f}^y$ together with the spindles following this jump in the excursion forms a clade; see Figure \ref{fig:biclade_decomp}. We denote the concatenation of these subsequent clades by $\fN^y_{\Psi}$. We also concatenate the point measures of the remaining spindles and cut-off spindles $\check f$ -- the initial components, below level $y$, of each excursion of scaffolding, as in Figure \ref{fig:biclade_decomp} -- into a point measure $\fN^{\le y}_{\Psi}$, and denote by $(\cF^y,y\ge 0)$ the filtration generated by $(\fN^{\le y}_{\Psi},y\ge 0)$.

By \cite[Proposition 5.17 and its proof]{Paper1}, type-0 data pairs have a Markov-like property.

\begin{lemma}\label{lem:type0Markov}
 Let $\Psi = (\cev{\fN},\fN_\beta)\sim\fP_\beta^0$ for some $\beta\in\cI$. For all $y\ge 0$, conditionally given $\cF^y$, 
  $$(\cev{\fN}^y_{\Psi},\fN^y_{\Psi})\text{, defined above, has conditional distribution }\fP_{\alpha^y}^0$$
  where $(\alpha^z,\,z\ge 0)=\skewerbar(\cev{\fN}+\fN_\beta)$ is the associated type-0 evolution.
\end{lemma}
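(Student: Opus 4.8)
The plan is to deduce the statement from the analysis of the single type-0 point measure $N:=\cev{\fN}+\fN_\beta$ carried out in the proof of \cite[Proposition~5.17]{Paper1}, and then to translate it into the language of data pairs. This translation amounts to little more than the observation that $(\cev{M},\fN_\gamma)\mapsto\cev{M}+\fN_\gamma$ identifies type-0 data pairs with the corresponding type-0 point measures, the inverse reading off the restrictions to $(-\infty,0)\times\cE$ and to $[0,\infty)\times\cE$, together with the convention, from the definition of $\xi(\cev{\fN}+\fN_\beta)$, that the scaffolding on each of these two pieces is computed separately.

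First I would fix $y\ge0$ and identify the decomposition of $N$ at level $y$ with the data-pair decomposition spelled out just before the lemma. Because the scaffolding of a point measure on $(-\infty,0)\times\cE$ is, by \eqref{eq:JCCP_def_type0}, always anchored to equal $0$ at time $0$, restricting $\cev{\fN}$ to times before the first-passage time $\cev{T}_y(\cev{\fN})$ and applying the time-shift built into the operator $\restrictshift{N}{I}$ automatically re-anchors the scaffolding, so that $\xi_{\cev{\fN}^y_\Psi}(t)=\xi_{\cev{\fN}}\big(t+\cev{T}_y(\cev{\fN})\big)-y$; since $\xi(\cev{\fN})$ has no negative jumps it stays above $y$ before $\cev{T}_y(\cev{\fN})$, so $\cev{\fN}^y_\Psi$ is exactly the part of the descent from level $\infty$ down to level $y$, with its scaffolding now itself a first-passage descent from $\infty$ to $0$ (and, in particular, carrying no spindle pierced at its level $0$). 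The remaining spindles of $N$ are sorted as described before the lemma: pieces lying below level $y$, including the below-$y$ parts of spindles straddling $y$, go into $\fN^{\le y}_\Psi$, while for each spindle straddling $y$ at a time past $\cev{T}_y(\cev{\fN})$ -- whether in the tail of the descent or in $[0,\infty)$ -- its above-$y$ part, together with the spindles following it in the same excursion of $\xi(N)$ above $y$, forms a clade; concatenating these clades in order of scaffolding time gives $\fN^y_\Psi$. Thus $\cev{\fN}^y_\Psi+\fN^y_\Psi$ is the ``above level $y$'' point measure of $N$ and $\fN^{\le y}_\Psi$ its ``below level $y$'' part in the sense of \cite{Paper1}; the block sequence of $\alpha^y=\skewer(y,N)=\skewer(y,\cev{\fN})\concat\skewer(y,\fN_\beta)$ equals the sequence of clade initial masses in $\fN^y_\Psi$; and $\alpha^y$ is measurable with respect to $\cF^y=\sigma(\fN^{\le z}_\Psi:z\le y)$.

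Next I would invoke the proof of \cite[Proposition~5.17]{Paper1}, which shows that, conditionally given $\cF^y$: (i) $\cev{\fN}^y_\Psi$ is again a first-passage descent from $\infty$ to $0$ with no residual dependence on $\cF^y$; (ii) $\fN^y_\Psi$ has conditional law $\fP^1_{\alpha^y}$, i.e.\ it is a concatenation of independent clades with initial masses given by the blocks of $\alpha^y$; and (iii) $\cev{\fN}^y_\Psi$ and $\fN^y_\Psi$ are conditionally independent. Since $\fP^0_\gamma$ is by definition the law of an independent pair consisting of a first-passage descent from $\infty$ to $0$ and a type-1 point measure of law $\fP^1_\gamma$, items (i)--(iii) together are precisely the assertion that $(\cev{\fN}^y_\Psi,\fN^y_\Psi)$ has conditional law $\fP^0_{\alpha^y}$ given $\cF^y$, which is the lemma.

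The main obstacle is not probabilistic: the renewal structure of the first-passage descent, the excursion-theoretic strong Markov property of the \Stable[\frac32] scaffolding, and the resulting conditional independence are all already established in \cite[Proposition~5.17]{Paper1}. Rather, I expect the bulk of the work to be the bookkeeping in the first step -- checking that the data-pair decomposition $(\cev{\fN}^y_\Psi,\fN^y_\Psi,\fN^{\le y}_\Psi)$, with its time- and (implicit) level-shift conventions, matches the point-measure decomposition of $N$ used in \cite{Paper1} spindle-for-spindle. In particular one must verify that the below-$y$ part of each spindle straddling $y$ lands in $\fN^{\le y}_\Psi$ while its above-$y$ part heads the corresponding clade of $\fN^y_\Psi$, and that the left-to-right order of the clades constituting $\fN^y_\Psi$ -- first those coming from excursions of the descent above $y$, then those coming from excursions of $\xi(\fN_\beta)$ above $y$, each internally ordered by scaffolding time -- agrees with the block order of $\skewer(y,\cev{\fN})\concat\skewer(y,\fN_\beta)$.
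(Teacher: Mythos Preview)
Your proposal is correct and follows exactly the paper's approach: the paper does not give a separate proof but simply states the lemma as a consequence of \cite[Proposition~5.17 and its proof]{Paper1}. Your write-up is a more detailed unpacking of that citation, making explicit the bookkeeping translation from the single type-0 point measure decomposition in \cite{Paper1} to the data-pair formulation used here.
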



Recall from the type-1 setting of Lemma \ref{type1fromtype0} that $\fN_*$ is a function of a type-1 data triple $\Psi$. For $y\ge 0$, define
\begin{equation}
\begin{split}
 m^y(\fN_*) &:= M_{\fN_*}^y(\inf\{t\ge 0\colon M_{\fN_*}^y(t)>0\})\\
 \text{and}\quad
\ff^y_{\Psi} &:= \big(m^{y+z}(\fN_*),\, 0\le z\le\inf\{w\ge 0\colon m^{y+w-}(\fN_*)=0\} \big),
\end{split}
\end{equation}
respectively the mass of the leftmost block at level $y$ and the leftmost spindle, evolving up from that level. These both vanish for $y\ge\sup\xi(\fN_*)$ and for a Lebesgue null set of levels that are in the range of the running supremum process of $\xi(\fN_*)$. For any other level, $\ff^y_{\Psi}$ is associated with a jump of $\xi(\fN_*)$ across level $y$, and 
along with the following spindles until $\xi(\fN_*)$ first hits level $y$, it forms a clade $\delta(0,\ff^y_{\Psi})\star\cev{\fn}^y$ for some point measure $\cev{\fn}^y$. 
Beyond $\cev{\fn}^y$, we collect clades above level $y$ as for type 0 and concatenate these to form a point measure $\fN^y_{\Psi}$. Let 
$\cev{\fN}^y_{\Psi} = \restrictshift{\cev{\fN}}{(-\infty,T_{\zeta(\ff^y_{\Psi})}(\cev{\fN}))\times\cE}\star(\cev{\fn}^y)^{\to}$.

We can alternatively represent this decomposition about level $y$ via $\fN_*^y := \clade\big(\ff_\Psi^y,\cev\fN_\Psi^y\big)\concat\fN_\Psi^y$. Point processes $\fN_*^y$ and $\fN_*^{\le y}$ can also be directly defined as above, from spindles in the excursions of $\xi(\fN_*)$ about $y$. For type 1, augment the type-0 filtration so that $\ff$ is also adapted.
More specifically, we define $(\cF^y,y\ge 0)$ as the natural filtration of $(\ff(y),\fN^{\le y}_{\Psi})$. Then $\fN_*^{\le y}$ is adapted. 
By \cite[Proposition 5.6 (and Lemma 3.41)]{Paper1}, we have the following.

\begin{lemma}\label{lem:type1Markov}
 In the setting of Lemma \ref{type1fromtype0}, for all $y\ge 0$, conditionally given $\cF^y$,
 $$(\ff^y_{\Psi},\cev{\fN}^y_{\Psi},\fN^y_{\Psi})\qquad\mbox{has conditional distribution }\besq_{m^y}(-1)\otimes\fP_{\alpha^y}^0,$$ 
 where $m^y:=m^y(\fN_*)$ and $(\alpha^z,\,z\ge 0)$ is such that $\big((0,m^z)\star\alpha^z,\,z\ge0\big) = \skewerbar(\fN_*)$ is the associated type-1 evolution. This includes the degenerate case $m^y=0$ and $\alpha^y=\emptyset$.  
\end{lemma}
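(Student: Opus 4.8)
The plan is to deduce the statement from the corresponding level-$y$ Markov property of type-1 point measures established in \cite{Paper1}, by passing to the point measure $\fN_* = \clade(\ff,\cev{\fN})\star\fN_\gamma$ of Lemma \ref{type1fromtype0}.

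First, by Lemma \ref{type1fromtype0}, $\fN_*$ is a type-1 point measure of law $\fP_{(0,x)\star\gamma}^1$ and, by construction, $\skewerbar(\fN_*) = ((0,m^z)\star\alpha^z,\,z\ge0)$. The initial task is to identify the filtration $\cF^y$ with the natural below-level-$y$ $\sigma$-algebra of $\fN_*$: since $\ff$ sits at the base of the first clade of $\fN_*$, with its scaffolding jump issuing from level $0$, the leftmost cut-off spindle of $\fN_*^{\le y}$ is exactly $(\ff(z),\,0\le z\le y)$ (and all of $\ff$ once $y\ge\zeta(\ff)$), while the remaining cut-off spindles and below-level-$y$ sub-excursions are precisely $\fN_\Psi^{\le y}$; hence $\sigma(\fN_*^{\le y}) = \cF^y$. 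This identification, and everything below, is valid off the Lebesgue-null set of levels at which $\fN_*$ has no leftmost block, where the claim holds by the stated degenerate convention $m^y = \|\alpha^y\| = 0$.

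Next, apply the Markov property of type-1 point measures, \cite[Proposition 5.6]{Paper1}: conditionally given $\sigma(\fN_*^{\le y})$, the above-level-$y$ point measure $\fN_*^y$ is a type-1 point measure started from $\skewer(y,\fN_*) = (0,m^y)\star\alpha^y$; in particular its leftmost spindle $\ff_\Psi^y$, which equals the cut-off $(\ff(y+z),\,z\ge0)$ while $y<\zeta(\ff)$, is a $\besq_{m^y}(-1)$. It remains to re-express this conditional type-1 point measure as the data triple $(\ff_\Psi^y,\cev{\fN}_\Psi^y,\fN_\Psi^y)$ according to the recipe stated just before the lemma, and to match laws. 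This is the content of \cite[Lemma 3.41]{Paper1}: a type-1 point measure of law $\fP_{(0,a)\star\delta}^1$, together with the pre-$0$ first-passage descent from $\infty$ extending the one built into its first clade, decomposes into a triple distributed as $\besq_a(-1)\otimes\fP_\delta^0$. Applying this with $(a,\delta)=(m^y,\alpha^y)$ gives that $(\ff_\Psi^y,\cev{\fN}_\Psi^y,\fN_\Psi^y)$ has conditional law $\besq_{m^y}(-1)\otimes\fP_{\alpha^y}^0$, which is the assertion; the pair statement $(\cev{\fN}_\Psi^y,\fN_\Psi^y)\sim\fP_{\alpha^y}^0$ follows by the same argument combined with Lemma \ref{lem:type0Markov}.

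The crux of the proof is this last step, because the descent $\cev{\fN}_\Psi^y$ is not a functional of $\fN_*$ alone: as specified in the paragraph preceding the lemma, it is assembled from the part of $\cev{\fN}$ at $\xi$-levels above $\zeta(\ff_\Psi^y)$ together with the spindles $\cev{\fn}^y$ trailing $\ff_\Psi^y$ in the first level-$y$ excursion of $\xi(\fN_*)$, and the former includes the portion of $\cev{\fN}$ above $\xi$-level $\zeta(\ff)$ that is discarded in forming $\fN_*$. One therefore has to exploit the independence of $\ff$, $\cev{\fN}$ and $\fN_\gamma$ in the type-1 data triple, together with the renewal (concatenation) structure of the first-passage descent $\cev{\fN}$, to check that conditionally on $\cF^y$ this discarded portion is a pre-$0$ first-passage descent from $\infty$ down to $\xi$-level $\zeta(\ff_\Psi^y)$, conditionally independent of $\ff_\Psi^y$ and $\fN_\Psi^y$, so that the reassembled $\cev{\fN}_\Psi^y$ has the correct type-0-descent marginal within the pair $(\cev{\fN}_\Psi^y,\fN_\Psi^y)$; reconciling this reassembly with the level-$y$ decomposition inherited from $\fN_*$ is the part of the argument that must be carried out carefully rather than merely quoted, and is precisely what \cite[Lemma 3.41]{Paper1} is designed to handle.
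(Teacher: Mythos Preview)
Your proposal is correct and follows the same route the paper indicates: the paper simply states that the lemma holds ``By \cite[Proposition 5.6 (and Lemma 3.41)]{Paper1},'' and you have spelled out how those two results combine, including the key subtlety that $\cev{\fN}_\Psi^y$ incorporates the portion of $\cev\fN$ above $\xi$-level $\zeta(\ff)$ that was discarded in forming $\fN_*$, which is exactly what \cite[Lemma 3.41]{Paper1} is invoked to handle. One minor caution: your claimed identity $\sigma(\fN_*^{\le y})=\cF^y$ is not literally true once $y>\zeta(\ff)$, since $\cF^y$ then also sees the below-$y$ part of the redundant portion of $\cev\fN$; but this does not damage the argument, as the independence you note in your final paragraph is precisely what upgrades the conditioning from $\sigma(\fN_*^{\le y})$ to $\cF^y$.
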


In the sequel, we will abuse terminology and also refer to $((m^y,\alpha^y),y\ge 0)$ as a type-1 evolution. Note that for type 1, the part $(\alpha^y,y\ge 0)$ is only a
type-0 evolution up to the random level $y=\zeta(\ff)$. Above this level, the point measure $\restrict{\cev{\fN}}{(-\infty,T_{\zeta(\ff)}(\cev{\fN}))\times\cE}$ is redundant for the type-1 evolution, while it provides further blocks for the type-0 evolution. We recall some more facts about type-0 and type-1 evolutions from \cite{Paper1}. The following is an immediate consequence of the definitions.

\begin{proposition}\label{type01plustype1}
 Let $\fN_\beta$ and $\fN^\prime_{\beta^\prime}$ be two independent type-1 point measures. Then $\fN_\beta\star\fN^\prime_{\beta^\prime}$ is also
 a type-1 point measure. In particular, $\skewerbar(\fN_\beta\star\fN^\prime_{\beta^\prime})$ is a type-1 evolution starting from
 $\beta\star\beta^\prime$. 
 Similarly, $(\cev{\fN},\fN_\beta\star\fN^\prime_{\beta^\prime})$ is a type-0 data pair.
\end{proposition}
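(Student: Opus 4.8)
The plan is to derive Proposition \ref{type01plustype1} directly from the definition of a type-1 point measure and from the definition of the skewer and scaffolding as functionals of a point measure, using only that concatenation $\star$ is compatible with all the relevant operations. First I would recall that, by definition in Section \ref{prel}, a type-1 point measure with initial interval partition $\eta$ is precisely a concatenation of independent clades $\clade(\ff_i,\fN^{(i)})$, one for each block of $\eta$, with the $i$-th clade having initial mass equal to the length of the $i$-th block, and the clades mutually independent. So if $\fN_\beta\sim\fP^1_\beta$ and $\fN'_{\beta'}\sim\fP^1_{\beta'}$ are independent, then $\fN_\beta$ is a concatenation of independent clades indexed by the blocks of $\beta$ and $\fN'_{\beta'}$ is a concatenation of independent clades indexed by the blocks of $\beta'$; concatenating the two measures yields a single concatenation of independent clades indexed by the blocks of $\beta\star\beta'$, with initial masses matching the block lengths of $\beta\star\beta'$ (here one uses that the blocks of $\beta\star\beta'$ are exactly those of $\beta$ followed by those of $\beta'$, translated appropriately, and that block lengths are preserved by the translation). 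Hence $\fN_\beta\star\fN'_{\beta'}\sim\fP^1_{\beta\star\beta'}$ by the very definition of $\fP^1$.

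Next I would address the skewer claim. The key observation is that $\skewerbar$ commutes with concatenation of point measures: concatenating point measures $N\star N'$ produces scaffolding $\xi(N\star N')$ that agrees with $\xi(N)$ on the time interval supporting $N$ and, beyond that, is a shifted copy of $\xi(N')$; because each clade is an excursion-type block of scaffolding returning to its starting level, the excursions of $\xi(N\star N')$ about any level $y$ are exactly the excursions of $\xi(N)$ about $y$ followed by those of $\xi(N')$ about $y$, in left-to-right order. Taking cross-sections at level $y$ via the aggregate mass process and the definition \eqref{eq:skewer_def} of $\skewer$, one gets $\skewer(y,N\star N')=\skewer(y,N)\star\skewer(y,N')$ for every $y\ge 0$. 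This identity — which is exactly the statement flagged as ``an immediate consequence of the definitions'' and which is treated in \cite{Paper1} — gives $\skewerbar(\fN_\beta\star\fN'_{\beta'}) = \big(\skewer(y,\fN_\beta)\star\skewer(y,\fN'_{\beta'}),\ y\ge 0\big)$, and since $\skewerbar(\fN_\beta)$ is a type-1 evolution from $\beta$ and $\skewerbar(\fN'_{\beta'})$ is one from $\beta'$, the concatenated process is a type-1 evolution from $\beta\star\beta'$.

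Finally, for the type-0 assertion I would recall that a type-0 data pair with initial interval partition $\eta$ is by definition a pair $(\cev\fN,\fN_\eta)$ with $\cev\fN$ the fixed ``descent from $\infty$'' measure of \eqref{eq:JCCP_def_type0} and $\fN_\eta$ an independent type-1 point measure from $\eta$. Since we have just shown $\fN_\beta\star\fN'_{\beta'}$ is a type-1 point measure from $\beta\star\beta'$, and since the given $\cev\fN$ is independent of both $\fN_\beta$ and $\fN'_{\beta'}$ (hence of their concatenation), the pair $(\cev\fN,\fN_\beta\star\fN'_{\beta'})$ matches the definition of a type-0 data pair with initial state $\beta\star\beta'$. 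I expect the only real point requiring care — and the main obstacle, if any — is the bookkeeping behind the identity $\skewer(y,N\star N')=\skewer(y,N)\star\skewer(y,N')$: one must check that concatenation of point measures translates correctly into concatenation at the level of scaffolding excursions and that no spindle crossing level $y$ is split or duplicated at the seam between the two measures. This is a routine but slightly delicate consequence of the excursion-theoretic setup of \cite{Paper1}, and once it is cited the rest of the argument is immediate.
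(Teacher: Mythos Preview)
Your proposal is correct and is precisely the unpacking of what the paper calls ``an immediate consequence of the definitions'' (the paper gives no further proof). One minor simplification: the ``In particular'' clause about $\skewerbar(\fN_\beta\star\fN'_{\beta'})$ being a type-1 evolution from $\beta\star\beta'$ follows directly from the first sentence, since a type-1 evolution is \emph{defined} as the skewer of a type-1 point measure; your careful discussion of $\skewer(y,N\star N')=\skewer(y,N)\star\skewer(y,N')$ is true and useful elsewhere but not strictly needed here.
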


For type-0 and type-1 evolutions, the associated \emph{total mass evolutions}, $(\|\skewer(y,\cev\fN+\fN_{\beta})\|,\,y\ge 0)$ and $(\|\skewer(y,\fN_*)\|,\,y\ge 0)$ respectively, are as follows. 

\begin{proposition}[Theorem 1.5 of \cite{Paper1}]\label{type1totalmass}
 For any $\beta\in\cI$, the total mass evolution under $\fP_\beta^0$ is $\besq_{\|\beta\|}(1)$, while the total mass evolution under $\fP_\beta^1$ or $\fP_{x,\beta}^1$ is $\besq_{\|\beta\|}(0)$ or $\besq_{x+\|\beta\|}(0)$. In particular, the type-1 evolution a.s.\ is absorbed at $\emptyset$ in finite time.
\end{proposition}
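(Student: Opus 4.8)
The plan is to reduce the statement, using the additivity property of squared Bessel processes \cite{CSP} together with the compatibility of the skewer map with concatenation of point measures \cite{Paper1}, to two building blocks. The first is a single clade $\clade(\ff,\fN)$ with leading spindle $\ff\sim\besq_x(-1)$, whose total-mass evolution I claim is $\besq_x(0)$. The second is the backbone measure $\cev\fN$, whose skewer has total mass $\besq_0(1)$ over levels $y\ge 0$. Granting these, a type-$1$ point measure under $\fP^1_\beta$ is by definition a concatenation of independent clades with leading masses $(x_j)_j$ equal to the block lengths of $\beta$, so $\|\skewer(y,\fN_\beta)\|=\sum_j\|\skewer(y,\clade(\ff_j,\fN^{(j)}))\|$ is a sum of independent $\besq_{x_j}(0)$ processes, hence $\besq_{\|\beta\|}(0)$ by additivity; under $\fP^1_{x,\beta}$ the additional leading clade $\clade(\ff,\cev\fN)$ of Lemma \ref{type1fromtype0} is distributed as a standard clade of leading mass $x$ (the pre-$0$ structure below $\ff$ has the same law as the post-$0$ structure), contributing a further independent $\besq_x(0)$ and hence giving $\besq_{x+\|\beta\|}(0)$; and under $\fP^0_\beta=(\cev\fN,\fN_\beta)$ the two parts are independent, so the total mass is the independent sum of a $\besq_0(1)$ and a $\besq_{\|\beta\|}(0)$, which is $\besq_{\|\beta\|}(1)$ by additivity.

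For the single-clade claim I would verify the $\besq(0)$ martingale problem directly in the level filtration $(\cF^y)$. Write $Z^y:=\|\skewer(y,\clade(\ff,\fN))\|$, so $Z^0=x$, and $Z$ is path-continuous since the skewer is $d_\cI$-continuous \cite[Theorem 1.4]{Paper1} and the mass functional is $d_\cI$-continuous. By the clade form of Lemma \ref{lem:type1Markov} (take $\gamma=\emptyset$), conditionally on $\cF^y$ the part of the clade above level $y$ is again a type-$1$ point measure whose skewer at its own level $0$ has total mass $Z^y$; this reduces checking that $Z$ is an $(\cF^y)$-martingale to the identity $\EV[\|\skewer(s,\fN_\gamma)\|]=\|\gamma\|$ for every $s\ge 0$ and $\gamma\in\cI$, and checking that $(Z^y)^2-4\int_0^y Z^s\,ds$ is a martingale to the corresponding second-moment identity (cross terms between distinct clades being handled by independence and the first-moment identity). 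These moment identities I would establish from the Poisson structure of $\fN$: the total mass at a level splits as the leading spindle's cross-section, a $\besq(-1)$ path which contributes drift $-1$ and variance $4v$ at value $v$, plus the mass carried by the descendant spindles, which are created as the level crosses the bottoms of jumps of the $\Stable[\frac32]$ scaffolding. The key point is that this creation of new, infinitesimally small spindles happens at exactly the rate needed to cancel the aggregate drift $-1\times(\text{number of present spindles})$, leaving $Z$ driftless, while the variances accumulate to $4Z^y\,dy$. This is the continuum shadow of the elementary fact that in the Poissonized down-up $\oCRP[\frac12,0]$ the total population is a critical linear birth-and-death chain, with birth rate and death rate both equal to the current population.

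The claim for $\cev\fN$ follows along the same lines: $\mBxc$ is, with the chosen normalization, the It\^o excursion measure of $\besq(1)$ away from $0$ \cite{PitmYor82}, and $\cev\fN$ arranges such excursions along a $\Stable[\frac32]$ first-passage descent from $+\infty$ to $0$ whose spatial local times play the role of the local time at $0$ of the reconstructed $\besq(1)$; equivalently $\skewerbar(\cev\fN)$ is a type-$0$ evolution started from $\emptyset$, and the same infinitesimal bookkeeping, now with an additional unit-rate immigration of mass from the backbone, gives total mass $\besq_0(1)$ (this is \cite[Remark 5.15]{Paper1}). Finally the absorption assertion is immediate: $Z^y=0$ forces $\skewer(z,\fN_\beta)=\emptyset$ for all $z\ge y$, and a $\besq(0)$ started from any value is a.s.\ absorbed at $0$ at a finite level.

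The main obstacle is the drift-cancellation step: making rigorous that the spatial local time of the $\Stable[\frac32]$ scaffolding at a level governs the creation of new spindles at precisely the critical rate, the rate that cancels the $\besq(-1)$ drift of the existing spindles and produces quadratic variation $4Z\,dy$. This is exactly where the stability index $\frac32$ and the normalization of $\mBxc$ enter, and I would handle it either by a direct ``exponential-formula'' computation of the conditional Laplace transform $\EV[e^{-\lambda Z^y}\mid\cF^0]$, matching the $\besq_x(0)$ transform $\exp(-\lambda x/(1+2\lambda y))$, or by importing a Ray--Knight-type description of the spatial local times of spectrally positive stable processes; a softer alternative is to establish convergence of the Poissonized down-up $\oCRP[\frac12,0]$ to the type-$1$ evolution and pass the convergence of its critical birth-and-death total population to the limit.
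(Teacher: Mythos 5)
First, a framing note: this paper does not prove Proposition~\ref{type1totalmass} at all --- it is stated as a direct import of Theorem~1.5 of~\cite{Paper1}, so there is no internal proof here to compare against. Your outline is nonetheless a reasonable reconstruction of the argument in~\cite{Paper1}: the reduction via Shiga--Watanabe additivity of \BESQ\ to the two building blocks (a single clade, and the backbone $\cev\fN$), together with the observation that $\fN_\beta$ is a concatenation of independent clades and that $\clade(\ff,\cev\fN)\stackrel{d}{=}\clade(\ff,\fN)$, is exactly the skeleton of the cited proof. Your martingale-problem reduction of the single-clade claim (using the Markov-like property of Lemma~\ref{lem:type1Markov} to reduce to the first- and second-moment identities for $\|\skewer(s,\fN_\gamma)\|$) is sound, and the characterization of $\besq(0)$ by the martingale problem with $\langle Z\rangle_y = 4\int_0^y Z^s\,ds$ is correct. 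You have, correctly, isolated the genuine content --- the ``drift cancellation'' / moment computation --- and your Laplace-transform route is essentially how~\cite{Paper1} closes this gap (via explicit transition kernels for clade masses), so as a proposal this is a legitimate plan rather than a complete proof.

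There is, however, one concrete error in the $\cev\fN$ step: $\mBxc$ is \emph{not} the It\^o excursion measure of $\besq(1)$ away from~$0$. A quick scaling check rules this out: $\mBxc(\zeta\in dy)\propto y^{-5/2}\,dy$ (chosen to match the L\'evy measure of the $\Stable[\frac32]$ scaffolding), whereas the It\^o measure of $\besq(1)$ reflected at $0$ has lifetime density $\propto y^{-3/2}$ (its inverse local time at $0$ is a $\Stable[\frac12]$ subordinator). More to the point, the total mass of $\skewerbar(\cev\fN)$ at a level is an aggregate over \emph{all} spindle cross-sections straddling that level, indexed by the horizontal ``time'' of the scaffolding, not a single excursion of a process in the level variable; so there is no simple It\^o-reconstruction picture available here, and the $\besq_0(1)$ claim for $\cev\fN$ requires the same kind of Laplace-transform or Ray--Knight computation as the single-clade $\besq(0)$ claim (with the difference showing up in the extra immigration from the backbone local time). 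The absorption conclusion at the end is fine.
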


The transition kernels of type-1 and type-0 evolutions are given in \cite[Propositions 4.30, 5.4 and 5.16]{Paper1}. Most relevant for us is the following.

\begin{proposition}[Pseudo-stationarity for type-0 and type-1 evolutions; Theorem 6.1 of \cite{Paper1}]\label{prop:type1:pseudo}
 Consider $\widebar{\beta}\sim{\tt PDIP}(\frac{1}{2},\frac{1}{2})$ and an independent random variable $M\in[0,\infty)$. Let $B\sim\besq_M(1)$ and $\alpha$ a type-0 evolution starting from $\beta=M\widebar{\beta}$. Then $\alpha^y$ has the same distribution $B(y)\widebar{\beta}$.
 
 The same result holds for type-1 evolutions $\alpha$, if we take $\widebar{\beta}\sim{\tt PDIP}(\frac{1}{2},0)$ and $B\sim\besq_M(0)$.
\end{proposition}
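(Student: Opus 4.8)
The plan is to reduce to deterministic initial mass, dispose of the total-mass coordinate using Proposition~\ref{type1totalmass}, and then identify the conditional law of the normalised interval partition through the scaffolding-and-spindles representation. First I would condition on $M$: since $\widebar\beta$ is independent of $M$, since $B\sim\besq_M(1)$ (resp.\ $\besq_M(0)$) is conditionally a $\besq_m(1)$ (resp.\ $\besq_m(0)$) given $\{M=m\}$, and since by construction the type-0 (resp.\ type-1) point measure with initial state $M\widebar\beta$, conditioned on $\{M=m\}$, is distributed as the one with initial state $m\widebar\beta$, the statement follows by mixing once it is proved for deterministic $M=m$. So fix $m\ge0$, take $\widebar\beta\sim\PDIP[\frac12,\frac12]$ (resp.\ $\PDIP[\frac12,0]$), and set $\beta=m\widebar\beta$. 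By Proposition~\ref{type1totalmass} the total-mass process $(\|\alpha^y\|,\,y\ge0)$ is already a $\besq_m(1)$ (resp.\ $\besq_m(0)$), which is the law of $(\|B(y)\widebar\beta\|,\,y\ge0)=(B(y),\,y\ge0)$; so it suffices to show that, for each fixed $y\ge0$, conditionally on $\|\alpha^y\|$ the rescaled partition $\alpha^y/\|\alpha^y\|$ is $\PDIP[\frac12,\frac12]$ (resp.\ $\PDIP[\frac12,0]$) and independent of $\|\alpha^y\|$.

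For the core step I would work with the explicit point measures --- $\cev\fN+\fN_\beta$ together with its level-$y$ decomposition from Lemma~\ref{lem:type0Markov} in the type-0 case, and $\fN_*=\clade(\ff,\cev\fN)\star\fN_\gamma$ together with Lemmas~\ref{type1fromtype0} and~\ref{lem:type1Markov} in the type-1 case. The branching property (Propositions~\ref{type01plustype1} and~\ref{type1fromtype0}) decomposes $\alpha^y$ blockwise into independent clade-skewers; since these individual pieces are not themselves $\PDIP$-distributed, the $\PDIP$ shape of $\alpha^y$ must emerge from the interaction of the pieces with the $\PDIP$ law of the initial block masses. The cleanest way I see to capture this is to invoke a Poissonian/excursion-theoretic description of $m\widebar\beta$ --- plausibly the one around \cite[Remark 5.15]{Paper1} built from the first-passage descent of a spectrally positive \Stable[\frac32] process from $\infty$ to $0$ --- in which $m\widebar\beta$ arises as the level-$0$ skewer of a single scaffolding-and-spindles configuration, so that $\alpha^y$ is the corresponding level-$y$ skewer. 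Since the scaffolding is a L\'evy process with stationary, independent increments and the spindle marks are i.i.d.\ $\mBxc$-excursions, moving the observation level from $0$ to $y$ merely replaces one first-passage event by another of the same type while leaving the marks intact; combined with the elementary Markov disintegration of a $\mBxc$-excursion above a positive level (again run by $\besq(-1)$), this should give that the cross-section configuration at level $y$ has the same regenerative ``stick-breaking'' law as at level $0$, i.e.\ is $\PDIP$-distributed given its total mass --- after checking that the exceptional levels carrying no leftmost block form a Lebesgue-null set that does no harm.

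An alternative, more computational route is a Chapman--Kolmogorov verification: using the transition kernels of \cite[Propositions 4.30, 5.4 and 5.16]{Paper1}, one checks that the kernel maps the law of $B(y)\widebar\beta$ to that of $B(y+z)\widebar\beta$, which with the explicit formulas in hand reduces to a known identity for $\besq$ transition densities and the regenerative structure of the $\PDIP$ family. Only one of the two cases then needs the full argument: the decomposition of $\PDIP[\frac12,0]$ as a $\BetaDist[\frac12,\frac12]$-distributed leftmost block followed by a rescaled $\PDIP[\frac12,\frac12]$, together with Lemma~\ref{type1fromtype0}, lets one derive the type-1 statement from the type-0 one up to a computation for the leftmost-block coordinate. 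The step I expect to be hardest is exactly this ``commutation'' --- showing that running the dynamics for time $y$ does not destroy the mass-mixed $\PDIP$ shape of the initial partition. Neither implementation is routine: the Poissonian coupling demands matching an entrance law for the level-$y$ skewer against the Poissonian description of $\PDIP$ and carefully tracking the leftmost block, while the kernel computation demands pushing an infinite-dimensional integral identity through intricate explicit formulas. A cleaner but less self-contained substitute would be to transfer the corresponding \emph{discrete} pseudo-stationarity: the Poissonised down-up \oCRP[\frac12,0] started from the seating arrangement of an \oCRP[\frac12,0] after $n$ customers remains, at every later time, in that family of arrangements with a random number of customers --- an easy consequence of the coherence of the two-parameter seating rule under down-up re-seating --- and then pass to the scaling limit established in \cite{Paper1}.
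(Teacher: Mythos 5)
The paper does not supply a proof of this proposition: it is imported verbatim as \cite[Theorem 6.1]{Paper1}, so there is no in-paper argument to compare against directly. That said, the strategy \cite{Paper1} uses for it is stated explicitly in this paper's proof of Proposition~\ref{prop:type2:pseudo}: ``we first prove this for $M(0)\sim\GammaDist[\cdot,\gamma]$\ldots and then generalize via Laplace inversion.'' Your central idea --- realising the pseudo-stationary initial partition as the level-$0$ skewer of a Poisson measure of \BESQ[-1]\ excursions marking \Stable[\frac32]\ scaffolding, and exploiting the L\'evy regenerative structure to slide the skewer level from $0$ to $y$ --- is indeed the right mechanism, and your use of Proposition~\ref{type1totalmass} to dispose of the total-mass coordinate is correct.

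There is, however, a genuine gap in the order of reductions. You begin by conditioning on $M$ to reduce to a deterministic initial mass $m$, and then intend to apply the Poissonian representation to establish the conditional $\PDIP$ shape. But the Poissonian construction (Proposition~\ref{prop:type1:pseudo_constr} here) produces a $\PDIP$ scaled by a \emph{random} $\GammaDist$ mass, not by a deterministic $m$, and the L\'evy--memorylessness argument (Lemma~\ref{lem:pseudostat:memoryless}) is naturally formulated at that random-mass level. Conditioning on $\{M=m\}$ after the fact does not recover the fixed-$m$ statement, because $\|\alpha^y\|$ is a nondegenerate function of both $M$ and the dynamics, so disintegrating on $M$ does not factor through your computation. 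The missing ingredient is Laplace inversion in the Gamma rate $\gamma$: prove the mass--shape factorisation for the entire Gamma family (this is what the Poisson picture actually delivers), deduce the identity for Lebesgue-a.e.\ fixed $m$ by uniqueness of Laplace transforms, and then upgrade to all $m$ using continuity of the semigroup in the initial state --- exactly the scheme carried out in the proofs of Propositions~\ref{prop:pseudo_f} and~\ref{prop:type2:pseudo}. As written, your step~(1) and your step~(3) do not connect. A secondary slip: the construction you want is the one-sided Poisson random measure $\fN$ on $[0,\infty)\times\cE$ stopped at the aggregate-mass threshold (Proposition~\ref{prop:type1:pseudo_constr}), not the first-passage descent $\cev\fN$ of \cite[Remark 5.15]{Paper1}, whose scaffolding stays nonnegative and therefore contributes nothing to the level-$0$ skewer; $\cev\fN$ supplies immigration for levels $y>0$, but the initial partition lives in $\fN_\beta$.
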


We refer to the respective distributions of $M\widebar{\beta}$ as type-0 and type-1 \emph{pseudo-stationary distributions}. We can also integrate this result over $y$ to extend this to independent random times, and in the type-1 case, we can rephrase this as a result conditionally given that the process survives to level $y$, since this conditioning only involves the total mass; see \cite[Theorem 6.9]{Paper1}. Specifically, we have the following.

\begin{proposition}[Proposition 6.2 of \cite{Paper1}]\label{prop:type1:pseudo_g}  
 Consider a type-1 evolution $(\alpha^y,y\ge 0)$ starting from $M\widebar{\beta}$, where $\widebar{\beta}\sim{\tt PDIP}(\frac{1}{2},0)$ is independent of $M\sim\ExpDist[\gamma]$ for some rate parameter $\gamma\in(0,\infty)$. Then the conditional distribution of $\alpha^y$ given $\alpha^y\neq\emptyset$ is the same as the (unconditional) distribution of $(2y\gamma+1)\alpha^0$.
 
 If $\widebar{\beta}\sim{\tt PDIP}(\frac{1}{2},\frac{1}{2})$, $M\sim\GammaDist[\frac{1}{2},\gamma]$ for type 0, then $\alpha^y\sim(2y\gamma+1)\alpha^0$ for all $y\ge 0$.
\end{proposition}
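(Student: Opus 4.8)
The idea is to reduce both assertions to one-dimensional identities for the total-mass process and then settle those by a short Laplace-transform computation using the conjugacy of Gamma laws with the $\besq$ semigroup. By pseudo-stationarity (Proposition~\ref{prop:type1:pseudo}), for each fixed $y\ge0$ the type-1 evolution $\alpha$ started from $M\widebar\beta$ has $\alpha^y\overset{d}{=}B(y)\widebar\beta$ with $B(y)\perp\widebar\beta$, where $\widebar\beta\sim\PDIP[\frac12,0]$ and $B\sim\besq_M(0)$; the analogous statement with $\widebar\beta\sim\PDIP[\frac12,\frac12]$ and $B\sim\besq_M(1)$ holds in the type-0 case. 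Since $\widebar\beta$ has total mass $1$ and is never the empty partition, $B(y)\widebar\beta$ is empty exactly when $B(y)=0$ and otherwise has total mass $B(y)$; hence conditioning on $\{\alpha^y\neq\emptyset\}$ is the same as conditioning on $\{B(y)>0\}$, and by the independence of $B$ and $\widebar\beta$ it suffices to prove: \emph{(type 1)} the law of $B(y)$ given $B(y)>0$ is that of $(2\gamma y+1)M$; and \emph{(type 0)} $B(y)\overset{d}{=}(2\gamma y+1)M$, where now $B(y)>0$ almost surely because $0$ is instantaneously reflecting, not absorbing, for $\besq(1)$.

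For the computation, recall that a $\besq_x(\delta)$ process $B$ has $\mathbf{E}_x\big[e^{-\lambda B(y)}\big]=(1+2\lambda y)^{-\delta/2}\exp\!\big(-\lambda x/(1+2\lambda y)\big)$ for $\lambda,y\ge0$. Write $c:=2\gamma y+1$. In the type-0 case ($\delta=1$), averaging this over $M\sim\GammaDist[\frac12,\gamma]$ and simplifying gives $\mathbf{E}\big[e^{-\lambda B(y)}\big]=\big(\gamma/(\gamma+c\lambda)\big)^{1/2}$, the Laplace transform of $\GammaDist[\frac12,\gamma/c]$, i.e.\ of $cM$; with $\widebar\beta\perp B$ this gives $\alpha^y\overset{d}{=}cM\widebar\beta=c\,\alpha^0$. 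In the type-1 case ($\delta=0$), averaging over $M\sim\ExpDist[\gamma]$ gives $\mathbf{E}\big[e^{-\lambda B(y)}\big]=\gamma(1+2\lambda y)/(\gamma+c\lambda)$; sending $\lambda\to\infty$ gives the atom $\mathbf{P}(B(y)=0)=2\gamma y/c$, so $\mathbf{P}(B(y)>0)=1/c$, and subtracting the atom and renormalising yields $\mathbf{E}\big[e^{-\lambda B(y)}\mid B(y)>0\big]=\gamma/(\gamma+c\lambda)$, the Laplace transform of $\ExpDist[\gamma/c]$, i.e.\ of $cM$. Feeding this through $\widebar\beta\perp B$ gives $\alpha^y\mid\{\alpha^y\neq\emptyset\}\overset{d}{=}cM\widebar\beta=c\,\alpha^0$, as required. (One could instead integrate the type-0 and type-1 transition kernels from \cite{Paper1} against the relevant initial laws, but the route via Proposition~\ref{prop:type1:pseudo} is shorter.)

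There is no real obstacle here beyond bookkeeping: once Proposition~\ref{prop:type1:pseudo} is in hand, everything reduces to a one-dimensional diffusion identity, and the only genuine input is the Gamma/$\besq$ conjugacy. The two points needing care are: (a) reading pseudo-stationarity as the statement that $\alpha^y$ has the law of $B(y)\widebar\beta$ with $B(y)\perp\widebar\beta$ — so that rescaling $\widebar\beta$ by the conditioned total mass is legitimate — rather than just as a statement about the marginal of $\alpha^y$; and (b) keeping the parametrisation conventions consistent, so that $\ExpDist[\gamma]$ and $\GammaDist[\frac12,\gamma]$ are rate-parametrised in agreement with the normalisation of the $\besq$ Laplace transform above, and remembering that $\{\alpha^y=\emptyset\}=\{B(y)=0\}$ carries positive mass only in the type-1 ($\delta=0$) case.
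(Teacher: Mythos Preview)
Your argument is correct: once Proposition~\ref{prop:type1:pseudo} reduces the question to identifying the law of $B(y)$ (respectively, of $B(y)$ given $B(y)>0$), the Gamma/$\besq$ Laplace-transform computation you give settles both cases cleanly, and your bookkeeping on the atom at zero in the $\delta=0$ case is right.

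Note, however, that the present paper does not actually prove this proposition: it is quoted verbatim from \cite{Paper1} (as Proposition~6.2 there), so there is no proof here to compare against. Your derivation from the pseudo-stationarity theorem is a natural and self-contained route, and it is essentially the argument one would expect the original paper to give as well.
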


\begin{corollary}\label{corbesq2}
 For a type-1 evolution $(\alpha^y,y\ge 0)$ starting from the pseudo-stationary distribution of random mass $M$, given $\alpha^y\neq\emptyset$, the mass $\|\alpha^y\|$ is conditionally independent of $\alpha^y/\|\alpha^y\|$. The former is conditionally distributed as $B(y)$, where $(B(z),\,z\ge0)$ is a $\besq_M(0)$ conditioned to survive to time $y$, and the latter has conditional law $\PDIP[\frac{1}{2},0]$.
\end{corollary}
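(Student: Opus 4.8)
The plan is to read this off directly from the pseudo-stationarity statement for type-1 evolutions. By Proposition~\ref{prop:type1:pseudo}, applied with $\widebar\beta\sim\PDIP[\frac12,0]$ and $B\sim\besq_M(0)$ constructed independently of $\widebar\beta$, for each fixed $y\ge0$ the interval partition $\alpha^y$ has the same law as the rescaled partition $B(y)\widebar\beta$, where $B(y)\widebar\beta$ denotes the partition obtained by multiplying every block length of $\widebar\beta$ by the scalar $B(y)$ (and equals $\emptyset$ when $B(y)=0$).

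First I would record the elementary identities for this scaling: since $\|\widebar\beta\|=1$, one has $\|B(y)\widebar\beta\|=B(y)$, with $B(y)\widebar\beta=\emptyset$ precisely when $B(y)=0$, while on $\{B(y)>0\}$ the normalization $\big(B(y)\widebar\beta\big)/\big\|B(y)\widebar\beta\big\|$ returns $\widebar\beta$. Since $\big(\|\alpha^y\|,\alpha^y/\|\alpha^y\|\big)$ is a measurable function of $\alpha^y$, and $\{\alpha^y\neq\emptyset\}$ corresponds under $\alpha^y\overset{d}{=}B(y)\widebar\beta$ to $\{B(y)>0\}$, the distributional identity passes to the conditional level: the conditional law of $\big(\|\alpha^y\|,\alpha^y/\|\alpha^y\|\big)$ given $\alpha^y\neq\emptyset$ equals the conditional law of $\big(B(y),\widebar\beta\big)$ given $B(y)>0$.

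Next I would use that in this reference construction $B$ and $\widebar\beta$ are independent and that $\{B(y)>0\}$ is measurable with respect to $\sigma(B)$; moreover, because a $\besq(0)$ diffusion is absorbed at $0$, this event coincides with the event that $B$ survives to time $y$. Conditioning two independent random variables on an event that depends only on the first leaves the marginal law of the second unchanged and preserves their independence. Hence, conditionally on $\{B(y)>0\}$, the variables $B(y)$ and $\widebar\beta$ are still independent, with $\widebar\beta$ still distributed as $\PDIP[\frac12,0]$ and $B(y)$ distributed as a $\besq_M(0)$ conditioned to survive to time $y$. Pulling this back along the identity of the previous paragraph yields exactly the asserted conditional independence together with the two conditional laws.

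There is no genuine obstacle here; the argument is essentially a one-line consequence of Proposition~\ref{prop:type1:pseudo} together with the trivial fact that conditioning on a $\sigma(B)$-measurable event does not disturb $\widebar\beta$. The only point deserving a word of care is the identification of ``a $\besq_M(0)$ conditioned to survive to time $y$'' with ``$B(y)$ conditioned on $B(y)>0$'', which is valid because the $\besq(0)$ absorption time is the unique zero of the path. If one prefers, the same conclusion can equally be extracted from Proposition~\ref{prop:type1:pseudo_g} in the special cases where $M$ has the indicated Exponential law, by the identical reasoning.
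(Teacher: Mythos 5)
Your proof is correct and matches the paper's (implicit) argument. The paper does not spell out a proof for Corollary~\ref{corbesq2}, but the remark preceding Proposition~\ref{prop:type1:pseudo_g} — that conditioning on survival to level $y$ ``only involves the total mass'' — is exactly your key observation that $\{B(y)>0\}$ is $\sigma(B)$-measurable, so conditioning on it preserves the independence of $B(y)$ from the normalized shape $\widebar\beta$ and leaves the $\PDIP[\frac12,0]$ marginal undisturbed; combined with Proposition~\ref{prop:type1:pseudo} this gives the corollary directly, as you argue.
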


Note that conditioning the total mass, $\besq(0)$, more strongly to never become extinct gives rise to a $\besq(4)$ process; 
see e.g.\ \cite[p.\ 451]{PitmYor82}.

\subsection{Type-1 evolutions as pairs of leftmost blocks and remaining interval partition}

Recall Sharpe's definition \cite{Sharpe} (see also \cite[Definition A.18]{Li2011}) of Borel right Markov processes:
\begin{enumerate}[leftmargin=.82cm]
  \item[1.] Lusin state space (homeomorphic to a Borel subset of a compact metric space),
  \item[2.] right-continuous sample paths,
  \item[3.] Borel measurable semi-group and strong Markov property.
\end{enumerate}
It is additionally a Hunt process if it is quasi-left-continuous, i.e.
\begin{enumerate}[leftmargin=.82cm]
  \item[4.] left-continuous along all increasing sequences of stopping times.
\end{enumerate}

In preparation for a discussion of continuity, we recall the formal definition of $d_{\cI}$ from \cite{Paper1}.

\begin{definition} \label{def:IP:metric}
 We adopt the notation $[n] := \{1,2,\ldots,n\}$.
 For $\beta,\gamma\in \IPspace$, a \emph{correspondence} from $\beta$ to $\gamma$ is a finite sequence of ordered pairs of intervals $(U_1,V_1),\ldots,(U_n,V_n) \in \beta\times\gamma$, $n\geq 0$, where the sequences $(U_j)_{j\in [n]}$ and $(V_j)_{j\in [n]}$ are each strictly increasing in the left-to-right ordering of the interval partitions.

 The \emph{distortion} of a correspondence $(U_j,V_j)_{j\in [n]}$ from $\beta$ to $\gamma$, denoted by $\dis(\beta,\gamma,(U_j,V_j)_{j\in [n]})$, is defined to be the maximum of the following four quantities:
 \begin{enumerate}[label=(\roman*), ref=(\roman*)]
  \item $\sup_{j\in [n]}|\IPLT_{\beta}(U_j) - \IPLT_{\gamma}(V_j)|$,
  \item $|\IPLT_{\beta}(\infty) - \IPLT_{\gamma}(\infty)|$,
  \item $\sum_{j\in [n]}|\Leb(U_j)-\Leb(V_j)| + \IPmag{\beta} - \sum_{j\in [n]}\Leb(U_j)$, \label{item:IP_m:mass_1}
  \item $\sum_{j\in [n]}|\Leb(U_j)-\Leb(V_j)| + \IPmag{\gamma} - \sum_{j\in [n]}\Leb(V_j)$. \label{item:IP_m:mass_2}
 \end{enumerate}
 Note that the second of these quantities depends only on the partitions $\beta$ and $\gamma$ and not on the correspondence. 
 
 For $\beta,\gamma\in\IPspace$ we define
 \begin{equation}\label{eq:IP:metric_def}
  \dI(\beta,\gamma) := \inf_{n\ge 0,\,(U_j,V_j)_{j\in [n]}}\dis\big(\beta,\gamma,(U_j,V_j)_{j\in [n]}\big),
 \end{equation}
 where the infimum is over all correspondences from $\beta$ to $\gamma$.
\end{definition}
\begin{proposition}[Theorem 1.4 of \cite{Paper1}]\label{intro:thm1} Type-1 and type-0 evolutions are path-continuous Hunt processes
  in $(\cI,d_{\cI})$ and are continuous in the initial condition.
\end{proposition}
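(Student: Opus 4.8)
The plan is to verify, in order, path-continuity, the (strong) Markov property with Borel semigroup, the remaining Hunt axioms, and finally continuity in the initial condition, working throughout from the scaffolding-and-spindles construction; in the present paper this is of course simply Theorem~1.4 of \cite{Paper1}, but here is the shape of the argument.

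\textbf{Path-continuity.} Fix $\beta\in\cI$ and work with $\fN_\beta\sim\fP^1_\beta$, a concatenation of independent clades $\clade(\ff_i,\fN^{(i)})$ with initial masses $\ell_i=\Leb(U_i)$ equal to the block lengths of $\beta$, so $\sum_i\ell_i=\|\beta\|<\infty$. Since concatenation is compatible with the skewer map and contributes additively to $d_\cI$-distortion, it suffices to prove that for a single clade $y\mapsto\skewer(y,\clade(\ff,\fN))$ is $d_\cI$-continuous and that the finitely many ``large'' clades dominate, the remaining tail being uniformly small. Within a clade the width of each spindle cross-section $f(y-\xi_\fN(u-))$ varies continuously in $y$ because the spindles are continuous \BESQ[-1] excursions, and because such excursions begin and end at $0$, the mass of a block shrinks continuously to $0$ as $y$ approaches the bottom or the top of the corresponding scaffolding jump; hence no block mass jumps, and the Hausdorff-type part of $d_\cI$ is continuous. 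The delicate point is continuity of the diversity functional $\sD$: one must show that, uniformly for $y$ in compact sets, $\sqrt{\pi h}\,\#\{\text{blocks at level }y\text{ of size}>h\}$ converges as $h\downarrow0$ to $\sD_{\skewer(y,\fN_\beta)}(\cdot)$. This follows from a level-uniform occupation estimate for the \Stable[\frac32] scaffolding together with the excursion-measure tail $\mBxc(\zeta>h)\asymp h^{-3/2}$, which is exactly calibrated so that the limiting diversities agree with those of a \PDIP. I expect this joint (in $y$) continuity of the diversity to be the main obstacle, as it is precisely what necessitates the stronger metric $d_\cI$ over the Hausdorff metric and requires careful excursion-theoretic estimates for the scaffolding near each level.

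\textbf{Markov property and Hunt axioms.} The Markov property comes from the level-decomposition of $\fN_\beta$ into its part below level $y$ and the concatenation of clades above level $y$: conditionally on the below-$y$ data, the above-$y$ part is again a type-1 (resp.\ type-0) point measure started from $\skewer(y,\fN_\beta)$, which is the content of the mechanisms behind Lemmas~\ref{lem:type0Markov} and~\ref{lem:type1Markov} and rests on the Markov property of the \Stable[\frac32] process sliced at level $y$ and the independence of the marking Poisson random measure. This upgrades to the strong Markov property at stopping times of the level-filtration by approximating such stopping times from above, using path-continuity together with quasi-left-continuity of the scaffolding about levels; Borel measurability of the transition semigroup follows from measurability of the skewer map and of the explicit clade construction. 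For the remaining Hunt axioms, $(\cI,d_\cI)$ is separable and embeds as a Borel subset of a compact metric space, so it is Lusin; right-continuity of paths is immediate from path-continuity; and quasi-left-continuity along increasing sequences of stopping times is automatic once paths are continuous.

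\textbf{Continuity in the initial condition.} Given $\beta_n\to\beta$ in $d_\cI$, choose near-optimal correspondences matching blocks of $\beta_n$ with blocks of $\beta$ and couple the evolutions by driving matched clades with the same Poisson random measure and the same \BESQ[-1] excursions (using that $x\mapsto\besq_x(-1)$, and hence the clade law, is continuous in $x$ in the sense that makes the skewer continuous), while the summable-mass control makes the unmatched and tail contributions uniformly small; this yields $d_\cI$-convergence of the evolutions uniformly on compact level sets. All of these steps are routine once path-continuity is in hand, so the genuine work lies in the first part.
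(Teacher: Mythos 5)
The paper does not prove this statement itself; it is cited as Theorem~1.4 of \cite{Paper1} (and the continuity in the initial condition is from \cite[Proposition~5.20]{Paper1}), which you already observe. Your sketch captures the correct high-level structure of the proof carried out there: path-continuity is reduced, via the clade decomposition, to joint (level-and-time) continuity of local times of the \Stable[\frac32] scaffolding, which is precisely what controls the diversity terms (i)--(ii) in $d_\cI$; the Markov property comes from slicing the marked point measure at a fixed level and using the Poisson structure; and continuity in the initial condition is obtained by coupling matched clades through common driving randomness. Two imprecisions worth correcting. First, distortion is not additive under concatenation (the definition involves a maximum of two sup-type and two sum-type quantities); what one actually uses is that the concatenation of two $d_\cI$-continuous interval partition processes is again $d_\cI$-continuous (\cite[Lemma~2.11]{Paper1}), together with summable total mass to control the tail of small clades uniformly. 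Second, quasi-left-continuity has no role in upgrading the simple Markov property to the strong Markov property: for a right process this follows from the simple Markov property, Borel measurability of the semigroup, and right-continuity of paths (approximating stopping times from above by discrete-valued ones); quasi-left-continuity is the separate Hunt axiom, which, as you correctly say at the end, is automatic once sample paths are continuous.
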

As noted above, a type-1 evolution has a leftmost block $m^y:=m^y(\fN_*)>0$ at Leb-a.e.\ level a.s.. 
Let $\gamma^y$ satisfy $(0,m^y)\star\gamma^y=\skewer(y,\fN_*)$. Consider
the continuous bijection $\varphi(m,\gamma)=(0,m)\concat\gamma$ from 
$\cJ^\bullet:=\{(m,\gamma)\!\in\![0,\infty)\!\times\!\cI\colon m\!>\!0\mbox{ or }\gamma\!\in\!\cI\setminus\cI^\circ\}\cup\{(0,\emptyset)\}$
to $\cI$, which has a (discontinuous) measurable inverse. Then Proposition \ref{intro:thm1} has the following corollary.
\begin{corollary}\label{corpairtype1}
 Representations $((m^y,\gamma^y),y\ge 0):=(\varphi^{-1}(\beta^y),y\ge 0)$ of type-1 evolutions $(\beta^y,y\ge 0)$ are $\cJ^\bullet$-valued Borel right Markov processes, but not Hunt. 
\end{corollary}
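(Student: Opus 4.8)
The plan is to transfer the Borel right Markov structure of the type-1 evolution $(\beta^y,y\ge0)$ to its pair representation $(m^y,\gamma^y)=\varphi^{-1}(\beta^y)$ through the bimeasurable bijection $\varphi$, the only genuinely analytic point being right-continuity of the paths of the pair, and then to break quasi-left-continuity at a level where the leftmost block is absorbed. For the first part I would record that $\cJ^\bullet$, with the metric inherited from $[0,\infty)\times(\cI,d_\cI)$, is a Lusin space: $(\cI,d_\cI)$ is Lusin by \cite{Paper1}, and $\cJ^\bullet$ is a Borel subset of $[0,\infty)\times\cI$ (Borel-ness of $\cI^\circ$ being already implicit in the measurability of $\varphi^{-1}$). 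Since $\varphi\colon\cJ^\bullet\to\cI$ is a continuous bijection with Borel inverse, the pair process takes values in $\cJ^\bullet$ and is, levelwise, a measurable function of $\beta^y$ and conversely; hence the two processes generate the same filtration, the strong Markov property of $\beta^\cdot$ (Proposition \ref{intro:thm1}) passes to the pair, and the transition kernel $P^\bullet_y(\,\cdot\,)=(\varphi^{-1})_{\ast}\,P_y(\varphi(\,\cdot\,),\,\cdot\,)$ is Borel because $P_y$ is (the type-1 evolution is Hunt, hence Borel right) and $\varphi,\varphi^{-1}$ are Borel.

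\emph{Right-continuity.} This is the crux. I would use the elementary estimate $d_\cI\big((0,\varepsilon)\concat\delta,\delta\big)\le\varepsilon$ (the identity correspondence leaves unmatched only the extra block of mass $\varepsilon$, and a single block contributes nothing to diversities), which gives $d_\cI(\beta^z,\gamma^z)\le m^z$ at every level $z$. Fix $y_0$. If $m^{y_0}>0$, Lemma \ref{lem:type1Markov} shows that the leftmost spindle above level $y_0$ is a $\besq_{m^{y_0}}(-1)$ started from $m^{y_0}>0$, hence has positive lifetime and is continuous at $0$, so $m^{y_0+z}\to m^{y_0}$; together with path-continuity of $\beta^\cdot$ (Proposition \ref{intro:thm1}) and continuity of ``delete the leftmost block'' at partitions with a \emph{positive} leftmost block, this yields $\gamma^{y_0+z}\to\gamma^{y_0}$. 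If $m^{y_0}=0$, then $\beta^{y_0}$ has no leftmost block (or equals $\emptyset$), and one first checks $m^{y_0+z}\to0$: otherwise leftmost blocks $(0,a_z)\in\beta^{y_0+z}$ with $a_z$ bounded away from $0$ would, by $d_\cI$-convergence and a matching argument, force a positive leftmost block in the limit $\beta^{y_0}$; then $d_\cI(\gamma^{y_0+z},\beta^{y_0+z})\le m^{y_0+z}\to0$ and $\beta^{y_0+z}\to\beta^{y_0}$ give $\gamma^{y_0+z}\to\beta^{y_0}=\gamma^{y_0}$. Upgrading this to right-continuity of the whole path (a.s., for all $y_0$ at once) runs the same dichotomy along the successive lifetime intervals of the leftmost spindles, within each of which the pair moves as a $\besq(-1)$ coordinate over a $d_\cI$-continuous $\gamma$-coordinate. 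This makes the pair a Borel right Markov process on $\cJ^\bullet$.

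\emph{Failure of quasi-left-continuity.} Start from a $\beta^0$ with at least two blocks, so $m^0>0$, and let $\ff$ be the leftmost spindle of $\fN_*$, a $\besq_{m^0}(-1)$ with a.s.\ finite lifetime $\zeta:=\zeta(\ff)$; since $\ff$ is adapted, $T_n:=\inf\{z\colon\ff(z)\le1/n\}$ are stopping times with $T_n\upto\zeta$ and $m^{T_n}=\ff(T_n)\le1/n$. By path-continuity of $\beta^\cdot$ and the bound above, $(m^{T_n},\gamma^{T_n})\to(0,\beta^\zeta)$. On the other hand $m^\zeta>0$ a.s.\ on $\{\beta^\zeta\neq\emptyset\}$: by the strong Markov property and Lemma \ref{lem:type1Markov} at the stopping time $\zeta$, the evolution $(\beta^{\zeta+z},z\ge0)$ has leftmost spindle a $\besq_{m^\zeta}(-1)$ over a type-$0$ evolution of $\alpha^\zeta$; if $m^\zeta=0$ this spindle is identically $0$, so $\beta^{\zeta+\cdot}$ is itself a type-$0$ evolution of $\alpha^\zeta$, which has no leftmost block at Lebesgue-a.e.\ level (pseudo-stationarity to $\PDIP[\frac12,\frac12]\times\text{mass}$, Proposition \ref{prop:type1:pseudo}), contradicting the leftmost-block property \ref{item:type1:LMB} for the type-$1$ evolution $\beta^\cdot$ -- unless $\alpha^\zeta=\emptyset$. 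Since $\Pr(\beta^\zeta\neq\emptyset)>0$ (with two initial clades the non-leftmost one, of independent $\besq(0)$ total mass, outlives level $\zeta$ with positive probability), on a positive-probability event $(m^\zeta,\gamma^\zeta)=\varphi^{-1}(\beta^\zeta)$ has first coordinate $m^\zeta>0$ and hence differs from $\lim_n(m^{T_n},\gamma^{T_n})=(0,\beta^\zeta)$. So the pair is not left-continuous along $T_n\upto\zeta$, hence not quasi-left-continuous, hence not Hunt.

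\emph{Main obstacle.} The delicate step is right-continuity, in particular the case $m^{y_0}=0$ and the passage from a fixed level to the whole path: one must rule out, $d_\cI$-robustly, that vanishingly small leftmost blocks appear and disappear along arbitrarily short level intervals, and must control the interplay of the $\besq(-1)$ leftmost coordinate with the $\gamma$-coordinate across the transition levels at which the leftmost spindle is absorbed -- this is where Lemma \ref{lem:type1Markov} and the fine structure of $\fN_*$ from \cite{Paper1} do the work. Once the leftmost-block property \ref{item:type1:LMB} and type-$0$ pseudo-stationarity are available, the failure of quasi-left-continuity is comparatively soft.
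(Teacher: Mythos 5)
Your outline for the Lusin-space, right-continuity, and strong Markov steps parallels the paper's (the paper uses the equivalence $d_\cI(\gamma_n,\gamma_0)\to0\iff d_\cI((0,m_n)\star\gamma_n,(0,m_0)\star\gamma_0)\to0$ for $m_n\to m_0$, plus a direct identification of the jump levels of $m^y$ from the excursion structure of $\xi(\fN_*)$, rather than your concatenation estimate plus a level-by-level dichotomy; both get there, the paper's being closer to a genuine c\`adl\`ag statement). The failure-of-Hunt step, however, contains a genuine gap. You want to show $m^\zeta>0$ a.s.\ on $\{\beta^\zeta\neq\emptyset\}$, and your argument is: if $m^\zeta=0$ then the post-$\zeta$ process is ``a type-$0$ evolution of $\alpha^\zeta$,'' which would contradict the leftmost-block property \ref{item:type1:LMB}. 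That identification is incorrect. From Lemma \ref{lem:type1Markov}, the post-$\zeta$ type-1 evolution is $\skewerbar\big(\clade(\ff^\zeta_\Psi,\cev\fN^\zeta_\Psi)\concat\fN^\zeta_\Psi\big)$; when $\ff^\zeta_\Psi\equiv0$ the clade is trivial, so the post-$\zeta$ process is $\skewerbar(\fN^\zeta_\Psi)$, i.e.\ a \emph{type-1} evolution from $\alpha^\zeta$, not the type-$0$ evolution $\skewerbar(\cev\fN^\zeta_\Psi+\fN^\zeta_\Psi)$. The paper is explicit that $(\alpha^y)$ is a type-0 evolution \emph{only up to} level $\zeta(\ff)$; beyond it, $\restrict{\cev\fN}{(-\infty,T_{\zeta(\ff)}(\cev\fN))\times\cE}$ is redundant. (A quick sanity check: the total mass of your putative post-$\zeta$ type-0 process would be $\besq(1)$, whereas the type-1 total mass must continue as $\besq(0)$ by Proposition \ref{type1totalmass}.) A type-1 evolution from $\alpha^\zeta$ does have a leftmost block at Lebesgue-a.e.\ level, so there is no contradiction with \ref{item:type1:LMB}, and your argument does not establish $m^\zeta>0$ on $\{\beta^\zeta\neq\emptyset\}$. (Also, the ``no leftmost block a.e.'' feature of a type-0 evolution is a structural consequence of $\cev\fN$ being supported on all of $(-\infty,0)$, not a consequence of Proposition \ref{prop:type1:pseudo}, which only applies to $\PDIP[\frac12,\frac12]$-scaled initial states.)

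The paper's version sidesteps this entirely: it concatenates two \emph{independent} type-1 evolutions $\beta^\cdot\concat\gamma^\cdot$ and takes $\eta_n=\inf\{y:\|\beta^y\|<1/n\}\uparrow\eta=\inf\{y:\beta^y=\emptyset\}$. Then $m^{\eta_n}\le\|\beta^{\eta_n}\|\le1/n\to0$, while at $\eta$ the leftmost block is that of $\gamma^\eta$. Because $\eta$ is independent of $\gamma^\cdot$ and has a continuous law (extinction of $\besq(0)$), the leftmost-block property \ref{item:type1:LMB} for $\gamma^\cdot$ gives $m^\eta>0$ with positive probability. Your claim $m^\zeta>0$ on $\{\beta^\zeta\neq\emptyset\}$ is in fact true, but establishing it requires a fluctuation-theoretic argument at the random level $\zeta=\zeta(\ff)$ (first passage of the spectrally positive $\Stable[\frac32]$ scaffolding back above $\zeta$ is a.s.\ by a jump strictly across $\zeta$, since $\zeta$ is independent of the later spindles), not the type-0/property-\ref{item:type1:LMB} contradiction you invoke. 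Either replace your argument by the paper's independence device, or supply the first-passage-by-jump argument directly.
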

\begin{proof} 
 1. The space $\cJ^\bullet$, equipped with the metric $d_\bullet((m_1,\gamma_1),(m_2,\gamma_2))=|m_1-m_2|+d_\cI(\gamma_1,\gamma_2)$, 
    is as a Borel subset of a product of Lusin spaces and is therefore Lusin 
    (see \cite[Theorem 2.7]{Paper1} for the Lusin property of $(\cI,d_\cI)$).

 2. Consider $\fN_*\sim\fP^1_{(0,m)\times\gamma}$. It is a consequence of the clade construction and properties of \Stable[\frac32] processes 
    that $y\mapsto m^y(\fN^*)$ is c\`adl\`ag and  
    the only jumps are up from zero, one at the starting level of each excursion of $\xi(\fN_*)$ below the supremum. It is a.s.\ the case that no
    two such excursions share an endpoint. See \cite{BertoinLevy} for 
    details on fluctuation theory. It is not difficult to show that $(m^y,\gamma^y)$ is also c\`adl\`ag since for $m_n\rightarrow m_0$ and 
    $(m_n,\gamma_n)\in\cJ_\bullet$ for all $n\ge 0$, we have 
    $$d_\cI(\gamma_n,\gamma_0)\rightarrow 0\quad\mbox{if and only if}\quad d_\cI((0,m_n)\star\gamma_n,(0,m_0)\star\gamma_0)\rightarrow 0.$$ 
  
 3. Since $\varphi$ and $\varphi^{-1}$ are measurable bijections, the measurability of the semi-group and the strong Markov property follow 
    from Proposition \ref{intro:thm1}. 
  
 4. Consider two independent type-1 evolutions $(\beta^y,y\ge 0)$ and $(\gamma^y,y\ge 0)$. By Proposition \ref{type01plustype1}, the concatenation  $\beta^y\concat\gamma^y$ defines
    a type-1 evolution. Consider $\eta_n=\inf\{y\ge 0\colon \|\beta^y\|<1/n\}$. Then $\eta_n$ increases to $\eta=\inf\{y\ge 0\colon\beta^y=\emptyset\}$. Then
    the top block at level $\eta_n$ converges to 0, but the leftmost block of $\gamma^\eta$ is non-zero with positive probability.
\end{proof}

\section{Type-2 evolutions}\label{clocking}

\noindent We will derive properties like c\`adl\`ag sample paths, strong Markov property and $\besq(-1)$ total mass directly from Definition \ref{def:type2:v1}, in Sections \ref{secbrm} and \ref{sectm}.  However, it will
extend our toolkit to rephrase this definition in the context of the scaffolding and spindles construction of type-1 evolutions. Indeed, the rephrasing also simplifies establishing some basic symmetry and non-accumulation properties of type-2 evolutions, which will be our starting point. 

\subsection{Alternative definition of type-2 evolutions: deletion clocking}


Definition \ref{def:type2:v1} constructs a type-2 evolution from sequences of $\besq(-1)$ processes and type-1 evolutions. In fact, we can construct a process with the same distribution using only a single $\ff_1\sim\besq_a(-1)$ and a single type-1 data triple $\Psi_0=(\ff_2,\cev{\fN},\fN_\beta)\sim\fP_{b,\beta}^1$. This is a continuum analogue of the construction described in Section \ref{sec:discrete_clocking}.



\begin{figure}
 \centering
 \scalebox{.85}{\input{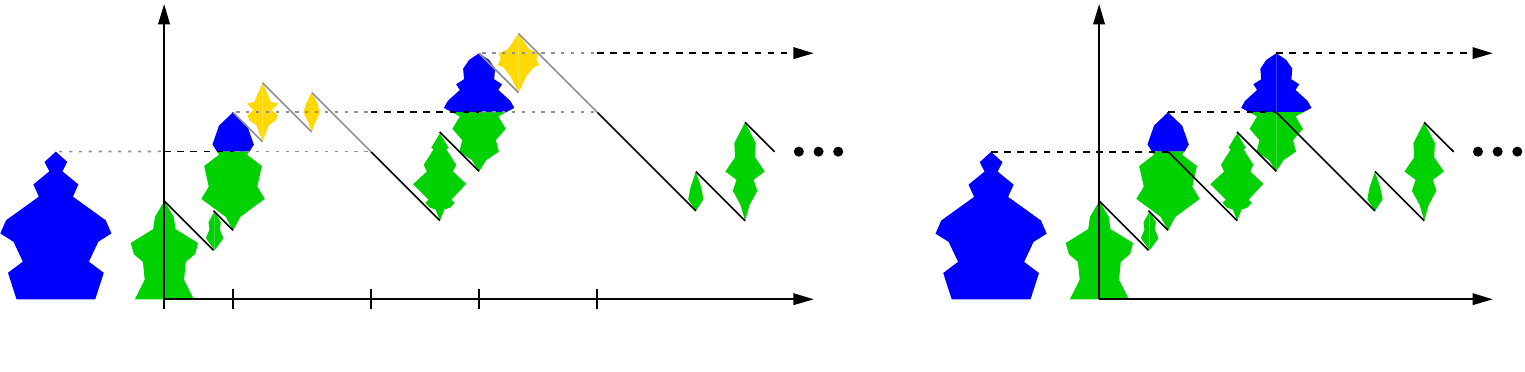_t}}
 \caption{The effect of deletion clocking is to ignore intervals of spindles. Here, the clock spindles are dark blue, the ignored spindles are yellow, and other spindles are green.\label{fig:d_clk}}
\end{figure}


\begin{definition}\label{deftype2}
 For $(a,b,\beta)\in\cJ^\circ$, consider a \emph{type-2 data quadruple with initial state $(a,b,\beta)$}
 $$\Psi := (\ff_1,\ff_2,\cev\fN,\fN_{\beta}) \sim \besq_a(-1)\otimes\besq_b(-1)\otimes\fP^0_{\beta} =: \fP^2_{a,b,\beta}.$$
 Let $\fN_* := \textsc{clade}(\ff_2,\cev\fN)\concat\fN_{\beta}$ and $\fX_* := \xi(\fN_*)$. We define
 $((m_1^y,m_2^y,\beta^y),\,y\ge0)$ in four steps.
 
 \textbf{Step 1}. We define \emph{clock levels} $(Y_n)$ and \emph{clock change times} $(T_n^{\pm})$ for $\fX_*$ recursively. These quantities appear labeled in Figure \ref{fig:d_clk}. Set $Y_0 = 0$, $T_0^+ = T_1^- = 0$, $Y_1 := \zeta(\ff_1)$, and for $n\ge 1$,
 \begin{equation}
   T^+_n:=\inf\{t \ge T^-_n\colon \fX_*(t) > Y_n\},\quad 
   Y_{n+1}:=\fX_*(T^+_n),\quad
   T^-_{n+1}:=\inf\{t > T^+_n\colon \fX_*(t) \leq Y_n\}
  \label{eq:d_clk:levels}
 \end{equation}
 with the conventions $\inf\emptyset\!=\!\infty$ and $\fX_*(\infty)\!=\!\infty$. Though we omit it from our notation, we view each of the preceding quantities as a function of $\Psi$.
 
 \textbf{Step 2}. We define clock spindles. Let $\ff^{(0)}=\ff_1$. For $j\ge 1$, let $\ff^{(n)}$ denote the cut-off top part $\hat f^{Y_{n}}$ of the spindle $f$ that occurs at time $T^+_{n}$ in $\fN_*$. Each $\ff^{(n)}$ will be the clock spindle during the interval $[Y_{n},Y_{n+1})$.
 
 \textbf{Step 3}. We define type-1 data. Let $\Psi_0 := (\ff_{\Psi_0},\cev\fN_{\Psi_0},\fN_{\Psi_0}):=(\ff_2,\cev\fN,\fN_{\beta})$. For $n\ge 1$, let
 \begin{equation}\label{eq:d_clk:shift}
   \Psi_n := \left(\ff_{\Psi_n},\cev\fN_{\Psi_n},\fN_{\Psi_n}\right):=\left(0,\RestrictShift{\cev\fN}{(-\infty,T_{Y_n}(\cev\fN)]\times\cE},\left(\ShiftRestrict{\fN_*}{(T_{n+1}^-,\infty)\times\cE}\right)^0\right) 
	\qquad \text{for }n\ge1.
 \end{equation}
 The superscript $0$ on the rightmost term above is in the sense of the cutoff processes $\fN^y$ described around Lemma \ref{lem:type0Markov}, in which spindles below a given level are removed or cut off. Each $\Psi_n$ is a type-1 data triple for the non-clock top mass and spinal masses during the interval $[Y_{n},Y_{n+1})$.
 
 \textbf{Step 4}. We define the evolution. For $n\ge 0$ even,
 \begin{equation}
  m_1^y := \ff^{(n)}(y-Y_{n}), \quad (0,m_{2}^y)\concat\alpha^y := \skewer\big(y-Y_{n},\ShiftRestrict{\fN_*}{(T_{n+1}^-,\infty)\times\cE}\big) \quad \text{for }y\in [Y_n,Y_{n+1}),\label{eq:type_2_def}
 \end{equation}  
 where $m_2^y = 0$ if and only if the skewer in the last expression has no leftmost block. For $n\ge 1$ odd, the definition is the same, but with $m_1^y$ and $m_2^y$ swapping roles.
\end{definition}
\medskip

The effect of this construction is to skip over intervals of spindles from $\fN_*$, ensuring that they never contribute blocks to the skewer: for each $n\ge 1$, the process $\restrict{\fN_*}{(T_n^+,T_{n+1}^-]\times\cE}$ is redundant. We therefore refer to this construction as \em deletion 
clocking\em. This is illustrated in Figure \ref{fig:d_clk}. The time of the succession of clock spindles $\ff^{(n)}$, which is the level of the scaffolding, is the
time of the type-2 evolution. The deletions next to each clock spindle are naturally interpreted as \em emigration \em as each family of spindles in an excursion 
above the minimum of the \Stable[\frac32] process $\xi(\fN_*)|_{(T_n^+,T_{n+1}^-]}$ is removed from the evolution and such excursions form a homogeneous Poisson point process up to the level
where the last clock spindle dies. 

\begin{proposition}
 The process constructed in Definition \ref{deftype2} is a type-2 evolution.
\end{proposition}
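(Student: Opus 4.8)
The plan is to prove that the deletion-clocking recipe of Definition \ref{deftype2} reproduces, stage by stage, the inductive gluing of independent $\besq(-1)$ clocks and type-$1$ evolutions in Definition \ref{def:type2:v1}, so that the whole argument reduces to one ``one-step'' statement iterated via the Markov-like properties of type-$0$ and type-$1$ data recalled in Section \ref{prel}. The base case, on $[0,Y_1)$, is immediate from the definitions: $m_1^y=\ff_1(y)$ is a $\besq_a(-1)$ by construction, and since $T_1^-=0$ we have $(0,m_2^y)\concat\alpha^y=\skewer(y,\fN_*)$; by Lemma \ref{type1fromtype0} the point measure $\fN_*=\clade(\ff_2,\cev\fN)\concat\fN_\beta$ has law $\fP^1_{(0,b)\concat\beta}$, so its pair representation $((m^y,\alpha^y),\,y\ge0)$ is a type-$1$ evolution from $(b,\beta)$ (in the sense of the abuse of terminology before Proposition \ref{type01plustype1}), independent of $\ff_1=\ff^{(0)}$. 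This is precisely stage $0$ of Definition \ref{def:type2:v1}.

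The core is a one-step statement. Writing $\Psi_0=(\ff_2,\cev\fN,\fN_\beta)$ for the type-$1$ data triple associated with $\Psi\sim\fP^2_{a,b,\beta}$, the level $Y_1=\zeta(\ff_1)$ is independent of $\Psi_0$, so conditioning on $\ff_1$ and applying Lemma \ref{lem:type1Markov} at the now-deterministic level $Y_1$ gives: given $\cF^{Y_1}\vee\sigma(\ff_1)$, the triple $(\ff^{Y_1}_{\Psi_0},\cev\fN^{Y_1}_{\Psi_0},\fN^{Y_1}_{\Psi_0})$ has law $\besq_{m^{Y_1}}(-1)\otimes\fP^0_{\alpha^{Y_1}}$, where $(m^{Y_1},\alpha^{Y_1})$ is the pair representation of $\skewerbar(\fN_*)$ at level $Y_1$. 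I would then match this decomposition with the output of deletion clocking at $Y_1$: the leftmost spindle above $Y_1$, namely $\ff^{Y_1}_{\Psi_0}$, is the clock spindle $\ff^{(1)}=\hat f^{Y_1}$ attached at time $T_1^+$; the remainder $\cev\fn^{Y_1}$ of the first excursion of $\xi(\fN_*)$ above $Y_1$ is exactly $\restrict{\fN_*}{(T_1^+,T_2^-]\times\cE}$, the emigrant family deleted in Definition \ref{deftype2}; and $\fN^{Y_1}_{\Psi_0}$, the concatenation of the clades of the subsequent excursions of $\xi(\fN_*)$ above $Y_1$, together with the residual descent measure carved from $\cev\fN$, is exactly the type-$0$ data pair $(\cev\fN_{\Psi_1},\fN_{\Psi_1})$ of Step 3 (its superscript-$0$ cut-off being the level cut-off built into the definition of $\fN^{Y_1}_{\Psi_0}$). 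It follows that the residual quadruple $\Psi^{(1)}:=(\ff^{(1)},0,\cev\fN_{\Psi_1},\fN_{\Psi_1})$ has conditional law $\besq_{m^{Y_1}}(-1)\otimes\besq_0(-1)\otimes\fP^0_{\alpha^{Y_1}}=\fP^2_{m^{Y_1},0,\alpha^{Y_1}}$; and since the deleted spindles all lie in the single excursion of $\xi(\fN_*)$ above $Y_1$ strictly between $\ff^{(1)}$ and the later clades of $\fN^{Y_1}_{\Psi_0}$, removing them leaves unchanged the ordered list of blocks skewered at each level of $[Y_1,Y_2)$, apart from dropping those emigrant families. Hence the Definition \ref{deftype2} process on $[Y_1,\infty)$ equals---after the relabelling $m_1\leftrightarrow m_2$---the Definition \ref{deftype2} process built from $\Psi^{(1)}$, i.e.\ a type-$2$ evolution from $(m_1^{Y_1},m_2^{Y_1},\alpha^{Y_1})$ whose clock restarts from the previous non-clock top mass and whose new leftmost block is born from $0$, exactly as in Definition \ref{def:type2:v1}.

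Since the conditional law of $\Psi^{(1)}$ depends only on the $\cF^{Y_1}$-measurable pair $(m^{Y_1},\alpha^{Y_1})$, I would then iterate: conditionally on the history up to the $n$-th clock level $Y_n$, stages $\ge n$ are generated by a type-$2$ quadruple $\sim\fP^2_{m^{Y_n},0,\alpha^{Y_n}}$ (with the two top masses relabelled according to the parity of $n$), so stage $n$ of Definition \ref{deftype2} is a fresh $\besq(-1)$ clock started from $m_2^{Y_n}$ (for $n$ odd) or $m_1^{Y_n}$ (for $n$ even), independent of a type-$1$ evolution from $(0,\alpha^{Y_n})$. This is exactly stage $n$ of Definition \ref{def:type2:v1}, so the two constructions have the same law; the conventions at degenerate clock levels---when the concurrent type-$1$ evolution has no leftmost block or is already extinct---match the absorption rule of Definition \ref{def:type2:v1} by the $\inf\emptyset=\infty$, $\fX_*(\infty)=\infty$ conventions.

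The hard part will be the identification in the one-step statement, namely translating Definition \ref{deftype2}'s prescription ``keep the clock spindle, delete the remainder of the first excursion above $Y_n$, keep the later excursions'' into the cut-off maps underlying Lemma \ref{lem:type1Markov}. This relies on the strong Markov property of the spectrally positive $\Stable[\frac32]$ scaffolding $\xi(\fN_*)$ at the stopping times $T_n^+$ and $T_{n+1}^-$ and on the Poissonian structure of its excursions above a level---so that the deleted families form, conditionally on the past, an independent homogeneous Poisson collection of clades, which is the emigration picture referred to after Definition \ref{deftype2}---together with careful tracking of the time-shift and level cut-off operations. Everything else is bookkeeping.
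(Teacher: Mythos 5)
Your proposal takes essentially the same route as the paper's proof: fix a single type-2 data quadruple $\Psi\sim\fP^2_{a,b,\beta}$, run the deletion-clocking construction of Definition~\ref{deftype2}, and show by induction on the clock-change index $n$ that the Markov-like property of type-1 data triples (Lemma~\ref{lem:type1Markov}), applied to $\Psi_{n-1}$ at level $\zeta(\ff^{(n-1)})=Y_n-Y_{n-1}$, produces exactly the fresh $\besq(-1)$ clock and independent type-1 evolution required at stage $n$ of Definition~\ref{def:type2:v1}, yielding pathwise equality of the two constructions. Your handling of the random level by first conditioning on the clock spindle, and your explicit observation that the $\cev\fN_{\Psi_n}$ component is skewer-irrelevant because $\ff_{\Psi_n}=0$, are slightly more spelled out than in the paper's argument but not substantively different.
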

\begin{proof} 
%
  Consider a data quadruple $\Psi=(\ff_1,\ff_2,\cev{\fN},\fN_\beta)\sim\fP_{a,b,\beta}^2$ and the filtration $(\cF^y,y\ge 0)$
  generated by $(\ff_1(y),\ff_2(y),\cev{\fN}^{\le y},\fN_\beta^{\le y})$. We will use the notation of Definition \ref{deftype2} to inductively
  set up all random variables as needed for Definition \ref{def:type2:v1}, and we will show that Definitions 
  \ref{def:type2:v1} and \ref{deftype2}, in this setup, yield pathwise the same process $((m_1^y,m_2^y,\alpha^y),y\ge 0)$. For the purpose 
  of this proof we will mark all random variables appearing in Definition \ref{def:type2:v1} by an underscore.

  Now, $\underline{\ff}^{(0)}:=\ff_1$ and 
  $(0,\underline{\fm}^{(0)})\star\underline{\gamma}^{(0)}:=\skewerbar(\clade(\ff_2,\cev{\fN})\star\fN_\beta)$ 
  have the appropriate joint distribution and achieve 
    $((\underline{m}_1^y,\underline{m}_2^y,\underline{\alpha}^y),0\le y\le \underline{Y}_1)=((m_1^y,m_2^y,\alpha^y),0\le y\le Y_1)$. 
  Suppose we have defined up to $(\underline{\ff}^{(n-1)},\underline{\fm}^{(n-1)},\underline{\gamma}^{(n-1)})$ and identified 
    $((\underline{m}_1^y,\underline{m}_2^y,\underline{\alpha}^y),0\le y\le \underline{Y}_{n})=((m_1^y,m_2^y,\alpha^y),0\le y\le Y_{n})$ 
  for some $n\ge 1$. Then given $\cF^{Y_{n-1}}$, we apply Lemma \ref{lem:type1Markov}, which is the Markov-like property of the type-1 data triple 
  $\Psi_{n-1}$ at the level $\zeta(\ff^{(n-1)})=Y_n-Y_{n-1}$, to find a post-$\zeta(\ff^{(n-1)})$ data triple $\Psi_{n-1}^{\zeta(\ff^{(n-1)})}$. The first component of this triple is $\ff^{(n)}$ and the last component is $\fN_{\Psi_n}$. Noting that $\ff^{(n)}$ and $\fN_{\Psi_n}$ are 
  conditionally independent given the pre-$\zeta(\ff_{n-1})$ data, indeed given $\cF^{Y_n}$, we proceed as follows. Suppose $n$ is even. First, 
    $\underline{\ff}^{(n)}:=\ff^{(n)}\sim{\tt BESQ}_{m_1^{Y_{n}}}(-1)$, 
  is as appropriate for Definition \ref{def:type2:v1}, since $\underline{Y}_{n}=Y_{n}$. 
  Second, $\Psi_{n}\sim\fP_{\alpha^{Y_{n}}}^1=\fP_{\underline{\alpha}^{\underline{Y}_n}}^1$, which gives rise to a type-1 evolution 
  $(0,\underline{\fm}^{(n)})\star\underline{\gamma}^{(n)}:=\skewerbar(\fN_{\Psi_{n}})\sim\bP_{\underline{\alpha}^{\underline{Y}_n}}^1$,
  as required, since we have $\underline{m}_{2}^{\underline{Y}_n}=\underline{\ff}^{(n-1)}(\zeta(\underline{\ff}^{(n-1)}))=0$. This also implies that 
  for all $y\in[0,Y_{n+1}-Y_n)$
  \begin{align*}(\underline{m}_{1}^{\underline{Y}_n+y},(0,\underline{m}_{2}^{\underline{Y}_n+y})\star\underline{\alpha}^{\underline{Y}_n+y})
   &=(\underline{\ff}^{(n)}(y),(0,\underline{\fm}^{(n)}(y))\star\underline{\gamma}^{(n)}(y))
   =(\ff^{(n)}(y),\skewer(y,\fN_{\Psi_n}))\\
   &=(m_{1}^{Y_n+y},(0,m_{2}^{Y_n+y})\star\alpha^{Y_n+y}),
  \end{align*}
  as required. The same argument applies for $n$ odd, with the roles of 1 and 2 interchanged.
\end{proof}


\begin{lemma}\label{lem:type2_symm}
  If we modify Definition \ref{deftype2} so that we let $\fN_*=\textsc{clade}(\ff_1,\cev\fN)\concat\fN_{\beta}$ and $Y_1=\zeta(\ff_2)$ and accordingly swap the parity in Step 4., we obtain a type-2 evolution that is pathwise the same as in Definition \ref{deftype2}, with identical sets $\{\zeta(\ff_1),\zeta(\ff_2)\}\cup\{Y_n,n\ge 0\}$. In particular, the point measure $\restrict{\cev{\fN}}{(-\infty,T_{\min\{\zeta(\ff_1),\zeta(\ff_2)\})}(\cev{\fN}))\times\cE}$ is redundant for the type-2 evolution.
\end{lemma}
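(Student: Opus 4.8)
The plan is to argue that the deletion-clocking construction of Definition \ref{deftype2} is symmetric in the roles of $\ff_1$ and $\ff_2$ by tracking what the construction actually does to the underlying scaffolding-and-spindles. The key observation is that the two "candidate clades" $\textsc{clade}(\ff_1,\cev\fN)$ and $\textsc{clade}(\ff_2,\cev\fN)$ share the same left-extension data $\cev\fN$, and that the excursion of $\xi(\cev\fN+\cdots)$ below its running infimum that carries $\ff_1$ at its top is the \emph{same} excursion regardless of whether we start the construction from $\ff_1$ or from $\ff_2$; the only asymmetry between the two constructions is a bookkeeping one, namely which of the two top masses gets labelled $m_1^y$ versus $m_2^y$, and which one we declare to be the "clock" first. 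First I would set $Z := \min\{\zeta(\ff_1),\zeta(\ff_2)\}$ and argue that on the level interval $[0,Z)$ both constructions produce the pair of BESQ$(-1)$ spindles $\ff_1,\ff_2$ running side by side on top of the same type-0 remainder, so the two triples $(m_1^y,m_2^y,\alpha^y)$ agree up to the swap $1\leftrightarrow 2$ — but since Definition \ref{def:type2:v1} is itself symmetric in which top mass is "1" and which is "2" only through the labelling, and since the statement of the lemma only claims pathwise equality of the process (and the lemma's own swap of parity in Step 4 precisely undoes the relabelling), these agree pathwise.

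Next I would handle the inductive step, which is the heart of the matter. After level $Z$, say $Z = \zeta(\ff_1)$, the spindle $\ff_1$ has died; in the construction of Definition \ref{deftype2} the new clock spindle $\ff^{(1)}$ is the cut-off top $\hat f^{Y_1}$ of the spindle crossing level $Y_1$ at time $T_1^+$, living in $\fN_\beta$ (through $\cev\fN$). Crucially, $\ff^{(1)}$ is extracted from $\fN_*^{(1)} := \textsc{clade}(\ff_2,\cev\fN)\concat\fN_\beta$ only via the part of $\cev\fN$ and $\fN_\beta$ lying beyond (to the left of) the clade of $\ff_1$ — precisely the part $\cev\fN$ restricted to $(-\infty, T_{\zeta(\ff_1)}(\cev\fN))$ which the lemma claims is redundant. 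In the alternative construction ($\fN_* = \textsc{clade}(\ff_1,\cev\fN)\concat\fN_\beta$, $Y_1 = \zeta(\ff_2)$), the roles of $\ff_1$ and $\ff_2$ as "first clock" versus "first non-clock top mass" are reversed, but because $\ff_1$ dies first, in \emph{both} constructions the first transition time $Y_1$ coincides, it equals $\zeta(\ff_1) \wedge \zeta(\ff_2) = \zeta(\ff_1)$ (in the original) while in the alternative the first transition is at $Y_1 = \zeta(\ff_2)$ but then the second clock spindle takes over — wait, this is the subtle point: I need to check that the \emph{set} $\{\zeta(\ff_1),\zeta(\ff_2)\}\cup\{Y_n : n\ge 0\}$ is the same, i.e. that relabelling which of $\ff_1,\ff_2$ is the first clock does not change the merged sequence of clock levels, only which index sits where. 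I would do this by observing that after the first of $\ff_1,\ff_2$ dies, both constructions are in the identical state: one surviving top spindle ($\ff_{\max}$), one type-1 remainder governed by the same $\cev\fN,\fN_\beta$, and one pending transition level (the death level of $\ff_{\max}$). From that point the two constructions proceed identically by the inductive hypothesis, and one verifies that the full clock-level sets agree by comparing the two "interleavings" term by term.

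The cleanest way to organise this is to appeal to the proposition just proved — that Definition \ref{deftype2} yields a process with the same law/pathwise realisation as Definition \ref{def:type2:v1} — and observe that Definition \ref{def:type2:v1} is manifestly symmetric: in its inductive construction, after the first BESQ$(-1)$ timer $\ff^{(0)}$ expires at $Y_1$, the process continues by running a new BESQ$(-1)$ from the surviving top mass, and it is irrelevant whether $\ff^{(0)}$ was the timer attached to the first or the second of the two initial masses. Concretely, I would show that applying the "proof of the proposition" argument to the \emph{alternative} quadruple-based construction reproduces the \emph{same} sequence of $\underline\ff^{(n)}, \underline{\fm}^{(n)}, \underline\gamma^{(n)}$ up to relabelling, hence the same $((m_1^y,m_2^y,\alpha^y),y\ge 0)$ pathwise. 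The redundancy claim for $\restrict{\cev\fN}{(-\infty, T_{\min\{\zeta(\ff_1),\zeta(\ff_2)\}}(\cev\fN))\times\cE}$ then follows because, in \emph{whichever} of the two constructions one adopts, the clade attached to $\ff_j$ for the $j$ with the smaller lifetime is deleted by the very first clocking operation, and that clade's "tail" in $\cev\fN$ is exactly this restricted measure; since both constructions give the same process, this tail plays no role.

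The main obstacle I anticipate is the careful bookkeeping of the clock-level sets: verifying that the two interleavings of $\{\zeta(\ff_1),\zeta(\ff_2)\}$ with the subsequent $Y_n$'s coincide as sets requires checking that after the first timer expires, the two constructions genuinely couple — i.e. that the "post-expiry" type-1 data triple extracted via Lemma \ref{lem:type1Markov} does not depend on which initial mass was declared the clock. This is where one must be precise about the fact that $\textsc{clade}(\ff_1,\cev\fN)$ and $\textsc{clade}(\ff_2,\cev\fN)$ use disjoint pieces of $\cev\fN$ (nested first-passage intervals), so that whichever clade survives longer "contains" the data needed for the next stage, and the shorter-lived clade's contribution — including its slice of $\cev\fN$ — is simply discarded. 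Once this disjointness/nesting structure is spelled out, the equality of clock-level sets and the pathwise identity of the two processes both follow, and the redundancy statement is an immediate corollary.
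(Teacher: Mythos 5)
Your main line (first two paragraphs) is essentially the paper's approach: the paper also case-splits on $\{\zeta(\ff_1)<\zeta(\ff_2)\}$ versus $\{\zeta(\ff_1)>\zeta(\ff_2)\}$, observes that one of $\zeta(\ff_1),\zeta(\ff_2)$ is inserted as an extra term in one of the two clock-level sequences, and formalizes the coupling you describe by the clean index shift $\underline{Y}_{\,j}=Y_{j+1}$ (or vice versa) for $j\ge 1$, together with the corresponding shift of clock spindles and data triples. Your identification of the nested $\cev\fN$-windows $[\,\cev{T}_{\zeta(\ff_i)}(\cev\fN),0)$ and of the claimed ``tail'' of $\cev\fN$ as the unused part is the right structural ingredient, and your reading of the redundancy statement is correct.

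The one thing to be careful about is the third paragraph, where you propose as ``the cleanest way'' to observe that Definition~\ref{def:type2:v1} is manifestly symmetric and invoke the already-proved equivalence with Definition~\ref{deftype2}. It is not manifestly symmetric: Definition~\ref{def:type2:v1} treats its two initial masses asymmetrically — the one driving $\ff^{(0)}$ dies at $Y_1$ and \emph{triggers} a transition, while the leftmost block of the concurrent type-1 evolution, if it dies first, is simply absorbed by that type-1 evolution with \emph{no} transition, and the total-mass and interval-partition components keep running uninterrupted. That this asymmetry is immaterial is exactly what the lemma asserts, so taking it as self-evident would be circular. Your own first-paragraph argument does not rely on this shortcut, and in fact is closer to the paper's proof, which works entirely at the level of Definition~\ref{deftype2} and verifies the index shift directly (also inheriting from this the observation you make — that for levels $y<\min\{\zeta(\ff_1),\zeta(\ff_2)\}$ no spindle of $\cev\fN$ on the discrepancy window $[\,\cev{T}_{\zeta(\ff_2)}(\cev\fN),\cev{T}_{\zeta(\ff_1)}(\cev\fN))$ crosses level $y$, so the two skewers agree block by block). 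So: keep your first argument, drop the ``manifestly symmetric'' claim, and tighten the inductive bookkeeping into the single statement that the $Y$-sequences shift by one past the first clock level; the redundancy claim then falls out exactly as you describe.
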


\begin{proof} 
 For the purposes of this proof, we add underscores and write $\underline{Y}_{\,j}$, $\underline{\ff}_{\,i}^{(j)}$, $\underline{\Psi}_{\,j}$, $i=1$ or $i=2$, $j\ge 0$, and  
  $((\underline{m}_{\,1}^y,\underline{m}_{\,2}^y,\underline{\alpha}^y),y\ge 0)$ in the modification of Definition \ref{deftype2}. We remark that the underscores here are unrelated to those in the previous proof. 
  The main aim of this proof is to show the pathwise equality 
  $((\underline{m}_{\,1}^y,\underline{m}_{\,2}^y,\underline{\alpha}^y),y\ge 0)=((m_1^y,m_2^y,\alpha^y),y\ge 0)$. We only discuss the case where $a>0$ and $b>0$. The cases where $a=0$ or $b=0$ can
  then be checked similarly.
  
  On the event $\{\zeta(\ff_1)<\zeta(\ff_2)\}$, we have $\underline{Y}_{\,0}=0=Y_0<Y_1=\zeta(\ff_1)<Y_2=\zeta(\ff_2)=\underline{Y}_{\,1}$, and we see inductively that
  $\underline{Y}_{\,j}=Y_{j+1}$, $\underline{\ff}_{\,i}^{(j)}=\ff_{\,i}^{(j+1)}$ and $\underline{\Psi}_{\,j}=\Psi_{j+1}$ for all $j\ge 1$, $i=1,2$. It is now easy to see that the pathwise equality holds on this event. Similarly, on $\{\zeta(\ff_1)>\zeta(\ff_2)\}$, we have $\underline{Y}_{\,1}=\zeta(\ff_2)$ and 
  $\underline{Y}_{\,j+1}=Y_{j}$ for all $j\ge 1$, and the same argument applies.

  In particular, the sets $\{Y_n,n\ge 0\}$ and $\{\underline{Y}_{\,n},n\ge 0\}$ differ precisely by the omission of either $\zeta(\ff_2)$ from the former or of $\zeta(\ff_1)$ 
  from the latter. The last statement of the lemma follows using the original definition on $\{\zeta(\ff_1\}<\zeta(\ff_2)\}$ and the modified definition on 
  $\{\zeta(\ff_1)>\zeta(\ff_2)\}$. 
\end{proof}   

It is not a priori clear in Definition \ref{def:type2:v1}, nor equivalently in Definition \ref{deftype2}, that clock changes cannot accumulate at a 
finite level $Y_\infty=\sup_{n\ge 0}Y_n<\infty$. This would leave the type-2 evolution undefined for $y\ge Y_\infty$, so we address this point before establishing any further properties. 

\begin{lemma}\label{type2welldef}
 For all $(a,b,\beta)\in\cJ^\circ$, the type-2 evolution as constructed in Definition \ref{deftype2} is such that there is a.s.\ some finite $n\ge 0$ for which $Y_n<Y_{n+1}=\infty$ and as $y$ increases to $Y_n$, the evolution $(m_1^y,m_2^y,\alpha^y)$ approaches $(0,0,\emptyset)$. 
\end{lemma}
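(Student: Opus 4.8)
The plan is to show that the clock changes correspond to a sequence of $\besq(-1)$ lifetimes whose initial values are governed by an evolving total mass that is itself eventually absorbed, so only finitely many clock changes can occur.

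\medskip

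\textbf{Setup and strategy.} First I would invoke Theorem \ref{thm:total_mass} (or rather argue it in parallel, since that theorem is proved later; alternatively one can argue directly): the candidate total mass process $S^y := m_1^y + m_2^y + \|\alpha^y\|$. The key structural observation from Definition \ref{deftype2} is that at each clock level $Y_n$, the new clock spindle $\ff^{(n)}$ is a $\besq(-1)$ started from whichever of $m_1^{Y_n}, m_2^{Y_n}$ just became the clock mass, and by Lemma \ref{lem:type1Markov} the non-clock data $\Psi_n$ is an independent type-1 data triple started from $\alpha^{Y_n}$, whose associated type-1 evolution has total mass $\besq(0)$ by Proposition \ref{type1totalmass}. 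The emigration deletions only remove mass, so between clock changes $S^y$ is dominated by (in fact, I expect, equal in law to a time-change away from) a $\besq(-1)$ plus an independent $\besq(0)$. Since both squared Bessel processes of dimension $\le 0$ are a.s.\ absorbed at $0$ in finite time, and since $\besq(-1)$ is even killed at $0$, each stage $[Y_n, Y_{n+1})$ has finite length $\zeta(\ff^{(n)})$.

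\medskip

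\textbf{Key steps.} (1) Show that $(S^y,\,y \ge 0)$, stopped at $Y_\infty := \sup_n Y_n$, is a nonnegative supermartingale (or directly: it is stochastically dominated by a $\besq(-1)$ process), using the independence structure of the clades making up $\fN_*$ and the fact that deletion clocking only removes blocks. (2) Deduce that $S^y \to 0$ as $y \uparrow Y_\infty$ on $\{Y_\infty < \infty\}$; more precisely, since $\|\alpha^y\|$ is a $\besq(0)$ run for a random but a.s.\ finite duration and the clock mass is a $\besq(-1)$ excursion-like piece, the total mass is bounded and cannot persist. (3) Rule out accumulation: on $\{Y_\infty < \infty\}$, by step (2) the clock mass $m_i^y$ feeding into stage $n$ tends to $0$ as $n \to \infty$, so $\zeta(\ff^{(n)})$, the lifetime of a $\besq(-1)$ started from a mass tending to $0$, must have $\sum_n \zeta(\ff^{(n)}) < \infty$ — but I want the reverse: I will argue that the probability that a $\besq(-1)$ started from mass $x$ has lifetime less than $\epsilon$ tends to $0$ as... no. The cleaner route: condition on the sequence of clock-start masses; given these, the $\zeta(\ff^{(n)})$ are independent, and $\sum \zeta(\ff^{(n)}) = Y_\infty$. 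I will show $Y_\infty = \infty$ a.s.\ by showing that the mass process cannot reach a positive accumulation of clock changes while still being absorbed — i.e., use that at the (hypothetical) accumulation level $Y_\infty$, continuity-type estimates on the type-1 evolution (Proposition \ref{intro:thm1}, path continuity in $(\cI, d_\cI)$) force $\alpha^{Y_\infty} = \emptyset$ and the clock mass to be $0$, and then appeal to the fact that the construction is exhausted: there is nothing left to evolve, so in fact the last clock change $Y_n$ with $m_1^{Y_n} + m_2^{Y_n} = 0$ was reached at finite $n$. (4) Finally, identify the terminal behavior: at that $Y_n$, by the conventions in Definition \ref{def:type2:v1} the process is set to $(0,0,\emptyset)$, and by path-continuity of the type-1 pieces and continuity of $y \mapsto \ff^{(n-1)}(y)$ at its absorption, $(m_1^y, m_2^y, \alpha^y) \to (0,0,\emptyset)$ as $y \uparrow Y_n$.

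\medskip

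\textbf{Main obstacle.} The delicate point is step (3): a priori one cannot exclude that infinitely many clock changes pile up at a finite level $Y_\infty$ with the clock masses shrinking fast enough that $\sum_n \zeta(\ff^{(n)}) < \infty$ while the non-clock type-1 evolutions keep replenishing mass via the $\besq(0)$ total mass (which has positive probability of being large at small times). The resolution I expect to need is a quantitative control: show that the expected total mass $\EV[S^y]$ is nonincreasing (since $\besq(-1)$ has decreasing mean and $\besq(0)$ has constant mean, and deletions only decrease mass), combined with a Borel–Cantelli argument on the events that stage $n$ contributes at least $\delta$ to $Y_\infty$ — using that a $\besq(-1)$ started from mass $x$ has $\Pr(\zeta > \delta)$ bounded below by a positive constant when $x$ is bounded below, and that infinitely many stages must have clock-start mass bounded below unless $S^y \to 0$, in which case $Y_\infty$ being finite contradicts the fact that reaching the absorbing state requires exhausting the $\besq(0)$ mass which happens at a genuine almost-sure finite time with no accumulation. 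I would lean on the explicit $\besq(-1)$ excursion measure normalization $\mBxc(\zeta \in dy) = \tfrac{3}{2\pi\sqrt2} y^{-5/2} dy$ and the a.s.\ termination of the analogous splitting-tree construction noted after Figure \ref{fig:splitting} as the template for the estimate.
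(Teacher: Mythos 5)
The approach you take is genuinely different from the paper's, and it has a real gap at the exact place you flag as "the delicate point."

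Your steps (1), (2), (4) are essentially fine: the paper's proof also handles the terminal behavior by observing that on $\{T_{n+1}^+ = \infty\}$ the leftmost spindle and the type-1 piece of stage $n$ both die out, giving convergence to $(0,0,\emptyset)$. But step (3) — ruling out accumulation of clock levels at a finite $Y_\infty$ — is where your proposal does not close. You suggest that (a) a supermartingale/total-mass control, (b) a Borel–Cantelli argument on $\{\Delta_n > \delta\}$, and (c) the observation that $\besq(-1)$ lifetimes are bounded below when initial masses are bounded below, together with a contradiction between $S^y\to 0$ and accumulation, would settle this. None of these pieces, as stated, rules out the scenario you yourself identified: the clock-start masses $m_i^{Y_n}$ could shrink fast enough that $\sum_n\zeta(\ff^{(n)})<\infty$. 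The fact that $S^y\to 0$ at $Y_\infty$ is perfectly compatible with accumulation; it does not by itself "exhaust" anything, and invoking path-continuity of the individual type-1 pieces cannot force $\alpha^{Y_\infty}=\emptyset$ across infinitely many stages. The reasoning is circular: it essentially assumes the process is well-defined at $Y_\infty$ (which is the content of the lemma) in order to derive a contradiction. The paper's actual key idea is quantitative and of a completely different flavor: $\Delta_n := Y_{n+1}-Y_n$ is the overshoot of the $\Stable[\frac32]$ scaffolding when first crossing level $Y_n$, equivalently the overshoot of its $\Stable[\frac12]$ ladder-height subordinator. By stable scaling, the ratios $R_n = \Delta_{n+1}/\Delta_n$ are i.i.d.\ (and independent of $\Delta_n$), distributed like the overshoot of a $\Stable[\frac12]$ subordinator across $1$. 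Identifying this with the first return time of Brownian motion after time $1$ and computing via the reflection principle gives $\bE[\log R_n]=0$, so $(\log\Delta_n)$ is a mean-zero random walk. Such a walk does not converge to $-\infty$, so $\Delta_n$ does not tend to $0$, ruling out accumulation. This is the concrete estimate your "main obstacle" paragraph reaches for but does not supply; without the overshoot characterization and the stable scaling identity, I do not see how to make your total-mass/Borel–Cantelli plan rigorous.
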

\begin{proof}
%
 %
 First, we prove the claimed convergence to $(0,0,\emptyset)$. 
 The events $\{T_1^+ = \infty\}$, $\{Y_2 = \infty\}$, and $\{\zeta(\skewerbar(\fN_*)) < Y_1\}$ are equal up to null sets. 
On these events, $m_1^y$ converges to 0 as $y$ increases to $Y_1$, and $(m_2^y,\alpha^y)$ are already absorbed at $(0,\emptyset)$ prior to that level. We proceed inductively. On the event $\{T_n^+ <\infty\}$, this time $T_n^+$ is when the type-1 scaffolding $\fX_*$ exceeds level $Y_n$. Since this scaffolding eventually dies at level 0, we get $T_{n+1}^- <\infty$ a.s.. Now, on the event $\{T_{n+1}^+ = \infty\}$, we apply the same argument as before to $\shiftrestrict{\fN_*}{(T_{n+1}^-,\infty)\times\cE}$ in place of $\fN_*$, to conclude that $(m_1^y,m_2^y,\alpha^y)$ approaches $(0,0,\emptyset)$ as $y$ increases to $Y_{n+1}$.
 
 It remains to show that $Y_n<Y_{n+1}=\infty$ for some $n\ge 1$. 
  We claim that it suffices to prove the following.
  \begin{center}
   $(*)$\ \ \ \parbox[c]{\textwidth - 2cm}{
  Consider any two spindles of heights $\zeta(f_1)=c_1$ and $\zeta(f_2)=c_2$ with $c_1<c_2$. Apply the construction of Definition \ref{deftype2} to $(\ff_1,\ff_2,\cev{\fN},\fN_\emptyset)=(f_1,f_2,\cev{\fN},0)$ for $(\cev{\fN},0)\sim\fP_\emptyset^0$. Then there is some $n\ge 1$ for which $Y_n < Y_{n+1}=\infty$.}\ \ \ \hphantom{(*)}
  \end{center}
 Indeed, once this is shown, $a+b>0$ in the general case implies $Y_2>0$, and only finitely many clades of $\fN_\beta$ survive to level $Y_2$. 
 We apply $(*)$ to these clades one by one, with $c_1$ as the final clock level of the preceding clades and $c_2$ as the next level after $c_1$ at which the top mass of the next clade vanishes, to see that each clade contributes a finite number of clock change levels.
 
 To prove $(*)$, we note that this can be read as a statement about the
  \Stable[\frac32]\ L\'evy process $X=c_2+\xi\Big(\shiftrestrict{\cev{\fN}}{(T_{c_2}(\cev{\fN}),0)\times\cE}\Big)$.
  Specifically, note that $\zeta\big(\ff_1^{(j)}\big)$ or $\zeta\big(\ff_2^{(j)}\big)$ is the overshoot $Y_{j+1}-Y_j$ of $X$ when first crossing level $Y_j$.

  Now we extend $X$ to a \Stable[\frac32]\ process with infinite lifetime so that $T_j^+<\infty$ for all $j\ge 1$, and we show that $Y_j\rightarrow\infty$. To this end, let $\Delta_n=Y_{n+1}-Y_n$ and $R_n=\Delta_{n+1}/\Delta_n$
  for $n\ge 1$. By the strong Markov property of \Stable[\frac32], the conditional distribution of $\Delta_{n+1}$ given $\Delta_1,\ldots,\Delta_n$ equals the law of the overshoot of a $\Stable[\frac32]$ process
  when first crossing $\Delta_n$, which is the same as the overshoot of its $\Stable[\frac12]$ ladder height subordinator \cite{BertoinLevy}. By stable scaling, for each $n$, $R_n$ is independent of $\Delta_n$ and is 
  distributed like the overshoot of a $\Stable[\frac12]$ subordinator across 1. So the sequence $(R_n,\,n\geq 1)$ is i.i.d.\ and
  $$\Delta_{n+1} = \Delta_1 \cdot \prod_{i=1}^{n} R_{i} \qquad \text{for }n\geq 1.$$
  Thus, $(\log(\Delta_n),\,n\geq 1)$ is a random walk. It suffices to show that the increments $\log(R_n)$, $n\ge 1$, of this walk have non-negative expected value.
 
  We can get at the law of $R_{n}$ by taking advantage of the $\Stable[\frac12]$ inverse local time subordinator associated with one-dimensional Brownian motion, $(B(t),\,t\geq 0)$. In this 
  setting, $R_{n}$ is distributed like $T-1$, where $T$ is the time of the first return of $B$ to zero, after time 1. By a calculation based on the reflection principle, we find
  $\bP(T < t) = \frac{2}{\pi}\arctan(t-1)$.
 Thus,
  $$\bE\left(  \log(R_n) \right) = \int_1^\infty \log(t-1)\bP(T\in dt) = \frac{2}{\pi}\int_1^{\infty} \log(t-1)\frac{1}{(t-1)^2 + 1}dt = 0. \vspace{-0.53cm}$$ 
\end{proof}

\subsection{Type-2 evolutions as Borel right Markov processes}\label{secbrm}

In this section we will prove Theorem \ref{thm:diffusion}, i.e.\ that type-2 evolutions are Borel right Markov processes, and that the 
IP-valued variant is a path-continuous Hunt process. We listed the properties 1.-4.\ that this comprises before Proposition \ref{intro:thm1}.

\begin{proof}[Proof of Theorem \ref{thm:diffusion}]
 1. By Lemma \ref{type2welldef}, type-2 evolutions take values in $\cI^\circ$ or $\cJ^\circ$ of (\ref{type2spaces}), 
    which are Lusin as Borel subsets of products of Lusin spaces (see \cite[Theorem 2.7]{Paper1}). 
 
 2. We first prove the path-continuity of the $\cI^{\circ}$-valued type-2 evolution. For $n\ge 0$, in between $Y_n$ and $Y_{n+1}$, this 
    process is formed by concatenating a \BESQ[-1] block to the left of an $\cI$-valued type-1 evolution, 
    $(0,\ff^{(n)})\concat (0,\fm^{(n)})\concat \gamma^{(n)}$. The 
    \BESQ[-1] process is continuous and, as noted in Proposition \ref{intro:thm1}, so is the type-1 evolution. By \cite[Lemma 2.11]{Paper1}, an interval 
    partition process formed by concatenation of two continuous interval partition processes is again continuous. To see continuity at $Y_n$, 
    first suppose that $n\ge 2$ is even. We note that as $y$ approaches $Y_n$ from below, the $\cI^{\circ}$-valued process approaches 
    $(0,0)\concat (0,m_1^{Y_n})\concat\alpha^{Y_n}$, while for $y$ approaching $Y_n$ from above, it approaches 
    $(0,m_1^{Y_n})\concat(0,0)\concat\alpha^{Y_n}$, by the continuity of \BESQ[-1] and $\cI$-valued type-1 evolution. The argument for odd $n\ge 1$ is the same,
    with $m_2^{Y_n}$ in the place of $m_1^{Y_n}$.
    
    The c\`adl\`ag property of $\cJ^\circ$-valued type-2 evolution follows similarly from the corresponding property of $\cJ^\bullet$-valued type-1 evolution proved
    in Corollary \ref{corpairtype1}. Specifically, continuity at $Y_n$ still holds by the same argument, using the path-continuity at independent 
    random times of $\cJ^\bullet$-valued type-1 evolutions, which follows from the path-continuity at fixed levels, which in turn holds as no excursion of a
    \Stable[\frac32] process below the supremum starts from a fixed level.
 
 
 3. The type of construction undertaken in Definition \ref{def:type2:v1}, in which a right Markov process with finite lifetime is reborn at the end of the 
    lifetime according to a probability kernel, has been studied by Meyer \cite{Mey75}. 
    Type-1 evolutions and \BESQ[-1] processes are Borel right Markov processes (see Corollary \ref{corpairtype1}), and thus so too is the process $\big((m_1^y,m_2^y,\alpha^y,1),\,0\le y\le Y_1\big)$ starting from any $(a,b,\beta)\in\cJ^\circ$ with $a>0$ and killed at $Y_1$.  By swapping the parity as in the statement of Lemma \ref{lem:type2_symm}, we can similarly define $\big((m_1^y,m_2^y,\alpha^y,2),\,0\le y\le Y_1\big)$ starting from
$(a,b,\beta)\in\cJ^\circ$ with $b>0$ and killed at $Y_1$, where the fourth component $I(y)=1$ or $I(y)=2$ records which of the two top blocks is evolving according to \BESQ[-1] and which is forming a type-1 evolution with $\alpha^y$. We define the deterministic kernel $\overline{N}((0,x,\beta,1);\,\cdot\,)=\delta_{(0,x,\beta,2)}$, $\overline{N}((x,0,\beta,2);\,\cdot\,)=\delta_{(x,0,\beta,1)}$. As noted in \cite[Definition 8.1]{Sharpe}, Borel right Markov processes are right Markov processes satisfying the hypoth\`eses droites, in Meyer's sense. Therefore, we can apply \cite[Th\'eor\`eme 1 and Remarque on p.474]{Mey75} to conclude that if we alternate killed processes with $I(y)=1$ and $I(y)=2$, using transitions according to $\overline{N}$ to determine initial states from the previous killing state,
\begin{equation}\label{augment}\mbox{the process $\big((m_1^y,m_2^y,\alpha^y,I(y)),\,y\ge 0\big)$ is a right Markov process,}
\end{equation}
satisfying the strong Markov property. It is not hard to show that the semigroup of this process is Borel, see e.g. the last point in the proof of \cite[Th\'eor\`eme (3.18)]{Bec07}. In Proposition \ref{contini} we strengthen this to continuity. 
 
    Lemma \ref{lem:type2_symm} verifies Dynkin's criterion to show that the $\cJ^{\circ}$-valued type-2 evolution is a right Markov process as well. To see that the $\cI^{\circ}$-valued type-2 evolution is a right Markov process, just note that every state $(0,a)\concat(0,b)\concat\beta\in\cI^\circ$ corresponds to two states $(a,b,\beta,1)$ and $(b,a,\beta,2)$, but that both are based on $\besq_a(-1)$ and type-1 evolution from $(b,\beta)$ and hence construct the same process, apart from maintaining opposite last components $I(y)$. Hence, Dynkin's criterion applies again. 
 
 4. The Hunt property of $\cI^\circ$-valued type-2 evolutions holds since sample paths are continuous.
\end{proof}

In Section \ref{sec:Holder} we prove a H\"older continuity result for type-2 evolutions started from certain initial distributions, with bounds on all moments of the H\"older constants. It is possible to mimic \cite[Proof of Proposition 5.11]{Paper1} and appeal to the construction of Definition \ref{deftype2} to prove H\"older continuity with index $\theta\in (0,\frac14)$ at all times after time zero, from any initial state, but in this setting we could not also give the desired bounds, so we omit such arguments here.

In order to establish continuity of the semigroup of type-2 evolution in the initial condition we require some intermediate results.

\begin{lemma}\label{lemma slightly stronger continuity}
Suppose that $((b_n,\beta_n),\,n\geq 1)$ is a sequence in $(\cJ^\bullet,d_\bullet)$ that converges to $(b,\beta)$ and that $(x_n,\,n\ge 1)$ is a 
sequence of levels converging to $x>0$. Let $((m^y_n, \gamma^y_n), y\geq 0)$ and  $((m^y, \gamma^y), y\geq 0)$ be type-1 evolutions started from $(b_n,\beta_n)$ and $(b,\beta)$ respectively.  If $f\colon\cJ^\bullet\to \bR$ is bounded and continuous, then 
$$\bE\left[ f(m^{x_n}_n, \gamma^{x_n}_n)\right] \to  \bE\left[f(m^x, \gamma^x)\right].$$
\end{lemma}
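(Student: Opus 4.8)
The statement is a continuity-in-initial-condition result for the type-1 evolution, evaluated at a converging sequence of levels $x_n \to x > 0$. The natural plan is to reduce it to the continuity-in-initial-condition that is already available (Proposition \ref{intro:thm1}: type-1 evolutions are continuous in the initial condition as $\cI$-valued Hunt processes) plus the Feller-type continuity of the semigroup at a \emph{fixed} level, and then upgrade from fixed $x$ to a converging sequence $x_n$ using path-continuity. First I would pass through the continuous bijection $\varphi(m,\gamma) = (0,m)\concat\gamma$ of Corollary \ref{corpairtype1}: working with the $\cI$-valued type-1 evolutions $\beta_n^y = (0,m_n^y)\concat\gamma_n^y$ and $\beta^y = (0,m^y)\concat\gamma^y$ avoids the discontinuity of $\varphi^{-1}$, since the test function $f$ composed with $\varphi^{-1}$ need not be continuous on $\cI$. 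So the cleaner route is to prove the statement directly for bounded continuous $g\colon\cI\to\bR$ and the $\cI$-valued evolutions, and then handle the passage back to $(m^y,\gamma^y)$ separately, exploiting that a.s. the limiting evolution has a leftmost block at the fixed level $x$ (see point \ref{item:type1:LMB}; no excursion of the \Stable[\frac32] scaffolding below its supremum starts at a fixed level), so $\varphi^{-1}$ is a.s. continuous at $\beta^x$ and the mapping theorem applies.

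**Key steps.** (1) Reduce to: for bounded continuous $g\colon\cI\to\bR$, $\bE[g(\beta_n^{x_n})]\to\bE[g(\beta^x)]$, where $\beta_n,\beta$ are $\cI$-valued type-1 evolutions from $\varphi(b_n,\beta_n)\to\varphi(b,\beta)$. (2) Split into the fixed-level statement $\bE[g(\beta_n^x)]\to\bE[g(\beta^x)]$ and the equicontinuity-in-level correction $\bE[g(\beta_n^{x_n})] - \bE[g(\beta_n^x)] \to 0$. (3) For the fixed-level statement, invoke Proposition \ref{intro:thm1}: continuity in the initial condition of a Hunt process means precisely that the law of $\beta_n^x$ converges weakly to that of $\beta^x$ in $(\cI,d_\cI)$, so the fixed-level convergence is immediate for bounded continuous $g$. (4) For the correction term, the clean argument is a coupling: by Proposition \ref{intro:thm1} one can realize the evolutions from $\varphi(b_n,\beta_n)$ on a common space so that $\sup_{y\le T} d_\cI(\beta_n^y,\beta^y)\to 0$ in probability for each fixed $T$; combined with path-continuity of the single limit process $\beta^\cdot$ at the fixed level $x$, one gets $d_\cI(\beta_n^{x_n},\beta^x)\le d_\cI(\beta_n^{x_n},\beta^{x_n}) + d_\cI(\beta^{x_n},\beta^x)\to 0$ in probability, and hence $g(\beta_n^{x_n})\to g(\beta^x)$ in probability; bounded convergence finishes it. (5) Finally, to return to the pair representation: $f\colon\cJ^\bullet\to\bR$ bounded continuous gives $f\circ\varphi^{-1}$ bounded, measurable, and continuous at every point of $\cI$ that has a leftmost block (hence continuous at $\beta^x$ a.s. for the limit, and the values $f(m_n^{x_n},\gamma_n^{x_n}) = (f\circ\varphi^{-1})(\beta_n^{x_n})$). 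Convergence $\beta_n^{x_n}\to\beta^x$ a.s. along a subsequence plus a.s. continuity of $f\circ\varphi^{-1}$ at $\beta^x$ yields $f(m_n^{x_n},\gamma_n^{x_n})\to f(m^x,\gamma^x)$ a.s. along that subsequence, and since every subsequence has such a further subsequence, the full expectations converge by bounded convergence.

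**Main obstacle.** The delicate point is step (5), and more precisely the claim that $\varphi^{-1}$ is a.s. continuous at $\beta^x$. This rests on two facts about the \Stable[\frac32] scaffolding underlying $\fN_*$: that the leftmost mass $m^x(\fN_*)$ is strictly positive a.s. at a fixed level $x>0$ (equivalently, $x$ is a.s. not in the closure of the set of levels where the skewer has no leftmost block — true because $x$ is a.s. not the starting level of an excursion of $\xi(\fN_*)$ below its running supremum, and such starting levels, together with levels in the range of the supremum but not strictly exceeded, form the bad set), and that at such a level the map $(0,m)\concat\gamma\mapsto(m,\gamma)$ is continuous in $d_\bullet$ versus $d_\cI$. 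The latter continuity is exactly the equivalence recorded in point 2 of the proof of Corollary \ref{corpairtype1}. Once these are in hand the argument is routine; the rest is standard weak-convergence bookkeeping (subsequences, bounded convergence, coupling from the Hunt/Feller property). An alternative to step (4) that avoids an explicit coupling is to note that by the strong Markov property at level $x\wedge x_n$ and Feller continuity of the semigroup, together with $\sup_{|z-x|\le\delta} d_\cI(\beta^z,\beta^x)\to0$ a.s. as $\delta\to0$, one gets a uniform-in-$n$ modulus; but the coupling phrasing is shorter.
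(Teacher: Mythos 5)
There is a genuine gap in your step (4). The pathwise coupling you invoke — realizing the evolutions on a common space so that $\sup_{y\le T}d_\cI(\beta_n^y,\beta^y)\to 0$ in probability — does not follow from Proposition~\ref{intro:thm1}. That proposition asserts that type-1 evolutions are path-continuous Hunt processes and are ``continuous in the initial condition,'' which is weak continuity of the semigroup at each fixed level (Feller-type continuity). This gives your step (3), but passing from weak semigroup continuity to a uniform-on-compacts pathwise coupling in probability requires additional structure that is not contained in the abstract Hunt/Feller statement; in general, convergence in distribution of Markov processes with continuous paths does not yield such a coupling. Your ``alternative to step (4)'' has the same difficulty: to dominate $\bE[g(\beta_n^{x_n})]-\bE[g(\beta_n^x)]$ you need a modulus of continuity in level that is uniform over $n$, and invoking path-continuity of the single limit process $\beta$, or the strong Markov property plus Feller continuity, does not obviously supply it without further work (e.g.\ a tightness argument for the laws of $\beta_n^{x\wedge x_n}$ together with uniform-on-compacts convergence of $P_t g\to g$).

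The paper avoids this entirely by citing the proof of~\cite[Proposition 5.20]{Paper1}, which already establishes $\bE[g(\beta_n^{x_n})]\to\bE[g(\beta^x)]$ directly for converging levels and initial states; that proof constructs an explicit scaffolding-and-spindles coupling, reducing to finitely many clades each driven by an initial spindle and an independent $\Stable[\frac32]$ process. It is precisely this concrete coupling, not the abstract Hunt/Feller properties, that makes the level-perturbation step go through. Your step (5) — that the existence of a leftmost block is a.s.\ continuous at a fixed level $x>0$ because the $\Stable[\frac12]$ ladder height subordinator of the scaffolding a.s.\ avoids $x$, so $\varphi^{-1}$ is a.s.\ continuous at $\beta^x$ — is exactly the paper's key observation, phrased slightly differently (the paper speaks of path-continuity of the leftmost mass evolution around level $x$, which amounts to the same thing under the coupling). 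So the overall architecture is sound, but the justification for step (4) needs to be replaced by an appeal to the explicit coupling in~\cite{Paper1} rather than to Proposition~\ref{intro:thm1}.
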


\begin{proof}
If $g\colon\cI \to \bR$ is bounded and continuous, then the fact that
$$\bE\left[g\left((0,m^{x_n}_n)\concat \gamma^{x_n}_n\right)\right] \to  \bE\left[g\left((0,m^{x})\concat \gamma^x\right)\right].$$
is established in the proof of \cite[Proposition 5.20]{Paper1}.  The slightly stronger version that separates out convergence of the top mass 
follows from the coupling used in that proof. Specifically, that proof reduces the argument to finitely many clades, each of which is composed of 
an initial spindle and an independent $\Stable[\frac32]$ L\'evy process. Furthermore, the ladder height process of a $\Stable[\frac32]$ L\'evy 
process, in which the leftmost spindle at each level can be found, is a $\Stable[\frac12]$ subordinator. The probability that $x$ is in its range 
is zero, so that the evolution of the leftmost mass is continuous around level $x$ with probability one.    
\end{proof}

It will be convenient to augment the type-2 evolution $(\Gamma^y, y\ge 0)$ by the counting process $J(y)=\inf\{j\ge 0\colon Y_{j+1}>y\}$ counting 
its clock changes. This process $((\Gamma^y,J(y)),y\ge0)$ can be constructed as a strong Markov process as in (\ref{augment}) and similarly relates to $(\Gamma^y,y\ge 0)$ by 
Dynkin's criterion. Let $p$ be the parity map sending even numbers to $2$ and odd numbers to $1$. The state space for the evolution 
$((\Gamma^y,J(y)), y\geq 0)$ is the set
$$ \cJ^+ = \{ ((m_1,m_2,\beta), j) \in \cJ^\circ \times \mathbb{N}_0\colon \ m_{p(j+1)}>0\}.$$
In the following lemma, we write $\bE_{\gamma,j}$ to denote the expectation for the augmented process starting from $(\gamma,j)\in\cJ^+$.

\begin{lemma}\label{lemma markovprops}
 Suppose that $(\Gamma^y,y\ge 0)$ is a type-2 evolution with clock change levels $Y_j$. 
 Then
 \begin{enumerate}[label=(\roman*), ref=(\roman*)]
  \item for  all $f\colon\cJ^+ \to \bR$ bounded and continuous
   $$\bE \left[ f(\Gamma^{Y_j+u}, J(Y_j+u)) \middle| \cF^{{Y_j}} \right] = \bE_{\Gamma^{Y_j}, j}\left[f(\Gamma^{u}, J(u))\right],\qquad\bP\mbox{-a.s..}$$
  \item for all $h\colon\cJ^\circ \to \bR$ bounded and continuous and for $\bP$-a.e.\ $\omega$ 
   \begin{align*}  &\bE\left[ h(\Gamma^y) \mathbf{1}\{Y_j\leq y <Y_{j+1}\} \middle| \cF^{Y_j}\right](\omega)\\ 
				&=\mathbf{1}\{Y_{j}(\omega)\leq y\} 
				  \bE_{\Gamma^{Y_j(\omega)}(\omega),j} \left[ h(\Gamma^{y\vee Y_j(\omega)- Y_j(\omega)}) \mathbf{1}\{ y\vee Y_j(\omega)-Y_j(\omega) <Y_{1}\}\right].
   \end{align*}
 \end{enumerate}
\end{lemma}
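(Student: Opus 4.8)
The plan is to derive both statements from the strong Markov property of the augmented process $((\Gamma^y,J(y)),y\ge0)$ together with the optional-stopping description of the clock levels $Y_j$. First I would record that $Y_j$ is an $(\cF^y)$-stopping time: indeed $Y_{j+1}=\fX_*(T_j^+)$ is determined by the scaffolding-and-spindles data up to level $Y_{j+1}$, and more importantly $\{Y_j\le y\}$ is $\cF^y$-measurable because the event that the clock has changed at least $j$ times by level $y$ is read off from $(\ff_1(z),\ff_2(z),\cev\fN^{\le z},\fN_\beta^{\le z})$ for $z\le y$ — this is exactly the filtration with respect to which Lemmas \ref{lem:type0Markov} and \ref{lem:type1Markov} give the Markov-like property, and it is the filtration used in (\ref{augment}). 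Since on $\{Y_j<\infty\}$ the clock change level $Y_j$ is, by the construction in Definition \ref{deftype2}, the lifetime $\zeta(\ff^{(j-1)})$ added to $Y_{j-1}$, and since the process $((\Gamma^y,J(y)),y\ge0)$ is a right Markov process reborn at each $Y_j$ via the deterministic parity-swap kernel $\overline N$ (this is precisely Meyer's construction invoked for (\ref{augment})), the pair $(\Gamma^{Y_j},j)$ is an $\cF^{Y_j}$-measurable random variable taking values in $\cJ^+$, and the post-$Y_j$ process is conditionally a copy of the augmented type-2 evolution started from $(\Gamma^{Y_j},j)$.

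For part (i), I would simply apply the strong Markov property of $((\Gamma^y,J(y)),y\ge0)$ at the stopping time $Y_j$. Because $J(Y_j+u)-j$ and $\Gamma^{Y_j+u}$ are, by Definition \ref{deftype2}, measurable functions of the shifted data $(\ff^{(j-1)}(\zeta(\ff^{(j-1)})+\cdot),\shiftrestrict{\fN_*}{(T_j^+,\infty)\times\cE},\ldots)$, which after conditioning on $\cF^{Y_j}$ is distributed as the data quadruple for a fresh augmented type-2 evolution started from state $(\Gamma^{Y_j},j)\in\cJ^+$, the identity
$\bE[f(\Gamma^{Y_j+u},J(Y_j+u))\mid\cF^{Y_j}]=\bE_{\Gamma^{Y_j},j}[f(\Gamma^u,J(u))]$
follows. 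One should be slightly careful on the event $\{Y_j=\infty\}$, where the conditional expectation and the right-hand side are both to be read with the convention that the process is absorbed; there $J$ has already stabilised and the claim is trivial. The continuity/boundedness hypothesis on $f$ plays no essential role here beyond ensuring the conditional expectations are well defined; it is stated for uniformity with part (ii).

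For part (ii), the idea is to combine part (i) with the observation that $\mathbf 1\{Y_j\le y<Y_{j+1}\}=\mathbf 1\{Y_j\le y\}\cdot\mathbf 1\{y-Y_j<\zeta(\ff^{(j)})\}$ and that on $\{Y_j\le y\}$ we have $y-Y_j=y\vee Y_j-Y_j$. Conditioning on $\cF^{Y_j}$, the factor $\mathbf 1\{Y_j\le y\}$ is measurable, and by the strong Markov property applied at $Y_j$ the remaining expectation
$\bE[h(\Gamma^y)\mathbf 1\{y-Y_j<\zeta(\ff^{(j)})\}\mid\cF^{Y_j}]$
equals, on $\{Y_j(\omega)\le y\}$, the function $(\gamma,j)\mapsto\bE_{\gamma,j}[h(\Gamma^{y-Y_j(\omega)})\mathbf 1\{y-Y_j(\omega)<Y_1\}]$ evaluated at $(\Gamma^{Y_j(\omega)}(\omega),j)$; here I use that for the augmented process started from $(\gamma,j)$ the first clock change occurs at $Y_1=\zeta(\ff^{(j)})$, matching the indicator, and on $\{Y_1>y-Y_j(\omega)\}$ the counter $J$ is still $j$ so that $\Gamma^{y-Y_j(\omega)}$ is the type-2 state. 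Substituting $y\vee Y_j(\omega)-Y_j(\omega)$ for $y-Y_j(\omega)$ (valid on $\{Y_j(\omega)\le y\}$, and on the complement the prefactor $\mathbf 1\{Y_j(\omega)\le y\}$ kills both sides) yields the stated formula.

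The main obstacle I anticipate is the careful bookkeeping in part (ii): one must check that the map $\omega\mapsto\bE_{\Gamma^{Y_j(\omega)}(\omega),j}[\cdots]$ is a legitimate version of the conditional expectation, i.e.\ that $\Gamma^{Y_j}$ is genuinely $\cF^{Y_j}$-measurable with values in $\cJ^+$ (not merely in $\cJ^\circ$ — one needs $m_{p(j+1)}^{Y_j}>0$, which holds because at a clock change the newly-started $\besq(-1)$ clock spindle has strictly positive initial mass, namely the surviving non-clock top mass), and that the regular conditional distribution of the post-$Y_j$ data given $\cF^{Y_j}$ is exactly $\fP^2$ with the appropriate initial state and parity — this is where Lemma \ref{lem:type1Markov} (applied to the type-1 triple $\Psi_{j-1}$ at level $\zeta(\ff^{(j-1)})$) and Lemma \ref{lem:type2_symm} (to handle the parity swap symmetrically) are the real inputs. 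Once the measurability and the regular conditional distribution are pinned down, both identities are immediate consequences of the strong Markov property, and the continuity of $f,h$ is only needed to guarantee that the relevant integrands are measurable and the conditional expectations are well defined $\bP$-a.s.
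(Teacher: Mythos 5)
Your plan mirrors the paper's: part (i) is read off the fact that the augmented process $((\Gamma^y,J(y)),\,y\ge 0)$ is a right Markov process built by concatenation via Meyer's construction (the paper says ``immediate from the construction''), and part (ii) proceeds by writing $\mathbf 1\{Y_j\le y<Y_{j+1}\}=\mathbf 1\{Y_j\le y\}\,\mathbf 1\{y-Y_j<Y_{j+1}-Y_j\}$, pulling out the $\cF^{Y_j}$-measurable factor, and conditioning at $Y_j$. The paper compresses the remainder of part (ii) into a citation of the proof of Chung's Theorem 2.3.3 applied to $((\Gamma^y,J(y)),\,y\ge0)$; your proposal unfolds what that citation contains, so the routes are essentially the same.

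The one step in part (ii) that you should not present as a direct consequence of ``the strong Markov property'' is the identity
\[
\bE\big[h(\Gamma^y)\,\mathbf 1\{y-Y_j<Y_{j+1}-Y_j\}\,\big|\,\cF^{Y_j}\big](\omega)
=\bE_{\Gamma^{Y_j(\omega)}(\omega),\,j}\big[h(\Gamma^{\,y-Y_j(\omega)})\,\mathbf 1\{y-Y_j(\omega)<Y_1\}\big]
\quad\text{on }\{Y_j(\omega)\le y\}.
\]
Here the path functional being integrated after the shift $\theta_{Y_j}$ depends on the past through the \emph{random} time $y-Y_j(\omega)$. The plain strong Markov property only yields $\bE[F\circ\theta_{Y_j}\mid\cF^{Y_j}]=\bE_{\Gamma^{Y_j},j}[F]$ for a \emph{fixed} functional $F$; to allow $F$ to depend $\cF^{Y_j}$-measurably on $\omega$ one needs the randomly-shifted (``extended'') version, whose monotone-class proof requires joint measurability of $(t,\gamma)\mapsto\bE_{\gamma,j}\big[h(\Gamma^t)\mathbf 1\{t<Y_1\}\big]$ — and this is precisely where right-continuous dependence of the semigroup on time enters, the property the paper explicitly flags as the hypothesis in Chung's proof and deduces from right-continuity of sample paths. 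Your closing ``main obstacle'' paragraph raises legitimate but secondary points ($\cJ^+$-valuedness of $\Gamma^{Y_j}$, the conditional law of the post-$Y_j$ data) — those are already settled once $((\Gamma^y,J(y)),\,y\ge0)$ is a right Markov process, which you invoke via \eqref{augment}. The genuinely delicate step your write-up asserts without justification is the random-time-shift extension; supplying the monotone-class argument (or quoting Chung as the paper does) is what closes the gap.
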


\begin{proof}
The first claim is immediate from the construction of type-2 evolutions and the second follows from the proof of \cite[Theorem 2.3.3]{chung2006markov} applied to the augmented Markov process $((\Gamma^y,J(y)),y\ge 0)$. The book \cite{chung2006markov} assumes that the Markov process takes place on a locally compact state space, but that is not needed in the proof of Theorem 2.3.3.  The right-continuous dependence of the semigroup on time needed in the proof follows from the right-continuity of sample paths.
\end{proof}

Next we establish weak continuity at clock levels.

\begin{lemma}\label{lemma clockcont}
Suppose that $(a_n,b_n,\beta_n) \rightarrow (a,b,\beta)$ in $(\cJ^\circ,d_\cJ)$ with $a>0$.  Let $(\Gamma^y_n,\,y\geq 0)$ and $(\Gamma^y,\,y\geq 0)$
be type-2 evolutions started from $(a_n,b_n,\beta_n)$ and $(a,b,\beta)$ respectively with respective clock levels $Y^n_k$ and $Y_k$.  Then
 $$\bE\left[f\left(\Gamma_n^{Y^n_j}, Y^n_j\right)\right] \rightarrow \bE\left[f\left(\Gamma^{Y_j}, Y_j\right)\right].$$
\end{lemma}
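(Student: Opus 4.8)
The plan is to induct on $j$. For $j=0$ there is nothing to prove, since $\Gamma_n^{Y^n_0}=(a_n,b_n,\beta_n)\to(a,b,\beta)=\Gamma^{Y_0}$ and $Y^n_0=Y_0=0$ deterministically, so $f(\Gamma_n^{Y^n_0},Y^n_0)\to f(\Gamma^{Y_0},Y_0)$ by continuity of $f$. The heart of the matter is the case of a single clock change; the inductive step then follows by combining this with the Markov property at clock levels (Lemma \ref{lemma markovprops}(i)) and the continuous mapping theorem for weak convergence.

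\textbf{One clock change.} By Definition \ref{def:type2:v1} we may realise, on one probability space, $\ff_1\sim\besq_a(-1)$ and, independently, the $\cI$-valued type-$1$ evolution $(\eta^y,\,y\ge0)$ from $(0,b)\concat\beta$, so that $Y_1=\zeta(\ff_1)=:\zeta$ and $\Gamma^{Y_1}=\big(0,(\varphi^{-1}(\eta^{\zeta}))_1,(\varphi^{-1}(\eta^{\zeta}))_2\big)$, the vanishing first coordinate being the dead clock spindle. Across $n$, I would couple the timers by squared-Bessel scaling: set $\ff_1^{(n)}:=\big(a_n\ff^*(y/a_n),\,y\ge0\big)$ for a single $\ff^*\sim\besq_1(-1)$ taken independent of the type-$1$ evolutions, so that $\zeta_n:=\zeta(\ff_1^{(n)})=a_n\zeta(\ff^*)\to a\,\zeta(\ff^*)=\zeta$ a.s., with $\zeta>0$ a.s.\ (and with a density). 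Couple the $\cI$-valued type-$1$ evolutions $(\eta_n^y)$ from $(0,b_n)\concat\beta_n$ and $(\eta^y)$ from $(0,b)\concat\beta$ by the scaffolding-and-spindles coupling used in the proof of Lemma \ref{lemma slightly stronger continuity} (that of \cite[Proposition 5.20]{Paper1}), under which $\varphi^{-1}(\eta_n^{x_n})\to\varphi^{-1}(\eta^{x})$ whenever $x_n\to x>0$ and $x$ avoids the Lebesgue-null, $\eta$-measurable set of levels in the range of the $\Stable[\tfrac12]$ ladder-height subordinator. Since $\zeta$ has a density and is independent of this coupling, a.s.\ $\zeta>0$ and $\zeta$ avoids the bad set; hence $\varphi^{-1}(\eta_n^{\zeta_n})\to\varphi^{-1}(\eta^{\zeta})$ a.s., and prepending the vanishing clock coordinate gives $(\Gamma_n^{Y^n_1},Y^n_1)\to(\Gamma^{Y_1},Y_1)$ a.s. Boundedness and continuity of $f$ then give $\bE[f(\Gamma_n^{Y^n_1},Y^n_1)]\to\bE[f(\Gamma^{Y_1},Y_1)]$ by bounded convergence. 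The same argument, with the two top masses interchanged as in Lemma \ref{lem:type2_symm}, handles a first clock change of either parity. Writing $\Phi^{(p)}(\gamma,t):=\bE^{(p)}_{\gamma}[f(\Gamma^{Y_1},t+Y_1)]$ ($p\in\{1,2\}$) for the corresponding quantity started from a state $\gamma\in\cJ^\circ$ with clock coordinate $p$ positive and run until the first clock change, this shows $\Phi^{(p)}$ is bounded and jointly continuous in $(\gamma,t)$ at every such $\gamma$ (joint continuity in $t$ being immediate by bounded convergence).

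\textbf{Inductive step.} Assume the assertion at level $j$; testing against all bounded continuous functions, this says $(\Gamma_n^{Y^n_j},Y^n_j)\Rightarrow(\Gamma^{Y_j},Y_j)$. On the event that the evolution has not been absorbed by level $Y_j$, the augmented state $(\Gamma^{Y_j},J(Y_j))=(\Gamma^{Y_j},j)$ lies in $\cJ^+$ with clock coordinate $p(j+1)$ positive; applying Lemma \ref{lemma markovprops}(i) to the augmented process at the clock level $Y_j$ and noting that $Y_{j+1}-Y_j$ and $\Gamma^{Y_{j+1}}$ are the first clock level and corresponding state of the post-$Y_j$ evolution, I get
\[
\bE\big[f(\Gamma^{Y_{j+1}},Y_{j+1})\big]=\bE\big[\Phi^{(p(j+1))}(\Gamma^{Y_j},Y_j)\big],
\]
and likewise with subscript $n$. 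Since $\Phi^{(p(j+1))}$ is bounded and Borel (Theorem \ref{thm:diffusion}), is continuous at every non-absorbed state by the previous paragraph, and is continuous at the absorbing state $(0,0,\emptyset)$ by a routine check using continuity of type-$1$ evolutions in the initial condition and at level zero (Proposition \ref{intro:thm1}) — there the first clock level degenerates to $0$ and the post-clock state to $(0,0,\emptyset)$ — the continuous mapping theorem yields $\bE[\Phi^{(p(j+1))}(\Gamma_n^{Y^n_j},Y^n_j)]\to\bE[\Phi^{(p(j+1))}(\Gamma^{Y_j},Y_j)]$, which is the assertion at level $j+1$.

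\textbf{Main obstacle.} The delicate step is the single-clock-change case, specifically dealing with the \emph{random} level $Y_1=\zeta(\ff_1)$ at which the type-$1$ evolution is evaluated: one needs continuity of type-$1$ evolutions simultaneously in the initial condition and in a converging sequence of positive levels, with the leftmost-block mass separated out, and one needs those levels to avoid the null set of ``bad'' levels almost surely. The first is exactly Lemma \ref{lemma slightly stronger continuity}; the second is obtained cheaply by realising $\zeta(\ff_1)$, via squared-Bessel scaling, as $a$ times a single $\besq_1(-1)$ lifetime independent of the type-$1$ coupling, so its absolutely continuous law makes it miss the bad set a.s. The rest — parity bookkeeping via $p(j+1)$ and disposal of the absorbed/degenerate states — is routine given this and Proposition \ref{intro:thm1}.
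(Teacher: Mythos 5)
Your induction structure and parity bookkeeping match the paper, and both proofs couple the clock timer by squared-Bessel scaling so that $Y^n_1=(a_n/a)Y_1$. The genuine difference is in how the base case handles the \emph{random} level $Y_1$. You couple the type-1 evolutions pathwise, via the construction from the proof of \cite[Proposition 5.20]{Paper1}, and then argue almost-sure convergence $\varphi^{-1}(\eta_n^{\zeta_n})\to\varphi^{-1}(\eta^{\zeta})$ because $\zeta$ is independent of that coupling, has an absolutely continuous law, and hence avoids the (random, null) bad set a.s. The paper instead \emph{disintegrates} on the value of $Y^n_1$, using the explicit density of $Y_1=a/(2G)$ with $G\sim\GammaDist[\frac32,1]$ from \cite[Eq.~(13)]{GoinYor03}: it writes the expectation as $\int_0^\infty \bE\bigl[f(0,\fm_n^{(0)}(x),\gamma_n^{(0)}(x),x)\bigr]\,\frac{a}{a_n}q\!\left(\frac{ax}{a_n}\right)dx$ and then applies Lemma~\ref{lemma slightly stronger continuity} at each \emph{fixed} $x$ together with a generalized dominated convergence theorem. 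This is exactly calibrated to the statement of Lemma~\ref{lemma slightly stronger continuity}, which is a level-by-level weak convergence.

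Your pathwise route instead invokes a stronger consequence of the coupling than Lemma~\ref{lemma slightly stronger continuity} states: that on a single probability space, a.s.\ simultaneously for every $x>0$ outside the fixed null set given by the range of the $\Stable[\frac12]$ ladder-height subordinator of the limit, and for every sequence $x_n\to x$, one has $\varphi^{-1}(\eta_n^{x_n})\to\varphi^{-1}(\eta^{x})$. The paper's proof of Lemma~\ref{lemma slightly stronger continuity} only asserts this level by level (``the probability that $x$ is in its range is zero''), and its proof of the present lemma is set up precisely to require only that weaker form. Your claim is plausible, and it is in the spirit of what the cited coupling does, but it is not stated anywhere in this paper or readily quotable from it; without justifying it you have a gap that the paper's disintegration argument sidesteps entirely. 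If you keep the pathwise approach, you should either verify the simultaneous-in-$x$ version from \cite[Proposition 5.20]{Paper1} or, more economically, switch to the disintegration-and-dominated-convergence argument, which needs nothing beyond the lemma as stated plus the density of $Y_1$. For the inductive step, your formulation via $\Phi^{(p)}$ and its continuity (including the check at the absorbing state $(0,0,\emptyset)$) is a sensible expansion of what the paper compresses into one sentence and is consistent with it.
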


\begin{proof} We first establish the claim for $j=1$. Let $((\Gamma^y_n, J_n(y)), y\geq 0)$ and $((\Gamma^y,J(y)),y\geq 0)$ be the augmented type-2 evolutions started from $(a_n,b_n,\beta_n,0)$ and $(a,b,\beta,0)$. Let $\ff^{(0)}$ be a $\besq(-1)$ started from $a$, let $(\mathbf{m}^{(0)}, \gamma^{(0)})$ be an independent type-1 evolution started from $(b, \beta)$, and let $(\mathbf{m}^{(0)}_n,\gamma^{(0)}_n)$ be a type-1 evolution, independent from $\ff^{(0)}$, and started from $(b_n, \beta_n)$.  From the construction of type-2 evolutions, we see that 
$$ (\Gamma^y,\,0\leq y\leq Y_1) \stackrel{d}{=}  \left( \left(\ff^{(0)}(y), \mathbf{m}^{(0)}(y), \gamma^{(0)}(y)\right),\,0\leq y\leq Y_1\right)$$
and
\begin{equation} (\Gamma^y_n,\,0\leq y\leq Y^n_1) \stackrel{d}{=} 
                  \left( \left(\frac{a_n}{a}\ff^{(0)}\left(\frac{a}{a_n}y\right), \mathbf{m}_n^{(0)}(y), \gamma^{(0)}_n(y)\right),\,0\leq y\leq Y^n_1\right).
  \label{scalebesq}
\end{equation}
Note that, from this construction, $Y^n_1 = (a_n/a)Y_1$.  Furthermore, from \cite[Equation (13)]{GoinYor03} we see that $Y_1$ is distributed like $a/(2G)$ where $G\sim\GammaDist[\frac32,1]$.  In particular, $Y_1$ has a continuous density $q$ on $(0,\infty)$. Disintegrating based on the value of $Y^n_1$, we see that
$$\bE\left[ f( \ff^{(0)}_{n}(Y^n_1), \mathbf{m}^{(0)}_n(Y^n_1),\gamma^{(0)}_n(Y^n_1), Y^n_1)\right] 
 = \!\int_0^\infty\! \bE\left[ f\left( 0, \mathbf{m}^{(0)}_n\left(x\right), \gamma^{(0)}_n\left(x\right),x\right)\right]\frac{a}{a_n}q\left(\frac{ax}{a_n}\right) dx.
$$
It follows from Lemma \ref{lemma slightly stronger continuity} and a version of the dominated convergence theorem (e.g.\ \cite[Theorem 1.21]{Kallenberg}) that
\begin{equation}\label{base1}\bE_{(a_n,b_n,\beta_n),0}\left[f\left(\Gamma^{Y_1}, Y_1\right)\right] \rightarrow \bE_{(a,b,\beta),0}\left[f\left(\Gamma^{Y_1}, Y_1\right)\right].
\end{equation}
This completes the proof for $j=1$, for all $a>0$, $b\ge 0$ and $\beta\in\cI$. The same proof applied to augmented type-2 evolutions started from $(a_n,b_n,\beta_n,1)$ and $(a,b,\beta,1)$ shows 
\begin{equation}\label{base2}\bE_{(a_n,b_n,\beta_n),1}\left[f\left(\Gamma^{Y_1},Y_1\right)\right] \rightarrow \bE_{(a,b,\beta),1}\left[f\left(\Gamma^{Y_1}, Y_1\right)\right],
\end{equation}
for all $a\ge 0$, $b>0$ and $\beta\in\cI$. The inductive step $j\rightarrow j+1$ follows from the strong Markov property of the augmented type-2 
evolutions at clock levels $Y^n_j$ and $Y_j$, applying (\ref{base2}) for odd $j$ and (\ref{base1}) for even $j$.
\end{proof}

\begin{proposition}\label{contini}
Fix  $y\geq 0$ and define $F_y\colon\cJ^\circ \to \mathcal{P}(\cJ^\circ)$, by letting $F_y(a,b,\beta)$ be the law at level $y$ of a type-2 evolution starting from 
the initial state $(a,b,\beta)\in \cJ^\circ$. Then $(a,b,\beta)\mapsto F_y(a,b,\beta)$ is weakly continuous. Similarly define $G_y(\gamma)$ as the law at level $y$
for the IP-valued variant starting from $\gamma\in\cI^\circ$. Then $\gamma\mapsto G_y(\gamma)$ is weakly continuous on $\cI^\circ$.
\end{proposition}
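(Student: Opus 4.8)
The plan is to fix $y>0$ (the case $y=0$ being trivial), a bounded continuous $f\colon\cJ^\circ\to\bR$, and a sequence $(a_n,b_n,\beta_n)\to(a,b,\beta)$ in $(\cJ^\circ,d_\cJ)$, and to prove $\bE_{(a_n,b_n,\beta_n)}[f(\Gamma^y)]\to\bE_{(a,b,\beta)}[f(\Gamma^y)]$ by splitting according to the clock interval containing level $y$. With $Y_0=0\le Y_1\le Y_2\le\cdots$ the clock levels and $J$ the clock‑counting process of Lemma \ref{lemma markovprops}, one has $\bE_{(a,b,\beta)}[f(\Gamma^y)]=\sum_{j=0}^{N-1}\bE_{(a,b,\beta)}\big[f(\Gamma^y)\mathbf{1}\{Y_j\le y<Y_{j+1}\}\big]+\bE_{(a,b,\beta)}\big[f(\Gamma^y)\mathbf{1}\{Y_N\le y\}\big]$ for every $N$. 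For fixed $j$, Lemma \ref{lemma markovprops}(ii) rewrites the $j$‑th summand as $\bE_{(a,b,\beta)}[H_j(\Gamma^{Y_j},Y_j)]$ with $H_j(\gamma,t):=\mathbf{1}\{t\le y\}\,\bE_{\gamma,j}\big[f(\Gamma^{y-t})\mathbf{1}\{y-t<Y_1\}\big]$, bounded by $\|f\|_\infty$, and the tail is crudely controlled by $\|f\|_\infty\,\bP_{(a,b,\beta)}(Y_N\le y)$. Now Lemma \ref{lemma clockcont} (and its parity‑swapped counterpart) gives weak convergence of $(\Gamma^{Y_j},Y_j)$ under the $\bP_{(a_n,b_n,\beta_n)}$; applied to the time coordinate alone, together with $\bP_{(a,b,\beta)}(Y_N=y)=0$, it yields $\bP_{(a_n,b_n,\beta_n)}(Y_N\le y)\to\bP_{(a,b,\beta)}(Y_N\le y)$, while $\bP_{(a,b,\beta)}(Y_N\le y)\le\bP_{(a,b,\beta)}(Y_N<\infty)\downarrow0$ as $N\to\infty$ by Lemma \ref{type2welldef}. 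Hence, once we know each $H_j$ is continuous off the $\bP_{(a,b,\beta)}$‑null set $\{Y_j=y\}$, the continuous mapping theorem handles the first $N$ terms, and letting $n\to\infty$ and then $N\to\infty$ gives the claim.

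Two points then need checking. First, $\bP_{(a,b,\beta)}(Y_j=y)=0$ for all $y>0$, $j\ge1$: each clock spindle $\ff^{(j)}$ with $j\ge1$ is the cut‑off top $\widehat f^{\,Y_j}$ of a continuous excursion, hence starts from a strictly positive value, so $Y_{j+1}-Y_j=\zeta(\ff^{(j)})$ has an atomless conditional law on $(0,\infty)$ given $\cF^{Y_j}$; with the atomlessness of $Y_1=\zeta(\ff_1)$ when $a>0$ (and $Y_1=0<y$ when $a=0$), induction gives the claim. Second, the within‑interval functional $G_j(\gamma,u):=\bE_{\gamma,j}[f(\Gamma^u)\mathbf{1}\{u<Y_1\}]$, and hence $H_j(\gamma,t)=\mathbf{1}\{t\le y\}G_j(\gamma,y-t)$, is continuous on its natural domain away from $\{t=y\}$: by Definition \ref{def:type2:v1}, within one clock interval the type‑2 evolution is a $\besq(-1)$ ``clock'' run independently of a type‑1 evolution, so $G_j$ is the integral of $f$ (composed with the relevant re‑ordering of the two top blocks) against the product of the sub‑probability kernel $(a',u)\mapsto\mathrm{law}\big(\ff(u);\,u<\zeta(\ff)\big)$ of $\besq(-1)$, jointly weakly continuous for $u>0$ by the Feller property and continuity of the $\besq(-1)$ transition density away from $0$ (\cite{PitmYor82,GoinYor03}), and the type‑1 transition kernel, jointly weakly continuous for $u>0$ by Lemma \ref{lemma slightly stronger continuity}; continuity as $u\downarrow0$ uses right‑continuity of sample paths (Theorem \ref{thm:diffusion}) and continuity in the initial condition (Proposition \ref{intro:thm1}).

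For the IP‑valued statement I would transfer through the continuous concatenation map $\varphi_2\colon(a,b,\beta)\mapsto(0,a)\concat(0,b)\concat\beta$: the IP‑valued type‑2 evolution from $\gamma\in\cI^\circ$ is the image under $\varphi_2$ of the $\cJ^\circ$‑valued one started from any preimage of $\gamma$, so $G_y(\gamma)=(\varphi_2)_\ast F_y(\psi)$ for any $\psi\in\varphi_2^{-1}(\gamma)$. Given $\gamma_n\to\gamma$ in $\cI^\circ$, take $\psi_n\in\varphi_2^{-1}(\gamma_n)$ formed by the leftmost two blocks of $\gamma_n$ and the remainder. A short analysis of $d_\cI$ (a positive‑mass block with zero diversity to its left must, in any low‑distortion correspondence, be matched to a correspondingly placed block) shows that along a subsequence $\psi_n\to\psi^*=(a^*,b^*,\beta^*)$ with $\varphi_2(\psi^*)=\gamma$; if $a^*+b^*>0$ then $\psi_n\to\psi^*$ in $\cJ^\circ$ and the $\cJ^\circ$‑case applies, while the only alternative is $a^*=b^*=0$, i.e.\ $\psi_n\to(0,0,\gamma)$ with $\gamma\neq\emptyset$. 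In that case the first two clocks of the type‑2 evolution from $\psi_n$ become instantaneous in the limit, and one shows $F_y(\psi_n)$ converges to $F_y$ of the standard preimage of $\gamma$ by applying Lemma \ref{lemma clockcont} at the two vanishing clock levels $Y_1^n,Y_2^n$ (both $\to0$, as their $\besq(-1)$ clocks start from $a_n,b_n\to0$) and using continuity of the type‑1 evolution in its initial condition; pushing forward by $\varphi_2$ then gives $G_y(\gamma_n)\to G_y(\gamma)$.

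I expect the IP‑valued case to be the main obstacle: because $\cI^\circ$ forgets the labels of the two top masses, a convergent sequence in $(\cI^\circ,d_\cI)$ need not lift to a convergent sequence in $(\cJ^\circ,d_\cJ)$ — mass may concentrate at the left so that arbitrarily many ``fast'' clock changes pile up near level $0$ — and one must show that such cascades of instantaneous clock changes depend continuously on the data. By contrast, the $\cJ^\circ$ case is essentially bookkeeping: the only delicate point is interchanging the sum over clock intervals with the limits $n\to\infty$ and $N\to\infty$, and this goes through because Lemma \ref{lemma clockcont}, applied to the time coordinate, already pins down $\bP_{(\cdot)}(Y_N\le y)$ in the $n$‑limit.
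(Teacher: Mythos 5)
Your treatment of the $\cJ^\circ$ case is essentially the paper's argument: decompose over clock intervals via Lemma~\ref{lemma markovprops}(ii), use Lemma~\ref{lemma clockcont} together with Lemma~\ref{lemma slightly stronger continuity} for each term, and dominate the tail. The paper runs this through the Skorohod representation theorem rather than the continuous mapping theorem, but that is cosmetic. Your explicit check that $\bP(Y_j=y)=0$ is correct and matches the remark in the paper.

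The $\cI^\circ$ case is where your plan has genuine gaps. First, you assert that along a subsequence $\psi_n=(a_n,b_n,\beta_n)\to\psi^*$ in $\cJ^\circ$, which in particular needs $\beta_n$ to converge in $(\cI,d_\cI)$; this is not justified, and $(\cI,d_\cI)$ has no obvious compactness criterion to invoke. Second, the case $a^*+b^*>0$ does \emph{not} reduce cleanly to the $\cJ^\circ$-case as you claim: take $\gamma_n=(0,a)\concat(0,1/n)\concat(0,b-1/n)\concat\beta$ with $a,b>0$, so the leftmost two blocks converge to $(a,0)$ and $\beta_n\to(0,b)\concat\beta\in\cI^\circ$. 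The limit $\psi^*=(a,0,(0,b)\concat\beta)$ has $(b^*,\beta^*)=(0,(0,b)\concat\beta)\notin\cJ^\bullet$, so Lemma~\ref{lemma slightly stronger continuity}, and hence Lemma~\ref{lemma clockcont} and your $\cJ^\circ$-argument, cannot be applied as stated. Third, in the case $a^*=b^*=0$ (which you flag), the limiting triple $(0,0,\gamma)$ is not even in $\cJ^\circ$, and your suggestion to ``apply Lemma~\ref{lemma clockcont} at the two vanishing clock levels'' runs against the hypothesis $a>0$ of that lemma; the lemma gives no control when the initial clock mass goes to zero.

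The paper's $\cI^\circ$ proof avoids all of this by \emph{not} lifting $\gamma_n$ via its leftmost two blocks. Instead, it uses the definition of $d_\cI$ to find, inside $\gamma_n$, two blocks $A_n,B_n$ whose masses converge to $a$ and $b$, with three surrounding remnants $\alpha_n^{(0)},\alpha_n^{(1)}\to\emptyset$ and $\alpha_n^{(2)}\to\beta$; in your example these $A_n,B_n$ need not be the first two blocks. It then couples the type-2 data $\Psi_n\sim\fP^2_{a_n,b_n,\beta_n}$ to auxiliary data $\widetilde\Psi_n\sim\fP^2_{\Leb(A_n),\Leb(B_n),\alpha_n^{(2)}}$, and shows via the $\besq$ total-mass comparison that $\bP(\gamma_n^y=\widetilde\gamma_n^y)\to1$, thereby reducing to the $d_\cJ$-convergent case $\Leb(A_n)\to a>0$, $\Leb(B_n)\to b$. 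This coupling, and the use of the $d_\cI$-correspondence to locate $A_n,B_n$ rather than the leftmost blocks, are the ideas missing from your plan.
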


\begin{proof}
We first prove the $\cJ^\circ$-valued case. Suppose that $(a_n,b_n,\beta_n)\rightarrow(a,b,\beta)$ in $(\cJ^\circ,d_\cJ)$, i.e. $a_n\to a$, 
$b_n\to b$ and $d_\cI(\beta_n,\beta)\to 0$. We may assume without loss of generality that $a>0$. Once the proof is complete for this subcase, we 
can apply Lemma \ref{lem:type2_symm} to deduce the subcase $a=0$, $b>0$; the subcase $a=b=0$, $\beta=\emptyset$ is trivial. Let 
$(\Gamma^y_n,y\ge 0)$ and $(\Gamma^y,y\ge 0)$ be $\cJ^\circ$-valued type-2 evolutions started from $(a_n,b_n,\beta_n)$ and $(a,b,\beta)$, 
respectively, with respective clock levels $(Y^n_j)_{j\ge0}$ and $(Y_j)_{j\ge0}$.  Observe that for all bounded continuous $f\colon\cJ^\circ\rightarrow\bR$
\begin{equation}\label{serieseq}
  \bE\left[ f\left(\Gamma^y_n\right)\right] = \sum_{j=0}^\infty \bE \left[ f\left(\Gamma^y_n\right) \mathbf{1}\{Y^n_j\leq y <Y^n_{j+1}\}\right].
\end{equation}
By Lemma \ref{lemma clockcont} and the Skorohod representation theorem, we may now assume $Y_j^n(\omega)\rightarrow Y_j(\omega)$, 
$d_{\cJ}\left(\Gamma^{Y_j^n(\omega)}_n(\omega),\Gamma^{Y_j(\omega)}(\omega)\right)\rightarrow 0$, and since $\bP(Y_j=y)=0$, also 
$\mathbf{1}\{Y_j^n(\omega)\le y\}\rightarrow\mathbf{1}\{Y_j(\omega)\le y\}$ for $\bP$-a.e.\ $\omega$. 
Recall that $Y_1$ and the initial clock spindle under $\bP_{\gamma,j}$ are associated with the block labeled 1 when $j$ is even and with the block labeled $2$ when $j$ is odd. For $\gamma = \Gamma^{Y_j(\omega)}(\omega)$ or $\Gamma_n^{Y^n_j(\omega)}(\omega)$, in either case this is the non-zero top mass of $\gamma$. 
Recall also from (\ref{scalebesq}) that $\besq(-1)$ 
processes with converging initial states can be coupled to converge uniformly together with their lifetimes. In particular, we 
can use their convergence in distribution together with Lemma \ref{lemma slightly stronger continuity} for the convergence of the second top mass 
and interval partitions at level $y\vee Y_j^n(\omega)-Y_j^n(\omega)\rightarrow y\vee Y_j(\omega)-Y_j(\omega)$ to obtain for $\bP$-a.e.\ $\omega$
\begin{align*}
  &\bE_{\Gamma^{Y^n_j(\omega)}_n(\omega),j} \left[ f\left(\Gamma^{y\vee Y^n_j(\omega)- Y^n_j(\omega)}\right) 
												 \mathbf{1}\left\{ y\vee Y^n_j(\omega)-Y^n_j(\omega) <Y_{1}\right\}\right]\\
  &\rightarrow \bE_{\Gamma^{Y_j(\omega)}(\omega),j} \left[ f\left(\Gamma^{y\vee Y_j(\omega)- Y_j(\omega)}\right) 
											     \mathbf{1}\left\{ y\vee Y_j(\omega)-Y_j(\omega) <Y_{1}\right\}\right].
\end{align*} 
By Lemma \ref{lemma markovprops}(ii) and applying the previous convergences and dominated convergence, we find 
\begin{equation}\label{conveq}
  \bE \left( f(\Gamma^y_n) \mathbf{1}\{Y^n_j\leq y <Y^n_{j+1}\}\right)\rightarrow \bE \left[ f(\Gamma^y) \mathbf{1}\{Y_j\leq y <Y_{j+1}\}\right].
\end{equation}
A further application of the dominated convergence theorem yields $\bE[f(\Gamma^y_n)]\rightarrow \bE[f(\Gamma^y)]$, completing the proof in the $\cJ^\circ$-valued case.

We now consider the $\cI^\circ$-valued case and suppose that $(0,a_n)\concat(0,b_n)\concat\beta_n \rightarrow (0,a)\concat(0,b)\concat\beta$ with 
$a>0$ and $b\geq 0$, the convergence now being with respect to the $d_\cI$-metric. We emphasize that this is weaker than convergence of the triples 
for the $d_\cJ$-metric and we could have, for example, $a_n \to 0$ and $b_n\to 0$ as sequences of real numbers. By Definition \ref{def:IP:metric}, 
there exist sequences $A_n, B_n \in (0,a_n)\concat (0,b_n)\concat \beta_n$, 
$\alpha_n^{(0)}\in\cI^\circ$ and $\alpha_n^{(1)},\alpha_n^{(2)}\in\cI$ such that 
$$(0,a_n)\concat (0,b_n)\concat \beta_n = \alpha_n^{(0)} \concat A_n \concat \alpha_n^{(1)} \concat B_n \concat \alpha_n^{(2)},$$
with ${\rm Leb}(A_n)\to a$, ${\rm Leb}(B_n)\to b$, $d_\cI(\alpha_n^{(i)},\emptyset) \to 0$ for $i\in \{0,1\}$, and $d_\cI(\alpha_n^{(2)},\beta) \to 0$.
Let $\check{\Psi}_n=(\ff^{(0)}_{n,1},\ff^{(0)}_{n,2},\cev{\fN}^{(0)}_n,\fN^{(0)}_n)\sim\fP_{\alpha_n^{(0)}}^2$, 
$(\ff^{(1)}_{n},\cev{\fN}^{(1)}_n,\fN^{(1)}_{n})\sim\fP_{{\rm Leb}(A_n),\alpha_n^{(1)}}^1$, and $(\ff_{n}^{(2)},\cev{\fN}^{(2)}_n,\fN^{(2)}_n) \sim\fP_{{\rm Leb}(B_n), \alpha_n^{(2)}}^1$ be independent.  Observe that
$$\Psi_n:=\left( \ff_{n,1}^{(0)},\ff^{(0)}_{n,2},\cev{\fN}^{(0)}_n,\fN^{(0)}_n\concat \clade( \ff^{(1)}_n,\cev{\fN}^{(1)}_n) \concat \fN^{(1)}_n \concat \clade(\ff^{(2)}_n,\cev{\fN}^{(2)}_n)\concat \fN^{(2)}_n \right) \sim \fP_{a_n,b_n,\beta_n}^2, $$
and
$$\widetilde{\Psi}_n:=\left( \ff^{(1)}_n,\ff^{(2)}_n,\cev{\fN}^{(2)}_n,\fN^{(2)}_n\right) \sim \fP_{{\rm Leb}(A_n),{\rm Leb}(B_n),\alpha^{(2)}_n}^2.$$
Let $(\check{\gamma}_n^y,y\ge 0)$, $(\gamma_n^y,y\ge 0)$ and $(\widetilde{\gamma}^y_n,y\ge 0)$ be the IP-valued type-2 evolutions constructed from 
$\check{\Psi}_n$, $\Psi_n$ and $\widetilde{\Psi}_n$ by deletion clocking as in Definition \ref{deftype2}, concatenating top mass intervals as in Definition 
\ref{def:type2:v1}. Let $\tau_n = \inf\{ y>0\colon\check{\gamma}_n^y=\emptyset\}$. 
Since ${\rm Leb}(A_n)\to a>0$ and $(\|\check{\gamma}_n^y\|,y\ge 0)\sim\besq_{\|\alpha_n^{(0)}\|}(-1)$ with $\|\alpha^{(0)}_n\|\rightarrow 0$, we have
$\mathbb{P}(\tau_n<\zeta(\ff^{(1)}_n))\to 1$. It is clear from the definitions that the first block (taken from the clock spindle straddling level $y$) of 
$\gamma^y_n$ is the first block of $\check{\gamma}_n^y$ for $0\le y<\tau_n$, whereas it is given by $\ff^{(1)}_n(y)$ for $\tau_n\wedge\zeta(\ff^{(1)}_n)\le y<\zeta(\ff^{(1)}_n)$. 
Furthermore, the conditional distribution of $(\gamma_n^y,\;\tau_n\wedge\zeta(\ff^{(1)}_n)\le y<\zeta(\ff^{(1)}_n))$ given $\check{\Psi}$ only depends on $\tau_n$. 
It is the same as the conditional distribution given $\tau_n$ of
$$\left((0,\ff^{(1)}_n(y))\concat\widehat{\alpha}_n^y\concat\alpha_n^y,\;\tau_n\wedge\zeta(\ff^{(1)}_n)\le y<\zeta(\ff^{(1)}_n)\right),$$
where the three processes are independent, $(\alpha_n^y,y\ge 0)$ is a type-1 evolution, and $(\widehat{\alpha}^y_n,y\ge 0)$ is a type-0 evolution up to level 
$\tau_n$ and then continues as a type-1 evolution. In particular, $(\|\widehat{\alpha}_n^y\|,y\ge 0)$ is a $\besq(1)$ starting from 
$\|\alpha^{(1)}_n\|\rightarrow 0$ up to level $\tau_n$ and then continues as $\besq(0)$. We conclude that
for $\tau_n^\prime=\inf\{y>\tau_n\colon\widehat{\alpha}_n^y=\emptyset\}$, we have 
$$\bP(\tau_n\le\tau_n^\prime<\zeta(\ff^{(1)}_n))\to 1.$$
Since $\gamma_n^y=\widetilde{\gamma}_n^y$  for all $y\ge\tau_n^\prime\ge\tau_n$ on the event $\{\tau_n\le\tau_n^\prime<\zeta(\ff_n^{(1)})\}$ and 
$\bP(\tau_n\le\tau_n^\prime\le y)\to 1$, we find 
$\bP(\gamma_n^y=\widetilde{\gamma}_n^y)\to 1$. This reduces the proof to the case when $a_n\rightarrow a$ and $b_n\rightarrow b$. The argument is now similar to the $\cJ^\circ$-valued case. We decompose as in (\ref{serieseq}) and then apply (\ref{conveq}) to functions of the form 
$$f(a,b,\beta)=\left\{\begin{array}{ll}g((0,a)\concat(0,b)\concat\beta)&\quad\mbox{for $j$ even,}\\
									   g((0,b)\concat(0,a)\concat\beta)&\quad\mbox{for $j$ odd,}
               \end{array}\right.$$
which are continuous for all bounded continuous $g\colon\cI^\circ\rightarrow\bR$, by \cite[Lemma 2.11]{Paper1}.               
\end{proof}


\subsection{The total mass process}\label{sectm}

In this section, we prove Theorem \ref{thm:total_mass}, that the total mass process of a type-2 evolution is a $\besq(-1)$. Our approach is to 
use the $\besq(-1)$ processes $\ff^{(n)}$ and type-1 evolutions $(\fm^{(n)},\gamma^{(n)})$ with $\besq(0)$ total mass, $j\ge 0$. Since the type-2
total mass process is built from the sum of these, the following additivity lemma will be useful. This extends the well-known additivity of
\besq\ processes with nonnegative parameters.

\begin{lemma}\label{lmadd}
 Let $X\sim \besq_a(-1)$, $W \sim \besq_b(0)$ and $\underline{Z}\sim \besq_1(-1)$ be independent. Consider the times 
 $T_X=\inf\left\{t\ge 0\colon X_t=0\right\}$, $T_W=\inf\left\{t\ge 0\colon W_t=0  \right\}$ and $\tau=T_X \wedge T_W$.
 Define a process 
 \[
  V_t=\begin{dcases}
   X_t + W_t, \qquad t \le \tau,\\
   Z_{t - \tau}, \qquad\quad\ t> \tau,
  \end{dcases}
 \]
 where $Z_s=(X_\tau + W_{\tau})\underline{Z}_{s/ (X_\tau+ W_\tau)}$, $s\ge 0$. Then $V \sim \besq_{a+b}(-1)$. 
\end{lemma}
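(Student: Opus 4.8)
The plan is to prove the additivity claim by a two-stage argument: first handle the behaviour of $V$ up to the stopping time $\tau$, then verify that the claimed rebirth mechanism at $\tau$ produces exactly a $\besq_0(-1)$ continuation, so that the concatenation is a genuine $\besq_{a+b}(-1)$.

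\textbf{Stage 1 (before $\tau$).} First I would recall that $\besq_a(-1)$ and $\besq_b(0)$ are nonnegative-dimensional squared Bessel diffusions in the classical sense of being driven by SDEs $dX_t = -1\,dt + 2\sqrt{X_t}\,dB_t$ and $dW_t = 0\,dt + 2\sqrt{W_t}\,dB'_t$ with $B \indep B'$, each run only until it hits $0$. By the standard additivity property of squared Bessel processes — which holds at the level of these SDEs and only requires that the sum of the two ``dimension'' parameters be used, with no sign restriction needed while both summands are strictly positive — the process $X_t + W_t$ solves $d(X+W)_t = -1\,dt + 2\sqrt{(X+W)_t}\,d\widehat{B}_t$ for a Brownian motion $\widehat B$ on $[0,\tau)$, i.e.\ it is a $\besq_{a+b}(-1)$ run up to the time $\tau = T_X \wedge T_W$ that one of the two components hits $0$. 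Equivalently, one may obtain this via the Laplace-transform/Shiga--Watanabe characterization of $\besq$ laws, which is additive in the parameter. The key point to record is that $\tau$ is \emph{not} the hitting time of $0$ by $X+W$: since $\besq(0)$ is absorbed at $0$ and $\besq(-1)$ has $0$ as an exit boundary, at time $\tau$ the sum $X_\tau + W_\tau$ is a.s.\ strictly positive (either $X_\tau = 0 < W_\tau$, or $W_\tau = 0 < X_\tau$, or, on the event of a ``tie'' which has probability zero, both vanish). So after time $\tau$ the process $X+W$ would, if continued naively, just be one of the two surviving coordinates, which is \emph{not} a $\besq(-1)$; this is precisely why the rebirth in the statement is needed.

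\textbf{Stage 2 (the rebirth at $\tau$).} Here I would invoke the strong Markov property at $\tau$ together with the scaling property of $\besq(-1)$: if $\underline Z \sim \besq_1(-1)$ and $c > 0$ then $(c\,\underline Z_{s/c},\,s\ge 0) \sim \besq_c(-1)$, and this law is the law of a $\besq(-1)$ started from $c$. Conditionally on $\mathcal F_\tau$, the value $c := X_\tau + W_\tau$ is known and strictly positive, and $\underline Z$ is independent of everything; hence $(Z_s,\,s\ge 0)$ is, conditionally given $\mathcal F_\tau$, distributed as a $\besq(-1)$ started from the current value $V_\tau = X_\tau + W_\tau$ of the pre-$\tau$ process. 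Since a $\besq_{a+b}(-1)$ process run up to $\tau$ and then, by the strong Markov property, continued as an independent $\besq(-1)$ from its current position $V_\tau$, is just a $\besq_{a+b}(-1)$ (one is splicing the process to itself at a stopping time), we conclude $V \sim \besq_{a+b}(-1)$ on all of $[0,\infty)$. The only mild care needed is to check that $\tau$ is a stopping time for the natural filtration of $(X,W)$ (clear, as a minimum of two hitting times) and that $V$ is adapted and right-continuous at $\tau$, which holds since $Z_0 = (X_\tau+W_\tau)\underline Z_0 = X_\tau + W_\tau = V_\tau$.

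\textbf{Main obstacle.} The genuinely delicate point — and the only place the argument is more than bookkeeping — is the behaviour of $X+W$ \emph{exactly at} the instant $\tau$ and the justification that classical $\besq$-additivity applies despite the negative parameter. The cleanest route is: additivity is first applied only on the open interval $[0,\tau)$ where both summands (hence their sum) are strictly positive, so no boundary subtlety arises; then one checks $V_{\tau-} = V_\tau$ by continuity of $\besq$ paths and notes $V_\tau > 0$ a.s.; and only then does one restart. An alternative that sidesteps SDE technicalities entirely is to compare finite-dimensional Laplace functionals: use the known formula $\bE[\exp(-\lambda \ff_t)] $ for $\ff \sim \besq_x(\delta)$ (valid for $\delta = -1, 0$ up to the respective boundary behaviour), exploit the product form $\bE[\exp(-\lambda(X_t+W_t))] = \bE[\exp(-\lambda X_t)]\,\bE[\exp(-\lambda W_t)]$ on $\{t < \tau\}$, and match it against the $\besq_{a+b}(-1)$ functional; then handle $\{t\ge\tau\}$ by conditioning on $\mathcal F_\tau$ and using the scaling identity for $Z$. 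Either way, I expect the bulk of the write-up to consist of carefully stating which classical $\besq$ fact is being quoted (additivity, scaling, strong Markov, exit-versus-absorbing boundary at $0$) and stitching them together; the conceptual content is entirely in recognizing that $\tau$ strictly precedes the death of $X+W$.
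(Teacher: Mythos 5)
Your proposal is correct in substance and uses the same key ingredients as the paper --- SDE additivity on $[0,\tau)$, \BESQ\ scaling to identify $(Z_s,\,s\ge 0)$ as a $\besq(-1)$ started from $V_\tau$, and a ``splice at $\tau$'' to complete the identification --- but the paper executes the splice differently, and more explicitly, than you do. You appeal to the strong Markov property of $\besq_{a+b}(-1)$ as a black box (``one is splicing the process to itself at a stopping time''). The delicate point you flag but do not fully resolve is that $\tau$ is a stopping time in the filtration of the pair $(X,W)$, not in the filtration generated by $X+W$ alone, so the strong Markov property of a $\besq_{a+b}(-1)$ process cannot be cited off the shelf: one first has to exhibit a genuine $\besq_{a+b}(-1)$ process on the same space agreeing with $X+W$ up to $\tau$ and Markov in that richer filtration. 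The paper sidesteps this entirely by working at the level of driving Brownian motions: it first writes the SDE $d(X+W)_t = -dt + 2\sqrt{X_t+W_t}\,d\beta_t$ on $[0,\tau]$ (your Stage 1) and $d\underline{Z}_s = -ds + 2\sqrt{\underline{Z}_s}\,d\underline{\beta}_s$, then \emph{stitches} $\beta$ and a Brownian-scaled shift of $\underline{\beta}$ into a single process $B$ defined on all of $[0,\infty)$, verifies $B$ is Brownian by L\'evy's characterization, and concludes via pathwise uniqueness of the $\besq(-1)$ SDE that $V$ coincides with the strong solution $U$ started from $a+b$. That pathwise-uniqueness step is precisely the rigorous content that your ``splicing'' asserts; making your version airtight would require essentially reconstructing it. So: same approach, but the paper's explicit construction is what makes the last step go through cleanly, and you should not treat the strong-Markov splice as free in the presence of the enlarged filtration.
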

\begin{proof}
 Consider a probability space where all three processes $X, W, \underline{Z}$ are supported. On a standard extension of the sample space, there exist two independent Brownian motions $\beta, \underline{\beta}$ such that 
 \begin{equation}\label{eq:2bm}
 \begin{split}
  d(X_t + W_t) &= -dt + 2 \sqrt{X_t+W_t} d \beta_t, \qquad 0\le t \le \tau, \\
  d \underline{Z}_s &= -ds + 2 \sqrt{\underline{Z}_s} d\underline{\beta}_s, \qquad\quad\ \ s\ge 0.
 \end{split}
 \end{equation}
 Consider the process 
 \[
  B_t=\begin{dcases}
   \beta_t, \qquad\qquad\qquad\qquad\qquad\qquad\quad 0\le t \le \tau, \\
   \beta_\tau + \sqrt{X_\tau+W_\tau}\underline{\beta}_{(t - \tau)/ (X_\tau + W_\tau)}, \quad t > \tau.
  \end{dcases}
 \]
 Then, it follows by L\'evy's characterization of Brownian motion that $B$ is a standard one-dimensional Brownian motion. 
 
 On this same probability space consider the strong solution of the stochastic differential equation (SDE)
 \[
  dU_t = - dt + 2\sqrt{U_t} dB_t, \quad U_0=a+b.
 \]
 It is well-known \cite[Chapter XI]{RevuzYor} that the above SDE has a strong solution that is pathwise unique. Obviously, $U\sim \besq_{a+b}(-1)$. However, it is clear from \eqref{eq:2bm} that the process $V$ also satisfies the relation $dV_t = - dt + 2\sqrt{V_t} dB_t$. Hence, by pathwise uniqueness, $V=U$, almost surely. Thus $V\sim \besq_{a+b}(-1)$.
\end{proof}

\begin{proof}[Proof of Theorem \ref{thm:total_mass}]
 Consider a type-2 evolution $((m_1^y,m_2^y,\beta^y),y\ge 0)$ as constructed in Definition \ref{def:type2:v1}, from initial state $(a,b,\beta)\in\cJ^\circ$. If $(a,b,\beta)$ equals $(a,0,\emptyset)$ or $(0,b,\emptyset)$ then the result is trivial from the construction, so assume not. Then by Proposition \ref{type2welldef}, there is a.s.\ some finite $K\ge 0$ for which the evolution dies at time $Y_{K+1}$. During the interval $[Y_{K},Y_{K+1})$, there is a \em degeneration time \em 
$D=\inf\{y\ge 0\colon(m_1^y,\beta^y)=(0,\emptyset)\mbox{ or }(m_2^y,\beta^y)=(0,\emptyset)\}$ when the type-1 evolution $\gamma^{(K)}$ dies while the top block
$\ff^{(K)}$ continues to live until $Y_{K+1}$. 
 
 By the strong Markov property and Definition \ref{def:type2:v1}, after time $D$, the type-2 evolution comprises a single non-zero component $m_i^y$, with $i$ being either 1 or 2, evolving as a \BESQ[-1] until its absorption at zero. Let $\underline{Z}$ denote the $\besq_1(-1)$ process obtained by applying \BESQ\ scaling to normalize mass of this component at degeneration: $\underline{Z}_y := (m_i^{D})^{-1}m_i^{D+m_i^{D}y}$, $y\ge0$. By the strong Markov property, $\underline{Z}$ is independent of the type-2 evolution run up until time $D$.
 
 We define $D_n:=\min\{Y_n,D\}$, $n\ge 0$, so that $D_n=D$ for $n$ sufficiently large, and set
  $$V_y:=m_1^y+m_2^y+\|\beta^y\|,\quad
  V_y^{(n)}:=\left\{\begin{array}{ll}
  		V_y&\mbox{if } y\le D_n,\\
        Z^{(n)}_{y-D_n}&\mbox{if } y>D_n,
    \end{array}\right.
      \quad\mbox{where }Z_s^{(n)}=V_{D_n} \underline{Z}_{s/V_{D_n}},\,s\ge 0.$$
 We will show inductively that all $V^{(n)}$, $n\ge 1$, and hence the a.s.\ limit $V=\lim_{n\rightarrow\infty}V^{(n)}$, are $\besq_{a+b+\|\beta\|}(-1)$.
 
 For $n=1$, we have $V_y=X_y+Y_y$, $0\le y\le D_1$, where $X=\ff^{(0)}\sim\besq_{a}(-1)$ and $Y=\fm^{(0)} + \|\gamma^{(0)}\|$ independent, and with $D_1=\min\{T_X,T_Y\}$ as in Lemma \ref{lmadd}. Since $Y\sim\besq_{b+\|\beta\|}(0)$ by Proposition \ref{type1totalmass}, Lemma \ref{lmadd} yields $V^{(1)}\sim\besq_{a+b+\|\beta\|}(-1)$.

 Now, assume for induction that for some $n\ge1$, $\widehat{V}^{(n)}\sim\besq_{\widehat{a}+\widehat{b}+\|\widehat{\beta}\|}(-1)$ for all type-2 evolutions  $((\widehat{m}_1^y,\widehat{m}_2^y,\widehat{\beta}^y),y\ge 0)$ starting from any $(\widehat{a},\widehat{b},\widehat{\beta})\in\cJ^\circ$.  By the strong Markov property, we can apply the inductive hypothesis to $(\widehat{m}_1^y,\widehat{m}_2^y,\widehat{\beta}^y) := (m_1^{D_1+y},m_2^{D_1+y},\beta^{D_1+y})$, $y\ge 0$, on the event $\{Y_1=D_1\} = \{D>Y_1\}$.  Then $\widehat D_n = D_{n+1}-D_1$ and $\widehat{\underline{Z}}=\underline{Z}$. We see that
 \begin{align*}
  V_y^{(n+1)} &= \left\{\begin{array}{ll}
  		V_y&\mbox{if } y\le D_{n+1},\\
		Z^{(n+1)}_{y-D_{n+1}}&\mbox{if } y>D_{n+1},
	\end{array}\right.\\
  &= \left\{\begin{array}{ll}
   		V_y&\mbox{if } y\le D_1,\\
		\widehat{V}_{y-D_1}&\mbox{if }D_1<y\le D_1+\widehat{D}_n\\
    	\widehat{Z}^{(n)}_{y-D_1-\widehat{D}_n}&\mbox{if } y>D_1+\widehat{D}_n,
    \end{array}\right\}       
    = \left\{\begin{array}{ll}
      	V_y&\mbox{if } y\le D_1,\\
		\widehat{V}^{(n)}_{y-D_1}&\mbox{if } y>D_1.\end{array}\right.
 \end{align*}
 By the inductive hypothesis, $\widehat{V}^{(n)}\sim\besq_{m_1^{D_1}+m_2^{D_1}+\|\beta^{D_1}\|}(-1)$, and by the strong Markov property and \BESQ\ scaling, $((\widehat{V}^{(n)}_0)^{-1}\widehat{V}^{(n)}_{s\widehat{V}^{(n)}_0},s\ge 0)\sim\besq_1(-1)$ is unconditionally independent of $\cF^{D_1}$, and hence of $((X_y,Y_y),0\le y\le D_1)$.
  Then, by the $n=1$ case already established, we conclude that $V^{(n+1)}\sim\besq_{a+b+\|\beta\|}(-1)$, as required.
\end{proof}

\subsection{The Markov-like property of type-2 data quadruples}\label{sec:Markovish}

We can extend the definition of cutoff data from type-1 evolutions, as seen before Lemma \ref{lem:type1Markov}, to type-2 evolutions.

\begin{definition}\label{def:type2:cutoff}
 In the setting of Definition \ref{deftype2}, for $j\ge 0$ even and $y\in [Y_j,Y_{j+1})$,
 \begin{equation*}
  \big(\ff_{\Psi,1}^y,\cev\fN_\Psi^y,\fN_\Psi^y\big) := \left(\ff_{\Psi_{j}}^{y-Y_{j}},\cev\fN_{\Psi_{j}}^{y-Y_{j}},\fN_{\Psi_{j}}^{y-Y_{j}}\right)\!, \quad 
  		\ff_{\Psi,2}^y := \left(\ff^{(j)}(y\!-\!Y_{j}\!+\!z),\,z\!\ge\!0\right)\!.
 \end{equation*}
 We make the same definition for $j\ge1$ odd, but with subscripts `1' and `2' reversed. We also write 
 $\Psi^y=(\ff_{\Psi,1}^y,\ff_{\Psi,2}^y,\cev\fN_{\Psi}^y,\fN_\Psi^y)$ for the cutoff data quadruple.
\end{definition}

Recall notation $J(y)$, denoting the number of clock changes, and $I(y)$, denoting the index of the clock, for $y\ge0$:
\begin{equation}\label{eq:IJ_index_def}
 J(y) = \inf\{j\ge0\colon Y_{j+1}>y\},\qquad I(y) = \text{1 if $J(y)+1$ is odd, or } I(y)=\text{2 if even}.
\end{equation}
In light of the previous definition, \eqref{eq:type_2_def} can be rewritten as
\begin{equation*}
 m_{I(y)}^y = \ff_{\Psi,I(y)}^y(0), \qquad \alpha^y = \skewer\left(0,\fN_{\Psi}^y\right).
\end{equation*}


It should be clear from the independence of the $\besq(-1)$ top mass processes that $T_j^+=\infty$ may happen for any $j\ge 1$. As a consequence of the argument of the proof of Lemma
\ref{type2welldef}, it will, in fact, happen for some random finite $j\ge 1$, in such a way that the $(j-1)^{\text{st}}$ type-1 evolution of Definition \ref{deftype2} vanishes at a level strictly below the last top mass process. We denote these extinction levels by $\zeta_i^+=\inf\{y\ge 0\colon m_i^z=0\mbox{ for all }z\ge y\}$, $i=1,2$. We write $D=\min\{\zeta_1^+,\zeta_2^+\}$ and $\zeta=\max\{\zeta_1^+,\zeta_2^+\}$. We call level $\zeta$ the \emph{lifetime} of the type-2 evolution and level $D$ its \emph{degeneration time}.



Consider a type-2 data quadruple $(\ff_1,\ff_2,\cev{\fN},\fN_\beta)$. Recall the definition above Lemma \ref{lem:type0Markov} of the point process $\fN^{\le y}$ of spindles below level $y$, based on the type-0 data pair 
$(\cev{\fN},\fN_\beta)$. In the context of type-2 data, we denote the right-continuous natural filtration of $(\ff_1(y),\ff_2(y),\fN^{\le y})$, $y\ge 0$ 
by $(\cF^y,y\ge 0)$, again abusing notation to suppress the dependence on type 2. 

The cases when $b=0$ and $\beta=\emptyset$, or when $a=0$ and $\beta=\emptyset$, are one-dimensional since no non-trivial type-1 point data triple is ever formed in Definition \ref{deftype2}. We therefore have 
$(m_1^y,m_2^y,\alpha^y)=(\ff_1(y),0,\emptyset)$, $y\ge 0$, or $(m_1^y,m_2^y,\alpha^y)=(0,\ff_2(y),\emptyset)$, $y\ge 0$, respectively, and this degenerate type-2 evolution inherits the Markov 
property from $\besq(-1)$. For other initial states, we establish a Markov-like property of a form similar to Lemmas \ref{lem:type0Markov} and \ref{lem:type1Markov}.

Throughout this section, $(\ff_1,\ff_2,\cev\fN,\fN_{\beta}) \sim \fP^2_{a,b,\beta}$, where $a,b\ge 0$, $a+b>0$, and $\beta\in\cI$. Following Definition \ref{deftype2}, let 
$\fN_* := \clade(\ff_2,\cev\fN)\concat\fN_{\beta}$ and $\fX_* := \xi(\fN_*)$. From \cite[proof of Proposition 5.11]{Paper1}, the local time process associated with $\fX_*$, denoted by $(\ell^y_{\fX_*}(t);\,y,t\ge 0)$, is a.s.\ continuous in both level and time coordinates. Therefore, for the cutoff processes $\fN_*^y$ and $\fN_*^{\le y}$ and their associated 
scaffolding processes 
$\fX_*^y:=\xi(\fN_*^y)$ and $\fX_*^{\le y}:=\xi(\fN_*^{\le y})$, we can define local times $(\ell^0_{\fX_*^y}(t),\,t\ge0)$ and $(\ell^y_{\fX_*^{\le y}}(t),\,t\ge 0)$ by extending continuously, approaching level $y$ from above and below, respectively. Moreover,
\begin{equation}
 \ell^y_{\fX_*^{\le y}}(\phi(t)) = \ell^y_{\fX_*}(t) = \ell^0_{\fX_*^y}(t-\phi(t)) \quad\text{where} \quad
 \phi(t) := \text{Leb}\{s\le t\colon \fX_*(s)\le y\}.\label{eq:cutoff_LT}
\end{equation}


\begin{proposition}\label{prop:type2Markov} 
  Let $\Psi=(\ff_1,\ff_2,\cev{\fN},\fN_\beta)\sim\fP_{a,b,\beta}^2=\besq_{a}(-1)\otimes\besq_{b}(-1)\otimes\fP_\beta^0$ for some $a,b\in[0,\infty)$ and $\beta\in\cI$, so that at least two of $a$, $b$ and $\|\beta\|$ are strictly positive. For $y\ge 0$, given $\cF^y$, 
  $(\ff^y_{\Psi,1},\ff^y_{\Psi,2},\cev{\fN}^y_\Psi,\fN^y_\Psi)$ has conditional distribution 
  $\fP_{m^y_1,m^y_2,\alpha^y}^2=\besq_{m^y_1}(-1)\otimes\besq_{m^y_2}(-1)\otimes\fP_{\alpha^y}^0$. 
\end{proposition}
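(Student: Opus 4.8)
The plan is to reduce the statement to the Markov-like properties already established for type-0 and type-1 data (Lemmas \ref{lem:type0Markov} and \ref{lem:type1Markov}) by conditioning successively on the clock levels $(Y_j)$ and using the deletion-clocking construction of Definition \ref{deftype2}. The key observation is that on the event $\{J(y) = j\}$, i.e.\ $Y_j \le y < Y_{j+1}$, the cutoff data $(\ff^y_{\Psi,1},\ff^y_{\Psi,2},\cev\fN^y_\Psi,\fN^y_\Psi)$ is, by Definition \ref{def:type2:cutoff}, built from three ingredients: the top part $\ff^{(j)}(y - Y_j + \cdot\,)$ of the clock spindle; and the triple $(\ff_{\Psi_j}^{y-Y_j}, \cev\fN_{\Psi_j}^{y-Y_j}, \fN_{\Psi_j}^{y-Y_j})$, which is precisely the cutoff of the type-1 data triple $\Psi_j$ at level $y - Y_j$. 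So the strategy is: first condition on $\cF^{Y_j}$ and on $\{J(y)=j\}$; identify the conditional law of $\ff^{(j)}$ as $\besq_{m_{I(y)}^{Y_j}}(-1)$ run from level $Y_j$; identify the conditional law of $\Psi_j$ as $\fP^1_{m^{Y_j}_{3-I(y)},\, \alpha^{Y_j}}$ (up to parity bookkeeping), independent of $\ff^{(j)}$; then apply Lemma \ref{lem:type1Markov} to the triple $\Psi_j$ at level $y - Y_j$ to get that $(\ff_{\Psi_j}^{y-Y_j}, \cev\fN_{\Psi_j}^{y-Y_j}, \fN_{\Psi_j}^{y-Y_j})$ has conditional law $\besq_{m^y_{3-I(y)}}(-1)\otimes\fP^0_{\alpha^y}$, while independently the $\besq(-1)$ Markov property gives $\ff^{(j)}(y - Y_j + \cdot\,) \sim \besq_{m^y_{I(y)}}(-1)$. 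Concatenating these gives exactly $\fP^2_{m^y_1, m^y_2, \alpha^y}$.

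First I would set up the filtration carefully: $(\cF^y, y\ge0)$ is the right-continuous natural filtration of $(\ff_1(y), \ff_2(y), \fN^{\le y})$, and I need $Y_1 = \zeta(\ff_1)$ to be an $\cF$-stopping time (immediate, since it is a hitting time of $\ff_1$), and then inductively each $Y_{j+1}$ is an $\cF$-stopping time because, given $\cF^{Y_j}$, the post-$Y_j$ scaffolding excursion structure and the clock spindle $\ff^{(j)}$ are measurable with respect to the shifted data. The Markov-like property of the type-0 pair underlying $\fN_*$ (Lemma \ref{lem:type0Markov}), applied at the stopping level $Y_j$, together with the strong Markov property of $\ff_1, \ff_2$, is what lets me assert the conditional law of $\Psi_j$ given $\cF^{Y_j}$. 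Here I would lean on the already-proven fact (see the proof of the Proposition after Definition \ref{deftype2}) that Definition \ref{deftype2} reproduces Definition \ref{def:type2:v1}, so that the inductive structure — clock spindle $\ff^{(j)}$ independent of the type-1 data $\Psi_j$, which starts from $(0, \alpha^{Y_j})$ in the appropriate top-mass slot — is available. The identity \eqref{eq:cutoff_LT} for local times, and the continuity of local time in level, is the technical ingredient ensuring the cutoff operations $\fN^y_*$, $\fN^{\le y}_*$ are well-defined and compatible with the decomposition at the intermediate level $y \in [Y_j, Y_{j+1})$.

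The cleanest way to organize the argument is a two-layer conditioning: inner layer is Lemma \ref{lem:type1Markov} applied to the type-1 triple active on $[Y_j, Y_{j+1})$, giving the statement relative to $\cF^{Y_j} \vee \sigma(\text{data on } [Y_j, y])$; outer layer pushes this down to $\cF^y$ by noting that $\cF^y$ restricted to $\{Y_j \le y < Y_{j+1}\}$ is generated by $\cF^{Y_j}$ together with the cutoff-below-$y$ portion of $\Psi_j$ and the portion of $\ff^{(j)}$ up to level $y$, which is exactly the conditioning $\sigma$-algebra in Lemma \ref{lem:type1Markov} plus the $\besq(-1)$ path of $\ff^{(j)}$ up to $y - Y_j$. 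Summing over $j\ge0$ (the events $\{J(y)=j\}$ partition the sample space by Lemma \ref{type2welldef}) and using that on each such event the asserted conditional law $\fP^2_{m^y_1,m^y_2,\alpha^y}$ is the same, completes the proof; the degenerate cases ($m_1^y = 0$ or $m_2^y=0$ or $\alpha^y = \emptyset$) are handled by the conventions in Definitions \ref{def:type2:v1} and \ref{deftype2} and the corresponding degenerate cases of Lemma \ref{lem:type1Markov}, together with the observation that after the degeneration time $D$ the single surviving mass is just a $\besq(-1)$, which is trivially Markov.

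The main obstacle I anticipate is the bookkeeping at the clock levels: making rigorous that conditioning on $\cF^{Y_j}$ the pair ``clock spindle $\ff^{(j)}$ and type-1 triple $\Psi_j$'' has the claimed product law, with the correct parity assignment of which of $m_1, m_2$ is the $\besq(-1)$ clock and which sits atop the type-1 evolution. This requires combining the strong Markov property of the $\besq(-1)$ processes $\ff_1,\ff_2$ with Lemma \ref{lem:type0Markov} for the type-0 pair $(\cev\fN, \fN_\beta)$, and then invoking the inductive identification of $\Psi_j$ from the proof of the Proposition following Definition \ref{deftype2}; the subtlety is that $Y_j$ depends on $\ff_1$ (or $\ff_2$) and on the scaffolding $\fX_*$, so one must check the conditional independence structure survives stopping at $Y_j$. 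I would also need to verify that the redundant portions $\restrict{\fN_*}{(T^+_j, T^-_{j+1}]\times\cE}$ deleted by emigration do not interfere with the cutoff data above level $y$, which follows because those excursions live strictly below level $Y_j \le y$ — but spelling this out against the definition of $\fN^y_\Psi$ requires care.
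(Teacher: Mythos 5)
Your strategy -- conditioning iteratively through the clock levels $Y_j$ and applying Lemma \ref{lem:type1Markov} at the random level $y-Y_j$ after conditioning on $\cF^{Y_j}$ -- is genuinely different from the paper's proof. The paper works directly at level $y$: it regards $\fN_*=\clade(\ff_2,\cev\fN)\concat\fN_\beta$ as a single type-1 object, applies Proposition 5.6 of \cite{Paper1} once at the \emph{fixed} level $y$ to get the conditional law of the full cutoff $\fN_*^y$, and then identifies where $m_1^y$, $m_2^y$, $\alpha^y$ sit inside the type-1 skewer $(0,m_*^y)\concat\alpha_*^y$ via the local-time markers $\ell^y_{\fX_*}(S)$, $\ell^y_{\fX_*}(T)$. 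The whole point of this design is to avoid ever applying the Markov-like property at the random clock levels $Y_j$, precisely because $Y_j$ is defined through time-indexed scaffolding quantities (first passage times and overshoot levels of $\fX_*$) and it is not automatic that these are stopping times in the level-indexed filtration $(\cF^y)_{y\ge0}$, nor that the associated data decompose compatibly with $\cF^y$ on the event $\{Y_j\le y<Y_{j+1}\}$.

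That is exactly where your proposal has a gap. You assert that ``inductively each $Y_{j+1}$ is an $\cF$-stopping time because, given $\cF^{Y_j}$, the post-$Y_j$ scaffolding excursion structure and the clock spindle $\ff^{(j)}$ are measurable with respect to the shifted data,'' but this presupposes what needs proving: conditioning on $\cF^{Y_j}$ only makes sense once $Y_j$ is a stopping time, and whether $Y_{j+1}=\fX_*(T^+_j)$ (an overshoot level of the time-parametrised scaffolding) is measurable in the \emph{level} filtration is non-trivial. The paper's Lemma \ref{lem:LTs_meas_in_lvl} carries out exactly this work: it runs the deletion-clocking construction on the below-$y$ cutoff $\Psi^{\le y}$ in place of $\Psi$, shows inductively that the clock quantities agree ($Z_j=\min\{Y_j,y\}$, $U^\pm_j=\phi(T^\pm_j)$) up to $j=J(y)+1$, and deduces $\cF^y$-measurability of the clock local times. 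Your proposal invokes \eqref{eq:cutoff_LT} and continuity of local time as ``technical ingredients'' but never gives the inductive comparison between the clocked construction on $\Psi$ and on $\Psi^{\le y}$, which is the actual content. Relatedly, your ``outer layer'' --- that $\cF^y$ restricted to $\{J(y)=j\}$ is generated by $\cF^{Y_j}$ together with the data on $[Y_j,y]$ --- also rests on this; and you would further need a strong-Markov (stopping-time) version of Lemma \ref{lem:type1Markov} at $Y_j$, which is stated in the paper only at fixed levels. None of this is fatal in principle --- the proof of the unnumbered proposition after Definition \ref{deftype2} does condition at $\cF^{Y_{n-1}}$ --- but those arguments establish a pathwise identity of constructions, not a conditional-distribution statement given $\cF^y$, and the additional measurability bookkeeping is precisely what Lemma \ref{lem:LTs_meas_in_lvl} supplies and what your sketch skips. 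If you fill in an argument equivalent to that lemma and a stopping-time form of Lemma \ref{lem:type1Markov}, your iterative route would go through; the paper's route buys you a one-shot application of the type-1 Markov-like property at the fixed level $y$, sidestepping both of these issues.
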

We prove this by way of the following.
\begin{lemma}\label{lem:LTs_meas_in_lvl}
 Fix $y>0$. On $\{\zeta>y\}$, let
 \begin{equation*}
  J:=J(y) = \inf\{ j \geq 0\colon Y_{j+1} > y \}, \quad S := T_{J}^+, \quad T := \inf\{t > T_{J+1}^- \colon \fX_*(t)\ge y\}.
 \end{equation*}
 On $\{\zeta\le y\}$, let $S=T=\infty$. Then the local times $\ell^y_{\fX_*}(S)$ and $\ell^y_{\fX_*}(T)$ are measurable in $\cF^y$.
\end{lemma}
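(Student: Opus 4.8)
The plan is to express $\ell^y_{\fX_*}(S)$ and $\ell^y_{\fX_*}(T)$ via the local-time process of the below-$y$ cutoff $\fX_*^{\le y}$, evaluated at times the cutoff itself determines; since $\cF^y$ retains precisely the below-$y$ data, this yields the claim. For the measurability inputs: $(\cF^y)$ is generated by $(\ff_1(y),\ff_2(y),\fN^{\le y})$, hence contains $\sigma\{\ff_2(z),\fN^{\le z}\colon z\le y\}$, to which $\fN_*^{\le y}$ is adapted by the type-1 cutoff adaptedness recalled before Lemma \ref{lem:type1Markov} (applied to the data triple $(\ff_2,\cev\fN,\fN_\beta)$, for which $\fN_*=\clade(\ff_2,\cev\fN)\concat\fN_\beta$). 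Therefore $\fX_*^{\le y}=\xi(\fN_*^{\le y})$, and --- using the continuity of local times recalled from \cite[proof of Proposition 5.11]{Paper1} --- the process $(\ell^y_{\fX_*^{\le y}}(t),\,t\ge0)$, are $\cF^y$-measurable. By \eqref{eq:cutoff_LT}, $\ell^y_{\fX_*}(t)=\ell^y_{\fX_*^{\le y}}(\phi(t))$ with $\phi(t)=\Leb\{s\le t\colon\fX_*(s)\le y\}$, so it suffices to exhibit $\ell^y_{\fX_*}(S)$ and $\ell^y_{\fX_*}(T)$ as $\ell^y_{\fX_*^{\le y}}$ evaluated at $\cF^y$-measurable times.

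The first step reduces $S=T_J^+$ and $T$ to honest below-$y$ times. Extend $J=J(y)=\inf\{j\ge0\colon Y_{j+1}>y\}$ to all of $\Omega$ (on $\{\zeta\le y\}$ this is the terminal index of Lemma \ref{type2welldef}, with $T_J^+=\infty$). Almost surely $Y_J<y$ because each $Y_n$ has a continuous law. Then $\fX_*<y$ on $[T_J^-,S)$ for $J\ge1$ (there $\fX_*\le Y_J$ by definition of $T_J^+$, with $T_1^-=0$), $\fX_*<y$ on $[T_{J+1}^-,T)$ (there $\fX_*<y$ by definition of $T$, and $\fX_*(T_{J+1}^-)=Y_J$), and the single jumps of $\fX_*$ at $S$ and at $T$ add no local time at level $y$. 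Hence, for $J\ge1$,
\[
\ell^y_{\fX_*}(S)=\ell^y_{\fX_*}(T_J^-)=\ell^y_{\fX_*^{\le y}}\big(\phi(T_J^-)\big),\qquad \ell^y_{\fX_*}(T)=\ell^y_{\fX_*}(T_{J+1}^-)=\ell^y_{\fX_*^{\le y}}\big(\phi(T_{J+1}^-)\big),
\]
while $\ell^y_{\fX_*}(S)=0$ when $J=0$. (These identities are valid on all of $\Omega$: on $\{\zeta\le y\}$, $\fX_*$ remains $\le Y_J<y$ after $T_J^-$, so the right-hand sides equal $\ell^y_{\fX_*}(\infty)$, which agrees with the left-hand sides as $S=T=\infty$ there.) It remains to check $\cF^y$-measurability of $J$, $\phi(T_J^-)$ and $\phi(T_{J+1}^-)$.

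This is the crux. I would transport the clocking recursion \eqref{eq:d_clk:levels} of Definition \ref{deftype2} into the cutoff process: starting from $Y_1=\zeta(\ff_1)$ (which is $\cF^y$-measurable on $\{Y_1\le y\}$), run \eqref{eq:d_clk:levels} with $\fX_*^{\le y}$ in place of $\fX_*$, stopping the first time the exceedance jump of the current clock level lands at the top value $y$ of $\fX_*^{\le y}$ (keeping $\inf\emptyset=\infty$). The key point is the nesting $\{t\colon\fX_*(t)\le Y_n\}\subseteq\{t\colon\fX_*(t)\le y\}$ for $n\le J$: every climb $(T_n^-,T_n^+)$ lies below $Y_n<y$ and is copied verbatim (without time change) into $\fX_*^{\le y}$, so the first exceedance of $Y_n$ and the value $Y_{n+1}$ are read off $\fX_*^{\le y}$ whenever $Y_{n+1}\le y$, while the alternative $Y_{n+1}>y$ is signalled precisely by that exceedance jump being cut at the top value $y$; and every descent of $\fX_*$ to a level $Y_n<y$ --- in particular at $T_n^-$ and at $T_{n+1}^-$ --- survives the time change $\phi$, so its cutoff-time is recovered. (The deleted excursions $(T_n^+,T_{n+1}^-]$ with $n<J$ may themselves rise above $y$, but only their below-$y$ portions, which are retained in $\fX_*^{\le y}$, enter here.) Thus this cutoff recursion produces $J$ and the cutoff-times $\phi(T_1^-),\phi(T_1^+),\dots,\phi(T_J^-),\phi(T_{J+1}^-)$ as measurable functions of $\fX_*^{\le y}$ and $\ff_1$, hence as $\cF^y$-measurable quantities; together with the first step this completes the proof (the degenerate cases $(b,\beta)=(0,\emptyset)$ and $(a,\beta)=(0,\emptyset)$ being trivial, with $\fX_*\equiv0$ and both local times $0$). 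I expect this transport step to be the main obstacle: it requires arguing carefully that, although the clocking is driven by $\fX_*$ whose deleted excursions can pass above $y$, every quantity needed to locate $S$, $T$ and the local time accumulated before them is invariant under the cutoff time change $\phi$ --- which is exactly what the nesting above and \eqref{eq:cutoff_LT} deliver.
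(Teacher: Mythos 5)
The proposal is correct and takes essentially the same approach as the paper: both transport the clocking recursion \eqref{eq:d_clk:levels} to the $\cF^y$-measurable cutoff scaffolding $\fX_*^{\le y}$ and argue that running that recursion on the cutoff recovers $J$, the levels $\min\{Y_j,y\}$, and the $\phi$-images of the relevant times, with the paper formalizing this as an explicit induction on $j$ showing $U_j^{\pm}=\phi(T_j^{\pm})$ and $Z_j=\min\{Y_j,y\}$. Your preliminary reduction of $\ell^y_{\fX_*}(S)$ and $\ell^y_{\fX_*}(T)$ to the descent times $T_J^-$ and $T_{J+1}^-$ is a minor cosmetic variant; the paper instead applies \eqref{eq:cutoff_LT} directly at $S=T_J^+$, giving $\ell^y_{\fX_*}(S)=\ell^y_{\fX_*^{\le y}}(U_J^+)$.
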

\begin{proof}
 Consider the cutoff processes
 $$\Psi^{\le y} := ((\ff_1(z),z\le y),(\ff_2(z),z\le y),\cev\fN^{\le y},\fN_\beta^{\le y}), \qquad \fX_*^{\le y} := \xi\big( \fN_*^{\le y} \big),$$
 where $\fN_\beta^{\le y}$ is associated with the type-1 point measure $\fN_\beta$, and $\cev\fN^{\le y}$ is such that $\cev\fN^{\le y}\star\fN_\beta^{\le y}=\fN^{\le y}$ with $\fN^{\le y}$ 
 associated with the type-0 data $(\cev\fN,\fN_\beta)$ as in Section \ref{prel}. Now, suppose we apply the construction of Definition \ref{deftype2} to $\Psi^{\le y}$ in place of $\Psi$. For clarity, we refer to the times and levels associated with this construction on $\Psi^{\le y}$ as $U_{j}^+$, $U_{j}^-$ and $Z_{j}$, for $j\ge 0$, rather than $T_{j}^+$, $T_{j}^-$ and $Y_{j}$, which are associated with the construction on $\Psi$.
 
 Let $\phi$ be as in \eqref{eq:cutoff_LT}. We now show by induction that: (i) $U_{j-1}^+ = \phi(T_{j-1}^+)$, (ii) $Z_j = \min\{Y_j,y\}$, and (iii) $U_j^- = \phi(T_j^-)$ for all $1\le j \le J+1$. By definition, $ U_1^- = \phi(T_1^-) =U_0^+ = \phi(T_0^+) =0$ and $Z_1=\min\{Y_1,y\}$. If $J = 0$, then this completes the proof. Otherwise, assume the assertion holds up to some index $j\le J$. Then $\shiftrestrict{\fX_*}{[T_j^-,T_j^+)} = \shiftrestrict{\fX_*^{\le y}}{[U_j^-,U_j^+)}$, as, by our hypotheses, $\phi(T_j^-) = U_j^-$ and $\fX_*$ is bounded above by $Y_j\le y$ on this interval. This implies $Z_{j+1} = \min\{Y_{j+1},y\}$ and $U_j^+ = \phi(T_j^+)$. Then $T_{j+1}^-$ is the time of the next return of $\fX_*$ to level $Y_j = Z_j$, so $\phi(T_{j+1}^-)$ is the time of next return of $\fX_*^{\le y}$ to this level, which equals $U_{j+1}^-$, as desired. 
 
 By \eqref{eq:cutoff_LT}, $\ell^y_{\fX_*}(S) = \ell^y_{\fX_*^{\le y}}(\phi(T_{J}^+)) = \ell^y_{\fX_*^{\le y}}(U_{J}^+)$. 
 Finally, $J = \inf\{ j \geq 0\colon Z_{j+1} = y \}$ a.s.. Thus, $\ell^y_{\fX_*}(S)$ is measurable in $\cF^y$. Similarly, $\phi(T) = \inf\{t > U_{J+1}^-\colon \fX_*^{\le y}(t) = y\} =: T'$. Thus, $\ell^y_{\fX_*}(T) = \ell^y_{\fX_*^{\le y}}(T')$ is also measurable in $\cF^y$.
\end{proof}
\begin{proof}[Proof of Proposition \ref{prop:type2Markov}]
 In the case $y=0$, the assertion is trivial. Fix $y>0$. Consider the type-1 data $(\ff_2,\cev\fN,\fN_{\beta})$ and associated point process $\fN_* = \clade(\ff_2,\cev\fN)\concat\fN_{\beta}$. Let $((m_*^z,\alpha_*^z),\,z\ge0)$ denote the resulting type-1 evolution. Let $\fN_*^y$ denote the cutoff process, as in Lemma \ref{lem:type1Markov}.
 We restrict to the a.s.\ event that $\fN_*$ behaves \emph{nicely} about level $y$, in the sense that no two excursions about the level occur at the same local time.
 
 Let $J$, $S$, and $T$ be as in Lemma \ref{lem:LTs_meas_in_lvl}. We first work on the event $\{y < D,\ J \ge 1,\ J\text{ is even}\}\in\cF^y$.  Theorem 37 of \cite{Paper0} asserts that, if a spindle occurs at time $t$ and survives to level $y$, then the corresponding block in the level $y$ skewer occurs at diversity $\ell^y_{\fX_*}(t)$. Thus, $m_1^y$ and $m_2^y$ are respectively the masses of the unique blocks $V_1,V_2\in (0,m_*^y)\concat\alpha_*^y$ for which $\sD_{(0,m_*^y)\concat\alpha_*^y}(V_1) = \ell^y(S)$ and $\sD_{(0,m_*^y)\concat\alpha_*^y}(V_2) = \ell^y(T)$. Finally, $\alpha^y$ corresponds to the set of blocks of $\alpha_*^y$ that have diversity greater than $\ell^y(T)$ to their left:
 $$\alpha_*^y = \{U\in\alpha_*^y\colon \sD_{\alpha_*^y}(U) \le \ell^y(T)\} \concat \alpha^y.$$
 In particular, from Lemma \ref{lem:LTs_meas_in_lvl} and the property that the type-1 evolution is adapted, we find that $(m_1^y,m_2^y,\alpha^y)$ is measurable in $\cF^y$.
 
 Analogously to the discussion of $m_1^y$ and $m_2^y$, since $J$ is even, $\ff_{\Psi,1}^y = (\ff^{(J)}(y-Y_{J}+z),\,z\ge0)$ and $\ff_{\Psi,2}^y$ are cut off the spindles that cross level $y$ at times $S$ and $T$, 
 respectively. In other words, $\ff_{\Psi,1}^y$ and $\ff_{\Psi,2}^y$ are cut off the middle spindles of the excursions about $y$ at local times $\ell^y_{\fX_*}(S)$ and $\ell^y_{\fX_*}(T)$, respectively. Let 
 $R := \inf\{t>T\colon \fX_*(t) = y\}$. Then, by Definition \ref{def:type2:cutoff},
 \begin{equation*}
  \cev\fN^y_\Psi = \RestrictShift{\cev\fN}{(-\infty,T_{\fX_*(T)}(\cev\fN)]}\concat\RestrictShift{\fN^y_*}{(T-\phi(T),R-\phi(R))} \qquad \text{and} \qquad
  \fN^y_\Psi = \ShiftRestrict{\fN^y_*}{[R-\phi(R),\infty)},
 \end{equation*}
 where $\phi$ is as in \eqref{eq:cutoff_LT}. Proposition 5.6 of \cite{Paper1} implies that, given $\cF^y$, the cutoff process $\fN^y_*$ is conditionally distributed as
 $\ConcatIL_{U\in (0,m_*^y)\concat\alpha_*^y}\fN_U$, 
 where each $\fN_U$ is a clade distributed as $\clade(\ff_U,\cev\fN)$, with $\ff_U\sim\besq_{\text{Leb}(U)}(-1)$ independent of $\cev\fN$, and these clades are all conditionally independent given $(0,m_*^y)\concat\alpha_*^y$. Similarly, for $\{y<D,J\ge 1,J\text{ is odd}\}$, the same argument applies, with roles of 1
 and 2 swapped. On $\{J=0\}=\{Y_1>y\}$, we can just apply the Markov-like property for type-1 evolutions and the Markov property of $\besq(-1)$. We conclude from this and the previous two paragraphs that $(\ff_{\Psi,1}^y,\ff_{\Psi,2}^y,\cev\fN_\Psi^y,\fN_\Psi^y)$ has the claimed conditional law.
\end{proof}

\subsection{Type-2 evolutions via interweaving two type-1 point measures}\label{sec:interweaving}

In this section we present another construction of type-2 evolutions from initial states in which the interval partition component is an independent multiple of a $\PDIP[\frac12,\frac12]$ random variable. 
Such interval partitions appear as pseudo-stationary distributions of type-0 and type-1 evolutions, and indeed, we will use this construction to study pseudo-stationarity properties of type-2 evolutions, including projections of type-2 evolutions to three-mass processes that only retain the evolution of the two top masses and the total mass of the interval partition.

Consider independent $A$ and $B$ for which $\bP(A+B>0)=1$. Also consider independent 
$C_1,C_2\sim\GammaDist(\frac12,\gamma)$ and $\overline{\beta}_1,\overline{\beta}_2\sim{\tt PDIP}(\frac12,\frac12)$ independent of $(A,B)$. Let 
$(\ff_1,\cev\fN_1,\fN_{\beta_1})$ and $(\ff_2,\cev\fN_2,\fN_{\beta_2})$ be two independent type-1 data triples with $\ff_1(0)=A$, $\ff_2(0)=B$, 
$\beta_1=C_1\overline{\beta}_1$ and $\beta_2=C_2\overline{\beta}_2$. Let $\fN_1 := \clade(\ff_1,\cev\fN_1)\concat\fN_{\beta_1}$, and 
correspondingly define $\fN_2$. We will combine these to define a process $((\td m_1^y,\td m_2^y,\td\alpha^y),\,y\ge 0)$ that we will show is a 
type-2 evolution. This \emph{interweaving} construction is illustrated in Figure \ref{fig:interweaving}.

\begin{figure}
 \centering
 \input{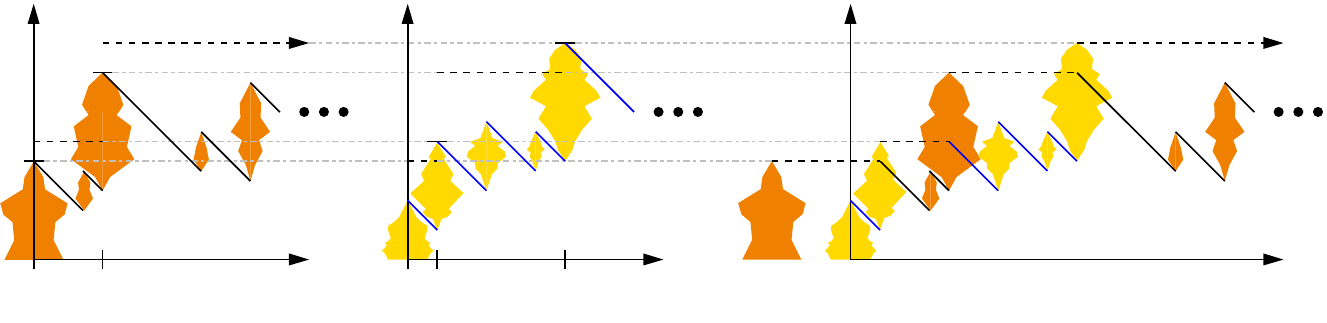_t}
 \caption{Interweaving can be thought of as alternating intervals, $\shiftrestrict{\fN_i}{(T_{j-2},T_j]}$ from two type-1 scaffoldings with spindles, $(\fN_1,\fN_2)$. We begin with a single spindle, $\ff_1$, from $\fN_1$. Then, we include an interval from $\fN_2$ until the time $T_1$ at which its scaffolding exceeds the death level $Z_1 = \zeta(\ff_1)$, reaching some higher level $Z_2$. To this, we concatenate an interval from $\fN_1$ until the time $T_2$ at which its scaffolding exceeds level $Z_2$, reaching some higher level $Z_3$, and so on.\label{fig:interweaving}}
\end{figure}

Let $\fX_1 := \xi(\fN_1)$ and $\fX_2 := \xi(\fN_2)$. We set $T_{-1} := T_0 := 0$, $Z_0 := 0$ and $Z_1 := \zeta(\ff_1)$. For $i\ge 1$ we define
\begin{equation}\label{eq:interweave_vars}
\begin{split}
 T_{2i-1} := \inf\{t\ge T_{2i-3}\colon \fX_2(t) > Z_{2i-1}\}, &\qquad Z_{2i} := \fX_{2}(T_{2i-1}),\\
 T_{2i} := \inf\{t\ge T_{2i-2}\colon \fX_1(t) > Z_{2i}\}, &\qquad Z_{2i+1} := \fX_{1}(T_{2i}),
\end{split}
\end{equation}
with the conventions that $\inf(\emptyset) = \infty$ and $\fX_1(\infty)=\infty$ and $\fX_2(\infty)=\infty$. Also note that this includes setting $T_1=0$ if $\zeta(\ff_2)>\zeta(\ff_1)$. Let $p$ denote the \emph{parity} map, sending even numbers to 2 and odd numbers to 1. For $y\ge 0$ we define
\begin{equation*}
 \td I(y) := p\big(\inf\{j\ge 0\colon Z_{j+1} > y\}\big), \qquad \td J(\infty) := \inf\{j\ge 1\colon T_{j} = \infty\},
\end{equation*}
\begin{equation}\label{eq:interweaving_skewer}
 \left(0,\td m_{3-\td I(y)}^y\right) \concat \left(0,\td m_{\td I(y)}^y\right) \concat\td\alpha^y
 :=\ (0,\ff_1(y))\concat (0,\ff_2(y))\concat\widetilde{\theta}(y)
\end{equation}
where $\displaystyle\widetilde{\theta}(y)=\skewer\!\left(y - \zeta(\ff_2), \Restrict{\fN_{2}}{(0,T_{1}]\times\cE}\right)\concat\Concat_{2\le j\le\td J(\infty)}\! \skewer\!\left(y - Z_{j-1}, \ShiftRestrict{\fN_{p(j+1)}}{(T_{j-2},T_{j}]\times\cE}\right)$.
By this we mean that, (i) if the expression on the right of \eqref{eq:interweaving_skewer} has a leftmost block (note that this equals $(0,\ff_1(y))$ if and only if $y < \zeta(\ff_1)$), then we take $\td m_{3-\td I(y)}^y$ to denote the mass of this block, otherwise setting $\td m_{3-\td I(y)}^y :=0$; and (ii) if said expression has a second-to-leftmost block, then we denote its mass by $\td m_{\td I(y)}^y$, otherwise setting $\td m_{\td I(y)}^y := 0$. Then $\td\alpha^y$ denotes what remains of $\widetilde{\theta}(y)$ after removing leftmost blocks as required to form $\widetilde{m}_1^y$ and $\widetilde{m}_2^y$, and, if necessary, shifting the remaining interval partition down to line up with 0 on its left end.

\begin{proposition}\label{interweaving}
 The process $((\td m_1^y,\td m_2^y,\td\alpha^y),y\ge 0)$ defined in \eqref{eq:interweaving_skewer} is a type-2 evolution with initial state  
 $(\td m_1^0,\td m_2^0,\td\alpha^0)=(A,B,C\overline{\beta})$, where $A$, $B$, $C$ and $\overline{\beta}$ are jointly independent, with 
 $C\sim\GammaDist(\frac12,\gamma)$ and $\overline{\beta}\sim\PDIP(\frac12,\frac12)$.
\end{proposition}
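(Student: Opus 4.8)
The plan is to identify the interweaving process with the deletion‑clocking construction of Definition~\ref{deftype2}, working across the clock changes and invoking the Markov‑like property of type‑1 data triples/type‑0 data pairs together with pseudo‑stationarity. First I would dispose of the boundary cases: if $A=0$ (resp.\ $B=0$) and the corresponding clade of $\fN_1$ (resp.\ $\fN_2$) is trivial, the construction collapses to a type‑1 evolution or a $\besq(-1)$ and is handled exactly as the degenerate cases in Definition~\ref{deftype2}; so assume $A,B>0$. By Lemma~\ref{type1fromtype0}, $\fN_i=\clade(\ff_i,\cev\fN_i)\concat\fN_{\beta_i}\sim\fP^1_{(0,\ff_i(0))\star C_i\overline\beta_i}$, so $\skewerbar(\fN_i)$ is a type‑1 evolution whose leftmost block is $\ff_i\sim\besq_{\ff_i(0)}(-1)$ and whose remaining part agrees, below level $\zeta(\ff_i)$, with a type‑0 evolution started from $C_i\overline\beta_i\sim\GammaDist(\tfrac12,\gamma)\times\PDIP(\tfrac12,\tfrac12)$. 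Unwinding \eqref{eq:interweaving_skewer} at $y=0$ then gives $(\td m_1^0,\td m_2^0,\td\alpha^0)=(A,B,C_2\overline\beta_2)$, which has the asserted law with $C:=C_2$, $\overline\beta:=\overline\beta_2$, jointly independent of $(A,B)$.

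The heart of the argument is a coupling identifying $((\td m_1^y,\td m_2^y,\td\alpha^y),y\ge0)$ with the output of Definition~\ref{deftype2} applied to a quadruple $\sim\fP^2_{A,B,C\overline\beta}$. Take the first clock spindle to be $\ff_1$ and the auxiliary type‑1 point measure to be $\fN_2$, so $\fN_*=\fN_2$; then $Z_1=\zeta(\ff_1)=Y_1$, the interweaving time $T_1$ of \eqref{eq:interweave_vars} equals $T_1^+$, and $Z_2=Y_2$, and on $[0,Z_1)$ the two constructions agree term by term (here one uses that appending the block $(0,\ff_2(y))$ to $\skewer(y-\zeta(\ff_2),\Restrict{\fN_2}{(0,T_1]})$ recovers $\skewerbar(\fN_2)$ at level $y$). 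At the clock change level $Z_1$, Definition~\ref{deftype2} reads its next clock spindle as the cut‑off top $\wh f^{Z_1}$ of the spindle of $\fN_2$ crossing level $Z_1$ at time $T_1$, and its subsequent type‑0/type‑1 data from $\fN_2$ beyond the return time $T_2^-$. Now the Markov‑like property for type‑1 data triples (Lemma~\ref{lem:type1Markov}; or Proposition~\ref{prop:type2Markov}) gives that, conditionally on $\cF^{Z_1}$, this new clock spindle is a $\besq(-1)$ started from the current surviving top mass, independent of a fresh type‑0 data pair started from $\td\alpha^{Z_1}$; and pseudo‑stationarity of type‑0 evolutions from $\GammaDist(\tfrac12,\cdot)\times\PDIP(\tfrac12,\tfrac12)$ (Proposition~\ref{prop:type1:pseudo}, Corollary~\ref{corbesq2}) identifies $\td\alpha^{Z_1}$, conditionally, as such a $\GammaDist\times\PDIP$. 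Since a pseudo‑stationary type‑0 evolution has the same law whether continued past a level or freshly restarted from its (distributionally identical) state there, one may replace the spinal data that Definition~\ref{deftype2} would read off $\fN_2$ above $Z_1$ by a fresh, independent type‑0 point measure with the same conditional law — and, by Lemmas~\ref{lem:type0Markov} and \ref{lem:type1Markov} applied to $\fN_1$ above its own level $\zeta(\ff_1)=Z_1$, the continuation of $\fN_1$ supplies precisely such a point measure. Iterating this substitution across the clock changes, alternating $\fN_1$ and $\fN_2$ exactly as \eqref{eq:interweave_vars}–\eqref{eq:interweaving_skewer} prescribe, and invoking the symmetry Lemma~\ref{lem:type2_symm} to align parities, shows the interweaving process has the same law as the deletion‑clocking construction from $\fP^2_{A,B,C\overline\beta}$; together with Lemma~\ref{type2welldef} (so the recursion terminates and the process is defined for all $y$), this is a type‑2 evolution.

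The step I expect to be the main obstacle is the level‑alignment at the hand‑offs: one must verify that the first‑passage levels $Z_j$ built from the two independent scaffoldings $\fX_1,\fX_2$ coincide with the clock change levels $Y_j$ of Definition~\ref{deftype2}, that the spindle of $\fN_{p(j)}$ selected at time $T_j$ is the correct cut‑off top clock spindle $\wh f^{Z_{j-1}}$, and — most delicately — that the spliced interval partition $\skewer(y-Z_{j-1},\ShiftRestrict{\fN_{p(j+1)}}{(T_{j-2},T_j]})$, as a process in $y$ together with its associated cut‑off top spindle, has, given the relevant history, exactly the conditional law $\besq(-1)\otimes\fP^0$ of a fresh type‑2 data pair started from the current state. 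This is where the Markov‑like property of Proposition~\ref{prop:type2Markov} must be combined with pseudo‑stationarity; the technical input that makes it go through is that the leftmost block of a type‑1 evolution at a fixed level is read off the $\Stable[\tfrac12]$ ladder‑height subordinator of the underlying $\Stable[\tfrac32]$ scaffolding, whose range a.s.\ avoids any fixed level (cf.\ the coupling used in Lemma~\ref{lemma slightly stronger continuity}), so each hand‑off occurs at a ``regular'' level of $\fN_i$ and introduces no spurious blocks. Once this alignment is established, the remaining identifications are bookkeeping.
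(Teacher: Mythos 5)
Your high-level strategy — identify interweaving with deletion-clocking by induction over clock changes, using a memorylessness/refresh property at each hand-off — is the same as the paper's, but you have not named the decisive technical tool and the tools you do cite leave a genuine gap.

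The key ingredient in the paper's proof is Lemma~\ref{lem:pseudostat:memoryless} (memorylessness for pseudo-stationary type-1 point measures): at any stopping time $R$ in the \emph{time} filtration of the scaffolding, conditionally given the past and given $\xi_{\fN_*}(R)=x$, the future $\shiftrestrict{\fN_*}{(R,\infty)\times\cE}$ is distributed as a fresh pseudo-stationary point measure shifted up from level $x$. The paper applies this at the (time) stopping times $T_{j-2}$ to show, in the inductive step, that each new scaffolding-with-spindles interval $\shiftrestrict{\fN_{p(j+1)}}{(T_{j-2},T_{j}]\times\cE}$ is conditionally independent of the preceding intervals, with the correct law. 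You instead invoke the level-filtration Markov-like properties (Lemma~\ref{lem:type1Markov}, Proposition~\ref{prop:type2Markov}) at the random \emph{level} $Z_1=\zeta(\ff_1)$ together with pseudo-stationarity of the interval-partition evolution. Those propositions are stated at fixed levels $y$, and extending them to the random level $Z_1$ is precisely the kind of delicate step that Lemma~\ref{lem:pseudostat:memoryless} is designed to circumvent; moreover, the second half of your substitution argument — that $\fN_1$ read beyond the initial spindle supplies a fresh type-0 point measure with the right conditional law — again needs memorylessness of $\fN_1$ at a (time) stopping time, not a level-filtration fact. Without the memorylessness lemma, the hand-off step does not go through as you have described it.

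Two further points. First, the coupling $\fN_*=\fN_2$ with clock spindle $\ff_1$ only forces pathwise agreement up to $Y_2 = Z_2$: from \eqref{eq:d_clk:levels}, $Y_3,Y_4,\ldots$ are all determined by $\fX_2$, whereas in \eqref{eq:interweave_vars} the levels $Z_j$ alternate between being determined by $\fX_1$ and $\fX_2$, so the identity $Z_j=Y_j$ for $j\ge 3$ is distributional, not pathwise; the paper correspondingly proves a distributional identity \eqref{eq:inter_clocking_intervals} rather than asserting a pathwise coupling. Second, the paper's proof has a separate, purely pathwise component — the rewriting \eqref{eq:alt_deletion_clocking}, verified by a four-case check of which portions of scaffolding lie above or below the current level — that converts the deletion-clocked skewer into the interweaved form; your proposal treats this as automatic ("agree term by term"), but it is exactly the careful level-tracking you flag as the main obstacle, and it needs to be carried out.
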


Before we prove this proposition, we recall a simpler construction of pseudo-stationary type-1 data triples that does not require concatenating 
infinitely many clades.

\begin{proposition}[Corollary 4.28(ii) of \cite{Paper1}]\label{prop:type1:pseudo_constr}
 Fix $\gamma>0$.  Let $\fN$ denote a \PRM[\Leb\otimes\mBxc] independent of $S\sim\ExpDist[\gamma]$. We define $T := \inf\{t > 0\colon M^0_{\fN}(t) > S\}$, where $M_{\fN}^0$ is the aggregate mass process of \eqref{eq:skewer_def}. Then $\beta := \skewer\big(0,\restrict{\fN}{[0,T)}\big)$ is a \PDIP[\frac12,\frac12] scaled by an independent \GammaDist[\frac12,\gamma], and, recalling the notation above Lemma \ref{lem:type0Markov}, $\big(\restrict{\fN}{[0,T)}\big)^0$ is a type-1 point measure with initial state $\beta$. Moreover, $\big(\cev\fN,\big(\restrict{\fN}{[0,T)}\big)^0\big)$ is pseudo-stationary type-0 data with \GammaDist[\frac12,\gamma] initial mass.
%
\end{proposition}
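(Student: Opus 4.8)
The plan is to verify the three assertions in turn: (1) the law of $\beta$; (2) that $\big(\restrict{\fN}{[0,T)}\big)^0$ is a type-$1$ point measure with initial state $\beta$; and (3) that, together with $\cev\fN$, this forms type-$0$ data, the pseudo-stationarity then being immediate from Proposition~\ref{prop:type1:pseudo}. The unifying observation is that, by \eqref{eq:skewer_def}, $\beta=\skewer\big(0,\restrict{\fN}{[0,T)}\big)$ is literally the interval partition of $[0,M^0_{\fN}(T-)]$ into the jump intervals $\big(M^0_{\fN}(t-),M^0_{\fN}(t)\big)$, $t<T$, of the aggregate mass process, while $\big(\restrict{\fN}{[0,T)}\big)^0$ is, in the notation above Lemma~\ref{lem:type0Markov}, the concatenation of the clades cut off at level $0$ from the finitely many excursions of $\xi(\fN)$ above $0$ that are completed strictly before time $T$, one clade per block of $\beta$; by the Markov property of the $\besq(-1)$ excursion measure, such an up-crossing spindle, cut at level $0$, is a $\besq(-1)$ started from its value at level $0$, which is exactly the mass of the corresponding block.

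To handle (1) I would reparametrise by local time: writing $\ell(t)=\ell^0_{\xi(\fN)}(t)$ for the local time of the scaffolding at level $0$ (continuous in $t$, as in the proof of Proposition~\ref{prop:type2Markov}), set $\sigma(u):=M^0_{\fN}\big(\inf\{t\colon\ell(t)>u\}\big)$. By the excursion theory of the spectrally positive $\Stable[\frac32]$ process $\xi(\fN)$ at the regular point $0$ together with the Poisson structure of $\fN$, the increments of $\sigma$ over disjoint local-time intervals are i.i.d., so $\sigma$ is a subordinator; it has no drift since $M^0_{\fN}$ grows only by the jumps contributed by up-crossing spindles, and a direct computation from $\mBxc(\zeta\in dy)\propto y^{-5/2}dy$ and the undershoot law of $\xi(\fN)$ at level $0$ shows its L\'evy measure is $\propto x^{-3/2}dx$, i.e.\ $\sigma$ is $\Stable[\frac12]$. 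Since local time is a monotone reparametrisation, the blocks of $\beta$, in their left-to-right order, are exactly the jumps of $\sigma$, in their natural order, strictly before $\sigma$ first exceeds $S$, and $\|\beta\|=\sigma\big(\sigma^{-1}(S)-\big)=G_S$, the undershoot. Now I invoke the classical Poisson--Dirichlet identity: conditionally on $G_r=g$, the jumps of a $\Stable[\frac12]$ subordinator before crossing a fixed level $r$, normalised by $g$, are distributed as $\PDIP(\frac12,\frac12)$ and are independent of $g$ --- this is the Brownian-bridge-of-length-$g$ description of the excursion intervals completed before the last zero; see \cite{CSP}. Hence randomising $r\to S\sim\ExpDist[\gamma]$ gives $\beta/\|\beta\|\sim\PDIP(\frac12,\frac12)$ independently of $\|\beta\|$, and a one-line integral of the generalised-arcsine undershoot density $\bP(G_r\in dg)\propto g^{-1/2}(r-g)^{-1/2}\mathbf 1_{\{0<g<r\}}dg$ against $\gamma e^{-\gamma r}dr$ gives $\|\beta\|=G_S\sim\GammaDist[\frac12,\gamma]$. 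This is insensitive to the (a priori unknown) scaling constant of $\sigma$: rescaling a $\Stable[\frac12]$ subordinator by $c$ turns $G_S$ with $S\sim\ExpDist[\gamma]$ into $c\,G_{S/c}$ with $S/c\sim\ExpDist[c\gamma]$, and $c\cdot\GammaDist[\frac12,c\gamma]=\GammaDist[\frac12,\gamma]$, so only the rate $\gamma$ survives.

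For (2), the same excursion-theoretic picture shows that the clades cut off at level $0$ from the successive up-excursions of $\xi(\fN)$, reindexed by local time, form a Poisson point process whose clades are distributed as $\clade(\ff_U,\cev\fN')$ with $\ff_U\sim\besq(-1)$ started from the mass of the associated block of $\skewer(0,\fN)$; stopping this point process at local time $\sigma^{-1}(S)$ --- equivalently, retaining clades until their cumulative initial masses first exceed $S$ --- keeps the first finitely many of them, and, since the stopping rule depends only on those initial masses, conditionally on $\beta$ the retained clades are independent with the prescribed $\besq(-1)$-from-block-mass laws. By definition this is exactly the statement that $\big(\restrict{\fN}{[0,T)}\big)^0$ is a type-$1$ point measure with initial state $\beta$ (the content, for this exponential stopping rule, of Corollary~4.28(i) and the proof of Proposition~5.6 in \cite{Paper1}). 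Finally, $\cev\fN$ is by construction an independent standard pre-$0$ $\Stable[\frac32]$ first-passage descent as in \eqref{eq:JCCP_def_type0}, so $\big(\cev\fN,\big(\restrict{\fN}{[0,T)}\big)^0\big)\sim\fP^0_\beta$; since $\beta\sim\GammaDist[\frac12,\gamma]\cdot\PDIP(\frac12,\frac12)$ is precisely the type-$0$ pseudo-stationary law with $\GammaDist[\frac12,\gamma]$ total mass by Proposition~\ref{prop:type1:pseudo}, this is pseudo-stationary type-$0$ data with $\GammaDist[\frac12,\gamma]$ initial mass.

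The main obstacle is the first half of step (1): showing that the aggregate-mass process, read in the local-time clock of the scaffolding, is genuinely a $\Stable[\frac12]$ subordinator --- both that the accumulation is a pure-jump additive functional of the excursions and that the resulting L\'evy measure is $\propto x^{-3/2}dx$. This is where the spindle-mass and $\Stable[\frac32]$ fluctuation computations internal to the scaffolding-and-spindles machinery of \cite{Paper1} are needed; once it is in hand, the Poisson--Dirichlet and undershoot identities are routine and steps (2)--(3) are bookkeeping on top of the conditional clade-independence already available from \cite{Paper1}.
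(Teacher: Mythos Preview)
This proposition is not proved in the present paper: it is stated as a direct citation of Corollary~4.28(ii) of \cite{Paper1}, with no accompanying argument. So there is no proof here to compare your proposal against.

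That said, your sketch is a reasonable reconstruction of the kind of argument one would expect in \cite{Paper1}. The strategy of reparametrising the aggregate mass process $M^0_{\fN}$ by the level-$0$ local time of the scaffolding, identifying the result as a $\Stable[\frac12]$ subordinator, and then reading off the $\PDIP[\frac12,\frac12]$ law from the Brownian-bridge/last-zero description and the $\GammaDist[\frac12,\gamma]$ undershoot from the arcsine law mixed against an exponential threshold, is sound in outline. Your observation that the scaling constant of the subordinator is irrelevant because $c\cdot\GammaDist[\frac12,c\gamma]=\GammaDist[\frac12,\gamma]$ is a nice shortcut. The one place where you are honest about a gap --- verifying that the level-$0$ mass process in the local-time clock really is $\Stable[\frac12]$, in particular computing the L\'evy measure $\propto x^{-3/2}dx$ from $\mBxc$ and the $\Stable[\frac32]$ fluctuation theory --- is exactly where the substantive work in \cite{Paper1} lies; this is the content of their ``$m^0$'' computations for bi-clades. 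Parts (2) and (3) of your argument correctly identify the remaining steps as consequences of the conditional-independence structure of clades already established in \cite{Paper1}.
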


In the setting of this construction, we write $\fN_{\beta} := \big(\restrict{\fN}{[0,T)}\big)^0$. Now, let $\ff$ denote a \BESQ[-1] independent of the other objects, with any random initial mass, and define $\fN_* := \clade(\ff,\cev\fN)\concat\fN_{\beta}$. In the special case that $\ff(0)\sim\GammaDist[\frac12,\gamma]$, the measure $\fN_*$ describes a pseudo-stationary type-1 evolution with \ExpDist[\gamma] initial mass, as in Proposition \ref{prop:type1:pseudo_g}. For any distribution of $\ff(0)$, this construction has the following consequence, by way of the Poisson property of $\fN$ and the memorylessness of $S$.

\begin{lemma}[Memorylessness for some type-1 point measures]\label{lem:pseudostat:memoryless}
 Fix $\gamma>0$ and let $\fN_*$ be as above. Let $R$ be a stopping time in the right-continuous time filtration $(\cF_t,\,t\ge0)$ generated by $\fN_*$, i.e.\ the least right-continuous filtration in which $\restrict{\fN_*}{[0,t]}$ is $\cF_t$-measurable for every $t\ge 0$. Given $\restrict{\fN_*}{[0,R]\times\cE}$ with $\xi_{\fN_*}(R) = x$, and further conditioning on 
 $\{\restrict{\fN_*}{(R,\infty)\times\cE}\neq 0\}$, the conditional distribution of $\shiftrestrict{\fN_*}{(R,\infty)\times\cE}$ equals 
 the (unconditioned) distribution of $\ShiftRestrict{\cev\fN}{[T_{x}(\cev\fN),0)\times\cE} \concat \fN_{\beta}$.
\end{lemma}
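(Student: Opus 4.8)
The plan is to split the future $\shiftrestrict{\fN_*}{(R,\infty)\times\cE}$ at the first time the scaffolding $\fX_*:=\xi(\fN_*)$ returns to level $0$ strictly after time $R$. Since $\fN_*=\clade(\ff,\cev\fN)\concat\fN_\beta$ is a concatenation of clades, the clades making up $\fN_*$ correspond to successive excursion-type arches of $\fX_*$, each beginning with an upward jump and ending when $\fX_*$ next returns to $0$. I would write $\sigma$ for that first return time after $R$, set $x:=\fX_*(R)$, and let $k$ be the index of the clade of $\fN_*$ containing $R$, with the convention $k=0$ for $\clade(\ff,\cev\fN)$ and $k\ge1$ for the $k$-th clade of $\fN_\beta$. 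All of $x$, $k$, and the event $\{k=0\}$ are measurable in the natural filtration $(\cF_t,t\ge0)$ of $\fN_*$, and the extra conditioning on $\{\restrict{\fN_*}{(R,\infty)\times\cE}\neq0\}$ only discards the event that $R$ exceeds the lifetime of $\fN_*$, where $x=0$ and nothing follows $R$.

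For the piece $\shiftrestrict{\fN_*}{(R,\sigma]\times\cE}$: within the clade containing $R$, the scaffolding runs as a \Stable[\frac32]\ process, marked --- conditionally on the scaffolding --- by independent \BESQ[-1]\ excursion spindles, by the underlying Poisson structure. The strong Markov property of \Stable[\frac32]\ at the stopping time $R$ then shows that, conditionally given $\cF_R$ on $\{\fX_*(R)=x\}$, this piece evolves as a fresh \Stable[\frac32]\ process started from $x$, run until its first hitting time of $0$, decorated with $\mBxc$-spindles. Matching this against the construction of $\cev\fN$ as a leftward concatenation of independent unit first-passage descents --- using the strong Markov property of \Stable[\frac32]\ at integer first-passage levels --- identifies the conditional law of this piece as that of $\ShiftRestrict{\cev\fN'}{[\cev{T}_x(\cev\fN'),0)\times\cE}$ for a fresh first-passage descent $\cev\fN'$; both sides degenerate to the null measure when $x=0$.

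For the piece $\shiftrestrict{\fN_*}{(\sigma,\infty)\times\cE}$, which consists of the clades of $\fN_*$ strictly after the $k$-th one: if $k=0$ then this piece is exactly $\fN_\beta$, which is independent of $(\ff,\cev\fN)$ --- hence of $\cF_R$ and of the first piece --- and is a pseudo-stationary type-1 point measure by Proposition \ref{prop:type1:pseudo_constr}. If $k\ge1$ then $\sigma$ corresponds, in the time scale of $\fN$, to the end $\tau$ of the $k$-th excursion of $\xi(\fN)$ above level $0$; on the relevant $\cF_R$-event this $\tau$ is a stopping time for the scaffolding-time filtration of $\fN$, so $\fN'':=\shiftrestrict{\fN}{(\tau,\infty)\times\cE}$ is a fresh \PRM[\Leb\otimes\mBxc]\ independent of $\restrict{\fN}{[0,\tau]\times\cE}$, hence of $\cF_R$ and of the first piece. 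The crucial bookkeeping is that, because $\xi(\fN)(\tau)=0$, the aggregate mass $M^0_\fN(\tau)$ equals the sum $\mu$ of the initial masses of the first $k$ clades of $\fN_\beta$, which is $\cF_R$-measurable, whereas the only constraint $\cF_R$ places on $S$ is $S>\mu$; by the lack of memory of the exponential, $S-\mu\sim\ExpDist[\gamma]$ conditionally, and independently of $\fN''$. Since $T=\tau+\inf\{s\colon M^0_{\fN''}(s)>S-\mu\}$, the clades of $\fN_\beta$ after the $k$-th are precisely $\big(\restrict{\fN''}{[0,T'')\times\cE}\big)^0$ with $T'':=\inf\{s\colon M^0_{\fN''}(s)>S-\mu\}$, so this piece is again a fresh pseudo-stationary type-1 point measure distributed as $\fN_\beta$, by Proposition \ref{prop:type1:pseudo_constr}. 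Concatenating the two conditionally independent pieces then yields the asserted law.

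The hard part will be the third step: one must carefully identify the $\fN_*$-stopping time $R$ with the \PRM-stopping time $\tau$, verify that the accumulated level $M^0_\fN(\tau)$ is $\cF_R$-measurable while the residual exponential budget $S-\mu$ is not, and keep track of the cutoff operation $(\,\cdot\,)^0$ (which is why one splits at $\tau$, where the scaffolding is at $0$, rather than at the raw $\fN$-time of $R$), so that the regeneration property of $\fN$ and the memorylessness of $S$ can be combined to rebuild a fresh copy of $\fN_\beta$. The first two steps are comparatively soft, resting only on the strong Markov property of \Stable[\frac32]\ and the first-passage description of $\cev\fN$.
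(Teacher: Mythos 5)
Your argument is correct and fills in the details the paper only indicates (it states the lemma with the one-line justification that it follows from the Poisson property of $\fN$ and the memorylessness of $S$, giving no further proof). The split at the end of the clade containing $R$, the two-step conditioning, and the bookkeeping of the accumulated level $\mu=M^0_\fN(\tau)$ and residual exponential budget $S-\mu$ are exactly right; the only cosmetic imprecision is that, since $\fX_*$ is c\`adl\`ag and jumps up at clade boundaries, $\sigma$ should be defined as the first time after $R$ at which $\fX_*$'s left limit vanishes (equivalently, the end of the clade containing $R$), which changes nothing in the argument.
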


\begin{proof}[Proof of Proposition \ref{interweaving}]
 Let $(\ff_1,\ff_2,\cev\fN,\fN_{\beta})$ be data for a type-2 evolution $\big((m_1^y,m_2^y,\alpha^y),\,y\ge0)$ starting from the initial distribution as claimed. We follow the notation of Definition \ref{deftype2} and \eqref{eq:IJ_index_def}. Additionally, we define $J(\infty) := \inf\{j\ge 1\colon T_{j}^+ = \infty\}$.
 We prove our assertion by showing
 \begin{gather}
  \!\!\!\!\!\left(\ff_1,\ff_2,\!\left( \ShiftRestrict{\fN_{p(i+1)}}{(T_{i-2},T_{i}]\times\cE}, Z_{i} \right)\!,1\!\le\! i\!\le\!\td J(\infty) \right) \stackrel{d}{=} 
  	\left( \ff_1,\ff_2,\!\left( \ShiftRestrict{\fN_{*}}{(T^-_i,T^+_i]\times\cE}, Y_{i} \right)\!,1\le i\!\le\!J(\infty) \right)\label{eq:inter_clocking_intervals}\\
  \text{and}\quad 
  \left(0,m_{I(y)}^y\right)\concat \left(0,m_{3-I(y)}^y\right) \concat \alpha^y = (0,\ff_1(y))\concat (0,\ff_2(y))\concat \theta(y)\quad\text{for all }y\ge0,\label{eq:alt_deletion_clocking}
 \end{gather}
 where $\displaystyle\theta(y) := \skewer\left(y-\zeta(\ff_2), \Restrict{\fN_{*}}{(0,T^+_1]\times\cE} \right)\concat\Concat_{2\le i\le J^{\infty}} \skewer\left(y-Y_{i-1}, \ShiftRestrict{\fN_{*}}{(T^-_i,T^+_i]\times\cE} \right)$.\\
 These formulas, together with \eqref{eq:interweaving_skewer}, complete the proof.
 
 First, we prove \eqref{eq:inter_clocking_intervals}. For $i\ge 1$, we note the equality of events
 \begin{equation}\label{eq:last_clock_adapted}
  \left\{\td J(\infty)\! =\! i\right\} = \left\{T_{i}\! =\! \infty;\,\td J(\infty)\! \ge\! i\right\} = \left\{\sup\nolimits_t\xi_{\shiftrestrict{\fN_{p(i+1)}}{(T_{i-2},T_{i}]\times\cE}}(t) < Z_i\!-\!Z_{i-1};\,\td J(\infty)\! \ge\! i\right\}\!.
 \end{equation}
 We conclude, by a recursive argument, that the indicator $\mathbf{1}\{\td J(\infty) \le j\}$ is a function of the $1\le i\le j\wedge \td J(\infty)$ terms on the left in \eqref{eq:inter_clocking_intervals}. By a corresponding argument, the indicator $\mathbf{1}\{J(\infty) \le j\}$ is a function of the $1\le i\le j\wedge J(\infty)$ terms on the right.
 
 We now establish the base case for an induction. By definition, $T_{-1} = T_1^- = 0$, $Z_1 = Y_1$, and $\fN_2\stackrel{d}{=}\fN_*$. Recall from \eqref{eq:interweave_vars} that $T_1$ is the time when $\fX_2$ first exceeds $Z_1$, while $T_1^+$ in \eqref{eq:d_clk:levels} is the time when $\fX_*$ exceeds $Y_1$. This proves equality in distribution for the $i=1$ terms of \eqref{eq:inter_clocking_intervals}.
 
 Assume for induction that, for some $j\ge 1$, \eqref{eq:inter_clocking_intervals} holds when we substitute $j\wedge\td J(\infty)$ for the $\td J(\infty)$ bound on the left and substitute $j\wedge J(\infty)$ for $J(\infty)$ on the right. By the argument following \eqref{eq:last_clock_adapted}, $\bP\{\td J(\infty) \le j\} = \bP\{J(\infty) \le j\}$. 
 We now show that the conditional distribution of the $(j+1)^{\text{st}}$ term on the left in \eqref{eq:inter_clocking_intervals}, given the preceding terms and the event $\{j < \td J(\infty)\}$, equals the conditional law of the corresponding term on the right given the preceding terms and the event $\{j < J(\infty)\}$. 
 %
 %
 
 Note that
 \begin{equation*}
 \begin{split}
  Z_{j+1} &= \fX_{p(j+1)}(T_{j}) = Z_{j-1} + \xi_{\shiftrestrict{\fN_{p(j+1)}}{(T_{j-2},T_{j}]\times\cE}}(T_{j}\!-\!T_{j-2})
  	=: G\!\left(\!\left(\shiftrestrict{\fN_{p(i+1)}}{(T_{i-2},T_{i}]\times\cE},Z_i\right)\!,i\!\le\!j\right)
 \end{split}
 \end{equation*}
 and $Y_{j+1} = G\left(\left( \ShiftRestrict{\fN_{*}}{(T^-_i,T^+_i]\times\cE}, Y_{i} \right)\!,i\!\le\! j\right)$.
 Next, observe that $\fX_{p(j+1)}(T_{j-2}) = Z_{j-1}$ while, correspondingly, $\fX_*(T_{j}^-) = Y_{j-1}$. Since we have conditioned on $\{j < \td J(\infty)\}$, which means $T_j < \infty$, we may apply Lemma \ref{lem:pseudostat:memoryless} to $\fN_{p(j+1)}$ at this time. In particular, by the independence of $\fN_1$ and $\fN_2$, and by this lemma, given $Z_{j}$, the restricted process $\shiftrestrict{\fN_{p(j+1)}}{(T_{j-2},\infty)\times\cE}$ is conditionally independent of all preceding terms on the left in \eqref{eq:inter_clocking_intervals}. Correspondingly, $\shiftrestrict{\fN_*}{(T_{j}^-,\infty)\times\cE}$ is conditionally independent of all preceding terms on the right in \eqref{eq:inter_clocking_intervals}, given $Y_j$, and these restricted point processes have the same conditional distribution. Finally, $T_{j}^+-T_j^-$ is the first time that $\xi(\shiftrestrict{\fN_{*}}{(T_{j}^-,\infty)\times\cE})$ exceeds $Y_{j}-Y_{j-1}$, and correspondingly for $\fN_{p(j+1)}$. This completes our induction and proves \eqref{eq:inter_clocking_intervals}.
 
 We now prove \eqref{eq:alt_deletion_clocking}. Recall the deletion clocking construction of the type-2 evolution in Definition \ref{deftype2}.
 
 Case 1: $y < \min\{\zeta(\ff_1),\zeta(\ff_2)\}$. Then $J(y) = 0$, $I(y) = 2$, so the two leftmost blocks on the left hand side of \eqref{eq:alt_deletion_clocking} are $(0,m_1^y)\concat (0,m_2^y)$, which equal $(0,\ff_1(y))\concat (0,\ff_2(y))$, as claimed. By definition, $\fX_*$ is bounded below by $Y_{i}\ge Y_1 > y$ on each interval $(T_{i}^+,T_{i+1}^-]$. Therefore,
 \begin{equation}
  \alpha^y = \skewer\left(y - \zeta(\ff_2),\restrict{\fN_*}{(0,\infty)\times\cE}\right) = \theta(y),
 \end{equation}
 as desired. Indeed, $\theta(y)$, as defined following \eqref{eq:alt_deletion_clocking}, simply skips over certain intervals of $\fN_*$ that cannot contribute to the skewer.
 
 Case 2: $\zeta(\ff_2)\le y < \zeta(\ff_1)$. Then, again, $J(y) = 0$ and $I(y) = 2$. As before, $(0,m_1^y) = (0,\ff_1(y))$, in agreement with \eqref{eq:alt_deletion_clocking}. However, now $\ff_2(y) = 0$. Thus,
 \begin{equation*}
  (0,m_2^y)\concat\alpha^y = \skewer(y,\fN_*) = \skewer\left(y - \zeta(\ff_2),\restrict{\fN_*}{(0,\infty)\times\cE}\right) = \theta(y),
 \end{equation*}
 since, as in Case 1, $\theta(y)$ skips over intervals that do not contribute.
 
 Case 3: $\zeta(\ff_1)\le y < \zeta(\ff_2)$. Then $J(y) = I(y) = 1$ and $T_1^+ = 0$. Then $(0,m_{3-I(y)}^y) = (0,m_2^y) = (0,\ff_2(y))$, while $\ff_1(y)=0$, in agreement with \eqref{eq:alt_deletion_clocking}. Moreover,
 \begin{equation*}
  (0,m_1^y)\concat\alpha^y = \skewer(y-Y_1,\shiftrestrict{\fN_*}{(T_2^-,\infty)\times\cE}).
 \end{equation*}
 In this case, since $T_1^- = T_1^+$, the first term in the formula for $\theta(y)$ is empty. Then, the concatenation of subsequent terms in $\theta(y)$ equals the above expression, since $\fX_*$ is bounded below by $Y_i \ge Y_2 > y$ on each interval $(T_{i}^+,T_{i+1}^-]$ with $i\ge 2$.
 
 Case 4: $\max\{\zeta(\ff_1),\zeta(\ff_2)\} \le y$. Then $J(y)\ge 1$ and $T_{J(y)+1}^- > 0$. Moreover, $\ff_1(y)=\ff_2(y)=0$, so all that remains on the right in \eqref{eq:alt_deletion_clocking} is $\theta(y)$. Note that $\fX_*$ is bounded above by $Y_{J(y)} \le y$ on each interval $(T_i^-,T_i^+]$ with $i < J(y)$, as well as on $(T_{J(y)}^-,T_{J(y)}^+)$. Then $\fX_*$ jumps up across level $y$ at time $T_{J(y)}^+$, giving rise to the broken spindle $\ff_{3-I(y)}^{(J(y))}$. Thus, the terms in $\theta(y)$ with $i < J(y)$ do not contribute, and the $i=J(y)$ term contributes only a single block:
 \begin{equation*}
  \left(0,m_{3-I(y)}^y\right) = \left(0,\ff_{3-I(y)}^{(J(y))}(y-Y_{J(y)})\right) = \skewer\left(y-Y_{J(y)-1},\shiftrestrict{\fN_*}{(T_{J(y)}^-,T_{J(y)}^+]\times\cE}\right).
 \end{equation*}
 Then
 \begin{equation*}
  \left(0,m_{I(y)}^y\right) \concat \alpha^y = \skewer\left(y-Y_{J(y)},\shiftrestrict{\fN_*}{(T_{J(y)+1}^-,\infty)\times\cE}\right),
 \end{equation*}
 which equals the concatenation of terms in $\theta(y)$ over $i>J(y)$, since, similarly to the previous cases, this expression skips over intervals where $\fX_*$ is bounded below by $Y_{J(y)+1}>y$.
\end{proof}

After Definition \ref{deftype2} we interpreted the spindles removed during deletion clocking as emigration. Where is the emigration in the interweaving construction?
The interweaving construction is based on two type-1 evolutions (without emigration). The one with the shorter lifetime is completely incorporated into the type-2
evolution, while the one with the longer lifetime will only be incorporated up to the clock spindle that exceeds that shorter lifetime. Following this clock spindle
is a \Stable[\frac32] process with excursions above the minimum that allow an analogous interpretation of emigration as in deletion clocking.

\section{Stationarity and connection to Wright-Fisher processes}\label{dePoiss}

\noindent In this section, we prove Theorem \ref{thm:stationary}, which describes a stationary variant of the type-2 evolution, constructed by normalizing and time-changing (de-Poissonizing) the type-2 evolution and allowing it to jump back into stationarity (resample) instead of being absorbed in a single-block state at degeneration times.


\subsection{Pseudo-stationarity}

Since type-2 evolutions degenerate to a single block of positive mass before reaching zero total mass, pseudo-stationarity results differ from Proposition \ref{prop:type1:pseudo} for types 0 and 1. Specifically, we obtain results conditionally given that degeneration has not yet happened. We furthermore identify the total mass at degeneration conditionally given the time of degeneration.

\begin{proposition}[Pseudo-stationarity of type-2 evolution]\label{prop:type2:pseudo}
 Consider $(A_1,A_2,A_3)\!\sim\!\DirDist[\frac12,\frac12,\frac12]$ and an independent interval partition $\widebar\beta\sim\PDIP[\frac12,\frac12]$, with $M(0)>0$ an independent random variable. Let $((m_1^y,m_2^y,\alpha^y),\,y\ge0)$ denote a type-2 evolution initially distributed as $(M(0)A_1,M(0)A_2,M(0)A_3\widebar\beta)$. Let $(M(y),\,y\ge0)$ denote its total mass process. For $y>0$, given $\{D>y\}$, $M(y)$ is conditionally independent of $(m_1^y/M(y),m_2^y/M(y),\alpha^y/M(y))$. The latter is conditionally distributed according to the (unconditioned) law of $(A_1,A_2,A_3\widebar\beta)$.
\end{proposition}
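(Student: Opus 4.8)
The plan is to use the interweaving construction of Proposition \ref{interweaving} to reduce pseudo-stationarity of the type-2 evolution to the pseudo-stationarity of its two constituent type-1 evolutions, as recorded in Propositions \ref{prop:type1:pseudo} and \ref{prop:type1:pseudo_g} and Corollary \ref{corbesq2}. The key point is that the initial law $(M(0)A_1,M(0)A_2,M(0)A_3\widebar\beta)$ with $(A_1,A_2,A_3)\sim\DirDist[\frac12,\frac12,\frac12]$ and $\widebar\beta\sim\PDIP[\frac12,\frac12]$ arises exactly as the $y=0$ marginal of the interweaving process when the two input type-1 data triples are taken pseudo-stationary: indeed, if $A\sim\besq(-1)$-type top mass with the right initial law and $C\sim\GammaDist[\frac12,\gamma]$, then a Beta--Gamma algebra computation shows that the triple $(A, B, C)$ built from independent gamma-type ingredients, suitably normalized, has the Dirichlet law, and the independent $\PDIP[\frac12,\frac12]$ factor $\widebar\beta$ is exactly what Proposition \ref{prop:type1:pseudo_constr} produces inside each type-1 block. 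So the first step is to identify the correct joint law of $(A,B,C_1,C_2)$ (or rather the relevant function of the gamma ingredients) so that the interweaving initial state matches $(M(0)A_1,M(0)A_2,M(0)A_3\widebar\beta)$ after an independent random total mass $M(0)$; here one chooses $M(0)$ with the appropriate (e.g. conditioned) total-mass law and uses that the Dirichlet simplex coordinates are independent of their sum for the Dirichlet--Gamma representation.

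\textbf{Main steps.} First I would set up the interweaving construction with $\ff_1(0) = A$, $\ff_2(0) = B$, $\beta_i = C_i\widebar\beta_i$, and compute, using Beta--Gamma algebra, that the induced initial triple $(\td m_1^0,\td m_2^0,\|\td\alpha^0\|)$ together with $\td\alpha^0/\|\td\alpha^0\|$ is distributed as $(M(0)A_1,M(0)A_2,M(0)A_3,\widebar\beta)$ for a suitable independent $M(0)$; this matches the hypothesis. Second, and this is the crux, I would analyze the evolution at a fixed level $y$ conditionally on $\{D>y\}$. On this event, neither constituent type-1 evolution has yet ``degenerated'' in the relevant sense — more precisely, $\{D>y\}$ is the event that the shorter-lived type-1 evolution is still alive, which from the interweaving picture is an event measurable with respect to the total-mass processes of the two pieces (it is a statement about lifetimes of \besq\ processes and their total masses, all of which only depend on masses, not on the internal interval-partition structure). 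I would then invoke the pseudo-stationarity of each type-1 evolution (Proposition \ref{prop:type1:pseudo}, type-0 version with $\PDIP[\frac12,\frac12]$, plus Corollary \ref{corbesq2} for the conditional independence of total mass from normalized shape) to conclude that, conditionally on the full collection of relevant total-mass trajectories up to level $y$ (hence in particular conditionally on $M(y)$ and on $\{D>y\}$), the normalized configuration $(m_1^y/M(y), m_2^y/M(y), \alpha^y/M(y))$ has exactly the law of $(A_1,A_2,A_3\widebar\beta)$. Third, I would take expectations / integrate out the total-mass trajectory, using that the conditioning event $\{D>y\}$ and the random variable $M(y)$ are both functions of that trajectory, to extract the two claimed statements: the conditional law of the normalized configuration given $\{D>y\}$ is the unconditioned law of $(A_1,A_2,A_3\widebar\beta)$, and $M(y)$ is conditionally independent of it given $\{D>y\}$.

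\textbf{Expected obstacle.} The main difficulty is making precise the claim that, conditionally on the total-mass trajectories of the two interweaving pieces, the normalized interval-partition configuration assembled by the interweaving/skewer procedure at level $y$ is still an independent $\PDIP[\frac12,\frac12]$-type object with the two top masses in the correct Dirichlet position. This requires care because the interweaving stitches together pieces of two type-1 evolutions at the clock levels $Z_j$, and one must check that the ``concatenation of scaled $\PDIP[\frac12,\frac12]$ blocks with Gamma total masses'' structure is preserved under this stitching — this is where one leans on the fact that a concatenation of independent scaled $\PDIP[\frac12,\frac12]$ partitions with independent Gamma$(\frac12,\gamma)$ masses is again a scaled $\PDIP[\frac12,\frac12]$ (a property of the Poisson--Dirichlet/Chinese restaurant structure, used already in Proposition \ref{prop:type1:pseudo_constr} and the type-0 pseudo-stationarity), together with the Markov-like property of type-1 data (Lemma \ref{lem:type1Markov}) and of type-2 data quadruples (Proposition \ref{prop:type2Markov}) to propagate this structure across clock changes. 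I would organize this as a conditioning argument on the sequence of clock levels $Y_j$ and apply pseudo-stationarity block-by-block, using the strong Markov property at each $Y_j$; the bookkeeping of which top mass is the ``clock'' at each stage (tracked by $I(y)$) and the symmetry in Lemma \ref{lem:type2_symm} handle the labeling.
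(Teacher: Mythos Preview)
Your general strategy---reduce to the Gamma-mass case via interweaving, then invoke type-1 pseudo-stationarity---is close to what the paper does for the \emph{top masses}, but the proposal has two genuine gaps.

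\textbf{The interval partition $\alpha^y$.} Your ``main obstacle'' is real and your sketch does not overcome it. In the interweaving picture, $\td\alpha^y$ at level $y$ is a complicated concatenation of alternating pieces from the two type-1 skewers, glued at the clock levels $Z_j$; it is not at all clear that ``applying pseudo-stationarity block-by-block'' preserves a clean scaled-$\PDIP[\frac12,\frac12]$ structure, nor that your suggested concatenation-closure property (two independent Gamma-scaled $\PDIP[\frac12,\frac12]$'s concatenate to one) holds in the form you need. The paper sidesteps this entirely: for $\alpha^y$ it uses the \emph{deletion-clocking} construction (Definition \ref{deftype2}) rather than interweaving. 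On $\{D>y\}$, $\alpha^y$ equals the skewer of $\shiftrestrict{\fN_*}{(R',\infty)}$ for a suitable stopping time $R'$; the memorylessness Lemma \ref{lem:pseudostat:memoryless} then shows that this has the \emph{same} conditional law as the IP part $\alpha_*^y$ of the single type-1 evolution $\skewerbar(\fN_*)$, to which Proposition \ref{prop:type1:pseudo_g} applies directly. So the IP claim reduces to a \emph{single} application of type-1 pseudo-stationarity, with no stitching across clock levels. Interweaving is used only for the top masses, where on $\{D>y\}=\{\zeta_1>y,\zeta_2>y\}$ each $m_i^y$ is literally the top mass of one independent pseudo-stationary type-1 evolution.

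\textbf{From Gamma to general $M(0)$.} Your proposal treats arbitrary $M(0)$ as if it falls out of the Gamma case by scaling or by ``choosing $M(0)$ with the appropriate law,'' but the interweaving construction and Proposition \ref{prop:type2:pseudo_g} only give the \GammaDist[\frac32,\gamma] case. The paper needs two Laplace-inversion steps: first (Proposition \ref{prop:pseudo_f}) invert in the IP initial mass $c$ to deduce, for fixed $a,b,c$, that $\alpha^y/\|\alpha^y\|$ is $\PDIP[\frac12,\frac12]$ independent of $(m_1^y,m_2^y,\|\alpha^y\|)$; then invert in the total mass $x$ to obtain the Dirichlet structure of $(m_1^y,m_2^y,\|\alpha^y\|)/M(y)$ for general $M(0)$. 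Both steps also require the continuity in the initial condition (Proposition \ref{contini}) to pass from ``a.e.\ $x$'' to ``every $x$.'' This machinery is absent from your outline.
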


In light of this result, we refer to the law of $(M(0)A_1,M(0)A_2,M(0)A_3\widebar\beta)$ above as the \emph{pseudo-stationary law for type-2 evolution} with mass $M(0)$. 
Following the proof in \cite{Paper1} of Proposition \ref{prop:type1:pseudo} above, we first prove this for $M(0)\sim\GammaDist[\frac{3}{2},\gamma]$, $\gamma>0$ and then generalize via Laplace inversion. 

\begin{proposition}\label{prop:type2:pseudo_g}
 Consider a type-2 evolution $((m_1^y,m_2^y,\alpha^y),y\ge 0)$ with initial blocks $(m_1^0,m_2^0)$ independent of $\alpha^0 = M\widebar\beta$,
 where $M\sim \GammaDist[\frac{1}{2},\gamma]$ and $\widebar\beta\sim\PDIP(\frac12,\frac12)$ are independent. 
 Then for $y>0$, given $\{D>y\}$, the interval partition $\alpha^y$ is conditionally independent of $(m_1^y,m_2^y)$, conditionally distributed according to the (unconditional) law of $(2y\gamma+1)M\widebar\beta$.
 
 If, additionally, $m_1^0$ and $m_2^0$ are i.i.d.\ \GammaDist[\frac12,\gamma], then given $\{D>y\}$, $m_1^y$ and $m_2^y$ are conditionally i.i.d.\ \GammaDist[\frac12,\gamma/(2y\gamma+1)].
\end{proposition}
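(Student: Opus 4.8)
The plan is to run the interweaving construction of Proposition~\ref{interweaving}, which realizes the type-2 evolution with the prescribed initial law from two \emph{independent} type-1 point measures $\fN_1 = \clade(\ff_1,\cev\fN_1)\concat\fN_{\beta_1}$ and $\fN_2 = \clade(\ff_2,\cev\fN_2)\concat\fN_{\beta_2}$, with leftmost spindles $\ff_i(0) = m_i^0$ and with $\beta_i = C_i\overline{\beta}_i$ for i.i.d.\ $C_i\sim\GammaDist[\frac12,\gamma]$ and $\overline{\beta}_i\sim\PDIP(\frac12,\frac12)$, all data jointly independent. In this picture the top masses $m_1^y,m_2^y$ at level $y$ are cut from $\ff_1,\ff_2$ (or the clock spindles these evolve into), while the spine $\widetilde\alpha^y$ is, up to shifting, the concatenation over the pieces of \eqref{eq:interweaving_skewer} of the level-$y$ skewers $\skewer(y-Z_{j-1},\cdot)$ of time-restrictions of $\fN_1,\fN_2$; crucially, these restrictions never incorporate $\ff_1,\ff_2$ themselves, only the type-1 point measures $\fN_{\beta_1},\fN_{\beta_2}$ (whose initial partitions $C_i\overline{\beta}_i$ are type-0 pseudo-stationary) and the clade data below $\ff_1,\ff_2$. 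The first preliminary step is to observe that $\{D>y\}$ agrees, up to a null set, with $\{\alpha^y\neq\emptyset\} = \{\|\alpha^y\|>0\}$ --- degeneration is precisely the death of the type-1/type-0 spine component --- so that conditioning on $\{D>y\}$ is a conditioning on the \emph{total mass of the spine}, exactly as for type-1 evolutions, and involves neither the top masses nor the shape of $\alpha^y$.

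The heart of the proof is then to identify, at level $y$ on $\{D>y\}$, the conditional laws of the spine and of the top masses. Using the pseudo-stationarity of type-0 and type-1 evolutions (Propositions~\ref{prop:type1:pseudo}, \ref{prop:type1:pseudo_g} and Corollary~\ref{corbesq2}) together with the memorylessness Lemma~\ref{lem:pseudostat:memoryless}, applied to $\fN_{p(j+1)}$ at each interweaving time $T_j$, one shows that, conditionally on the past of the construction, each successive interwoven spine-piece is fresh pseudo-stationary type-0 (or type-1) data for its boundary level $Z_{j-1}$; in particular, on survival its interval partition is an independent $\PDIP(\frac12,\cdot)$-partition carrying a $\GammaDist[\frac12,\cdot]$ mass, with the $y$-linear scaling factor of Proposition~\ref{prop:type1:pseudo_g} inherited. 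Concatenating the pieces and invoking the additivity of $\PDIP(\frac12,\theta)$ in $\theta$ and of $\GammaDist$ in its shape parameter under convolution, one concludes that, given $\{D>y\}$, $\alpha^y$ is a $\PDIP(\frac12,\frac12)$-partition carrying a $\GammaDist[\frac12,\gamma/(2y\gamma+1)]$ total mass --- that is, distributed as $(2y\gamma+1)M\overline{\beta}$ --- and that its shape $\alpha^y/\|\alpha^y\|$ is produced independently of $(m_1^y,m_2^y,\|\alpha^y\|)$ and of the conditioning event, yielding the asserted conditional independence (which holds for arbitrary independent $m_1^0,m_2^0$, since these enter the construction only through $\ff_1,\ff_2$). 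For the final assertion, assume $m_1^0,m_2^0$ are i.i.d.\ $\GammaDist[\frac12,\gamma]$: after the last clock change $Y_{J(y)}$ before $y$, each top mass is a $\besq(-1)$ whose starting value is, by the Markov-like property (Proposition~\ref{prop:type2Markov}) and the preceding analysis, $\GammaDist[\frac12,\gamma/(2Y_{J(y)}\gamma+1)]$; conditioning on $\{D>y\}$ amounts to conditioning this $\besq(-1)$ to survive to level $y-Y_{J(y)}$, and a direct computation with the $\besq(-1)$ transition kernel --- equivalently, the scaling identity that a $\GammaDist[\frac12,c]$ starting mass, run through a $\besq(-1)$ conditioned to survive for level $z$, becomes $\GammaDist[\frac12,c/(2zc+1)]$ --- gives that each surviving top mass is $\GammaDist[\frac12,\gamma/(2y\gamma+1)]$, the two being conditionally independent because $\ff_1,\ff_2$ are independent.

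The step I expect to be the main obstacle is the bookkeeping in the second paragraph: one must propagate the joint invariant ``pseudo-stationary $\PDIP$ shape, produced independently of all masses and clock data'' cleanly through the recursively-defined, alternating interweaving and across the random, a.s.\ finite (Lemma~\ref{type2welldef}) number of clock changes below level $y$, and, in particular, check that the several $y$-dependent scaling factors attached to the separate interwoven pieces telescope to the single factor $2y\gamma+1$. This forces the use of the overshoot / ladder-height structure of the $\Stable[\frac32]$ scaffolding underlying the interweaving and of the precise definition of the boundary levels $Z_j$, and it mirrors, requiring the same care as, the type-0 and type-1 pseudo-stationarity arguments of \cite{Paper1} that stand behind Propositions~\ref{prop:type1:pseudo} and~\ref{prop:type1:pseudo_g}.
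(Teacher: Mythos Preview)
Your plan contains the right building blocks, but it misses the two simplifications that make the paper's proof work cleanly, and in doing so creates a bookkeeping problem that you yourself flag and that is not resolved.

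For the first assertion (the law of $\alpha^y$), you propose to track pseudo-stationarity through the \emph{interweaving} pieces and then concatenate them using an additivity of $\PDIP(\frac12,\theta)$ in $\theta$. The paper avoids this entirely by switching to the \emph{deletion clocking} construction (Definition~\ref{deftype2}) for this part. The point is that on $\{D>y\}$, the spine $\alpha^y$ equals $\skewer\big(y-\xi_{\fN_*}(R),\restrict{\fN_*}{(R',T)}\big)$ for a single stopping time $R'$ in the time-filtration of $\fN_*$, where $\fN_*$ is built from $\fN_\beta$ via Proposition~\ref{prop:type1:pseudo_constr}. One application of the memorylessness Lemma~\ref{lem:pseudostat:memoryless} at $R'$ then gives that the conditional law of $\alpha^y$ equals the conditional law of $\alpha_*^y$ given type-1 survival, and Proposition~\ref{prop:type1:pseudo_g} finishes. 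There is no concatenation, no telescoping of scaling factors, and no induction over clock changes. Your ``main obstacle'' paragraph is an honest diagnosis: that telescoping argument, across a random number of pieces with interdependent boundary levels $Z_j$, would require substantial additional work. Also, your preliminary identification $\{D>y\}=\{\alpha^y\neq\emptyset\}$ is not correct in general (both top masses can be positive with $\alpha^y=\emptyset$ before degeneration); the paper works directly on $\{D>y\}$ in the clocking picture.

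For the second assertion (the joint law of $m_1^y,m_2^y$), your route via the Markov-like property at $Y_{J(y)}$ and then conditioning a $\besq(-1)$ to survive introduces the random index $J(y)$ and entangles the conditioning with the clock structure. The paper's argument is much more direct: in the interweaving picture, $D=\min\{\zeta_1,\zeta_2\}$ where $\zeta_i$ are the lifetimes of the two \emph{independent} pseudo-stationary type-1 evolutions, and $m_i^y$ is the leftmost mass of the $i$th one. Hence $\{D>y\}=\{\zeta_1>y\}\cap\{\zeta_2>y\}$ factors over the two independent evolutions, and Proposition~\ref{prop:type1:pseudo_g} applied to each gives the conditional $\GammaDist[\frac12,\gamma/(2y\gamma+1)]$ law immediately. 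So the paper uses deletion clocking for $\alpha^y$ and interweaving for $(m_1^y,m_2^y)$, exploiting whichever picture makes the relevant quantity a single object rather than a concatenation.
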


\begin{proof}
 Let $(\ff_1,\ff_2,\cev\fN,\fN_{\beta})$ be type-2 data for this evolution. From Proposition \ref{prop:type1:pseudo_constr}, we may assume $\fN_\beta = \big(\restrict{\fN}{[0,T)}\big)^0$, where $\fN$ is a \PRM[\Leb\otimes\mBxc]\ on $[0,\infty)\times\cE$ and $T$ is the time at which the aggregate mass of spindles crossing level 0, as defined in \eqref{eq:skewer_def}, first exceeds an independent mass threshold $S\sim \ExpDist[\gamma]$. 
 
 We follow the notation of \eqref{eq:IJ_index_def}, in which $I(y)$ denotes the index, 1 or 2, of the clock mass at level $y$. So $m_{I(y)}^y$ is the clock mass at that level, $m_{3-I(y)}^y$ is the non-clock top mass at that level, $\alpha^y$ is the interval partition of remaining, ``spinal'' masses, and $\fN_* := \clade(\ff_2,\cev\fN)\concat\fN_{\beta}$. Let $((m_*^z,\alpha_*^z),\,z\ge0)$ denote the type-1 evolution $\skewerbar(\fN_*)$.
 
 It follows from Definition \ref{deftype2} that, on $\{D>y\}$, the non-clock top mass at level $y$ is the mass of a spindle found in $\restrict{\fN_*}{[0,T)}$ at the stopping time $R = \inf\{t>T_{J(y)+1}^- \colon \xi_{\fN_*}(t)>y\}<T$, and the remaining interval partition $\alpha^y$ equals $\skewer(y - \xi_{\fN_*}(R),\restrict{\fN_*}{(R,T)})$. 
 
 Let $R' := \inf\{t>R\colon \xi_{\fN_*}(t)=y\}$ and $T_y(\fN_*) := \inf\{t\ge0\colon \xi_{\fN_*}(t) = y\}$. 
 By Lemma \ref{lem:pseudostat:memoryless}, the conditional law of $\shiftrestrict{\fN_*}{(R',\infty)}$ given $\{D>y\}$ and $(\ff_1,\restrict{\fN_*}{[0,R']})$ equals the conditional law of $\shiftrestrict{\fN_*}{(T_y(\fN_*),\infty)}$ given $\{m_*^y+\|\alpha_*^y\|>0\}$. Passing to the skewers, the correspondingly conditioned laws of $\alpha^y$ and $\alpha^y_*$ are equal. 
 By Proposition \ref{prop:type1:pseudo_g}, this is an independent \GammaDist[\frac12,\gamma/(2y\gamma+1)]\ multiple of a \PDIP[\frac12,\frac12]. This also implies that $\alpha^y$ is conditionally independent of $(m_1^y,m_2^y)$ given $\{D>y\}$, proving the first assertion of the proposition.
 
 To prove the second assertion, we apply Lemma \ref{interweaving}. In the representation there, we have $D = \min\{\zeta_1,\zeta_2\}$. In particular, conditioning on $\{D>y\}$ is the same as conditioning on $\{\zeta_1>y,\zeta_2>y\}$. By Proposition \ref{prop:type1:pseudo_g} and the independence of the two pseudo-stationary type-1 evolutions in that construction, $m^y_1$ and $m^y_2$ are conditionally independent given $\{\zeta_1>y,\zeta_2>y\}$, with common distribution $\GammaDist[\frac{1}{2},\gamma/(2y\gamma+1)]$.
\end{proof}

\begin{proposition}\label{prop:pseudo_f}
 For $a,b,c>0$ and $\widebar\beta\sim\PDIP[\frac12,\frac12]$, consider a type-2 evolution starting from $(a,b,c\widebar{\beta})$. Let $\widebar\beta'$ be an independent $\PDIP[\frac12,\frac12]$, and let $\td\alpha^y$ denote $\alpha^y/\|\alpha^y\|$ when $\alpha^y\neq\emptyset$ (this holds a.s.\ given $y<D$), or $\widebar\beta'$ otherwise. Then for $y>0$, $\td\alpha^y$ is independent of $(m_1^y,m_2^y,\|\alpha^y\|)$ and has law $\PDIP[\frac12,\frac12]$.
\end{proposition}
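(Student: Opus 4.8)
The statement asserts, for a type-2 evolution started from a fixed triple $(a,b,c\widebar\beta)$ with $\widebar\beta\sim\PDIP[\frac12,\frac12]$, that at each level $y>0$ the \emph{normalized} spinal partition $\td\alpha^y$ is independent of the triple $(m_1^y,m_2^y,\|\alpha^y\|)$ and is again $\PDIP[\frac12,\frac12]$-distributed.  The natural route is to deduce this from Proposition~\ref{prop:type2:pseudo_g} by a randomization-and-disintegration argument in the initial mass, exactly mirroring how Corollary~\ref{corbesq2} is obtained from Proposition~\ref{prop:type1:pseudo_g} in the type-1 setting.  Concretely, fix $c>0$ and consider the two-parameter family of type-2 evolutions started from $(a,b,M\widebar\beta)$ with $M$ an independent random mass, while keeping $a,b$ deterministic; the key point is that Proposition~\ref{prop:type2:pseudo_g} handles the case $M\sim\GammaDist[\frac12,\gamma]$, and we want to strip away that randomization and land on the degenerate case $M=c$.

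\textbf{Step 1: the Gamma-randomized identity in product form.}  First I would record the consequence of Proposition~\ref{prop:type2:pseudo_g} in the form needed here: if $\alpha^0 = M\widebar\beta$ with $M\sim\GammaDist[\frac12,\gamma]$ independent of $(m_1^0,m_2^0) = (a,b)$ and of $\widebar\beta$, then for every $y>0$, conditionally on $\{D>y\}$, the pair $\big((m_1^y,m_2^y),\,\|\alpha^y\|\big)$ is conditionally independent of $\alpha^y/\|\alpha^y\|$, and the latter has law $\PDIP[\frac12,\frac12]$.  The first assertion of Proposition~\ref{prop:type2:pseudo_g} already gives $\alpha^y$ conditionally independent of $(m_1^y,m_2^y)$ on $\{D>y\}$, distributed as an independent $\GammaDist[\frac12,\gamma/(2y\gamma+1)]$ multiple of a $\PDIP[\frac12,\frac12]$; the $\PDIP$ factor in such a product is always independent of the Gamma scalar, so $\alpha^y/\|\alpha^y\|$ is independent of $\|\alpha^y\|$ as well, giving the full four-way conditional independence of $\td\alpha^y$ from $\big(m_1^y,m_2^y,\|\alpha^y\|\big)$ on $\{D>y\}$.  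Off $\{D>y\}$, the definition of $\td\alpha^y$ substitutes an independent $\widebar\beta'\sim\PDIP[\frac12,\frac12]$, so in fact the product form $\bE\big[g(\td\alpha^y)h(m_1^y,m_2^y,\|\alpha^y\|)\big] = \bE_{\PDIP}[g]\cdot\bE\big[h(m_1^y,m_2^y,\|\alpha^y\|)\big]$ holds \emph{unconditionally} when the initial mass is $\GammaDist[\frac12,\gamma]$.

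\textbf{Step 2: remove the Gamma randomization by conditioning on $\|\alpha^0\|$.}  Now write the $\GammaDist[\frac12,\gamma]$-randomized evolution as a mixture over its initial mass $M=c$: by the Markov property and the scaling structure of the construction, conditioning the whole evolution on $\alpha^0 = c\widebar\beta$ (equivalently on $M=c$) produces precisely the type-2 evolution started from $(a,b,c\widebar\beta)$ of the Proposition.  Thus $\bE\big[g(\td\alpha^y)h(m_1^y,m_2^y,\|\alpha^y\|)\big]$ for the $\GammaDist[\frac12,\gamma]$-randomized evolution equals $\int_0^\infty \Phi_c(g,h)\,\bP(M\in dc)$, where $\Phi_c(g,h)$ denotes the corresponding expectation for the evolution started from $(a,b,c\widebar\beta)$.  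Step~1 says the left side factors as $\bE_{\PDIP}[g]\cdot\int_0^\infty\Psi_c(h)\,\bP(M\in dc)$ with $\Psi_c(h) := \bE[h(m_1^y,m_2^y,\|\alpha^y\|)\mid M=c]$.  The Gamma densities $\gamma^{1/2}c^{-1/2}e^{-\gamma c}/\Gamma(\tfrac12)$, as $\gamma$ ranges over $(0,\infty)$, determine a measure on $c\in(0,\infty)$ up to the substitution $c\mapsto$ (anything), and more precisely their Laplace-transform span is enough to conclude, by uniqueness of Laplace transforms, that $\Phi_c(g,h) = \bE_{\PDIP}[g]\cdot\Psi_c(h)$ for Lebesgue-a.e.\ $c$, and then for all $c>0$ by the weak continuity of the type-2 semigroup in the initial condition (Proposition~\ref{contini}) together with continuity of $c\mapsto c\widebar\beta$ in $d_\cI$ for fixed $\widebar\beta$.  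Since $g$ and $h$ are arbitrary bounded continuous (and this product identity over all such $g,h$ is equivalent to the asserted independence plus the identification of the law of $\td\alpha^y$), this is exactly the claim, for fixed $a,b,c$.  The assertion for $\widebar\beta\sim\PDIP[\frac12,\frac12]$ as stated then follows by taking $\widebar\beta$ fixed and noting the conclusion does not depend on which representative $\widebar\beta$ one fixes, or alternatively by integrating the fixed-$\widebar\beta$ statement.

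\textbf{Main obstacle.}  The delicate point is Step~2: extracting the pointwise-in-$c$ identity from the $\gamma$-indexed family of mixed identities.  One must argue that the $\GammaDist[\frac12,\gamma]$ densities are a Laplace-type total family, i.e.\ that $\int_0^\infty c^{-1/2}e^{-\gamma c}\big(\Phi_c(g,h) - \bE_{\PDIP}[g]\Psi_c(h)\big)\,dc = 0$ for all $\gamma>0$ forces the integrand to vanish a.e.; this is the injectivity of the Laplace transform applied to the finite signed measure $c^{-1/2}\big(\Phi_c - \bE_{\PDIP}[g]\Psi_c\big)\,dc$, and one must check integrability (which follows since $\Phi_c, \Psi_c$ are bounded by $\|g\|_\infty\|h\|_\infty$ and $\int_0^\infty c^{-1/2}e^{-\gamma c}dc<\infty$).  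Passing from "a.e.\ $c$" to "all $c$" is then the role of Proposition~\ref{contini}, which must be invoked with some care since the functionals $g(\td\alpha^y)$ and $h(m_1^y,m_2^y,\|\alpha^y\|)$ involve the (discontinuous-in-general) decomposition of the $\cI^\circ$-valued state into two top masses and a spinal partition together with the further normalization by $\|\alpha^y\|$; however, on the relevant a.s.\ event $\{\alpha^y\neq\emptyset\}$ these maps are continuous, and the substitution convention off that event is harmless, so this does not cause real trouble.  This is exactly the pattern used to prove Proposition~\ref{prop:type2:pseudo} from Proposition~\ref{prop:type2:pseudo_g}, so the argument here should run in parallel.
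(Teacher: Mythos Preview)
Your proposal is correct and follows essentially the same route as the paper: randomize the initial spinal mass $c$ by an independent $\GammaDist[\frac12,\gamma]$, apply the first assertion of Proposition~\ref{prop:type2:pseudo_g} (decomposing on $\{D>y\}$ versus $\{D\le y\}$ to get an unconditional product identity), then invert the Laplace transform in $\gamma$ to obtain the factorization for Lebesgue-a.e.\ $c$, and finally upgrade to every $c>0$ via the continuity in the initial condition from Proposition~\ref{contini} together with the a.s.\ continuity of $(a,b,\beta)\mapsto f(a,b,\|\beta\|)g(\beta/\|\beta\|)\cf\{\beta\neq\emptyset\}$ at $(m_1^y,m_2^y,\alpha^y)$. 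Your discussion of the ``main obstacle'' (Laplace injectivity and the care needed because normalization by $\|\alpha^y\|$ is discontinuous at $\emptyset$) matches the paper's treatment exactly.
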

\begin{proof}
 For $\gamma>0$, consider $B_{\gamma}\sim \GammaDist[\frac12,\gamma]$ independent of all other objects. By decomposing according to the events $\{D>y\}$ and $\{D<y\}$, and applying the first assertion of Proposition \ref{prop:type2:pseudo_g} in the former case, we see that 
 for all continuous $f\colon \BR^3\to[0,\infty)$ and $g\colon\cI^\circ\to[0,\infty)$,
  \begin{align*}
    &\int_0^\infty \sqrt{\frac{\gamma}{\pi x}}e^{-\gamma x}\fE^2_{a,b,x\widebar\beta}\big[ f(m_1^y,m_2^y,\|\alpha^y\|)g(\td\alpha^y) \big]dx
    = \fE^2_{a,b,B_{\gamma}\widebar\beta}\big[ f(m_1^y,m_2^y,\|\alpha^y\|)g(\td\alpha^y) \big]\\
    &\qquad\qquad = \bE\big[g(\widebar\beta)\big]\int_0^\infty \sqrt{\frac{\gamma}{\pi x}}e^{-\gamma x} \fE^2_{a,b,x\widebar\beta}\big[ f(m_1^y,m_2^y,\|\alpha^y\|) \big]dx.
  \end{align*}
  We cancel factors of $\sqrt{\gamma}$ and appeal to the uniqueness of Laplace transforms to find that
  $$\fE^2_{a,b,x\widebar\beta}\big[ f(m_1^y,m_2^y,\|\alpha^y\|)g(\td\alpha^y) \big] = \fE^2_{a,b,x\widebar\beta}\big[ f(m_1^y,m_2^y,\|\alpha^y\|) \big]\bE\big[g(\widebar\beta)\big]$$
  for a.e.\ $x>0$. By Proposition \ref{contini}, the right hand side is continuous in $x$. 
  Note that $(a,b,\beta)\mapsto f(a,b,\|\beta\|)g(\beta/\|\beta\|)\cf\{\beta\neq\emptyset\}$ is $\bP^2_{a,b,x\bar\beta}$-a.s.\ continuous at $(m_1^y,m_2^y,\alpha^y)$. Thus, the left hand side is continuous in $x$ as well; see e.g.\  \cite[Theorem 4.27]{Kallenberg}. We conclude that the above formula holds for every $x$.
\end{proof}

\begin{proof}[Proof of Proposition \ref{prop:type2:pseudo}]
 Let $((m_1^z,m_2^z,\alpha^z),\,z\ge0)$ be as in the statement of the proposition, and fix $y>0$. The conditional law of $(m_1^y,m_2^y,\alpha^y)$ given $D>y$ can be obtained as a mixture, over the law of the vector $(MA_1,MA_2,MA_3)$ of initial masses, of the conditional laws described in Proposition \ref{prop:pseudo_f}. In particular, conditionally given $\{\alpha^y\neq\emptyset\}$, $\alpha^y/\|\alpha^y\|\sim\PDIP[\frac12,\frac12]$, conditionally independent of $(m_1^y,m_2^y,\|\alpha^y\|)$. To prove that $(m_1^y,m_2^y,\|\alpha^y\|)/M(y)$ then has conditional law \DirDist[\frac12,\frac12,\frac12], we make an argument similar to that in the proof of Proposition \ref{prop:pseudo_f}.
 
 Recall the standard beta-gamma algebra that a \DirDist[x_1,\ldots,x_n] vector, multiplied by an independent \GammaDist[x_1+\cdots+x_n,\gamma] scalar, gives rise to a vector of independent variables, with the $j^{\text{th}}$ having law \GammaDist[x_j,\gamma]. Let $(\widetilde m_1^y,\widetilde m_2^y,\widetilde m_3^y)$ denote $(m_1^y/M(y),m_2^y/M(y),\|\alpha^y\|/M(y))$ when $y<D$ or $(A_1',A_2',A_3')$ otherwise, where the latter is an independent \DirDist[\frac12,\frac12,\frac12]. By the second assertion of Proposition \ref{prop:type2:pseudo_g}, for $\gamma>0$ and measurable $f\colon \BR^3\to[0,\infty)$ we have
  \begin{align*}
    \int_0^\infty 2\sqrt{\frac{x\gamma^3}{\pi}}e^{-\gamma x}\fE^2_{A_1x,A_2x,A_3x\widebar\beta}\big[ f(\widetilde m_1^y,\widetilde m_2^y,\widetilde m_3^y) \big]dx
    = \bE\big[f(A_1,A_2,A_3)\big].
  \end{align*}
  Multiplying the right hand side by $\int_0^\infty 2\sqrt{x\gamma^3/\pi}e^{-\gamma x}dx=1$, canceling factors of $\gamma^{3/2}$, and appealing to uniqueness of Laplace transforms and Proposition \ref{contini}, as in the previous proof, gives the desired result.
\end{proof}

For our next results, we require a scaling invariance property of type-2 evolutions. We recall the scaling map for point processes of spindles from \cite[Equation 4.2]{Paper1}: for $c>0$,
\begin{equation}
 c\scaleH N = \sum_{\text{points }(t,f)\text{ of }N}\delta\left(c^{3/2}t,c\scaleB f\right),\quad\mbox{where }c\scaleB f = (cf(y/c),\, y\in\bR).
\end{equation}
For type-2 data $\Psi = (\ff_1,\ff_2,\cev \fN,\fN_{\beta})$, we will write $c\scaleH\Psi$ to denote $(c\scaleB \ff_1,c\scaleB \ff_2,c\scaleH\cev\fN,c\scaleH\fN_{\beta})$. We adopt the convention that $0\scaleH\Psi = (0,0,\cev\fN,0)$, where the first two zeros on the right denote zero functions, and the last a zero measure on $\Exc\times\bR$.

\begin{lemma}\label{lem:scaling}
 Suppose $\Psi=(\ff_1,\ff_2,\cev{\fN},\fN_\beta)$ is a type-2 data quadruple and $M\ge 0$ is a real-valued random variable, conditionally 
 independent of $\Psi$ given the initial state $(\ff_1(0),\ff_2(0),\beta)$ of the type-2 evolution associated with $\Psi$. Then $M\scaleH\Psi$ is 
 also a type-2 data quadruple.
\end{lemma}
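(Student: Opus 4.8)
\textbf{Proof plan for Lemma \ref{lem:scaling}.}
The plan is to reduce the scaling invariance of type-2 data quadruples to the known scaling invariance of their constituent pieces, namely $\besq(-1)$ processes and type-0 data pairs, and then to check that the scaling map $\scaleH$ commutes with the operations used to assemble a type-2 data quadruple. First I would recall the two inputs. For the $\besq(-1)$ top mass processes, $\besq$ scaling states that if $\ff\sim\besq_a(-1)$ then $c\scaleB\ff = (c\ff(y/c),\,y\ge0)\sim\besq_{ca}(-1)$; this is the standard space-time scaling of squared Bessel processes, and it holds jointly and independently for the two processes $\ff_1,\ff_2$. For the type-0 data pair $(\cev\fN,\fN_\beta)\sim\fP^0_\beta$, I would invoke the scaling invariance of $\fP^0$, i.e.\ that $c\scaleH\fN_\beta\sim\fP^1_{c\beta}$ is a type-1 point measure with initial state $c\beta$ and $c\scaleH\cev\fN$ has the same distribution as the pre-$0$ descent built from $c\scaleH(\restrict{\fN}{(0,T_{-1}(\fN)]\times\cE})$; this follows from the scaling invariance of $\PRM[\Leb\otimes\mBxc]$ under $\scaleH$ together with the fact that $c^{3/2}$-time-scaling and $c\scaleB$-mark-scaling send $\xi(\fN)$ to $c\,\xi_{\fN}(\cdot/c^{3/2})$, hence a \Stable[\frac32] process to a rescaled \Stable[\frac32] process with first-passage level $-1$ mapped to $-c$. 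These facts are either stated in \cite{Paper1} (around the definition of $\scaleH$ in Equation 4.2 and the scaling results used for type-0 and type-1 evolutions) or are immediate from the defining PRM and L\'evy-process constructions in Section \ref{prel}.

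Next I would handle the conditioning. The subtlety in the statement is that $M$ is random and only conditionally independent of $\Psi$ given the initial state $(\ff_1(0),\ff_2(0),\beta) = (a,b,\beta)$. So I would argue by disintegration: it suffices to show that for each fixed deterministic $c\ge0$ and each fixed initial state $(a,b,\beta)\in\cJ^\circ$, $c\scaleH\Psi$ has law $\fP^2_{ca,cb,c\beta}$ whenever $\Psi\sim\fP^2_{a,b,\beta}$; then, since $c\mapsto\fP^2_{ca,cb,c\beta}$ is the conditional law of $c\scaleH\Psi$ given $M=c$ and the initial state, and since $(ca,cb,c\beta)$ is exactly the initial state of the type-2 evolution associated with $c\scaleH\Psi$, integrating over the joint law of $(M,a,b,\beta)$ gives that the unconditional law of $M\scaleH\Psi$ is the $\fP^2$-mixture over its (random) initial state, which is precisely the assertion that $M\scaleH\Psi$ is a type-2 data quadruple. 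The $c=0$ case is covered by the stated convention $0\scaleH\Psi = (0,0,\cev\fN,0)$, which is exactly a type-2 data quadruple with initial state $(0,0,\emptyset)$ since $\besq_0(-1)$ is identically zero and $\fP^0_\emptyset$-data has a zero $\fN_\emptyset$ component.

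So the core is the fixed-$c$, fixed-initial-state claim. By definition $\fP^2_{a,b,\beta} = \besq_a(-1)\otimes\besq_b(-1)\otimes\fP^0_\beta$, a product measure, and $c\scaleH$ acts coordinatewise as $(\ff_1,\ff_2,\cev\fN,\fN_\beta)\mapsto(c\scaleB\ff_1,c\scaleB\ff_2,c\scaleH\cev\fN,c\scaleH\fN_\beta)$, which respects the product structure. Hence it is enough to combine: (i) $c\scaleB\ff_1\sim\besq_{ca}(-1)$ and independently $c\scaleB\ff_2\sim\besq_{cb}(-1)$, by $\besq$ scaling; and (ii) $(c\scaleH\cev\fN,c\scaleH\fN_\beta)\sim\fP^0_{c\beta}$, by the scaling invariance of $\fP^0$-data. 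For (ii) I would verify that $\scaleH$ commutes with the clade construction and with concatenation: $c\scaleH(\clade(\ff,N)) = \clade(c\scaleB\ff,c\scaleH N)$ since $\scaleH$ rescales both the initial spindle and the first-passage time $T_{-\zeta(\ff)}(N)$ consistently (the level $-\zeta(\ff)$ becomes $-c\zeta(\ff) = -\zeta(c\scaleB\ff)$ and times rescale by $c^{3/2}$), and $c\scaleH(N_1\star N_2) = (c\scaleH N_1)\star(c\scaleH N_2)$ since concatenation of spindle measures is defined through concatenation of the underlying excursion/jump measures, which is manifestly scaling-covariant; these are recorded in or immediate from \cite{Paper1}. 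Then $c\scaleH\fN_\beta$, being the $\scaleH$-image of a concatenation of independent clades with masses the block lengths of $\beta$, is a concatenation of independent clades with masses $c\times(\text{block lengths of }\beta)$, i.e.\ a type-1 point measure with initial state $c\beta$; and $c\scaleH\cev\fN$ is the $\scaleH$-image of a concatenation of independent copies of $\restrict{\fN}{(0,T_{-1}(\fN)]\times\cE}$, which by PRM scaling and the mapping $-1\mapsto -c$ of the first-passage level becomes a concatenation of independent copies of $\restrict{\fN'}{(0,T_{-c}(\fN')]\times\cE}$ for $\fN'$ another $\PRM[\Leb\otimes\mBxc]$, matching the definition of the type-0 descent component at initial state $c\beta$. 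Together these give $(c\scaleH\cev\fN,c\scaleH\fN_\beta)\sim\fP^0_{c\beta}$, completing the fixed-$c$ claim.

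I expect the main obstacle to be purely bookkeeping rather than conceptual: carefully checking that $\scaleH$ commutes with the clade construction, with $\star$-concatenation of (possibly countably many) spindle measures, and in particular with the compensated limit defining $\xi_{\fN}$ in \eqref{eq:JCCP_def} and \eqref{eq:JCCP_def_type0} — one must confirm that the $z\downto 0$ compensation term transforms correctly under $c\scaleB$-scaling of jump heights (the factor $3tz^{-1/2}/\pi\sqrt2$ rescales so that the limit of the rescaled truncated integrals is again $c\,\xi_{\fN}(t/c^{3/2})$). All of these are either explicitly stated in \cite{Paper1} or follow from the scaling covariance of $\mBxc$ under $c\scaleB$ (which is built into its normalization $\mBxc(\zeta\in dy) = \frac{3}{2\pi\sqrt2}y^{-5/2}dy$, since $y\mapsto cy$ scales this measure by $c^{-3/2}$, matching the $c^{3/2}$ time-rescaling so that $\Leb\otimes\mBxc$ is $\scaleH$-invariant). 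Once these commutation facts are in hand, the proof is the short disintegration argument above.
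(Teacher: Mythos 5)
Your proposal is correct and follows essentially the same route as the paper: reduce to a fixed scaling factor $c$ and a fixed initial state via disintegration, then verify scaling invariance coordinate-wise using $\besq$ scaling, the scaling covariance of clades of spindles (cited in the paper as \cite[Lemma 4.5]{Paper1}), and the scaling law for $\mBxc$ (cited as \cite[Lemma 3.9]{Paper1}). The paper's proof is a one-sentence citation of precisely these three scaling facts; you have spelled out the disintegration step handling the conditional independence of $M$, the $c=0$ convention, and the commutation of $\scaleH$ with the clade and concatenation constructions in more detail than the paper bothers to, but this is bookkeeping on top of the same underlying argument.
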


\begin{proof}
 This follows from the well-known scaling invariance of squared Bessel processes (see e.g.\ \cite[A.3]{GoinYor03}), as well as that of clades of spindles \cite[Lemma 4.5]{Paper1} and the law $\mBxc(c\scaleB A) = c^{-3/2}\mBxc(A)$ for $A\subseteq\cE$, from \cite[Lemma 3.9]{Paper1}.
\end{proof}

For type-2 data $\Psi$ with $M_{\Psi}(0)=\ff_1(0)+\ff_2(0)+\|\skewer(0,\fN)\|\neq 0$, we adopt the notation $\widebar\Psi := M_{\Psi}(0)^{-1}\scaleH\Psi$. This is data for a type-2 evolution scaled to have unit initial mass. Going in the other direction, if we begin with data $\widebar\Psi$ for a type-2 evolution with pseudo-stationary initial distribution and unit initial mass, then for any independent random $M\ge0$, the quadruple $M\scaleH\widebar\Psi$ is data for a type-2 evolution with pseudo-stationary initial distribution and initial mass $M$. Denote by $\mu_m$ the pseudo-stationary distribution
on $\cI^\circ$ with total mass $m$. 

We will also denote by $\mu_{a,b,c}$ the distribution on $\cI^\circ$ of $(a,b,c\overline{\beta})$, with $\overline{\beta}\sim\PDIP[\frac12,\frac12]$,
for all $(a,b,c)\in[0,\infty)$ with either $a+b>0$ or $a=b=c=0$. We will use notation $\fP^2_\mu$ for the distribution on 
$\cE^2\times\cev{\cN}\times\cN$ of type-2 data with random $\mu$-distributed initial data, for any distribution on $\cI^\circ$.

\begin{lemma}[Strong pseudo-stationarity]\label{pretotal}
 \begin{enumerate}[label=(\roman*), ref=(\roman*)]
  \item Let $\overline{\beta}\sim\PDIP[\frac12,\frac12]$, and let  
  $(A,B,C)$ be an independent vector for which, with probability 1, at least two components are positive. Consider type-2 data $\Psi$ with 
  initial state $(A,B,C\overline{\beta})$. Denote by $(A(y),B(y),C(y))=(m_1^y,m_2^y,||\alpha^y||)$ the associated 3-mass process and by 
  $(\cF_{\rm 3-mass}^y,y\ge 0)$ the right-continuous filtration it generates. Let $Y$ be a stopping time in this filtration. Then for all 
  $\cF^Y_{\rm 3-mass}$-measurable $\eta\colon\Omega\rightarrow[0,\infty)$ and all measurable 
  $H\colon\cE^2\times\cev{\cN}\times\cN\rightarrow[0,\infty)$,
  $$\bE\left(\eta H(\Psi^Y)\right)=\bE\left(\eta\fP_{\mu_{A(Y),B(Y),C(Y)}}^2[H]\right).$$
  I.e.\ the cutoff data above level $Y$ are type-2 data. Conditionally given $\cF_{\rm 3-mass}^Y$, the initial data of $\Psi^Y$ are distributed as  
  $(A(Y),B(Y),C(Y)\overline{\beta}^\prime)$ for independent $\overline{\beta}^\prime\sim\PDIP[\frac12,\frac12]$.  \label{item:pretotal:3}
  \item  Now consider instead type-2 data of the form $\Psi = M(0)\scaleH\widebar{\Psi}$, where $\widebar{\Psi}$ is data for a pseudo-stationary type-2 evolution with unit initial mass, independent of $M(0)$. Denote by $M(y) = m_1^y+m_2^y+\|\alpha^y\|$, $y\ge 0$, the associated total mass process and by $(\cF^y_{\rm mass},\,y\ge 0)$ the right-continuous filtration it generates. Let $Y$ be a stopping time in this filtration. Then for all $\cF^Y_{\rm mass}$-measurable $\eta\colon\Omega\rightarrow[0,\infty)$ and measurable $H\colon\cE^2\times\cev{\cN}\times\cN\rightarrow[0,\infty)$,
  $$\bE\left(\left.\eta H(M(Y)^{-1}\scaleH\Psi^Y)\,\right|\,D>Y\right)=\bE\left(\left.\eta\,\right|\,D>Y\right)\bE\left(H(\overline{\Psi})\right).\vspace{-0.6cm}$$\label{item:pretotal:1}%
 \end{enumerate}
\end{lemma}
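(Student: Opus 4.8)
\textbf{Proof plan for Lemma \ref{pretotal}.}

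The plan is to prove part \itemref{item:pretotal:3} first via an optional-stopping/monotone-class argument built on the Markov-like property of type-2 data (Proposition \ref{prop:type2Markov}) and the pseudo-stationarity established in Proposition \ref{prop:type2:pseudo}, and then to deduce part \itemref{item:pretotal:1} from \itemref{item:pretotal:3} by a change of variables using the scaling invariance of Lemma \ref{lem:scaling}. For \itemref{item:pretotal:3}, I would first treat the case of a deterministic level $Y=y$. Proposition \ref{prop:type2Markov} says that, given $\cF^y$, the cutoff data $\Psi^y = (\ff_{\Psi,1}^y,\ff_{\Psi,2}^y,\cev\fN_\Psi^y,\fN_\Psi^y)$ has law $\fP^2_{m_1^y,m_2^y,\alpha^y}$; combining this with Proposition \ref{prop:type2:pseudo}, which gives that, given $\{D>y\}$, the triple $(m_1^y,m_2^y,\alpha^y/\|\alpha^y\|)$ is distributed as $(A(y),B(y),\overline\beta')$ with $\overline\beta'\sim\PDIP[\frac12,\frac12]$ independent of $(m_1^y,m_2^y,\|\alpha^y\|)$, yields exactly the asserted identity for $Y=y$ with $\eta$ an $\cF^y_{\rm 3-mass}$-measurable function — here one must check that $\cF^y_{\rm 3-mass}\subseteq\cF^y$, which is immediate since the 3-mass process is a measurable function of the skewer process, which is adapted. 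Actually, since the statement in \itemref{item:pretotal:3} makes no conditioning on $\{D>y\}$, I should be careful: the clean statement is that $\Psi^y$ given $\cF^y$ has law $\fP^2_{m_1^y,m_2^y,\alpha^y}$ unconditionally, and the extra structure (that $\alpha^y/\|\alpha^y\|$ is an independent $\PDIP[\frac12,\frac12]$ given the 3-mass process) is then added by Proposition \ref{prop:type2:pseudo} on $\{D>y\}$ and is vacuous on $\{D\le y\}$ where $\alpha^y=\emptyset$. So the $Y=y$ case reduces to re-expressing Proposition \ref{prop:type2Markov} and Proposition \ref{prop:type2:pseudo} in terms of $\mu_{A(y),B(y),C(y)}$.

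To pass from deterministic $y$ to a general stopping time $Y$ in $(\cF^y_{\rm 3-mass})$, I would use the standard discretization: approximate $Y$ from above by $Y_k = 2^{-k}\lceil 2^k Y\rceil$, which takes countably many dyadic values, decompose $\bE(\eta H(\Psi^{Y_k}))$ over the events $\{Y_k = q\}\in\cF^q_{\rm 3-mass}\subseteq\cF^q$, apply the deterministic-level identity on each piece, and reassemble to get $\bE(\eta H(\Psi^{Y_k})) = \bE(\eta\,\fP^2_{\mu_{A(Y_k),B(Y_k),C(Y_k)}}[H])$. Then let $k\to\infty$: by right-continuity of sample paths of the type-2 evolution (Theorem \ref{thm:diffusion}) we have $(A(Y_k),B(Y_k),C(Y_k))\to(A(Y),B(Y),C(Y))$, and by right-continuity of the cutoff data in the level variable — which follows from the clade/JCCP construction, as in the proofs of Lemmas \ref{lem:type0Markov} and \ref{lem:type1Markov} and the discussion of $\fN^y$ around \eqref{eq:cutoff_LT} — we have $H(\Psi^{Y_k})\to H(\Psi^Y)$ for $H$ bounded continuous; combined with weak continuity of $(a,b,c)\mapsto\fP^2_{\mu_{a,b,c}}$, which is exactly Proposition \ref{contini} composed with the continuous map $(a,b,c)\mapsto\mu_{a,b,c}$ into $\mathcal{P}(\cI^\circ)$, a dominated-convergence argument closes the case. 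A monotone-class argument then extends from bounded continuous $H$ to general measurable nonnegative $H$, and from bounded continuous $\eta$ to general $\cF^Y_{\rm 3-mass}$-measurable nonnegative $\eta$.

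For part \itemref{item:pretotal:1}: write $\widebar\Psi$ for the unit-mass pseudo-stationary data, so $\Psi = M(0)\scaleH\widebar\Psi$, and note $M(y) = M(0)\,\widebar M(y)$ where $\widebar M(\cdot)$ is the total mass of the evolution driven by $\widebar\Psi$ and, by Lemma \ref{lem:scaling}, $\widebar\Psi$ drives a genuine pseudo-stationary type-2 evolution. The key point is that the scaling map commutes with the cutoff operation up to the obvious time/space rescaling — $M(0)\scaleH(\widebar\Psi^{\,\cdot}) $ agrees with $(M(0)\scaleH\widebar\Psi)^{M(0)\,\cdot}$ — so that $M(Y)^{-1}\scaleH\Psi^Y = \widebar M(Y)^{-1}\scaleH\widebar\Psi^{\,Y}$ (with $Y$ now read as a stopping time of $\widebar M$, since $\cF^y_{\rm mass}$ for $\Psi$ coincides with that for $\widebar\Psi$ as $M(0)$ is $\cF^0_{\rm mass}$-measurable and the mass filtrations differ only by this initial scalar). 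Thus it suffices to prove the identity for unit-mass pseudo-stationary data, i.e.\ to show $\bE(\eta\,H(M(Y)^{-1}\scaleH\Psi^Y)\mid D>Y) = \bE(\eta\mid D>Y)\,\bE(H(\overline\Psi))$ when $M(0)=1$. Apply part \itemref{item:pretotal:3} with the stopping time $Y$: since $M(0)=1$, the total-mass filtration $(\cF^y_{\rm mass})$ is contained in the 3-mass filtration $(\cF^y_{\rm 3-mass})$, so $Y$ is also a $(\cF^y_{\rm 3-mass})$-stopping time and $\eta$ is $\cF^Y_{\rm 3-mass}$-measurable; part \itemref{item:pretotal:3} gives that, conditionally on $\cF^Y_{\rm 3-mass}$, the cutoff data $\Psi^Y$ has law $\fP^2_{\mu_{A(Y),B(Y),C(Y)}}$, hence its normalization $M(Y)^{-1}\scaleH\Psi^Y$ has, on $\{D>Y\}$ (where $M(Y) = A(Y)+B(Y)+C(Y)>0$), the law $\fP^2_{\widebar\mu}$ where $\widebar\mu$ is the distribution of $(\widetilde A,\widetilde B,\widetilde C\,\overline\beta')$ with $(\widetilde A,\widetilde B,\widetilde C) = M(Y)^{-1}(A(Y),B(Y),C(Y))$ and $\overline\beta'\sim\PDIP[\frac12,\frac12]$ independent. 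The final ingredient is that this normalized triple is, conditionally on $\{D>Y\}$ and on $\cF^Y_{\rm mass}$, distributed as $(A_1,A_2,A_3)\sim\DirDist[\frac12,\frac12,\frac12]$ independently of $\cF^Y_{\rm mass}$; this is the strong (stopping-time) version of the de-Poissonized pseudo-stationarity statement, which one obtains by the same discretization argument applied to Proposition \ref{prop:type2:pseudo} — or, more cleanly, by invoking the intertwining/Markov structure that will be established for the de-Poissonized process in the next subsections. Granting this, $\widebar\mu$ is (conditionally on $\cF^Y_{\rm mass}$ and $\{D>Y\}$) a fixed law, namely the unit-mass pseudo-stationary law, so $\fP^2_{\widebar\mu}[H] = \bE(H(\overline\Psi))$ is deterministic, and factoring out $\bE(\eta\mid D>Y)$ completes the proof.

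\textbf{Main obstacle.} The routine part is the discretization/monotone-class machinery; the genuinely delicate step is justifying the passage to the limit $k\to\infty$ in the stopping-time approximation, specifically the joint right-continuity in the level variable of the cutoff data map $y\mapsto\Psi^y$ in a topology strong enough that $H(\Psi^{y_k})\to H(\Psi^y)$ for continuous $H$ — this requires care with the JCCP/clade bookkeeping near level $y$ (no excursion of the \Stable[\frac32] scaffolding below its running supremum starting exactly at a fixed level, a.s.), paralleling the arguments behind Lemmas \ref{lem:type0Markov}--\ref{lem:type1Markov}. The second subtle point is ensuring, in part \itemref{item:pretotal:1}, that the conditioning on $\{D>Y\}$ interacts correctly with the scaling and the stopping time: one must verify that $\{D>Y\}\in\cF^Y_{\rm mass}$ fails in general (degeneration is not measurable w.r.t.\ the mass alone!), so the conditioning genuinely enlarges the information, and the argument must be organized so that the pseudo-stationarity input is applied in the 3-mass filtration, where the analogue of $\{D>Y\}$ is adapted, before projecting back down to the mass.
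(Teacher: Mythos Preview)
Your overall architecture matches the paper's: establish the identity at deterministic levels by combining the Markov-like property of type-2 data (Proposition \ref{prop:type2Markov}) with a pseudo-stationarity result, then extend to stopping times by a standard discretization. The paper defers the stopping-time extension entirely to the analogous argument in \cite{Paper1} (Lemma 6.8, Theorem 6.9 there), so your explicit discretization sketch is more detailed than what the paper writes, but it is the same mechanism.

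Two points where your write-up diverges from the paper. First, for part \itemref{item:pretotal:3} you cite Proposition \ref{prop:type2:pseudo} as the source of the statement that $\alpha^y/\|\alpha^y\|$ is an independent $\PDIP[\frac12,\frac12]$ given the three masses. That proposition requires the full pseudo-stationary (Dirichlet) initial law, which is \emph{not} the hypothesis of \itemref{item:pretotal:3}; the correct input is Proposition \ref{prop:pseudo_f}, which holds for arbitrary deterministic $(a,b,c)$ and hence for arbitrary independent $(A,B,C)$. The paper makes exactly this distinction: \itemref{item:pretotal:3} uses Proposition \ref{prop:pseudo_f}, while \itemref{item:pretotal:1} uses Proposition \ref{prop:type2:pseudo}. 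Second, for part \itemref{item:pretotal:1} the paper does not deduce it from \itemref{item:pretotal:3} via scaling as you propose; it simply repeats the \itemref{item:pretotal:3} argument verbatim, swapping in Proposition \ref{prop:type2:pseudo} (which supplies the Dirichlet structure of the normalized masses directly). Your scaling route is not wrong, but it is circuitous: you end up needing a stopping-time version of Proposition \ref{prop:type2:pseudo} anyway (your ``final ingredient''), which is precisely what the paper's direct argument gives. So the detour through \itemref{item:pretotal:3} buys nothing.

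Your ``main obstacle'' about right-continuity of $y\mapsto\Psi^y$ is a legitimate technical concern, but the paper treats it as already handled by the machinery of \cite{Paper1} and does not revisit it. Your worry about whether $\{D>Y\}\in\cF^Y_{\rm mass}$ is well-founded --- it is not in general --- but the statement does not require it: the conditioning on $\{D>Y\}$ is on a well-defined event in the ambient $\sigma$-field, and the identity is established by working in the larger filtration $(\cF^y)$ where both $D$ and the 3-mass process are adapted, then observing that $\eta$ being $\cF^Y_{\rm mass}$-measurable is a special case.
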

\begin{proof}
  For the first assertion, if we further condition on $\{D\le Y\}$ then the statements follow trivially from Proposition \ref{prop:type2Markov}, as 
  $\|\alpha^Y\|=0$. For fixed $Y=y$, conditional on $\{D>y\}$, the first assertion follows from the pseudo-stationarity of the interval partition in 
  Proposition \ref{prop:pseudo_f} and the Markov-like property of Proposition \ref{prop:type2Markov} for type-2 data, in the same manner 
  as in the proof of \cite[Lemma 6.8]{Paper1} for type-0 and type-1 evolutions. The generalization to stopping times is standard; see e.g.\ the 
  proof of \cite[Theorem 6.9]{Paper1}. The proof of the second assertion is the same, using Proposition \ref{prop:type2:pseudo} instead of 
  Proposition \ref{prop:pseudo_f}.
 
  We point out that the cited results, \cite[Lemma 6.8, Theorem 6.9]{Paper1}, were stated in terms of interval-partition
  evolutions. However, those results, and the methods used to prove them, extend to results like those stated in this lemma, in terms of (type-0, 
  type-1 or) type-2 data.
\end{proof}

\begin{proposition}\label{prop:3mass:Poi}
 Consider a type-2 evolution $((m_1^y,m_2^y,\alpha^y),y\ge 0)$ starting from $(x_1,x_2,x_3\overline{\beta})$ for 
  $\overline{\beta}\sim\PDIP(\frac12,\frac12)$. Then the associated 3-mass process $((m_1^y,m_2^y,\|\alpha^y\|),y\ge 0)$ is a Markov process starting from $(x_1,x_2,x_3)$. 
\end{proposition}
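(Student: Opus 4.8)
The plan is to show that the $3$-mass process $((m_1^y,m_2^y,\|\alpha^y\|),y\ge 0)$ is Markov by combining the Markov-like property for type-2 data (Proposition \ref{prop:type2Markov}) with the pseudo-stationarity of the interval partition component (Proposition \ref{prop:pseudo_f}). The key observation is that, although a general type-2 evolution is \emph{not} Markovian in its $3$-mass projection — the ``shape'' $\alpha^y/\|\alpha^y\|$ carries extra information that influences the future evolution of $\|\alpha^y\|$ — this degeneracy disappears precisely when the initial interval partition is a scaled $\PDIP[\frac12,\frac12]$, because Proposition \ref{prop:pseudo_f} then guarantees that at every fixed level $y>0$ the normalized shape is again $\PDIP[\frac12,\frac12]$ and independent of $(m_1^y,m_2^y,\|\alpha^y\|)$. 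This independence is what closes the Markovian loop.

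Concretely, first I would fix $y>0$ and a bounded measurable test function $f\colon\Delta\to\bR$ on the $3$-mass simplex, and consider $\bE[f(m_1^{y+z},m_2^{y+z},\|\alpha^{y+z}\|)\mid \cF^y]$ where $(\cF^y)$ is the type-2 data filtration of Section \ref{sec:Markovish}. By Proposition \ref{prop:type2Markov}, conditionally on $\cF^y$ the cutoff data $\Psi^y$ is a type-2 data quadruple with initial state $(m_1^y,m_2^y,\alpha^y)$, so this conditional expectation equals $g_z(m_1^y,m_2^y,\alpha^y)$ for the measurable function $g_z(a,b,\beta) := \fE^2_{a,b,\beta}[f(m_1^z,m_2^z,\|\alpha^z\|)]$. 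Next I would invoke Proposition \ref{prop:pseudo_f}: on the event $\{\alpha^y\ne\emptyset\}$ we have $\alpha^y = \|\alpha^y\|\,\td\alpha^y$ with $\td\alpha^y\sim\PDIP[\frac12,\frac12]$ independent of $(m_1^y,m_2^y,\|\alpha^y\|)$, and by the scaling invariance of type-2 evolutions (Lemma \ref{lem:scaling}) together with the $\besq$-type scaling of the total mass, $g_z(a,b,c\bar\gamma)$ depends on $c$ and $\bar\gamma$ only through $c\bar\gamma$; integrating out the independent $\td\alpha^y$ against the law $\bE[g_z(m_1^y,m_2^y,\|\alpha^y\|\,\bar\gamma)]$ of $\bar\gamma\sim\PDIP[\frac12,\frac12]$ shows that $\bE[g_z(m_1^y,m_2^y,\alpha^y)\mid m_1^y,m_2^y,\|\alpha^y\|]$ is a measurable function $h_z(m_1^y,m_2^y,\|\alpha^y\|)$ of the $3$-mass coordinates alone. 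On the complementary event $\{\alpha^y=\emptyset\}$ the interval partition is already absorbed and the type-2 evolution has degenerated to (at most) two top masses, so $g_z(m_1^y,m_2^y,\emptyset)$ is already a function of the $3$-mass state. Hence $\bE[f(\cdot^{y+z})\mid\cF^y] = h_z(m_1^y,m_2^y,\|\alpha^y\|)$, which is $\cF^y_{\rm 3-mass}$-measurable; since the $3$-mass filtration is coarser than $(\cF^y)$, taking conditional expectations gives $\bE[f(\cdot^{y+z})\mid\cF^y_{\rm 3-mass}] = h_z(m_1^y,m_2^y,\|\alpha^y\|)$, which is the Markov property at a single pair of levels. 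I would then upgrade from fixed levels to the full simple Markov property by noting that $h_z$ does not depend on $y$ (again by the time-homogeneity built into Definition \ref{deftype2}), and extend to finite-dimensional distributions by the tower property, using at each step that the conditional law of the post-$y$ evolution given $\cF^y$ depends only on $(m_1^y,m_2^y,\alpha^y)$ and that Proposition \ref{prop:pseudo_f} re-establishes the $\PDIP[\frac12,\frac12]$ shape at every intermediate level.

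The main obstacle I anticipate is the measurability/continuity bookkeeping needed to turn the ``for a.e.\ $x$'' statements implicit in the Laplace-transform style arguments of Propositions \ref{prop:pseudo_f} and \ref{prop:type2:pseudo} into statements valid for \emph{every} initial interval-partition mass and for every level, so that $h_z$ is genuinely a well-defined measurable kernel on $\Delta$. This is exactly the role played by the weak continuity of the type-2 semigroup in the initial condition (Proposition \ref{contini}), which I would use as in the proof of Proposition \ref{prop:pseudo_f} to promote almost-everywhere identities to everywhere identities. A secondary point requiring care is the behaviour at the degeneration time $D$: for $y\ge D$ the interval partition is empty and one must check that $g_z(m_1^y,m_2^y,\emptyset)$, computed from the two-component (or one-component) $\besq(-1)$ dynamics, is consistent with the kernel $h_z$ extended continuously to the boundary faces of $\Delta$; this follows from the construction in Definition \ref{def:type2:v1} since past $D$ the $3$-mass process is just a single $\besq(-1)$ (plus a zero coordinate), which is manifestly Markov, and from Proposition \ref{contini} the kernel matches up on the boundary. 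Everything else is a routine application of the tower property and time-homogeneity.
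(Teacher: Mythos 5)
Your strategy---combining the Markov-like property of Proposition \ref{prop:type2Markov} with the pseudo-stationary shape of Proposition \ref{prop:pseudo_f}---is conceptually the right one and close to the paper's. The paper's actual proof is more compact: it defines the kernel $\Lambda((x_1,x_2,x_3),\cdot)$ as the law of $(x_1,x_2,x_3\overline\beta)$ with $\overline\beta\sim\PDIP[\frac12,\frac12]$, verifies via Lemma \ref{pretotal}\ref{item:pretotal:3} that the conditional law of $(m_1^y,m_2^y,\alpha^y)$ given $(m_1^y,m_2^y,\|\alpha^y\|)$ is $\Lambda$ of that triple, and then cites the Rogers--Pitman intertwining criterion to conclude Markovianity of the projection. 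Your attempt to reproduce the content of that criterion directly, however, contains a genuine gap.

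The gap occurs at ``Hence $\bE[f(\cdot^{y+z})\mid\cF^y] = h_z(m_1^y,m_2^y,\|\alpha^y\|)$, which is $\cF^y_{\rm 3-mass}$-measurable.'' What you have actually shown is $\bE[f(\cdot^{y+z})\mid\cF^y]=g_z(m_1^y,m_2^y,\alpha^y)$, a quantity depending on the whole interval partition $\alpha^y$ and hence \emph{not} $\cF^y_{\rm 3-mass}$-measurable. Integrating out the shape $\td\alpha^y=\alpha^y/\|\alpha^y\|$ against $\PDIP[\frac12,\frac12]$ gives only the fixed-time statement $\bE[f(\cdot^{y+z})\mid m_1^y,m_2^y,\|\alpha^y\|]=h_z(m_1^y,m_2^y,\|\alpha^y\|)$, since Proposition \ref{prop:pseudo_f} asserts independence of $\td\alpha^y$ from the single triple $(m_1^y,m_2^y,\|\alpha^y\|)$ only. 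To conclude $\bE[f(\cdot^{y+z})\mid\cF^y_{\rm 3-mass}]=h_z(m_1^y,m_2^y,\|\alpha^y\|)$, you need $\td\alpha^y$ to be conditionally $\PDIP[\frac12,\frac12]$ given the \emph{entire} $3$-mass past, not just the current triple. That stronger statement is precisely Lemma \ref{pretotal}\ref{item:pretotal:3} (strong pseudo-stationarity, established by the Laplace-transform argument of \cite[Lemma 6.8, Theorem 6.9]{Paper1}), which the paper cites; alternatively, the Rogers--Pitman criterion itself packages the inductive upgrade from the fixed-time statement to the filtration statement. To avoid invoking either, your tower-property paragraph would have to carry out that induction explicitly (restart at each intermediate level and re-invoke Proposition \ref{prop:pseudo_f} for the restarted pseudo-stationary evolution), which your sketch gestures at but does not perform. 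The obstacle you flag as the main one---the a.e.-to-everywhere measurability bookkeeping---is in fact already resolved inside the proof of Proposition \ref{prop:pseudo_f} via Proposition \ref{contini}, so it is not the bottleneck here.
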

\begin{proof} We check the Rogers--Pitman intertwining criterion \cite[Theorem 2 and Remarks (i)-(ii)]{RogersPitman}. 
  Consider the map $\phi(x_1,x_2,\alpha) = (x_1,x_2,\|\alpha\|)$ and the stochastic kernel $\Lambda((x_1,x_2,x_3),A)=\bP((x_1,x_2,x_3\overline{\beta})\in A)$, where $\overline{\beta}\sim\PDIP(\frac12,\frac12)$. Then clearly $\Lambda((x_1,x_2,x_3),\phi^{-1}(x_1,x_2,x_3))=1$ and
  by Lemma \ref{pretotal}\ref{item:pretotal:3}, we also have 
  $$\bP_\mu\big((m_1^y,m_2^y,\alpha^y)\in A\ \big|\ m_1^y,m_2^y,\|\alpha^y\|\big)=\Lambda((m_1^y,m_2^y,\|\alpha^y\|),A)\qquad a.s.,$$
  for all initial distributions $\mu$ of the form $\Lambda((x_1,x_2,x_3),\,\cdot\,)$, as required.\pagebreak[2] 
\end{proof}

The semi-group of the 3-mass process can be described as ``replace the third component by a scaled $\PDIP(\frac12,\frac12)$, make type-2 evolution transitions, and then project the interval partition onto its mass.''

\subsection{Degeneration in pseudo-stationarity}

In the interweaving construction in pseudo-stationarity, it is easy to describe the degeneration time.

\begin{proposition}\label{prop:degeneration}
 Fix $\gamma>0$. Let $D$ be the degeneration time of a type-2 evolution starting from $(A,B,C\overline{\beta})$, where $A$, $B$, $C$ and 
 $\overline{\beta}$ are jointly independent, with $C\sim\GammaDist(\frac12,\gamma)$ and $\overline{\beta}\sim\PDIP(\frac12,\frac12)$. Then
 $\bP(D>y)=\bP(\zeta_1>y)\bP(\zeta_2>y)$ for all $y>0$, where $\zeta_1$ and $\zeta_2$ are the lifetimes of type-1 evolutions starting from
 $(A,C\overline{\beta})$ and $(B,C\overline{\beta})$, respectively. 

 If also $A,B\sim\GammaDist(\frac12,\gamma)$, then $\bP\{D>y\} = (2y\gamma+1)^{-2}$ for all $y>0$.
\end{proposition}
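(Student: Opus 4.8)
The plan is to use the interweaving construction of Proposition \ref{interweaving}, which realizes the type-2 evolution with initial state $(A,B,C\overline\beta)$ (for $C\sim\GammaDist(\frac12,\gamma)$, $\overline\beta\sim\PDIP(\frac12,\frac12)$) from two \emph{independent} type-1 data triples $(\ff_1,\cev\fN_1,\fN_{\beta_1})$ and $(\ff_2,\cev\fN_2,\fN_{\beta_2})$ with $\ff_1(0)=A$, $\ff_2(0)=B$, and $\beta_i=C_i\overline\beta_i$ for independent $C_i\sim\GammaDist(\frac12,\gamma)$, $\overline\beta_i\sim\PDIP(\frac12,\frac12)$. The key structural fact, already used at the end of the proof of Proposition \ref{prop:type2:pseudo_g}, is that in this representation the degeneration time satisfies $D=\min\{\zeta_1,\zeta_2\}$, where $\zeta_i$ is the lifetime of the $i$-th pseudo-stationary type-1 evolution $\skewerbar(\fN_i)$ starting from $(\ff_i(0),\beta_i)$. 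Since $\fN_1$ and $\fN_2$ are independent, $\zeta_1\indep\zeta_2$, and hence $\bP(D>y)=\bP(\zeta_1>y)\bP(\zeta_2>y)$. The only thing to check is that the marginal law of $\skewerbar(\fN_1)$, say, matches a type-1 evolution started from $(A,C\overline\beta)$: here $A$ is the given first top mass and, crucially, $C_1\overline\beta_1\ed C\overline\beta$ since both are a $\GammaDist(\frac12,\gamma)$ scaling of an independent $\PDIP(\frac12,\frac12)$; and $A$ is independent of this spinal partition. This gives the first assertion.

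For the second assertion, assume additionally $A,B\sim\GammaDist(\frac12,\gamma)$, jointly independent of everything. Then each $\zeta_i$ is the lifetime of a type-1 evolution started from $(\ff_i(0),C_i\overline\beta_i)$ with $\ff_i(0)\sim\GammaDist(\frac12,\gamma)$ and $C_i\sim\GammaDist(\frac12,\gamma)$ independent; by the beta–gamma algebra the total initial mass $\ff_i(0)+\|C_i\overline\beta_i\|=\ff_i(0)+C_i\sim\GammaDist(1,\gamma)=\ExpDist(\gamma)$. This is precisely the pseudo-stationary type-1 setup of Proposition \ref{prop:type1:pseudo_g} (the \ExpDist(\gamma)-initial-mass case). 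By Proposition \ref{type1totalmass}, the total mass of a type-1 evolution is a $\besq(0)$, which is absorbed at $0$ in finite time, so $\zeta_i$ is the extinction time of a $\besq_{M_i}(0)$ with $M_i\sim\ExpDist(\gamma)$. I would then compute $\bP(\zeta_i>y)$ directly: conditionally on $M_i=m$, $\bP(\zeta_i>y\mid M_i=m)=1-\bP(\besq_m(0)\text{ hits }0\text{ by time }y)$. Using the well-known fact that a $\besq_m(0)$ hits zero by time $y$ with probability $e^{-m/(2y)}$ (equivalently, the extinction time is $m/(2E)$ with $E\sim\ExpDist(1)$, cf.\ the $Y_1\ed a/(2G)$-type statements cited from \cite{GoinYor03} in Lemma \ref{lemma clockcont}), and integrating against the $\ExpDist(\gamma)$ density $\gamma e^{-\gamma m}$ of $M_i$, gives
\[
\bP(\zeta_i>y)=\int_0^\infty \gamma e^{-\gamma m}\bigl(1-e^{-m/(2y)}\bigr)\,dm
=1-\frac{\gamma}{\gamma+1/(2y)}=\frac{1/(2y)}{\gamma+1/(2y)}=\frac{1}{2y\gamma+1}.
\]
Therefore $\bP(D>y)=\bP(\zeta_1>y)\bP(\zeta_2>y)=(2y\gamma+1)^{-2}$, as claimed. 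Alternatively, and perhaps more cleanly, one can avoid the explicit $\besq(0)$ extinction computation by invoking Proposition \ref{prop:type2:pseudo_g}: conditionally on $\{D>y\}$, the pair $(m_1^y,m_2^y)$ is i.i.d.\ $\GammaDist(\frac12,\gamma/(2y\gamma+1))$; matching the normalization of the conditional law against the unconditional Laplace-transform identity used in its proof pins down $\bP(D>y)$, since the "missing mass" in passing from the unconditioned to the conditioned moment generating function is exactly the factor $(2y\gamma+1)^{-2}$ coming from the two independent \GammaDist(\frac12,\cdot) clock masses.

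The main obstacle is not conceptual but bookkeeping: one must be careful that the interweaving construction's identification $D=\min\{\zeta_1,\zeta_2\}$ is used with the \emph{correct} marginal laws of the two auxiliary type-1 evolutions — in particular that replacing $C\overline\beta$ by two independent copies $C_1\overline\beta_1$, $C_2\overline\beta_2$ is legitimate (it is, because only the \emph{marginal} law of each $\skewerbar(\fN_i)$ enters the event $\{\zeta_i>y\}$, and the independence $\zeta_1\indep\zeta_2$ is built into the construction). The computation of $\bP(\zeta_i>y)=(2y\gamma+1)^{-1}$ is then routine given the cited $\besq(0)$ hitting-time facts, and the squaring is immediate from independence.
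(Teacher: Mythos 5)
Your proposal is correct and follows essentially the same route as the paper: both use the interweaving construction to identify $D=\min\{\zeta_1,\zeta_2\}$ for two \emph{independent} pseudo\mbox{-}stationary type-1 evolutions, then exploit independence to factor $\bP(D>y)$, and finally compute $\bP(\zeta_i>y)=(2y\gamma+1)^{-1}$ (the paper cites \cite[eq.\ (6.3)]{Paper1}, while you re-derive it from the $\besq(0)$ extinction law; both are fine and your integral checks out).

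One caveat on provenance: you invoke the identity $D=\min\{\zeta_1,\zeta_2\}$ ``already used at the end of the proof of Proposition \ref{prop:type2:pseudo_g},'' but that earlier proof merely asserts it without argument. In the paper it is precisely the proof of Proposition \ref{prop:degeneration} that actually establishes this fact, by tracking the interweaving quantities $Z_{\td J(\infty)}$, $T_{\td J(\infty)}$ and showing that on the event where the interweaving terminates with the label-$1$ scaffolding exhausted, $\zeta_2 \ge \zeta = Z_{\td J(\infty)} > \zeta_1 = D$, and symmetrically. So citing the earlier proof is circular; a complete write-up should include (or at least sketch) that argument from the definitions in \eqref{eq:interweave_vars}--\eqref{eq:interweaving_skewer}, as the paper does. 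Your observation that only the marginal law of each $\skewerbar(\fN_i)$ matters (so that replacing $C\overline\beta$ by independent copies $C_i\overline\beta_i$ is harmless) is correct and matches the construction; the remainder of your argument is sound.
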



\begin{proof}
 The interweaving construction is such that on $\{\td J(\infty)\mbox{ even}\}$
 $$\left(0,\widetilde{m}_1^y\right)
	\concat\Concat_{2\le j\le\td J(\infty)\textrm{ even}} 
             \skewer\left(y - Z_{j-1}, \ShiftRestrict{\fN_{p(j+1)}}{(T_{j-2},T_{j}]\times\cE}\right)
   =\skewer\left(y,\fN_1\right),$$
 and on $\{\td J(\infty)\mbox{ odd}\}$, 
 $$\left(0,\widetilde{m}_2^y\right)
	\concat\skewer\left(y - \zeta(\ff_2), \Restrict{\fN_{2}}{(0,T_{1}]\times\cE}\right)
	\concat\Concat_{2\le j\le\td J(\infty)\textrm{ odd}} 
			 \skewer\left(y - Z_{j-1}, \ShiftRestrict{\fN_{p(j+1)}}{(T_{j-2},T_{j}]\times\cE}\right)$$
 equals $\skewer\left(y,\fN_2\right)$, while on the respective opposite event the expression on the LHS yields 
 $\skewer\left(y,\restrict{\fN_1}{[0,T_{\td J(\infty)-1}]}\right)$ and $\skewer\left(y,\restrict{\fN_2}{[0,T_{\td J(\infty)-1}]}\right)$, 
 respectively.
 
 On $\{\td J(\infty)\mbox{ even}\}$, the definitions of $Z_{\td J(\infty)}$ and $T_{\td J(\infty)}$ imply that
 $\zeta_2\ge\zeta=Z_{\td J(\infty)}>\zeta_1=D$, where $\zeta_i$ is the death level of the type-1 evolution $\skewerbar(\fN_i)$,
 $i=1,2$, and $\zeta$ is the death
 level of the type-2 evolution $((\td m_1^y,\td m_2^y,\td\alpha^y),y\ge 0)$. Together with corresponding observations on 
 $\{\td J(\infty)\mbox{ odd}\}$, we see that $D$ is the minimum of the lifetimes $\zeta_1$ and $\zeta_2$ of the two 
 pseudo-stationary type-1 evolutions used in the construction. Hence

 If we apply the interweaving construction to independent $A$ and $B$ with $\GammaDist[\frac12,\gamma]$ distribution, we 
 obtain the type-1 pseudo-stationary initial distribution. As these are i.i.d.\ with \ExpDist[\gamma] initial mass, from 
 \cite[equation (6.3)]{Paper1} they each have lifetime at least $y$ with probability $(2y\gamma+1)^{-1}$. The minimum of two i.i.d.\ variables with 
 this law has probability $(2y\gamma+1)^{-2}$ of exceeding $y$, as claimed.
\end{proof}

\begin{proposition}\label{degdist}
 Consider a type-2 evolution $((m_1^y,m_2^y,\alpha^y),y\ge 0)$ starting from the initial condition of Proposition \ref{prop:type2:pseudo} with $M\sim{\tt Gamma}(\frac{3}{2},\gamma)$, with degeneration time $D$. Let $A = \{I(D)=1\}$; this is the event that $(m_1^y,\,y\ge0)$ is the surviving clock spindle at the time of degeneration. In this event, $(m_1^D,m_2^D,\alpha^D)=(M^D,0,\emptyset)$; in the complementary event, $(m_1^D,m_2^D,\alpha^D)=(0,M^D,\emptyset)$. Then $\Pr(A) = \frac12$, $A$ is independent of $(D,M^D)$, and $\GammaDist[\frac{1}{2},\gamma/(2\gamma y+1)]$ is a regular conditional distribution for $M^D$ given $D=y$.
\end{proposition}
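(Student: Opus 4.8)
The plan is to leverage the interweaving construction of Proposition \ref{interweaving} together with Proposition \ref{prop:type2:pseudo_g}. Recall from Proposition \ref{prop:degeneration} and its proof that, in the interweaving representation, $D = \min\{\zeta_1,\zeta_2\}$, where $\zeta_1,\zeta_2$ are the lifetimes of two \emph{independent} pseudo-stationary type-1 evolutions with i.i.d.\ \ExpDist[\gamma] initial masses (since $M \sim \GammaDist[\frac32,\gamma]$ splits, via beta-gamma algebra applied to the \DirDist[\frac12,\frac12,\frac12]-scaling, into three i.i.d.\ \GammaDist[\frac12,\gamma] coordinates; here $C \sim \GammaDist[\frac12,\gamma]$ and $A,B\sim\GammaDist[\frac12,\gamma]$ i.i.d.). First I would identify the event $A = \{I(D) = 1\}$ in the interweaving picture: degeneration occurs when the \emph{first} of the two type-1 evolutions dies, and the surviving clock spindle at degeneration is the top block of the type-1 evolution that survives. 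So $A = \{\zeta_1 < \zeta_2\}$ (up to a null set, as the two lifetimes are a.s.\ distinct). By symmetry of the construction in the two independent, identically distributed type-1 data, $\Pr(A) = \Pr(\zeta_1 < \zeta_2) = \frac12$.

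Next I would establish the independence and the conditional law. At degeneration, $(m_1^D, m_2^D, \alpha^D) = (M^D, 0, \emptyset)$ on $A$ and $(0, M^D, \emptyset)$ on $A^c$, where $M^D$ is the mass of the surviving clock spindle. Here $M^D$ is the value at level $D = \zeta_{3-\text{(dying index)}}\wedge\cdot$... more precisely, on $A$, $M^D = m_1^D$ is the value at level $\zeta_1$ of the top mass of the type-1 evolution $\skewerbar(\fN_1)$ whose lifetime is $\zeta_2 > \zeta_1$. The key point is that the pair $(\zeta_1, (\zeta_2, m^{\zeta_1}_{*,2}))$ — the dying lifetime together with the surviving evolution's lifetime and top mass at that level — has the same joint law as $(\zeta_2, (\zeta_1, m^{\zeta_2}_{*,1}))$ by the interchange symmetry, so $A$ is independent of the pair $(D, M^D)$. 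To pin down the conditional law of $M^D$ given $D = y$: condition on $A$ and on $\zeta_1 = y$; then $M^D = m_1^y$, the top block at level $y$ of a pseudo-stationary type-1 evolution with \ExpDist[\gamma] initial mass, conditioned on $\{\zeta_1 = y\} \subseteq \{\zeta_2 > y\}$ — but this conditioning on $\zeta_2$ is on the \emph{other}, independent evolution, hence irrelevant. By Corollary \ref{corbesq2} (or directly from Proposition \ref{prop:type1:pseudo_g}), given that a pseudo-stationary type-1 evolution with \ExpDist[\gamma] initial mass survives to level $y$, its total mass at level $y$ — which for this type-1 evolution equals $m_1^y + \|\alpha_{*}^y\|$ — is conditionally \GammaDist[\frac12,\gamma/(2\gamma y+1)]. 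Since at degeneration $\alpha^y = \emptyset$, we need the top block rather than the total; I would use the second assertion of Proposition \ref{prop:type2:pseudo_g}, which states that conditionally given $\{D > y\}$ the two top masses $m_1^y, m_2^y$ are i.i.d.\ \GammaDist[\frac12,\gamma/(2y\gamma+1)], combined with the fact that degeneration at level exactly $y$ fixes $\alpha^y = \emptyset$ and leaves only the surviving top mass.

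More carefully, I would compute the joint law of $(D, M^D, \mathbf 1_A)$ by disintegrating: for the interweaving representation, condition on $\zeta_1 = y$ (density of $\zeta_1$ is known from \cite[equation (6.3)]{Paper1}, giving $\Pr(\zeta_1 > y) = (2y\gamma+1)^{-1}$) and on $\{\zeta_2 > y\}$; on this event $M^D$ is the top mass of the second type-1 evolution at level $y$, which by Proposition \ref{prop:type1:pseudo_g} (together with Corollary \ref{corbesq2} to separate the top mass from the spinal partition) is conditionally \GammaDist[\frac12,\gamma/(2y\gamma+1)], and is conditionally independent of everything determining $\zeta_1$ by the independence of the two type-1 data. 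This yields simultaneously that $\{I(D)=1\}$, $D$, and $M^D$ decouple in the stated way. I expect the main obstacle to be the careful bookkeeping in the last step: verifying that the conditional law of the surviving top mass at level $y$, given degeneration \emph{exactly} at $y$ (rather than merely survival past $y$), is genuinely \GammaDist[\frac12,\gamma/(2y\gamma+1)] — one must check that conditioning on $\{\zeta_1 = y\}$ is a conditioning purely on the \emph{dying} evolution and hence, by independence, does not perturb the law of the surviving evolution's state at level $y$, which itself requires Corollary \ref{corbesq2}'s conditional independence of $\|\alpha_*^y\|$ from $\alpha_*^y/\|\alpha_*^y\|$ and the explicit $\besq(0)$-conditioned-to-survive description to extract the top-mass marginal. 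The \GammaDist[\frac32,\gamma] choice for $M$ is precisely what makes the initial top masses i.i.d.\ \GammaDist[\frac12,\gamma], which is exactly the hypothesis needed to invoke the second assertion of Proposition \ref{prop:type2:pseudo_g}.
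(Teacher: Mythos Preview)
Your approach is essentially the same as the paper's: use the interweaving construction so that $D=\min\{\zeta_1,\zeta_2\}$ with the two type-1 evolutions i.i.d., deduce $\Pr(A)=\tfrac12$ and independence from exchangeability, and obtain the conditional law of $M^D$ by disintegrating on the dying lifetime and applying Proposition~\ref{prop:type1:pseudo_g} to the surviving evolution's top mass. The paper carries out exactly this computation, writing $(D,M^D)=(\zeta_2,m_1^{\zeta_2})\cf_A+(\zeta_1,m_2^{\zeta_1})\cf_{A^c}$ and evaluating $\bE[f(D)g(M^D)]$ directly.

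One labeling issue to fix: you write $A=\{\zeta_1<\zeta_2\}$, but in fact $A=\{I(D)=1\}$ is the event that the mass with label~1 survives, which in the interweaving representation corresponds to $\{\zeta_1>\zeta_2\}$ (evolution~2 dies first, so the surviving top mass is $m_1^{\zeta_2}$). Your second paragraph inherits this confusion, e.g.\ when you say ``$M^D=m_1^D$ is the value at level $\zeta_1$ of the top mass of the type-1 evolution $\skewerbar(\fN_1)$ whose lifetime is $\zeta_2>\zeta_1$'' --- the indices are tangled here. This does not affect the validity of the argument, since the two evolutions are i.i.d.\ and everything is symmetric, but you should straighten out the bookkeeping.
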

\begin{proof}
 We are interested in the joint distribution of $(D,m_1^D,m_2^D,\alpha^D)$. Using the construction of Lemma \ref{interweaving} from two independent type-1 evolutions with extinction times $\zeta_1$ and $\zeta_2$ and top mass processes $m_1$ and $m_2$, we have $D=\min\{\zeta_1,\zeta_2\}$, $A = \{\zeta_1>\zeta_2\}$, and
  $$(D,M^D)=(\zeta_2,m_1^{\zeta_2})\cf_A+(\zeta_1,m_2^{\zeta_1})\cf_{A^c}.$$
  Under the stated initial conditions, these two type-1 evolutions are in fact i.i.d.. From this, it is clear by symmetry that $\Pr(A)=\frac12$ and $A$ is independent of $(D,M^D)$, as claimed.
  
  For all nonnegative measurable $f$ and $g$ on $\bR$,
  $$\bE(f(D)g(M^D))=\bE(f(\zeta_2)g(m_1^{\zeta_2})\cf_A)+\bE(f(\zeta_1)g(m_2^{\zeta_1})\cf_{A^c}).$$ 
  We use Proposition \ref{prop:type1:pseudo_g} to rewrite the first term on the right hand side as
  \begin{equation*}
  \begin{split}
   &\int_0^\infty f(y)\bE(g(m_1^y)\mathbf{1}{\{\zeta_1>y\}})\bP(\zeta_2\in dy)\\
       &\ \ =\int_0^\infty f(y)\bE(\mathbf{1}{\{\zeta_1>y\}})\int_0^\infty g(x)\frac{1}{\sqrt{\pi x}}\sqrt{\frac{\gamma}{2\gamma y+1}}\exp\left(-\frac{\gamma}{2\gamma y+1}x\right)dx\bP(\zeta_2\in dy)\\
    &\ \ =\bE\left(f(\zeta_2)\mathbf{1}{\{\zeta_1>\zeta_2\}}\int_0^\infty g(x)\frac{1}{\sqrt{\pi x}}\sqrt{\frac{\gamma}{2\gamma \zeta_2+1}}\exp\left(-\frac{\gamma}{2\gamma\zeta_2+1}x\right)dx\right).
  \end{split}
  \end{equation*}
The second term can be written similarly, by symmetry, and together they give
  $$\bE(f(D)g(M^D))=\bE\left(f(D)\int_0^\infty g(x)\frac{1}{\sqrt{\pi x}}\sqrt{\frac{\gamma}{2\gamma D+1}}\exp\left(-\frac{\gamma}{2\gamma D+1}x\right)dx\right).$$
 This proves the claimed regular conditional distribution for $M^D$.
\end{proof}

Note that this result (and proof) formalizes an extension of the second part of Proposition \ref{prop:type2:pseudo_g} to the random time $y=D$, the degeneration time, and yields the same conditional distribution for the surviving top mass as for the surviving top masses when conditioning on $y<D$.

\subsection{De-Poissonization}

Consider a type-2 evolution $\fT=(T^y,y\ge 0)=\big((m_1^y,m_2^y,\alpha^y),y\ge 0\big)$ constructed from independent $\ff_1\sim\besq_a(-1)$ and $\fN_*\sim\fP^1_{(0,b)\concat\beta}$ as in Definition \ref{deftype2}. We now
consider the distribution $\bP^2_{a,b,\beta}$ of $\fT$ on the space $\bD([0,\infty),\cJ^\circ)$ of 
c\`adl\`ag functions from $[0,\infty)$ to $\cJ^\circ$.

For $T=(a,b,\beta)\in\cJ^\circ$, we consider the total mass $\|T\|=a+b+\|\beta\|$. For $\fT=(T^y,y\ge 0)\in\bD([0,\infty),\cJ^\circ)$, we define a time-change function
\begin{equation}\label{eq:dePoi_time_change}
 \rho_{\fT}\colon[0,\infty)\rightarrow[0,\infty],\qquad\rho_{\fT}(u)=\inf\left\{y\ge 0\colon\int_0^y\|T^x\|^{-1}dx>u\right\},\quad u\ge 0,
\end{equation}
which is continuous and strictly increasing until a potential absorption at $\infty$. Recall from Theorem \ref{thm:total_mass} that for a type-2 evolution $\fT$ starting from $(a,b,\beta)\in\cJ^\circ\setminus\{(0,0,\emptyset)\}$, we have $\|\fT\|=(\|T^y\|,y\ge 0)\sim\besq_{a+b+\|\beta\|}(-1)$. By \cite[p.314-5]{GoinYor03}, $\rho_{\fT}$ is bijective onto $[0,\zeta)$ a.s., where $\zeta=\inf\{y\ge 0\colon\|T^y\|=0\}$. Let
\begin{equation}\label{eq:IP_space_1_def}
 \cJ^\circ_1 := \{T\in\cJ^\circ\colon\|T\|=1\}, \qquad \cI^\circ_1 := \{\beta\in\cI^\circ\colon \|\beta\|=1\}.
\end{equation}

\begin{definition}\label{defdePoiss}
 Let $\widebar{\nu}$ be a distribution on $\cJ^\circ_1$. Given a type-2 evolution $\fT\sim\bP^2_{\widebar\nu}$ starting according to $\widebar{\nu}$, we associate the \emph{de-Poissonized type-2 evolution} $\overline{T}^u=T^{\rho_{\fT}(u)} / \|T^{\rho_{\fT}(u)}\|$, $u\ge 0$. We denote its distribution on $\bD([0,\infty),\cJ^\circ_1)$ by $\overline{\bP}^{2,-}_{\overline{\nu}}$.
  
  We define the \emph{IP-valued de-Poissonized type-2 evolution} to be
  \begin{equation*}
   \left\{\left(0,m_{I(\rho_{\fT}(u))}^{\rho_{\fT}(u)}\right)\right\}\concat \left\{\left(0,m_{3-I(\rho_{\fT}(u))}^{\rho_{\fT}(u)}\right)\right\} \concat \alpha^{\rho_{\fT}(u)},\quad u\ge 0.
  \end{equation*}
\end{definition}

\begin{theorem}\label{absorption}
 De-Poissonized type-2 evolutions are Borel right Markov processes absorbed in finite time in either $(1,0,\emptyset)$ or $(0,1,\emptyset)$. The IP-valued variants of these processes are also Borel right Markov processes, but are additionally path-continuous, and are absorbed in finite time in the degenerate interval partition $\{(0,1)\}$ of the interval $(0,1)$.
\end{theorem}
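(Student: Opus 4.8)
The plan is to derive the Borel right Markov property of the de-Poissonized type-2 evolution from that of the (Poissonized) type-2 evolution, established in Theorem \ref{thm:diffusion}, by a standard time-change argument, and then to read off absorption and path-continuity from Theorems \ref{thm:total_mass} and \ref{thm:diffusion}. First I would record the basic pathwise facts about the time change $\rho_{\fT}$: by Theorem \ref{thm:total_mass} the total mass $\|\fT\| = (\|T^y\|,\,y\ge 0)$ is a $\besq(-1)$ started from $\|T^0\| = 1$, hence by \cite[pp.\ 314--315]{GoinYor03} the additive functional $u\mapsto\int_0^u\|T^x\|^{-1}dx$ is finite, continuous and strictly increasing on $[0,\zeta)$ and diverges to $\infty$ as $u\uparrow\zeta$, where $\zeta = \inf\{y\colon\|T^y\|=0\}<\infty$ a.s.. Therefore $\rho_{\fT}$ is a continuous, strictly increasing bijection from $[0,\infty)$ onto $[0,\zeta)$, and the de-Poissonized process $\overline T^u = T^{\rho_{\fT}(u)}/\|T^{\rho_{\fT}(u)}\|$ is well-defined for all $u\ge 0$, with $\|\overline T^u\| = 1$ throughout, so it takes values in $\cJ^\circ_1$.

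Next I would establish the Markov property. The function $(a,b,\beta)\mapsto\|(a,b,\beta)\| = a+b+\|\beta\|$ is continuous and bounded away from $0$ on any set of states with total mass bounded below, and $\|\overline T^u\| = 1$, so normalization is a continuous operation on the relevant subset of $\cJ^\circ$; combined with the \BESQ-scaling invariance of type-2 data (Lemma \ref{lem:scaling}), normalizing a type-2 evolution and then evolving is the same as evolving and then normalizing (in law). The time change $\rho_{\fT}$ is an additive-functional time change of the Borel right process $\fT$ of Theorem \ref{thm:diffusion}, and additive-functional time changes of Borel right processes are again Borel right processes (see, e.g., \cite[Ch.\ 65]{Sharpe} or the analogous de-Poissonization arguments for type-1 evolutions in \cite{Paper1}); this gives right-continuity of paths, the Borel measurability of the semigroup, and the strong Markov property. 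For the IP-valued variant one argues identically, using the Hunt (in particular path-continuous) property of the IP-valued type-2 evolution from Theorem \ref{thm:diffusion} together with the fact that a strictly increasing continuous time change preserves path-continuity; the concatenation $(0,m_1)\concat(0,m_2)\concat\alpha$ is continuous in $(m_1,m_2,\alpha)$ by \cite[Lemma 2.11]{Paper1} and the normalization is continuous on mass-$1$ states, so continuity is preserved under de-Poissonization.

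Finally I would identify the absorption behaviour. By Lemma \ref{type2welldef} the type-2 evolution $\fT$ degenerates: there is a.s.\ a finite degeneration level $D = \min\{\zeta_1^+,\zeta_2^+\}$ after which exactly one of the two top masses is positive and the interval partition is $\emptyset$, and this surviving mass then evolves as a $\besq(-1)$ until the lifetime $\zeta$. Hence on $[D,\zeta)$ we have $T^y = (m_1^y,0,\emptyset)$ or $(0,m_2^y,\emptyset)$ with $m_i^y = \|T^y\|$, so $T^y/\|T^y\|$ is constantly $(1,0,\emptyset)$ or $(0,1,\emptyset)$ on the nonempty interval $[\rho_{\fT}^{-1}(D),\infty)$ (note $\rho_{\fT}^{-1}(D)<\infty$ since $D<\zeta$); thus the de-Poissonized process is absorbed at one of these two states after a finite, a.s.\ positive amount of de-Poissonized time. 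For the IP-valued variant the surviving block $(0,m_i^y)$ normalizes to $(0,1)$, so the absorbing state is the trivial partition $\{(0,1)\}$. Path-continuity at the absorption time follows because $m_i^y/\|T^y\|\to 1$ continuously as $y\uparrow$ (indeed $m_i^y = \|T^y\|$ on $[D,\zeta)$), so in the IP metric $d_\cI$ the partition converges continuously to $\{(0,1)\}$.

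The main obstacle I expect is the bookkeeping needed to make the additive-functional time-change argument rigorous in the present non-standard setting: the state spaces $\cJ^\circ$, $\cJ^\circ_1$ are Lusin but not locally compact, the normalization map is only continuous on the mass-bounded-below part of $\cJ^\circ$, and the time change $\rho_{\fT}$ blows up exactly at the lifetime $\zeta$, so one must check carefully that $\overline T$ does not "run out of time" before absorption — this is where Lemma \ref{type2welldef} (finiteness of the number of clock changes and $D<\zeta$ a.s.) and the explicit $\besq(-1)$ hitting-time facts from \cite{GoinYor03} do the essential work. The rest is routine, paralleling the corresponding de-Poissonization statements for type-1 evolutions in \cite{Paper1}.
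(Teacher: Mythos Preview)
Your proposal is correct and follows essentially the same approach as the paper: transfer the Borel right Markov property from the Poissonized type-2 evolution via the additive-functional time change and normalization (citing the analogous de-Poissonization arguments in \cite{Paper1}), and read off absorption from $D<\zeta$ together with the $\besq(-1)$ total mass. The paper's proof additionally invokes Proposition \ref{contini} (continuity in the initial state) when transferring the strong Markov property, which you leave implicit in your reference to \cite{Paper1}; otherwise the two arguments match.
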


\begin{proof}
 $\cJ^\circ_1$ (likewise $\cI^\circ_1$) is a Borel subset of a Lusin space, and is therefore Lusin. Both continuous time changes and normalization on $\cJ^\circ\setminus\{(0,0,\emptyset)\}$ (respectively, $\cI^\circ\setminus\{\emptyset\}$) preserve the property of sample paths being c\`adl\`ag (resp.\ continuous). The strong Markov property of Theorem \ref{thm:diffusion} and the continuity in the initial state of Proposition \ref{contini} transfer to the de-Poissonized processes as in \cite[Proposition 6.7, proof of Theorem 1.6]{Paper1}. 

  For both processes, absorption occurs at the time $\overline{D}$ that satisfies $\rho_{\fT}(\overline{D}) = D$ since $D<\zeta$ a.s.; note that all states $(m,0,\emptyset)$, $m\in(0,\infty)$, are normalized to $(1,0,\emptyset)$, and similarly for $(0,1,\emptyset)$. For the IP-valued process, all states $\{(0,m)\}$ are normalized to $\{(0,1)\}$. Finally, the time-change is such that $\rho_{\fT}(u)<\zeta$ for all $u\in[0,\infty)$. 
\end{proof}

Pal \cite{Pal11,Pal13} studied Wright--Fisher diffusions with positive and negative real parameters $\theta_1,\ldots,\theta_n$ as de-Poissonized processes associated with vectors of independent $Z_i\sim\besq(2\theta_i)$, $1\le i\le n$. Combining the arguments of \cite[Proposition 11]{Pal11} and \cite[Theorem 4]{Pal13}, we may define generalized Wright--Fisher diffusions (running at 4 times the speed of \cite{Pal11,Pal13}) as either weak solutions to certain systems of stochastic differential equations or, as is relevant for us, as 
\begin{equation}\label{eq:WF_constr}
 \overline{Z}_i(u)=\frac{Z_i(\rho(u))}{Z_+(\rho(u))},\ \ 1\le i\le n,\ \ 0\le u\le\overline{\tau} = \inf\{s\ge 0\colon \exists i\text{ s.t.\ }\overline{Z}_i(s)=0\mbox{ and }\theta_i\le 0\},
\end{equation}
where $Z_+(y) := \sum_{i=1}^nZ_i(y)$ and $\rho(u)$ is as in \eqref{eq:dePoi_time_change}, but with $Z_+(x)$ in place of $\|T^x\|$ inside the integral. 
See also \cite[pp. 60--61]{Paper1}.
 
\begin{proposition}\label{prop:3mass:dePoi}
 Let $\overline{\fT}=((X_1(u),X_2(u),\beta(u)),u\ge 0)\sim\overline{\bP}^{2,-}_{a,b,\beta}$ be a de-Poissonized type-2 evolution starting from any $(a,b,\beta)\in\cJ^\circ_1$. Let $U=\inf\{u\ge 0\colon X_1(u)=0\mbox{ or }X_2(u)=0\}$. Then the 3-mass process $((X_1(u),X_2(u),1-X_1(u)-X_2(u)),0\le u\le U)$ is a generalized Wright--Fisher process with parameter vector $(-\frac{1}{2},-\frac{1}{2},\frac{1}{2})$.
  
  If furthermore the initial state is taken as $\beta=(1-a-b)\overline{\beta}$ for $\overline{\beta}\sim\PDIP(\frac12,\frac12)$, then the 3-mass process $((X_1(u),X_2(u),1-X_1(u)-X_2(u)),u\ge 0)$ is a Markovian extension of the generalized Wright--Fisher process.
\end{proposition}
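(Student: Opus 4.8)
The plan is to derive the first assertion directly from the de-Poissonization construction \eqref{eq:WF_constr} of Pal, and the second assertion from an intertwining argument. For the first part, recall from Definition \ref{defdePoiss} that $\overline{\fT}$ arises by normalizing and time-changing a type-2 evolution $\fT = ((m_1^y,m_2^y,\alpha^y),y\ge 0) \sim \bP^2_{a,b,\beta}$ using the time-change $\rho_{\fT}$ of \eqref{eq:dePoi_time_change}, driven by the total mass $\|T^x\| = m_1^x + m_2^x + \|\alpha^x\|$. By Theorem \ref{thm:total_mass}, this total mass is $\besq_1(-1)$. Before the degeneration time $D$, and in particular before $U = \inf\{u : X_1(u)=0 \text{ or } X_2(u)=0\}$ (note $\rho_\fT(U)\le D$ since on $[0,D)$ both top masses and the interval partition are involved, and the first vanishing of a top mass cannot precede $D$), the three coordinates $y\mapsto(m_1^y, m_2^y, \|\alpha^y\|)$ decompose as follows: each clock mass evolves as a $\besq(-1)$ (Definition \ref{def:type2:v1}), the non-clock top mass together with the spinal partition forms a type-1 evolution whose total mass $m_{3-I}^y + \|\alpha^y\|$ is $\besq(0)$ by Proposition \ref{type1totalmass}, and these two are independent. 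The key observation, as in the additivity Lemma \ref{lmadd}, is that prior to $D$ the pair $(m_1^y, m_2^y, \|\alpha^y\|)$ evolves exactly as a triple $(Z_1,Z_2,Z_3)$ of independent $\besq$ processes with dimensions $(-1,-1,1)$, i.e.\ $(2\theta_1,2\theta_2,2\theta_3)$ with $(\theta_1,\theta_2,\theta_3) = (-\frac12,-\frac12,\frac12)$, since the $\besq(0)$ total mass of the type-1 part itself decomposes as $\besq(-1)\oplus\besq(1)$ in law, compatibly with the clock-switching (the clock switch swaps which of $m_1,m_2$ is the $\besq(-1)$ summand and which is absorbed into the type-1 block, but the joint law of the triple is unchanged). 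Applying the de-Poissonization of \eqref{eq:WF_constr} with $n=3$ to this triple, and matching the time-change $\rho$ there (driven by $Z_+ = Z_1+Z_2+Z_3$) with $\rho_{\fT}$ (driven by $\|T^x\|$), and noting $\overline\tau = U$ since $\theta_3 > 0$ does not trigger stopping, yields that $((X_1(u),X_2(u),1-X_1(u)-X_2(u)), 0\le u\le U)$ is exactly the generalized Wright--Fisher$(-\frac12,-\frac12,\frac12)$ diffusion.

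For the second assertion, I would argue that when $\beta = (1-a-b)\overline\beta$ with $\overline\beta\sim\PDIP(\frac12,\frac12)$, the full 3-mass process $((X_1(u),X_2(u),1-X_1(u)-X_2(u)),u\ge 0)$ is Markovian and thus extends the killed Wright--Fisher process past $U$. The cleanest route is to transfer Proposition \ref{prop:3mass:Poi} through de-Poissonization. Proposition \ref{prop:3mass:Poi} states that the Poissonized 3-mass process $((m_1^y,m_2^y,\|\alpha^y\|),y\ge 0)$ is Markov when started from $(x_1,x_2,x_3\overline\beta)$, via the Rogers--Pitman intertwining criterion with $\phi(x_1,x_2,\alpha) = (x_1,x_2,\|\alpha\|)$ and kernel $\Lambda((x_1,x_2,x_3),\cdot) = \mathrm{law}(x_1,x_2,x_3\overline\beta)$. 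Since the de-Poissonization time-change $\rho_\fT$ depends on $\fT$ only through the total mass $\|T^x\|$, which is itself a function of the 3-mass process, the time-change is adapted to the (right-continuous) filtration generated by the 3-mass process; normalization is likewise a deterministic function of the 3-mass coordinates. Therefore the de-Poissonized 3-mass process $((X_1(u),X_2(u),1-X_1(u)-X_2(u)),u\ge 0)$ is obtained from the Markovian Poissonized 3-mass process by a time-change and normalization that are measurable functionals of that same process, and this preserves the Markov property (exactly as the de-Poissonization of the full type-2 evolution preserves the strong Markov property in Theorem \ref{absorption}, which relied on Proposition \ref{contini}; here one needs the analogous continuity-in-initial-condition of the 3-mass semigroup, which follows from Proposition \ref{contini} by pushing forward under $\phi$). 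Combined with the first assertion, which identifies the killed process, this shows the 3-mass process is a Markovian extension of the generalized Wright--Fisher diffusion.

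The main obstacle I anticipate is making the identification in the first part fully rigorous: one must check that the \emph{joint} law of the triple $(m_1^y,m_2^y,\|\alpha^y\|)_{y\ge 0}$ up to $D$ genuinely coincides with that of independent $\besq_a(-1)\otimes\besq_b(-1)\otimes\besq_{1-a-b}(1)$, despite the clock-switching mechanism of Definition \ref{def:type2:v1}, which repeatedly reallocates which coordinate plays the $\besq(-1)$ role. This requires a careful induction over the clock change levels $Y_n$, at each step invoking the Markov-like property (Proposition \ref{prop:type2Markov}) together with the additivity Lemma \ref{lmadd} and the decomposition $\besq(0) \stackrel{d}{=} \besq(-1)\oplus\besq(1)$ applied to the type-1 total mass, to see that the switch is distributionally invisible at the level of the unordered-mass triple --- essentially the same bookkeeping carried out in the proof of Theorem \ref{thm:total_mass}, but now tracking all three coordinates rather than just their sum. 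A secondary technical point is the identity $\rho_\fT(U)\le D$ and $\overline\tau = U$, which needs the observation that the interval partition $\alpha^y$ stays nonempty and both top masses stay positive precisely on $[0,D)$, so that stopping the Wright--Fisher process when a \emph{negative-parameter} coordinate hits zero corresponds exactly to the first vanishing of $m_1$ or $m_2$ in the de-Poissonized time scale.
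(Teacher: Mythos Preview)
Your second part is correct and matches the paper's proof essentially verbatim: de-Poissonization is a function of total mass alone, hence of the 3-mass process, so the Markov property of Proposition \ref{prop:3mass:Poi} survives the time-change and normalization.

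For the first part, your overall strategy (de-Poissonize three independent \BESQ\ processes via \eqref{eq:WF_constr}) is right, but you have made it much harder than necessary and your stated ``main obstacle'' is both unnecessary and unresolvable as written. The claim that $(m_1^y,m_2^y,\|\alpha^y\|)$ has the joint law of independent $\besq(-1)\otimes\besq(-1)\otimes\besq(1)$ \emph{up to $D$} is false: at the first clock change $Y_1=\zeta(\ff^{(0)})$ the mass $m_1$ hits zero and then restarts as the leftmost block of a type-1 evolution from $(0,\alpha^{Y_1})$, which immediately becomes positive --- it is not an absorbed $\besq(-1)$. So the induction over clock changes you propose cannot recover three independent \BESQ\ coordinates past $Y_1$. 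Your appeal to ``$\besq(0)\stackrel{d}{=}\besq(-1)\oplus\besq(1)$'' gives the right marginal for the sum but does not identify the non-clock top mass with a $\besq(-1)$ pathwise.

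The paper avoids all of this by working directly with the type-2 data quadruple $(\ff_1,\ff_2,\cev\fN,\fN_\beta)$ and setting $Z_1=\ff_1$, $Z_2=\ff_2$, $Z_3=\|\skewerbar(\cev\fN+\fN_\beta)\|$. These are independent $\besq_a(-1)$, $\besq_b(-1)$, $\besq_{\|\beta\|}(1)$ by construction (Proposition \ref{type1totalmass} for the type-0 total mass). The point you are missing is that $\rho_\fT(U)=\min\{\zeta(\ff_1),\zeta(\ff_2)\}$, and on $[0,\min\{\zeta(\ff_1),\zeta(\ff_2)\})$ no clock change has occurred, so one reads off \emph{pathwise} $m_1^y=\ff_1(y)$, $m_2^y=\ff_2(y)$, and $\alpha^y=\skewer(y,\cev\fN+\fN_\beta)$, hence $\|\alpha^y\|=Z_3(y)$. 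Then $Z_+=\|T^y\|$, $\rho=\rho_\fT$, and $U=\overline\tau$ follow immediately. There is no clock-switching bookkeeping to do because the identification is only needed on an interval where no switch happens.
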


\begin{proof}
 For the first claim, we assume without loss of generality that $\overline{\fT}$ is constructed as in Definition \ref{defdePoiss} from a type-2 evolution $\fT=(T^y,\,y\ge 0)$ arising from type-2 data $(\ff_1,\ff_2,\cev{\fN},\fN_\beta)\sim\fP_{a,b,\beta}^2$ as in Definition \ref{deftype2}. By Proposition \ref{type1totalmass}, we have $\|\skewerbar(\cev{\fN}\concat\fN_\beta)\|\sim\besq_{\|\beta\|}(1)$. This process, together with $\ff_1\sim\besq_a(-1)$ and $\ff_2\sim\besq_b(-1)$ forms a triple of \besq\ processes, as in the paragraph above the proposition. Thus, we can construct a generalized Wright--Fisher process from $Z_1=\ff_1$, $Z_2=\ff_2$ and $Z_3=\|\skewerbar(\cev{\fN}\concat\fN_\beta)\|$. Since $Z_+(y)=\|T^y\|$ for $0\le y\le\tau:=\inf\{y\ge 0:\exists i\text{ s.t.\ }Z_i(y)=0\}$, we have $\rho(u)=\rho_{\fT}(u)$, and hence $X_i(u)=\overline{Z}_i(u)$, $i=1,2$, and $X_3(u)=1-X_1(u)-X_2(u)=1-\overline{Z}_1(u)-\overline{Z}_2(u)=\overline{Z}_3(u)$, $0\le u\le U=\overline{\tau}$. This completes the proof.   
  
  The second claim follows from Proposition \ref{prop:3mass:Poi} and the observation that the (Poissonized) 3-mass process of that proposition can be de-Poissonized by the same scaling/time-change operation as the type-2 evolution, as the scaling and time change only depend on the common total mass process.
\end{proof}

\subsection{Resampling and stationarity}

As we have seen in Theorem \ref{absorption}, de-Poissonized type-2 evolutions degenerate at a finite random time $\overline{D}<\infty$ in one of the two absorbing states $(1,0,\emptyset)$ and $(0,1,\emptyset)$. In this section we will restart the process instead of entering the absorbing states. Informally and with the conjectured stationary Aldous diffusion in mind, we take the opportunity to sample afresh from the Brownian CRT at each degeneration time. Recall the state space $\cJ_1^*$ of \eqref{eq:IP_spaces}. Formally, we sample from the distribution $\widebar\mu$ on $\cJ^*_1$ of $(X_1,X_2,(1-X_1-X_2)\overline{\beta})$, where $(X_1,X_2,1-X_1-X_2)\sim{\rm Dirichlet}(\frac{1}{2},\frac{1}{2},\frac{1}{2})$ is independent of the interval partition $\widebar{\beta}\sim\PDIP[\frac12,\frac12]$, as discussed in the introduction. 

\begin{definition}\label{def:resampling}
 Let $(a,b,\beta)\in\cJ^*_1$. Let $(\overline{T}^u_{(j)},0\le u<\overline{D}_{(j)})$, $j\ge 0$, be a sequence of 
  independent copies of $(\overline{T}^u_-,0\le u<\overline{D})$, where $\overline{\fT}_-\sim\overline{\bP}_{a,b,\beta}^{2,-}$ for $j=0$ 
  and $\overline{\fT}_-\sim\overline{\bP}_{\overline{\mu}}^{2,-}$ for $j\ge 1$, see Definition \ref{defdePoiss}. Set $V_0=0$ and denote the cumulative 
  lifetimes inductively by $V_j=V_{j-1}+\overline{D}_{(j)}$, $j\ge 1$. Then the concatenation
  $$\overline{T}^{V_j+u}_+=\overline{T}^u_{(j)},\qquad 0\le u<\overline{D}_{(j)},\ j\ge 0,$$
  is called a \emph{(resampling) 2-tree evolution} starting from $(a,b,\beta)$. We denote its distribution on 
  $\bD([0,\infty),\cJ^*_1)$ by $\overline{\bP}^{2,+}_{a,b,\beta}$. For clarity, we continue to use notation $(\overline{T}_+^u,u\ge 0)$ for the 
  canonical process on $\bD([0,\infty),\cJ^*_1)$ when working under $\overline{\bP}^{2,+}_{a,b,\beta}$.
\end{definition}

\begin{proof}[Proof of Theorem \ref{thm:stationary}]
 To confirm that the 2-tree evolution is a Borel right Markov process, we only need to check the strong Markov property. Given this construction, this can be seen in the context of general results about resurrecting Markov processes \cite{Mey75}. However, since our setting is much more elementary than the general theory, we sketch an elementary proof.
Consider a stopping time $Y$ in the right-continuous filtration $(\cF^y,y\ge 0)$ generated by the canonical process on $\bD([0,\infty),\cJ^*_1)$. For $j\ge 1$, let $Y_j=\min\{Y,V_j\}$. 
  Denote by $(\theta_y, y\ge 0)$ the canonical shift operators on $\bD([0,\infty),\cJ^*_1)$. Then for all probability measures $\widebar\nu$ on $\cJ^*_1$, for all 
  $0\le j\le k$, for all  $\cF^{V_1}$-measurable $\eta_i,f\colon\bD([0,\infty),\cJ^*_1)\rightarrow[0,\infty)$, with  
  $\cF^{Y}$-measurable $\eta_j\circ\theta_{V_j}$, we check that
  $$\overline{\bP}^{2,+}_{\widebar\nu}\left[\prod_{i=0}^k\eta_i\circ\theta_{V_i}\mathbf{1}_{\{V_{j}\le Y<V_{j+1}\}}\,f\circ\theta_Y\right]
	  =\overline{\bP}^{2,+}_{\widebar\nu}\Bigg[\prod_{i=0}^{j}\eta_i\circ\theta_{V_i}\mathbf{1}_{\{V_{j}\le Y<V_{j+1}\}}\overline{\bP}^{2,+}_{\widebar T^Y_+}\Bigg[f\prod_{i=j+1}^k\eta_i\circ\theta_{V_{j-i}}\Bigg]\Bigg]$$
  holds, by inductively applying the strong Markov property under $\widebar\bP^{2,+}_{\widebar\nu}$ and $\widebar\bP^{2,+}_{\widebar\mu}$ at $V_i$, which holds by 
  construction, and the strong Markov property under $\widebar\bP^{2,-}_{\widebar\mu}$ (or, for $j=0$ under $\widebar\bP^{2,-}_{\widebar\nu}$) at the $Y-V_j$.  Summing over $j\ge 0$ and applying a monotone class theorem proves the strong Markov property.
  
  Now, we prove that $\widebar\mu$, defined before Definition \ref{def:resampling}, is the unique stationary distribution, and that the process converges to it. Applying Lemma \ref{pretotal}\ref{item:pretotal:1} to the $(\cF^y_{\rm mass},y\ge 0)$-stopping time $Y=\rho_{\fT}(u)$, we find
  \begin{equation}\label{statpre}
   \bE_{\widebar\mu}^2\left(g(T^{\rho_{\fT}(u)}/\|T^{\rho_{\fT}(u)}\|)1_{\{D>\rho_{\fT}(u)\}}\right)=
   \bP_{\widebar\mu}^2(D>\rho_{\fT}(u)){\widebar\mu}(g).
  \end{equation}
  Now consider a resampling 2-tree evolution $(\overline{T}_+^u,u\ge 0)$ with initial distribution ${\widebar\mu}$. We use the notation of Definition \ref{def:resampling}. Let $U\sim\ExpDist(\lambda)$ independent of the resampling 2-tree evolution. Then \eqref{statpre} yields
  $\bE\left(g\left(\overline{T}_+^U\right)1_{\{U<V_1\}}\right)=\bP(U<V_1){\widebar\mu}(g)$. 
  For $m\ge 1$,
  \begin{align*}\bE\left(g\left(\overline{T}_+^U\right)1_{\{V_m\le U<V_{m+1}\}}\right)
	&=\int_0^\infty\lambda e^{-\lambda u}\bE\left(g\left(\overline{T}^u_+\right)1_{\{V_m\le u<V_{m+1}\}}\right)du\\
	&=\bE\left(e^{-\lambda V_m}\int_0^\infty\lambda e^{-\lambda s}g\left(\overline{T}_+^{V_m+s}\right)1_{\{V_m+s<V_{m+1}\}}ds\right)\\
	&=\int_0^\infty\lambda e^{-\lambda s}\bE\left(e^{-\lambda V_m}g\left(\overline{T}_+^{V_m+s}\right)1_{\{V_m+s<V_{m+1}\}}\right)ds\\
	&=\int_0^\infty\lambda e^{-\lambda s}\bE\left(e^{-\lambda V_m}\bE\left(g(\overline{T}_+^s)1_{\{s<V_1\}}\right)\right)ds\\
	&=\bE\left(e^{-\lambda V_m}\bE\left(g\left(\overline{T}_+^U\right)1_{\{U<V_1\}}\right)\right)\\
	&=\bE\left(e^{-\lambda V_m}\right)\bP(U<V_1){\widebar\mu}(g)=\bP(V_m\le U<V_{m+1}){\widebar\mu}(g).
  \end{align*}
  Summing over $m$ and inverting Laplace transforms in $\lambda$, we find that $\overline{\bE}^{2,+}_{\widebar\mu}(g(T^u))={\widebar\mu}(g)$ for all $u\ge 0$, i.e.\ ${\widebar\mu}$ is stationary for $\overline{\fT}_+$. Furthermore, since resampling is according to the stationary distribution ${\widebar\mu}$, we have for any other initial distribution ${\widebar\nu}$ that for all bounded measurable $g\colon\cJ^*_1\rightarrow[0,\infty)$
  $$\overline{\bE}_{\widebar\nu}^{2,+}(g(T^u)) = \overline{\bE}_{\widebar\nu}^{2,+}\left(g(T^u)1_{\{u<V_1\}}\right)+\overline{\bE}_{\widebar\nu}^{2,+}(V_1\le u){\widebar{\mu}}(g)\rightarrow{\widebar{\mu}}(g),$$ 
  since $V_1=\overline{D}$ is finite $\overline{\bP}_{\widebar\nu}^{2,+}$-a.s.. In particular, the stationary distribution is unique.
\end{proof}

As in Propositions \ref{prop:3mass:Poi} and \ref{prop:3mass:dePoi}, we can project a 2-tree evolution $((\widebar m_1^y,\widebar m_2^y,\widebar\beta^y),\,y\ge0)$ down to a \emph{resampling 3-mass process} $\big((\widebar m_1^y,\widebar m_2^y,\widebar\beta^y),\,y\ge0\big)$. 
We can now prove Theorem \ref{thm:wright-fisher}.

\begin{proof}[Proof of Theorem \ref{thm:wright-fisher}]
Proposition \ref{prop:3mass:dePoi} and Theorem \ref{thm:stationary} imply that 
the resampling 3-mass process is a Borel right Markov process that extends the generalized Wright--Fisher process to a recurrent process on the simplex 
$\{(a,b,c)\in [0,1)^3\colon a+b+c=1\}$, which has 
\DirDist[\frac12,\frac12,\frac12] stationary distribution, and converges to stationarity. 
\end{proof}

Note that the Wright--Fisher diffusion with parameters $\left(\frac12,\frac12,\frac12\right)$ has this same invariant law.

Recall the definition in \eqref{eq:diversity} of the diversity $\sD_\beta$ of an interval partition $\beta\in\cI$.

\begin{corollary}
 Under $\overline{\bP}_{\widebar\nu}^{2,+}$, let $(\widebar\beta^u,\,u\ge 0)$ denote the evolution of the interval partition component. Then the total diversity process $(\sD_{\widebar\beta^u}(\infty),\,u\ge 0)$ is continuous except at the resampling times $V_m$, $m\ge 1$.
\end{corollary}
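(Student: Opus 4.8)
The plan is to transfer the known path-continuity of the total diversity process for type-0 and type-1 evolutions to the 2-tree evolution between resampling times, and then to observe that at the countably many resampling times the only possible discontinuities occur. Recall from the construction in Definition \ref{deftype2} that between consecutive clock levels $Y_j$ and $Y_{j+1}$ the IP-valued type-2 evolution is $(0,\ff^{(j)})\concat(0,\fm^{(j)})\concat\gamma^{(j)}$, where $(\fm^{(j)},\gamma^{(j)})$ is a type-1 evolution. The total diversity of such a concatenation is the sum of the total diversities of the three concatenands, and the two leftmost blocks (being genuine blocks of positive length at a.e.\ level) contribute zero diversity. So the total diversity of the type-2 evolution coincides, at each level, with the total diversity of the type-1 evolution $\gamma^{(j)}$.

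First I would record that total diversity is continuous along a single type-1 (or type-0) evolution. This follows from Proposition \ref{intro:thm1}: type-1 evolutions are $d_\cI$-continuous, and $d_\cI$ dominates the fluctuation of total diversity through quantity (ii) in Definition \ref{def:IP:metric}, namely $|\sD_\beta(\infty) - \sD_\gamma(\infty)| \le d_\cI(\beta,\gamma)$. Hence $y\mapsto \sD_{\gamma^{(j)}}(\infty)$ is continuous on $[0,Y_{j+1}-Y_j)$. Next I would check continuity across the clock levels $Y_j$ internal to a single de-Poissonized excursion $(\overline T^u_{(m)},\,0\le u<\overline D_{(m)})$. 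At such a level, the parity swap in Step 4 of Definition \ref{deftype2} exchanges the roles of the two leftmost masses but does not alter $\gamma^{(j)}$; moreover, by the continuity statement established in the proof of Theorem \ref{thm:diffusion}, the IP-valued type-2 evolution itself is $d_\cI$-continuous at $Y_j$ (one leftmost block shrinks to zero from below and reappears from above, with $\alpha^{Y_j}$ unchanged). Since total diversity is $d_\cI$-continuous, it is continuous at $Y_j$ as well. Finally, de-Poissonization is a continuous, strictly increasing time-change composed with normalization by the total mass; normalization scales every block length, hence every diversity, by the factor $\|T^x\|^{-1/2}$, which is continuous and bounded away from $0$ and $\infty$ on compact level-intervals strictly before $\zeta$. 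Therefore the de-Poissonized total diversity $u\mapsto \sD_{\overline\beta^u}(\infty)$ is continuous on each interval $[V_{m-1},V_m)$.

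It remains to handle the resampling times $V_m$, $m\ge 1$. At such a time the process jumps from a degenerate state $\{(0,1)\}$ (whose total diversity is $0$) to a fresh state sampled from $\widebar\mu$, which is $(X_1,X_2,(1-X_1-X_2)\widebar\beta)$ with $\widebar\beta\sim\PDIP[\frac12,\frac12]$; since a $\PDIP[\frac12,\frac12]$ almost surely has strictly positive total diversity, this is genuinely a jump. Thus the resampling times $\{V_m:m\ge 1\}$ are exactly the possible discontinuity points, and by Definition \ref{def:resampling} they form a discrete (a.s.\ locally finite) set because each $\overline D_{(j)}$ is a.s.\ strictly positive and, by Theorem \ref{absorption} applied to the pseudo-stationary starting law, has no finite accumulation (the $V_m$ increase to $+\infty$). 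Assembling these observations: off $\{V_m:m\ge 1\}$ the total diversity process is continuous, so the corollary follows.

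The routine parts are the continuity of total diversity along type-1/type-0 evolutions (immediate from $d_\cI$-continuity and the definition of $d_\cI$) and the behaviour under the de-Poissonization time-change. The one point that requires a little care — and which I regard as the main obstacle — is continuity at the internal clock levels $Y_j$: one must be sure that, although the labelled triple $(m_1^y,m_2^y,\alpha^y)$ has a bookkeeping discontinuity there (the labels $1,2$ are swapped), the underlying \emph{unlabelled} interval partition $\overline\beta^y$ used to define the diversity is $d_\cI$-continuous at $Y_j$. This is precisely the path-continuity of the IP-valued type-2 evolution established in the proof of Theorem \ref{thm:diffusion}, so once that is invoked, the argument closes.
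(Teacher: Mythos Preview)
Your argument is correct and rests on the same idea as the paper: the interval-partition component $\widebar\beta^u$ differs from the IP-valued (de-Poissonized) type-2 evolution by removing at most two leftmost blocks, and removing finitely many blocks does not change total diversity; since the IP-valued evolution is $d_\cI$-continuous and total diversity is $d_\cI$-continuous (item (ii) in Definition~\ref{def:IP:metric}), you get continuity between resampling times.

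The only difference is one of packaging. The paper invokes Theorem~\ref{absorption} directly at the de-Poissonized level and is done in two sentences. You instead work at the Poissonized level, split into clock intervals $[Y_j,Y_{j+1})$, appeal to type-1 continuity there, then separately handle continuity at the clock levels $Y_j$ via Theorem~\ref{thm:diffusion}, and finally push through the time-change and normalization. This is not wrong, but the detour through type-1 continuity on sub-intervals is redundant: once you invoke $d_\cI$-continuity of the IP-valued type-2 evolution (which you do anyway at the clock levels), you have it for all levels, so the piecewise type-1 argument adds nothing. Two minor slips worth tidying: (a) block lengths scale by $\|T^x\|^{-1}$ under normalization, and it is the diversity that scales by $\|T^x\|^{-1/2}$; (b) $\gamma^{(j)}$ is not itself a type-1 evolution but the IP component of the pair $(\fm^{(j)},\gamma^{(j)})$ --- your reasoning still goes through because its total diversity equals that of the IP-valued type-1 evolution $(0,\fm^{(j)})\concat\gamma^{(j)}$. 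Your additional remarks on genuine discontinuity at $V_m$ and on $V_m\to\infty$ are correct but not required by the statement.
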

\begin{proof}
 Consider de-Poissonized type-2 evolution, its IP-valued variant as in Definition \ref{defdePoiss}, and the extension of the triple-valued process to a 2-tree evolution via resampling. Up until the first resampling time, the interval partition component of the resampling process differs from the IP-valued evolution by at most two leftmost blocks, removed from the latter to construct the former. As noted in Theorem \ref{absorption}, the latter process is path-continuous under $d_{\cI}$, so its total diversity process is continuous. Removing a finite number of blocks does not change its total diversity. This same argument proves continuity between any two consecutive resampling times.
\end{proof}

\subsection{H\"older estimates}\label{sec:Holder}

Let us denote by $\widetilde\mu$ the distribution of the 
$\cI^\circ$-valued interval partition 
$$(0,\overline{A})\star(0,\overline{B})\star \overline{G}\,\overline{\beta}$$
for independent $(\overline{A},\overline{B},\overline{G})\sim{\tt Dir}(\frac12,\frac12,\frac12)$ and 
$\overline{\beta}\sim{\tt PDIP}(\frac12,\frac12)$. This distribution is not pseudo-stationary in the strong sense that the distribution of an 
$\cI^\circ$-valued type-2 evolution starting from $\widetilde\mu$ has as marginal distributions the distributions of random multiples of this interval 
partition -- intuitively, the leftmost block is stochastically larger than the second block. However, we will be able to appeal to the 
pseudo-stationarity of $\cJ^\circ_1$-valued type-2 evolutions starting from $(\overline{A},\overline{B},\overline{G}\,\overline{\beta})$ in 
situations that treat the two top masses symmetrically.


\begin{proposition}\label{type2holder}
 Let $(\widetilde{\beta}^y,y\ge 0)$ be an $\cI^\circ$-valued type-2 evolution starting according to $\widetilde\mu$. Let $\theta\in(0,1/4)$ and $y>0$. Then 
 there is a random H\"older constant $L=L_{\theta,y}$ with moments of all orders such that
 $$d_\cI(\widetilde{\beta}^a,\widetilde{\beta}^b)\le L|b-a|^\theta\qquad\mbox{for all }0\le a<b\le y.$$ 
\end{proposition}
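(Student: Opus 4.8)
The strategy is to transfer a Hölder estimate for type-1 evolutions (and for $\besq(-1)$ processes) through the deletion-clocking construction of Definition \ref{deftype2}, exploiting the fact that the type-2 evolution is, between consecutive clock levels $Y_n$ and $Y_{n+1}$, a concatenation of a $\besq(-1)$ clock spindle and an $\cI$-valued type-1 evolution, and that these pieces fit together continuously at the $Y_n$. The first step is to record the analogue for type-1 evolutions: starting from the pseudo-stationary law ${\tt PDIP}(\frac12,0)$ scaled by an independent mass, the proof of \cite[Proposition 5.11]{Paper1} gives, after time zero, Hölder continuity of index $\theta\in(0,1/4)$ with a Hölder constant having moments of all orders; one also needs the same for the leftmost-block mass $m^y$, which is governed by the $\Stable[1/2]$ ladder height structure and is again Hölder with good moments, and for the $\besq(-1)$ clock spindles themselves, where classical Bessel estimates (Kolmogorov's continuity criterion applied to $\besq$, using that $\besq(-1)$ is locally a time-changed Bessel-type process away from $0$, or directly the $\besq$ moment bounds) give Hölder continuity of any index below $1/2$ with moments of all orders on any compact level-interval bounded away from the absorption time.

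The second step is to use the interweaving description of Proposition \ref{interweaving} together with the pseudo-stationarity results of Section \ref{dePoiss}. Starting the $\cJ^\circ_1$-valued process from $(\overline{A},\overline{B},\overline{G}\,\overline\beta)\sim$ the Dirichlet$\times{\tt PDIP}$ law, one can build the type-2 evolution by interweaving two independent pseudo-stationary type-1 evolutions; the clock levels $Y_n$ are then the successive overshoot levels of the interwoven $\Stable[3/2]$ scaffolds, and on each interval $[Y_n,Y_{n+1})$ the two top masses are, respectively, a $\besq(-1)$ piece and the evolving leftmost block of a type-1 evolution, while $\alpha^y$ is the ``tail'' of that type-1 evolution. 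The key structural point is that $d_\cI$ is compatible with concatenation (\cite[Lemma 2.11]{Paper1}): if $\gamma^y=\gamma_1^y\concat\gamma_2^y\concat\cdots$ with uniformly finitely many pieces relevant near level $y$, then $d_\cI(\gamma^a,\gamma^b)\le\sum_i d_\cI(\gamma_i^a,\gamma_i^b)+(\text{mass discrepancies})$, and the mass discrepancies are controlled by Hölder continuity of the total masses, i.e.\ of $\besq(-1)$ and $\besq(0)$ processes. Hence the Hölder constant $L_{\theta,y}$ of the type-2 evolution on $[0,y]$ is dominated by a finite sum — finite because only finitely many clock changes occur before level $y$, by Lemma \ref{type2welldef} — of Hölder constants of the individual $\besq(-1)$ clock spindles and of the type-1 evolutions $(\fm^{(n)},\gamma^{(n)})$ arising in the construction, plus Hölder constants of the relevant total-mass processes.

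The third step is the moment bound on this sum. Here one must be careful: the number $N_y$ of clock changes before level $y$ is random, and the individual Hölder constants are attached to evolutions started from random (and possibly small) masses, so one cannot simply take a crude union bound. The plan is to condition on the filtration $(\cF^{Y_n})$ and use the Markov-like property of type-2 data quadruples (Proposition \ref{prop:type2Markov}): given $\cF^{Y_n}$, the post-$Y_n$ evolution is a type-2 evolution from state $(m_1^{Y_n},m_2^{Y_n},\alpha^{Y_n})$, so its contribution to $L$ on $[Y_n,Y_{n+1}\wedge y]$ has conditional moments controlled — by $\besq$-scaling and the type-1 estimates — in terms of (negative and positive powers of) the current masses and the length of the level interval, uniformly on $\{Y_n<y\}$. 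Combining with the pseudo-stationarity of Proposition \ref{prop:type2:pseudo}, which says that conditionally on $\{D>y\}$ the rescaled state has a fixed law, and with the tail bound $\bP(N_y>k)$ decaying fast enough — this follows from the random-walk structure $\log\Delta_{n+1}=\log\Delta_1+\sum\log R_i$ with mean-zero i.i.d.\ increments identified in the proof of Lemma \ref{type2welldef}, so that the clock levels $Y_n$ grow at a controlled rate and $N_y$ has (at least) geometric-type tails after rescaling — one gets that $\bE[L_{\theta,y}^p]<\infty$ for every $p$ by a conditional-Hölder-inequality argument summing over $n$. Starting from $\widetilde\mu$ rather than the symmetric Dirichlet-type law is handled by noting that $\widetilde\mu$ is the law of $(0,\overline A)\star(0,\overline B)\star\overline G\,\overline\beta$ with $(\overline A,\overline B,\overline G)$ exchangeable in the first two coordinates, so any statement treating the two top masses symmetrically — and the Hölder bound is such — transfers from the $\cJ^\circ_1$-valued pseudo-stationary process to the $\cI^\circ$-valued process started from $\widetilde\mu$.

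\textbf{Main obstacle.} The delicate point is the moment control of the Hölder constant uniformly over the random number of clock changes and, especially, over the small random masses of the clock spindles: a $\besq(-1)$ started from a tiny mass $m$ lives for a time of order $m$, and its natural Hölder constant on that short interval, when measured against a fixed index $\theta<1/4$, scales like a negative power of $m$, so one needs the joint law of these masses (available through pseudo-stationarity and the $\Stable[1/2]$ overshoot calculations) to have enough integrability to absorb these negative moments after summing over $n$. Making this summation converge in $L^p$ for all $p$, by carefully pairing the fast decay of $\bP(N_y>k)$ against the polynomial-in-$1/m$ growth of the per-block Hölder constants, is the technical heart of the argument; this is exactly why the proposition is stated only for the specific initial law $\widetilde\mu$, where these computations are tractable, rather than from arbitrary initial states.
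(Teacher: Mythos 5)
Your plan and the paper's proof go in genuinely different directions. You decompose the type-2 evolution by clock levels $Y_n$ and try to bound the almost-sure H\"older constant by a (random-length) sum of per-interval H\"older constants of the \BESQ[-1] clock spindles and the type-1 pieces. The paper never decomposes at clock levels; instead it works with the interweaving construction of Proposition~\ref{interweaving}, builds an explicit correspondence with $m=\lfloor 3y^{-1/4}\rfloor$ size-biased blocks matched to the blocks arising from the same spindles at level~$y$, uses \cite[Lemma 33]{Paper0} (moment bounds on $\besq$ increments) to control the mass terms and a local-time H\"older estimate (Lemmas~\ref{lem:LT_Holder} and \ref{lem:LT_matching_bd}) to control the diversity terms, and thereby proves the \emph{two-time marginal} moment bound $\bE[(d_\cI(\beta^0,\beta^y))^p]\le Cy^{\theta p}$ (Lemma~\ref{gammaholder}) from a $\GammaDist[\frac32,\gamma]$-mass pseudo-stationary start. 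Pseudo-stationarity (Proposition~\ref{prop:type2:pseudo}) propagates this to all pairs of levels, and Kolmogorov--Chentsov upgrades the two-time bound to the a.s.\ H\"older constant with all moments; a final Laplace-inversion/scaling argument removes the $\GammaDist$ randomization of the initial mass. The structure "moment bound on two-time increments $\Rightarrow$ Kolmogorov--Chentsov" is precisely what lets the paper avoid your main obstacle.

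The gap in your proposal is the step you flag yourself as "the technical heart": you must show that the random sum over $n\le N_y$ of per-interval H\"older constants has all moments. You would need, simultaneously, (a) tail control on $N_y$, and (b) control of negative moments of the clock spindle masses and of the lengths $Y_{n+1}-Y_n$. Neither is supplied. On (a), the increments $\log R_n$ in the walk of Lemma~\ref{type2welldef} are heavy-tailed (overshoots of a $\Stable[\frac12]$ subordinator), so the claim that $N_y$ has "geometric-type tails after rescaling" is not self-evident and would require a separate large-deviations estimate. On (b), the per-interval H\"older constant of a $\besq_m(-1)$ on its full lifetime blows up like a negative power of $m$, and these small masses appear as overshoots of a process at a random level inside a random-length sum; getting a joint $L^p$-bound for all $p$ out of this is substantially harder than the paper's direct estimate. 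You have correctly identified the difficulty but not resolved it, and the whole point of the paper's route is that it sidesteps this sum entirely.

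Two smaller issues. First, the leftmost-block mass $y\mapsto m^y$ of a type-1 evolution is c\`adl\`ag but \emph{not} continuous -- it jumps to a new positive value each time the current leftmost block dies and the next block takes over -- so it cannot be H\"older as a real-valued process, contrary to what you write; only the IP-valued process $(0,m^y)\concat\gamma^y$ is $d_\cI$-continuous. Second, \cite[Lemma 2.11]{Paper1} gives continuity of a concatenation of continuous $\cI$-valued processes, but it does not directly give the quantitative inequality $d_\cI(\gamma^a,\gamma^b)\le\sum_i d_\cI(\gamma_i^a,\gamma_i^b)+(\text{mass})$ that your decomposition assumes; the paper instead builds a single correspondence across the full interval partition (not piece by piece) exactly to obtain a quantitative distortion bound.
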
 

The remainder of this subsection is devoted to the proof of this proposition. We begin by some preliminary considerations. Let us first consider the type-2 evolution $\beta^y=M\widetilde{\beta}^{y/M}$, $y\ge0$, with \GammaDist[\frac32,\gamma] initial mass $M$ for some $\gamma>0$, cf.\ Proposition \ref{prop:type2:pseudo_g}. Recall that 
\begin{itemize}[leftmargin=.82cm]
  \item the evolution of $(\beta^y,y\ge 0)$ can be constructed by interweaving two independent pseudo-stationary 
    type-1 evolutions of initial mass ${\tt Exponential}(\gamma)$ (Propositions \ref{interweaving} and \ref{prop:type2:pseudo});
  \item the pseudo-stationary type-1 evolution consists of a type-1 evolution starting from a single 
    interval $(0,A)$ with $A\sim\GammaDist[\frac12,\gamma]$ concatenated left-to-right with an independent type-1 evolution starting from a $\PDIP[\frac12,\frac12]$ scaled by an independent \GammaDist[\frac12,\gamma] mass (Proposition \ref{prop:type1:pseudo_g}); 
  \item a type-1 evolution starting from a single \GammaDist[\frac12,\gamma]-distributed interval $(0,A)$ can be constructed
    from a $\besq_A(-1)$ process, with death level $\zeta$ and an independent \Stable[\frac32] process  
    with $\besq(-1)$ excursions in its jumps and run until it hits $-\zeta$, then shifted up by $\zeta$ to descend from
    $\zeta$ to $0$;
  \item a type-1 evolution starting from $\PDIP[\frac12,\frac12]$ scaled by mass \GammaDist[\frac12,\gamma] can be constructed from a \Stable[\frac32] process $\widetilde{\fX}$ starting from 0, with $\besq(-1)$ excursions in its jumps stopped at a time $\widetilde{T}$, which is the left endpoint of the excursion away from 0 where the mass at level 0 exceeds an independent ${\tt Exponential}(\gamma)$ threshold (Proposition \ref{prop:type1:pseudo_constr});
  \item the local times $(\widetilde{\ell}^y(t),0\le t\le\widetilde{T},y\ge 0)$ of 
    $(\widetilde{\fX}(t),0\le t\le\widetilde{T})$ have the property that for each $a\ge 0$ and $\theta\in(0,1/4)$, the random variable 
    $$\widetilde{D}_\theta^a=\sup_{0\le t\le\widetilde{T},0\le x<y\le a}\frac{|\widetilde{\ell}^x(t)-\widetilde{\ell}^y(t)|}{|y-x|^\theta}$$
    has moments of all orders (\cite[Theorem 3]{Paper0});
  \item 
  for a type-1 evolution $(\alpha^y,\,y\ge0)$ arising from some scaffolding $\fX$ marked by \BESQ[-1] spindles, 
  it is a.s.\ the case that for every $y$ and every block $U\in\alpha^y$, the diversity $\mathscr{D}_{\alpha^y}(U)$ equals the local time $\ell^y(t)$ in $\fX$, up to the time $t$ at which the spindle corresponding to block $U$ arises (\cite[Theorem 1]{Paper0} or \cite[Theorem 4.15]{Paper1}).
\end{itemize}
Consider the \Stable[\frac32] process starting from $\zeta$ obtained by concatenating the descent from $\zeta$ to $0$ before $\td \fX$. Denote this process by $(\widehat{\fX}(t),0\le t\le\widehat{T})$ and its local times by $(\hat{\ell}^y(t),0\le t\le\widehat{T},y\ge 0)$. 
In this context, \cite[Theorem 3]{Paper0} has the following consequence. 

\begin{lemma}\label{lem:LT_Holder} 
 The following random variable has moments of all orders:
  $$\widehat{D}_\theta^a=\sup_{0\le t\le\widehat{T},0\le x<y\le a}\frac{|\hat{\ell}^x(t)-\hat{\ell}^y(t)|}{|y-x|^\theta}.$$
\end{lemma}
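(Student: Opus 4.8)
The plan is to deduce Lemma \ref{lem:LT_Holder} from the corresponding statement for the forward process $\widetilde{\fX}$, namely that $\widetilde{D}_\theta^a$ has moments of all orders, which is \cite[Theorem 3]{Paper0}. The process $\widehat{\fX}$ is obtained by prepending to $\widetilde{\fX}$ a first-passage descent of a \Stable[\frac32] process from $\zeta$ down to $0$; its local time process $(\hat\ell^y(t),\,0\le t\le\widehat{T})$ therefore decomposes into two contributions along the time axis. First I would note that if $\sigma$ denotes the length of the descent piece, then for $t\ge\sigma$ we have $\hat\ell^y(t) = \hat\ell^y(\sigma) + \widetilde\ell^y(t-\sigma)$ by additivity of local time along disjoint time intervals, while for $t\le\sigma$ the local times are those of the descent piece alone. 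Hence it suffices to control two H\"older moduli separately: the modulus of $y\mapsto\widetilde\ell^y(t)$ over $t\le\widetilde T$ and levels in $[0,a]$ (which is exactly $\widetilde D_\theta^a$), and the modulus of $y\mapsto\hat\ell^y(t)$ for the descent piece, $t\le\sigma$, over the same level range. A triangle-inequality argument then gives $\widehat D_\theta^a \le 2\widetilde D_\theta^a + (\text{descent modulus})$, so moments of all orders for $\widehat D_\theta^a$ follow once we have them for each summand.

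For the descent piece, the key observation is that a \Stable[\frac32] first-passage descent from $\zeta$ to $0$ is, by spatial homogeneity and time-reversal of spectrally positive L\'evy processes (see \cite{BertoinLevy}), intimately related to a \Stable[\frac32] process stopped at a hitting time; more to the point, its local time process restricted to levels $[0,a]$ has the same law (for $a$ below $\zeta$, and with an obvious truncation argument for the general case) as a piece of the local time process of the ordinary \Stable[\frac32] scaffolding. Thus \cite[Theorem 3]{Paper0}, or rather the method of its proof, applies verbatim to the descent piece and yields a H\"older modulus with moments of all orders. Alternatively — and this is perhaps cleaner — one observes that the spectrally positive \Stable[\frac32] scaffolding underlying a type-1 evolution started from a single \GammaDist[\frac12,\gamma] block (bullet point four in the list above) already consists of exactly such a descent concatenated with compensated jumps, and \cite[Theorem 3]{Paper0} is stated for precisely that scaffolding; so one needs only to match up $\widehat{\fX}$ with the scaffolding of the concatenated type-1 data triple described in the list, whose local-time H\"older estimate is already in hand.

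The remaining technical point is to handle the randomness of $\zeta$ and the level cap $a$: the descent piece has height $\zeta$, which is random, but $\widehat D_\theta^a$ only looks at levels up to the fixed value $a$, so on the event $\{\zeta\ge a\}$ the descent contributes local time at all levels in $[0,a]$, while on $\{\zeta<a\}$ only levels in $[0,\zeta]$ are relevant and the rest is governed by $\widetilde\fX$. Since $\zeta$ (distributed as $A/(2G)$ with $G\sim\GammaDist[\frac32,1]$ by \cite{GoinYor03}, cf.\ the proof of Lemma \ref{lemma clockcont}) is independent of the \Stable[\frac32] dynamics and has moments of all orders, conditioning on $\zeta$ and then integrating causes no loss. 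I expect the main obstacle to be purely bookkeeping: carefully identifying $\widehat{\fX}$ with a scaffolding already covered by \cite[Theorem 3]{Paper0} (or assembling it from two such pieces) and making the additivity-of-local-time decomposition rigorous at the junction time $\sigma$, where one must check that the local time is continuous across the splice so that no extra term appears. Once that identification is made, the moment bounds transfer immediately and the lemma follows.
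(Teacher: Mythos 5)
Your time-axis decomposition of $\widehat\fX$ is a genuinely different route from the paper's, and it leaves a gap. The paper never splits $\widehat\fX$ into a descent piece and a $\widetilde\fX$ piece. Instead it observes that $\widehat\fX$ has the same unconditional law as the terminal segment $(\widetilde\fX(H_\zeta+s),\,0\le s\le\widetilde T-H_\zeta)$, conditionally given the positive-probability event $\{H_\zeta<\widetilde T\}$, where $H_\zeta$ is the first hitting time of level $\zeta$. This is exactly what the memorylessness Lemma \ref{lem:pseudostat:memoryless} says: after a stopping time at level $\zeta$, the remaining marked point process is (given non-extinction) distributed as a first-passage descent from $\zeta$ to $0$ followed by a fresh $\fN_\beta$ --- which is the law of $\widehat\fX$. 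Consequently $\hat\ell^y(t)$ is conditionally distributed as the increment $\widetilde\ell^y(H_\zeta+s)-\widetilde\ell^y(H_\zeta)$, and the triangle inequality gives
$$\bE\big[(\widehat D_\theta^a)^p\big] \le \bE\big[(2\widetilde D_\theta^a)^p\,\big|\,H_\zeta<\widetilde T\big]<\infty.$$
The descent and the post-descent part are thereby embedded jointly in a single $\widetilde\fX$-path, so there is no separate estimate for the descent, no case analysis on $\zeta$ versus $a$, and the local-time-continuity-at-the-splice worry you raise never arises.

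The gap in your proposal is the H\"older estimate for the descent piece, and neither of your remedies closes it. The first (``spatial homogeneity and time-reversal \dots the method of \cite[Theorem 3]{Paper0} applies verbatim'') is a gesture, not an argument: you would need to prove that the descent's local-time field, restricted to levels $[0,a]$, really does coincide in law with a piece of the local-time field already covered by \cite[Theorem 3]{Paper0}, and then that the uniform H\"older constant transfers with all moments; neither step is written down. The ``alternative, cleaner'' version is based on a misidentification: as quoted in the bullet list, \cite[Theorem 3]{Paper0} is stated for $\widetilde\fX$ (fifth bullet), not for the descent (fourth bullet), and certainly not for their concatenation --- which is $\widehat\fX$ itself, so invoking the theorem there would be circular. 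If you supplied a self-contained H\"older estimate for the descent, your decomposition would go through; but the paper's memorylessness trick makes the lemma essentially immediate from the existing estimate for $\widetilde\fX$, with no new input required.
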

\begin{proof}
 Let $H_\zeta=\inf\{t\ge 0\colon\widetilde{\fX}(t)=\zeta\}$. Then the event $\{H_\zeta<\widetilde{T}\}$ has 
  positive probability and by the strong Markov property and by the property of
  Poisson processes of marked excursions, the conditional distribution given $H_\zeta<\widetilde{T}$ of the process
  $(\widetilde{\fX}(H_\zeta+s),0\le s\le\widetilde{T}-H_\zeta)$ is the same as the unconditional distribution of 
  $(\widehat{\fX}(t),0\le t\le T)$; c.f.\ Lemma \ref{lem:pseudostat:memoryless}. But then the associated local times 
  $(\td\ell^y(H_\zeta+s)-\td\ell^y(H_\zeta),0\le s\le\widetilde{T}-H_\zeta,y\ge 0)$ have as their conditional distribution the
  distribution of $(\hat{\ell}^y(t),0\le t\le\widehat{T},y\ge 0)$. By the triangle inequality,
  $$\bE\big[(\widehat{D}_\theta^a)^p\big] \le \bE\big[(2\widetilde{D}_\theta^a)^p\big|H_\zeta<\widetilde{T}\big]<\infty.\vspace{-0.4cm}$$
\end{proof}
This allows us to bound terms (i) and (ii) of Definition \ref{def:IP:metric} of $d_\cI$, which deal with diversity.
\begin{lemma}\label{lem:LT_matching_bd}
  There is a random variable $L_\theta$ with moments of all orders such that uniformly over all those
  matchings $((U_j,U_j^\prime),1\le j\le m)$ of intervals of $\beta^0$ and $\beta^y$ that are taken from the same 
  $\besq(-1)$ excursion, we have
  \begin{equation*}
   \big|\IPLT_{\beta^y}(\infty)-\IPLT_{\beta^0}(\infty)\big| \le L_\theta y^\theta \quad \text{and} \quad \max_{1\le j\le m}\big|\IPLT_{\beta^y}(U'_j)-\IPLT_{\beta^0}(U_j)\big| \le L_\theta y^\theta.
  \end{equation*}
\end{lemma}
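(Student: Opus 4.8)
\textbf{Proof plan for Lemma~\ref{lem:LT_matching_bd}.}
The plan is to reduce the statement to the H\"older continuity of local times of the underlying \Stable[\frac32] scaffolding, which is provided by Lemma~\ref{lem:LT_Holder} (via \cite[Theorem 3]{Paper0}), together with the identification of diversities with local times recalled in the last bullet point above (\cite[Theorem 1]{Paper0}, \cite[Theorem 4.15]{Paper1}). Recall from the bullet-point discussion that $(\beta^y,\,y\ge0)$ is constructed by interweaving two independent pseudo-stationary type-1 evolutions, each of which is itself a concatenation of a type-1 evolution from a single \GammaDist[\frac12,\gamma]-interval and one from a scaled $\PDIP[\frac12,\frac12]$. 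All of these pieces come from \Stable[\frac32] scaffoldings marked by \BESQ[-1] spindles, and in every case the relevant scaffolding can be realized inside a single \Stable[\frac32] process: either $(\widehat\fX(t),\,0\le t\le\widehat T)$ (the descent from $\zeta$ to $0$ concatenated with $\td\fX$, governing the first type-1 evolution of each pseudo-stationary piece together with the part straddling level~$0$) or $(\td\fX(t),\,0\le t\le\td T)$ (governing the scaled-$\PDIP$ piece).

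First I would fix the matching $((U_j,U_j'),\,1\le j\le m)$ of intervals of $\beta^0$ and $\beta^y$ coming from the same \BESQ[-1] excursion. By the diversity-equals-local-time identity, for each such pair the spindle producing $U_j$ occurs at some scaffolding time $t_j$, and $\IPLT_{\beta^0}(U_j)=\ell^0(t_j)$ while $\IPLT_{\beta^y}(U_j')=\ell^y(t_j)$ with the \emph{same} time $t_j$ in the relevant scaffolding (this is the whole point of matching within a common excursion: the spindle straddling level~$y$ and the spindle at level~$0$ live on the same jump of the same scaffolding, so they share a time coordinate). Hence
\[
 \big|\IPLT_{\beta^y}(U_j')-\IPLT_{\beta^0}(U_j)\big| = \big|\ell^y(t_j)-\ell^0(t_j)\big| \le \big(\widehat D_\theta^y \vee \widetilde D_\theta^y\big)\, y^\theta,
\]
uniformly in $j$, where the supremum in the definitions of $\widehat D_\theta^y$ and $\widetilde D_\theta^y$ is taken over all times in the respective scaffolding. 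For the total-diversity bound $|\IPLT_{\beta^y}(\infty)-\IPLT_{\beta^0}(\infty)|$ I would observe that the total diversity of each type-1 constituent is the total local time accumulated by the corresponding scaffolding at the relevant level, so $|\IPLT_{\beta^y}(\infty)-\IPLT_{\beta^0}(\infty)|$ is likewise controlled by $(\widehat D_\theta^y\vee\widetilde D_\theta^y)y^\theta$ after summing over the (finitely many, a.s.) constituent pieces that are alive at levels $0$ and $y$; the interweaving only rearranges and truncates these pieces, and truncation at a level where the scaffolding is cut off can only decrease the relevant total local time difference, so no new contribution appears. Setting $L_\theta := C(\widehat D_\theta^y\vee\widetilde D_\theta^y)$ for a suitable deterministic constant $C$ absorbing the bounded number of constituent pieces, Lemma~\ref{lem:LT_Holder} (and its analogue for $\td\fX$, which is exactly \cite[Theorem 3]{Paper0}) gives that $L_\theta$ has moments of all orders.

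The main obstacle I anticipate is bookkeeping around the interweaving construction: one must check carefully that when a spindle straddles level $y$ and its matched partner sits at level $0$ within the same \BESQ[-1] excursion, both the level-$0$ and level-$y$ cross-sections genuinely arise from a single jump of a single one of the scaffoldings $\widehat\fX$ or $\td\fX$ (after the shifts/concatenations in the pseudo-stationary and interweaving constructions), so that the time coordinate $t_j$ is unambiguous and the bound $|\ell^y(t_j)-\ell^0(t_j)|$ applies with that scaffolding's own local time. A secondary technical point is that the supremum over matchings must be taken at $y$ fixed but over \emph{all} admissible matchings simultaneously; since the bound $(\widehat D_\theta^y\vee\widetilde D_\theta^y)y^\theta$ does not depend on the matching (it is a uniform modulus of continuity of the local time as a function of level, uniformly in the time coordinate), this is automatic once the identification in the previous sentence is in place. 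Everything else is a routine application of the cited H\"older estimates and the diversity--local-time correspondence.
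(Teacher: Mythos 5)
Your overall plan—reduce the claim to the H\"older continuity of local times via Lemma \ref{lem:LT_Holder} and the diversity--local-time identity, using the interweaving construction—is the right one and is exactly the route the paper takes. However, there is a genuine gap in your key identification $\IPLT_{\beta^0}(U_j)=\ell^0(t_j)$. Because $\beta^0$ is built by concatenating alternating pieces of the skewers of \emph{two} independent copies $\widehat\fX_{(1)},\widehat\fX_{(2)}$ of $\widehat\fX$, the diversity of a block $U$ accumulates local time from \emph{both} scaffoldings: if $U$ comes from a spindle at time $t$ of $\widehat\fX_{(1)}$ with $t\in[T_2,T_4)$, then blocks to its left live on $\widehat\fX_{(1)}$ up to time $t$ \emph{and} on $\widehat\fX_{(2)}$ up to time $T_3$, so the correct formula (by \cite[Theorem 4.15]{Paper1}) is $\IPLT_{\beta^0}(U)=\hat\ell^0_{(1)}(t)+\hat\ell^0_{(2)}(T_3)$, not $\ell^0(t)$ in a single scaffolding. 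Your ``anticipated obstacle'' paragraph flags the right worry but resolves it incorrectly: the fact that the spindle lives on a single jump of a single scaffolding does \emph{not} mean its diversity depends on only that scaffolding's local time.

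Fortunately this gap is repairable without changing the skeleton of your argument. Because consecutive pieces from the same scaffolding in the interweaving are contiguous time intervals, the contribution from each scaffolding telescopes, so the diversity is always a sum of \emph{exactly two} local-time evaluations (one per scaffolding), not a random number as your phrase ``absorbing the bounded number of constituent pieces'' suggests. Applying the triangle inequality to this two-term formula and then Lemma \ref{lem:LT_Holder} to each scaffolding gives $L_\theta=\widehat D^{\,y}_{\theta,(1)}+\widehat D^{\,y}_{\theta,(2)}$ with $p^{\rm th}$ moment bounded by twice that of $\widehat D^{\,y}_\theta$. A smaller bookkeeping issue is your use of $\widehat D^y_\theta\vee\widetilde D^y_\theta$: since each pseudo-stationary piece is already encoded by a single $\widehat\fX$ (the descent concatenated with $\widetilde\fX$), separate control on $\widetilde\fX$ is unnecessary; the interweaving uses two i.i.d.\ copies of $\widehat\fX$, so only $\widehat D^y_\theta$ enters. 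The same two-term sum structure handles the $\IPLT_{\beta^0}(\infty)$ bound; no separate ``truncation'' argument is needed.
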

\begin{proof}
  Think of $(\beta^y,\,y\ge0)$ as arising from an interweaving construction, as in Section \ref{sec:interweaving}, so for each $y\ge0$, $\beta^y$ is formed as in \eqref{eq:interweaving_skewer}, by concatenating alternating intervals of the skewers of two i.i.d.\ copies $(\widehat\fX_{(1)},\widehat\fX_{(2)})$ of $\widehat\fX$ with jumps marked by \BESQ[-1] spindles. Now, consider a block $U\in\beta^0$; this corresponds to one such spindle, marking a jump at some time $t$ in either $\widehat\fX_1$ or $\widehat\fX_2$. Suppose, for example, that this spindle appears in $\fX_1$ with $t\in [T_2,T_4)$, in the notation of \eqref{eq:interweaving_skewer}. Then by \cite[Theorem 4.15]{Paper1}, $\IPLT_{\beta^0}(U) = \hat\ell^0_{(1)}(t) + \hat\ell^0_{(2)}(T_3)$, and if $U'\in\beta^y$ corresponds to the same spindle, then $\IPLT_{\beta^y}(U') = \hat\ell^y_{(1)}(t) + \hat\ell^y_{(2)}(T_3)$. Such comparisons can be made for spindles coming from any interval $[T_{j-2},T_j)$ in either $\widehat\fX_{(1)}$ or $\widehat\fX_{(2)}$. Thus, the claimed bounds follow from Lemma \ref{lem:LT_Holder} by the triangle inequality, with the $p^{\rm th}$ moment of $L_\theta$ being bounded by twice that of $\widehat{D}_\theta^y$.
\end{proof}
It remains to bound terms (iii) and (iv) in Definition \ref{def:IP:metric}, which deal with mass. Consider a sequence of $m$ distinct size-biased picks among the blocks of $\beta^0$, and match these with the blocks arising from the same spindle at time $y$, $((U_j,U_j^\prime),1\le j\le m)$, allowing that ${\rm Leb}(U_j^\prime)$ may equal zero for some $j$ if the spindle does not survive. We can separately control
\begin{itemize}[leftmargin=.82cm]
  \item total discrepancy between matched beads $\sum_{1\le j\le m}\big|{\rm Leb}(U_j)-{\rm Leb}(U_j^\prime)\big|$,
  \item unmatched level-0 mass $\|\beta^0\|-\sum_{1\le j\le m}{\rm Leb}(U_j)\cf\{{\rm Leb}(U_j')> 0\}$,
  \item and unmatched level-$y$ mass $\|\beta^y\|-\sum_{1\le j\le m}{\rm Leb}(U_j^\prime)$.
\end{itemize}

Denote by $\widetilde\mu_\gamma$ the distribution of $M\widetilde{\beta}^0$ for independent $\widetilde{\beta}^0\sim\widetilde\mu$ and $M\sim\GammaDist[\frac32,\gamma]$.

\begin{lemma}\label{gammaholder}
 Let $(\beta^y,y\ge 0)$ be an $\cI^\circ$-valued type-2 evolution starting according to $\widetilde\mu_\gamma$. Let $\theta\in(0,1/4)$ and $p>0$. Then there is a constant $C=C_{\gamma,\theta,p}$ such that
 \begin{equation}\label{eq:gammaholder}
  \bE\left[(d_\cI(\beta^0,\beta^y))^p\right]\le Cy^{\theta p}\qquad\mbox{for all }0\le y\le 1.
 \end{equation}
\end{lemma}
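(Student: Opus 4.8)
\textbf{Proof strategy for Lemma \ref{gammaholder}.} The plan is to control the $d_\cI$-distortion of an explicit correspondence between $\beta^0$ and $\beta^y$ obtained from the interweaving construction: match each block of $\beta^0$ coming from a given marked \BESQ[-1] spindle to the block (if any) of $\beta^y$ coming from the same spindle, taking the blocks in size-biased order. By Definition \ref{def:IP:metric}, it then suffices to bound the $p^{\text{th}}$ moments of the four distortion quantities, uniformly for $y\in[0,1]$, each by a constant times $y^{\theta p}$. Quantities (i) and (ii), the diversity terms, are already handled by Lemma \ref{lem:LT_matching_bd}, whose Hölder constant $L_\theta$ has moments of all orders; so the remaining work is the two mass terms (iii) and (iv), which by the discussion preceding the lemma reduce to controlling (a) the total discrepancy $\sum_j|{\rm Leb}(U_j)-{\rm Leb}(U_j')|$ between matched beads, (b) the unmatched level-$0$ mass $\|\beta^0\|-\sum_j{\rm Leb}(U_j)\cf\{{\rm Leb}(U_j')>0\}$, and (c) the unmatched level-$y$ mass $\|\beta^y\|-\sum_j{\rm Leb}(U_j')$.

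First I would reduce everything to statements about a single pseudo-stationary type-1 evolution. By Proposition \ref{interweaving} and Proposition \ref{prop:type2:pseudo}, $(\beta^y,y\ge0)$ started from $\widetilde\mu_\gamma$ is built by interweaving two i.i.d.\ pseudo-stationary type-1 evolutions of \ExpDist[\gamma] initial mass, and by Proposition \ref{prop:type1:pseudo_g} each of these is (a \GammaDist[\frac12,\gamma]-distributed single-block type-1 evolution) concatenated with (a type-1 evolution from a \GammaDist[\frac12,\gamma]-scaled \PDIP[\frac12,\frac12]). Since $d_\cI$ is controlled by the sum of the distortions of the pieces under concatenation (as in \cite[Lemma 2.11]{Paper1} and the argument of Corollary \ref{corpairtype1}), and only finitely many pieces are relevant, it is enough to prove the analogue of \eqref{eq:gammaholder} for (A) a $\besq(-1)$ single block, where the bead-discrepancy is just $|\ff(0)-\ff(y)|$ and one appeals to the $\besq(-1)$ scaling and standard moment estimates for Bessel-type increments to get a bound $\le C_p y^{p/2}\le C_p y^{\theta p}$ for $y\le1$; and (B) a type-1 evolution $(\alpha^y,y\ge0)$ from a \GammaDist[\frac12,\gamma]-scaled \PDIP[\frac12,\frac12], which is the crux. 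For (B), I would use the \Stable[\frac32]-scaffolding-with-\BESQ[-1]-spindles representation $\restrict{\fN}{[0,T)}$ of Proposition \ref{prop:type1:pseudo_constr}, where $T$ is an exponentially-thresholded stopping time; matched beads correspond to the values $f((y-\xi(t-))\vee0)$ of a spindle $f$ at a jump time $t$ of $\xi$ surviving to level $y$.

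The key analytic estimates for (B) are: (b) unmatched level-$0$ mass, coming from spindles present at level $0$ that die before level $y$ (plus the short initial piece of each surviving spindle below level $\xi(t)\wedge 0$); this is the aggregate mass of \BESQ[-1] excursions of lifetime at most $y$ together with the near-$0$ contributions of longer ones, and can be bounded in $L^p$ by $C_p y^{p/2}$ using the explicit \BESQ[-1] excursion measure $\mBxc(\zeta\in dy)\propto y^{-5/2}dy$, the Poissonian structure of $\fN$, and the total-mass estimate $\|\alpha^y\|\sim\besq(1)$; (c) unmatched level-$y$ mass, coming from spindles born between levels $0$ and $y$, which by the same Poisson/excursion-measure computation (the intensity of new excursions is Lebesgue in level) is again $L^p$-bounded by $C_p y^{p/2}$; and (a) the bead discrepancy, which for a single surviving spindle $f$ seen at levels $0$ and $y$ is at most $\sup_{|s|\le y}|f(u+s)-f(u)|$-type increments summed over surviving spindles, controlled by Markov-like property (Lemma \ref{lem:type1Markov}) conditioning plus \BESQ[-1] increment moments and the finiteness of the expected number of surviving spindles. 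Summing (a)+(b)+(c) and combining with Lemma \ref{lem:LT_matching_bd} for (i)+(ii), applying the elementary inequality $(x_1+\dots+x_4)^p\le 4^{p-1}(x_1^p+\dots+x_4^p)$, and using $y^{p/2}\le y^{\theta p}$ for $y\le1$ and $\theta<1/4<1/2$, yields \eqref{eq:gammaholder}.

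\textbf{Main obstacle.} I expect the hardest part to be the $L^p$ control of the \emph{mass} discrepancies (a)--(c) with the correct power $y^\theta$ rather than merely $y^{\theta}$-up-to-logs or with only low-order moments: the naive bound via the $\besq(-1)$ excursion measure gives mass contributions of order $y$ in expectation, and one must be careful that, after conditioning on the (random, exponentially-thresholded) stopping time $T$ and on survival events, all moments remain finite and uniformly bounded for $y\le1$. Handling the interface between the explicit single-block $\besq(-1)$ computation and the spindle-level analysis — in particular the short sub-$0$-level initial segment of a spindle that straddles level $0$, which contributes to ``unmatched level-$0$ mass'' — will require some care, but should follow from the same excursion-measure moment bounds. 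No genuinely new idea beyond the \BESQ[-1] excursion-measure moment estimates and the interweaving/concatenation reductions already available should be needed.
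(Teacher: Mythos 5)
Your overall frame is right—fix an explicit correspondence of blocks coming from the same spindle, bound the two diversity terms of $d_\cI$ via Lemma \ref{lem:LT_matching_bd}, and bound the two mass terms separately—but there is a genuine gap in the mass-term analysis, and it is precisely where the exponent $1/4$ comes from.

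You propose to match \emph{all} blocks of $\beta^0$ whose spindle survives to level $y$, and you claim the resulting bead discrepancy $\sum_j|{\rm Leb}(U_j)-{\rm Leb}(U_j')|$ is controlled by ``\BESQ[-1] increment moments and the finiteness of the expected number of surviving spindles.'' This is the problem. Although for fixed $y>0$ the number of surviving spindles is a.s.\ finite, its expectation scales like $y^{-1/2}$ as $y\downarrow 0$ (blocks of size $\gtrsim y$ in a \PDIP[\frac12,\frac12], of which there are $\sim y^{-1/2}$, survive with probability of order one; blocks of size $\ll y$ survive with probability $\sim(m/y)^{3/2}$, contributing the same order). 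Meanwhile the $\besq(-1)$ increment bound from \cite[Lemma 33]{Paper0} gives a per-block discrepancy of order $y^{1/2}$, uniformly in the block size (the constant $C_1$ in that bound does not vanish for small $m$). Multiplying, the naive estimate gives a total bead discrepancy of order one, not $o(1)$, so your step (a) does not close. One can try to sharpen the per-block bound to $\min(m_j,\,y^{1/2}\sqrt{m_j})$ and show the sum over the \PDIP\ tail is $\lesssim y^{1/2}\log(1/y)$, but that is a different and more delicate argument than what you wrote, and the $L^p$ version of it (with the random, dependent block sizes) is not obviously routine.

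The paper sidesteps this entirely with a truncation that your proposal is missing: it matches only $m=\lfloor 3y^{-1/4}\rfloor$ size-biased blocks of $\beta^0$ (plus the two top blocks), matching the rest to nothing. This trades the bead discrepancy, now $\lesssim m\cdot y^{1/2}$ in each $L^p$ norm, against the unmatched stick-breaking tail $G\sum_{j\geq m-1}\beta_j^*$, whose $p$-th moment is computed exactly to be $\sim m^{-p}$. Balancing $m^p y^{p/2}$ against $m^{-p}$ yields $m\sim y^{-1/4}$ and an overall error $y^{p/4}$; this is the source of the $\theta<1/4$ threshold. Your heuristics also assert $y^{p/2}$ bounds for the unmatched mass without this truncation, but as above, the bottleneck is the discrepancy term, not the unmatched tail. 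So the missing idea is the choice of a finite, $y$-dependent truncation level and the explicit stick-breaking moment computation for the tail, without which the estimate does not go through.

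One smaller point: the paper never passes through the scaffolding-and-spindles representation for the mass terms as you suggest in (B); it works directly with the initial state $(0,A)\concat(0,B)\concat G\bar\beta$, a size-biased ordering $\beta^*$ of $\bar\beta$, the per-block $\besq(-1)$ moment bound, and the stick-breaking law of the tail. This is substantially shorter than your route through $\mBxc$-moment bounds and Poissonian aggregate-mass estimates, and it produces constants that are visibly uniform in $y\in(0,1]$, which would require further care in your version.
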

\begin{proof}
 Consider such a process $(\beta^y,\,y\ge0)$. Its initial state is of the form $\beta^0 = (0,A)\concat (0,B)\concat G\bar\beta$, where $A,B,G$ are i.i.d.\ \GammaDist[\frac12,\gamma] random variables, independent of $\widebar\beta\sim\PDIP[\frac12,\frac12]$. Further let $\beta^*=(\beta^*_1,\beta^*_2,\dots) \in [0,1]^\infty$ denote a size-biased random ordering of the masses of $\bar\beta$. 
 
 To construct the matching, we take the blocks $U_1=A$, $U_2=B$, together with the blocks $U_i$ for $3\leq i\leq m=\lfloor 3y^{-1/4}\rfloor$, where $U_i$ is the block corresponding to $G\beta^*_{i-2}$ and match them with the blocks $U_A',U_B',U_1',\ldots$ that arise from the corresponding spindles at level $y$. Consequently, ${\rm Leb}(U_i') = \mathbf{g}_i(y)$ where $\mathbf{g}_i\sim\besq_{{\rm Leb}(U_i)}(-1)$ given ${\rm Leb}(U_i)$. 
%
 Note that this means that some of our blocks will be matched with empty blocks and should thus be accounted for in the remaining mass component of the metric.  We will handle this later.
 
 Assuming $p \geq 2$, $y\in(0,1]$, and using \cite[Lemma 33]{Paper0} and the fact that $M$ has finite moments of all orders, there are constants $C_1,C_2, C_3,C_4$ and $C_5$, depending only on $p$, such that
 \[ \begin{split}
  \bE \Bigg[ \Bigg(\sum_{1\le j\le m}|{\rm Leb}(U_j)-{\rm Leb}(U_j^\prime)|\Bigg)^{p}\Bigg]
  	&\leq m^{{p}-1} \bE \Bigg[ \sum_{1\le j\le m}|{\rm Leb}(U_j)-{\rm Leb}(U_j^\prime)|^{p}\Bigg]\\
	&\leq m^{{p}-1}y^{{p}/2} \sum_{1\le j\le m}\bE\Big[\Big(C_1 +\sqrt{C_2{\rm Leb}(U_j)+C_3}\Big)^{p}\Big] \\
	&\leq m^{{p}-1}y^{{p}/2} \sum_{1\le j\le m}\bE\Big[\Big(C_1 +\sqrt{C_2{\rm Leb}(M)+C_3}\Big)^{p}\Big] \\
	&\leq C_4 m^{{p}} y^{{p}/2} 
   \leq C_5 y^{p/4},
 \end{split} \]
 absorbing $3^p$ into the constant at the last step.
 
 The unmatched mass at level $0$ is $G\sum_{j=m-1}^\infty \beta^*_i$.  Let $(Y_n)_{n\geq 1}$ be a sequence of independent random variables, also independent of $G$, such that $Y_n$ has ${\tt Beta}(1/2, (n+1)/2)$ distribution.  Using the stick-breaking construction of the Poisson--Dirichlet distribution, we see that for all $y\in(0,1]$
 \[\begin{split}
  \bE\Bigg[ \Bigg(G \sum_{j=m-1}^\infty \beta^*_i\Bigg)^{p}\Bigg]
 	&= \bE[G^p] \bE\Bigg[ \Bigg(1 - \sum_{j=1}^{m-2} \beta^*_i\Bigg)^{p}\Bigg]
	= \bE[G^p]\bE\Bigg[ \Bigg(\prod_{j=1}^{m-2} (1-Y_j)\Bigg)^{p}\Bigg] \\
	&= \bE[G^p]\prod_{j=1}^{m-2}\bE\left[ \left(1-Y_j\right)^{p}\right] 
	=\bE[G^p]\Gamma\left(1+{p} \right)\frac{ \Gamma\left(\frac{m}{2}\right)}{\Gamma\left(\frac{m}{2} +{p}\right)} \\
	&\sim \bE[G^p]\Gamma\left(1+{p} \right)m^{-{p}} 
	= \bE[G^p]\Gamma(1+{p}) \lfloor 3y^{-1/4}\rfloor^{-{p}}
 	\le C_6 y^{p/4},
 \end{split}\]
 for some $C_6\ge C_5$.
 
 We are left with estimating the unmatched mass at time $y$. By the triangle inequality,
 \[ \|\beta^y\| - \sum_{j=1}^{m} {\rm Leb}(U'_j)  \leq \big| \|\beta^y\|-M\big| + \Bigg|G\sum_{j=m-1}^\infty \beta^*_j\Bigg|+ \sum_{1\le j\le m}\big|{\rm Leb}(U_j)-{\rm Leb}(U_j^\prime)\big|.\]
 Furthermore, by Theorem \ref{thm:total_mass}, $(\|\beta^y\|)_{y\geq 0}$ is a \besq$_{M}(-1)$ process to which \cite[Lemma 33]{Paper0} applies, as above. Consequently, we have for some $C_7>0$ that
 \[\begin{split} &\bE \Bigg|\|\beta^y\| - \sum_{j=1}^{m} {\rm Leb}(U'_j)\Bigg|^{p}\\ 
   &\leq 3^{{p}-1}\Bigg(\bE \Big| \|\beta^y\|-M\Big|^{p} + \bE\Bigg|G\sum_{j=m-1}^{\infty} \beta^*_j\Bigg|^{p}+ \bE\Bigg[\Bigg(\sum_{1\le j\le m}|{\rm Leb}(U_j)-{\rm Leb}(U_j^\prime)|\Bigg)^{{p}}\Bigg]\Bigg) \\
   & \leq C_7(y^{{p}/2} +y^{p/4} + y^{p/4})
   \leq 3C_7 y^{{p} /4}.
 \end{split}\]
 To account for the fact that some ${\rm Leb}(U'_i)$ may be $0$, and thus the corresponding $U_i$ should count towards unmatched mass at time $0$, we bound the metric $d_{\mathcal{I}}$ using the correspondence defined above (and bounding the maximum in the definition of $d_{\mathcal{I}}$ by a sum) to see that
 \[\begin{split}
   d_{\mathcal{I}}(\beta^0,\beta^y) & \leq  \sum_{1\le j\le m}|{\rm Leb}(U_j)-{\rm Leb}(U_j^\prime)|\mathbf{1}\{{\rm Leb}(U'_j) >0\}\\
   &\qquad + \|\beta^y\| - \sum_{j=1}^{m} {\rm Leb}(U'_j)\mathbf{1}\{{\rm Leb}(U'_j) >0\} +M- \sum_{j=1}^m {\rm Leb}(U_j)\mathbf{1}\{{\rm Leb}(U'_j) >0\} \\
   &\qquad + |\ell^0(T)-\ell^y(T)|+  \max_{1\le j\le m}|\ell^0(U_j)-\ell^y(U_j^\prime)|\mathbf{1}\{{\rm Leb}(U'_j) >0\} \\
   & =   \sum_{1\le j\le m}|{\rm Leb}(U_j)-{\rm Leb}(U_j^\prime)|+ \|\beta^y\| - \sum_{j=1}^{m} {\rm Leb}(U'_j) +M -\sum_{j=1}^m {\rm Leb}(U_j)\\
   & \qquad + |\ell^0(T)-\ell^y(T)|+  \max_{1\le j\le m}|\ell^0(U_j)-\ell^y(U_j^\prime)|\mathbf{1}\{{\rm Leb}(U'_j) >0\}.
 \end{split}\]
 Dropping the indicator on the last term and combining this with our calculations above and Lemma \ref{lem:LT_matching_bd} shows that for $0<\theta<1/4$ there exists some constant $C_{\gamma,\theta,{p}}$ depending only on $\gamma$, $\theta$ and $p$ that satisfies \eqref{eq:gammaholder}.
\end{proof}

\begin{proof}[Proof of Proposition \ref{type2holder}]
Let $(\beta^y,y\ge 0)$ be a type-2 evolution with initial distribution $\mu_\gamma$. Denote the total mass evolution by 
$Z(y)=\|\beta^y\|$, $y\ge 0$. Let $0\le a<b\le 1$. Then
\begin{align*}
  \bE_{\mu_\gamma}\!\left[(d_\cI(\beta^a,\beta^b))^p\right]
  &=\bE_{\mu_\gamma}\!\left[\cf\{D>a\}\bE_{\beta^a}\!\left[(d_\cI(\beta^0,\beta^{b-a}))^p\right]\right]
    +\bE_{\mu_\gamma}\!\big[|Z(a)-Z(b)|^p\cf\{D<a\}\big].
\end{align*}
For the first term, we condition on $D>a$ and apply the pseudo-stationarity of Proposition \ref{prop:type2:pseudo}. While the distribution of $\beta^a$ given $D>a$ may not be $\mu_{\gamma/(2\gamma a+1)}$, it is $\mu_{\gamma/(2\gamma a+1)}$ up to a potential swap of the two leftmost blocks, and the matching set up in the proof of Lemma \ref{gammaholder} is unaffected by such a swap so that scaling by $2\gamma a+1$ and applying the bound of Lemma \ref{gammaholder} yields the upper bound:
\begin{align*}
  \bE_{\mu_{\gamma/(2\gamma a+1)}}\!\left[\big(d_\cI\big(\beta^0,\beta^{b-a}\big)\big)^p\right]
  &=(2\gamma a+1)^p\bE_{\mu_\gamma}\!\left[\big(d_\cI\big(\beta^0,\beta^{(b-a)/(2\gamma a+1)}\big)\big)^p\right]\\
  &\le (2\gamma a+1)^pC(b-a)^{\theta p}/(2\gamma a+1)^{\theta p}\le (2\gamma+1)^{p(1-\theta)}C|b-a|^{\theta p}.
\end{align*}
For the second term, we apply \cite[Lemma 33]{Paper0} to find the upper bound
$$|b-a|^{p/2}\bE_{\mu_\gamma}\!\left[\left(1+2(p-1)+2\sqrt{p-1}\sqrt{Z(0)+2(p-1)}\right)^p\right],$$
which is easily seen to be a finite multiple of $|b-a|^{p/2}\le|b-a|^{\theta p}$.

By the Kolmogorov-Chentsov theorem \cite[Theorem I.(2.1)]{RevuzYor}, this shows that for all $0<\theta<1/4$ and $p>0$,
$$\bE_{\mu_\gamma}\!\left[\left(\sup_{0\le a<b\le 1}\frac{d_\cI(\beta^a,\beta^b)}{|b-a|^\theta}\right)^p\right]<\infty.$$
We can write the LHS by integrating out the random initial mass. Cancelling $\gamma^{3/2}/\Gamma(3/2)$ gives
$$\int_0^\infty e^{-\gamma x}\sqrt{x}\bE_x\!\left[\left(\sup_{0\le a<b\le 1}\frac{d_\cI(\beta^a,\beta^b)}{|b-a|^\theta}\right)^p\right]dx<\infty.$$
By Fubini's theorem, this yields for a.e.\ $x\in(0,\infty)$ that
$\displaystyle\bE_x\!\left[\left(\sup_{0\le a<b\le 1}\frac{d_\cI(\beta^a,\beta^b)}{|b-a|^\theta}\right)^p\right] < \infty$.\\
But for any $x,y\in(0,\infty)$, we can find $c<1/y$ so that this expectation is finite for initial mass $xc$. By scaling, 
\begin{align*}
 \infty&>\bE_{c x}\!\left[\sup_{0\le a<b\le 1}\left(\frac{d_\cI(\beta^a,\beta^b)}{|b-a|^\theta}\right)^p\right]
       = c^p\bE_{x}\!\left[\sup_{0\le a<b\le 1}\left(\frac{d_\cI(\beta^{a/c},\beta^{b/c})}{|b-a|^\theta}\right)^p\right]\\
       &=c^{p(1-\theta)}\bE_{x}\!\left[\sup_{0\le a^\prime<b^\prime\le 1/c}\left(\frac{d_\cI(\beta^{a^\prime},\beta^{b^\prime})}{|b^\prime-a^\prime|^\theta}\right)^p\right]
       \ge c^{p(1-\theta)}\bE_{x}\!\left[\sup_{0\le a^\prime<b^\prime\le y}\left(\frac{d_\cI(\beta^{a^\prime},\beta^{b^\prime})}{|b^\prime-a^\prime|^\theta}\right)^p\right],
\end{align*}
so the expectation is finite for any initial mass, including unit initial mass $x=1$, and for any $y\in(0,\infty)$.       
\end{proof}

\bibliographystyle{abbrv}
\bibliography{AldousDiffusion4}

\end{document}